\documentclass[reqno]{amsart}
\pdfoutput=1

\usepackage[british]{babel}
\usepackage[utf8]{inputenc}
\usepackage[T1]{fontenc}

\usepackage{amssymb}
\usepackage{amsfonts}
\usepackage{amsmath}
\usepackage{amsthm}
\usepackage{mathtools}
\usepackage{bbm}
\usepackage{csquotes}
\usepackage{tikz-cd}
\usepackage{adjustbox}
\usepackage{accents}
\usepackage{mathrsfs}
\usepackage{MnSymbol}
\usepackage{soul}
\usepackage{stmaryrd}
\usepackage{xurl}
\usepackage{dsfont}
\usepackage{pdfpages}
\usepackage{graphicx}
\usepackage{comment}
\usepackage[dvipsnames]{xcolor}
\usepackage{etoolbox}
\usepackage{stmaryrd}

\makeatletter
\patchcmd{\@setaddresses}{\indent}{\noindent}{}{}
\patchcmd{\@setaddresses}{\indent}{\noindent}{}{}
\patchcmd{\@setaddresses}{\indent}{\noindent}{}{}
\patchcmd{\@setaddresses}{\indent}{\noindent}{}{}
\makeatother

\PassOptionsToPackage{hyphens}{url}
\usepackage[hypertexnames=false, breaklinks=true]{hyperref}
\hypersetup{
 colorlinks,
 linkcolor={red!40!black},
 citecolor={green!40!black},
 urlcolor={blue!50!black}
}
\usepackage[compress]{cleveref}
\usepackage{thmtools, thm-restate}
\declaretheoremstyle[spaceabove=.8\baselineskip,spacebelow=.8\baselineskip,headfont=\bfseries,notefont=\normalfont,bodyfont=\itshape,postheadspace=.5em]{thms}

\declaretheoremstyle[spaceabove=.8\baselineskip,spacebelow=.8\baselineskip,headfont=\bfseries,notefont=\normalfont,bodyfont=\normalfont,postheadspace=.5em]{defn}

\numberwithin{equation}{section}\theoremstyle{thms}
\newtheorem{thm}[equation]{Theorem}
\newtheorem*{thm*}{Theorem}
\newtheorem{cor}[equation]{Corollary}

\newtheorem{lem}[equation]{Lemma}
\newtheorem{prop}[equation]{Proposition}
\newtheorem*{prop*}{Proposition}

\theoremstyle{defn}
\newtheorem{cons}[equation]{Construction}
\newtheorem{defi}[equation]{Definition}
\newtheorem*{defi*}{Definition}
\newtheorem{ex}[equation]{Example}

\newtheorem{nota}[equation]{Notation}
\newtheorem*{nota*}{Notation}
\newtheorem{rem}[equation]{Remark}

\newtheorem{warn}[equation]{Warning}
\newtheorem{rec}[equation]{Recollection}

\crefname{prop}{Proposition}{Propositions}
\crefname{hyp}{Hypothesis}{Hypotheses}
\crefname{cor}{Corollary}{Corollaries}
\crefname{lem}{Lemma}{Lemmata}
\crefname{cons}{Construction}{Constructions}
\crefname{def}{Definition}{Definitions}
\crefname{rem}{Remark}{Remarks}
\crefname{nota}{Notation}{Notations}
\crefname{ex}{Example}{Examples}
\crefname{thm}{Theorem}{Theorems}
\crefname{rec}{Recollection}{Recollections}
\usepackage[shortlabels,inline]{enumitem}

\newcommand{\nA}{\tn{A}}

\newcommand{\nP}{\tn{P}}

\newcommand{\bA}{\mathbb{A}}

\newcommand{\bC}{\mathbb{C}}
\newcommand{\bD}{\mathbb{D}}
\newcommand{\bE}{\mathbb{E}}
\newcommand{\bF}{\mathbb{F}}
\newcommand{\bG}{\mathbb{G}}

\newcommand{\bP}{\mathbb{P}}
\newcommand{\bQ}{\mathbb{Q}}
\newcommand{\bR}{\mathbb{R}}
\newcommand{\bS}{\mathbb{S}}

\newcommand{\bZ}{\mathbb{Z}}

\newcommand{\cC}{\mathcal{C}}
\newcommand{\cD}{\mathcal{D}}
\newcommand{\cE}{\mathcal{E}}
\newcommand{\cF}{\mathcal{F}}
\newcommand{\cG}{\mathcal{G}}

\newcommand{\cI}{\mathcal{I}}

\newcommand{\cK}{\mathcal{K}}

\newcommand{\cM}{\mathcal{M}}
\newcommand{\cN}{\mathcal{N}}
\newcommand{\cO}{\mathcal{O}}
\newcommand{\cP}{\mathcal{P}}
\newcommand{\cQ}{\mathcal{Q}}

\newcommand{\cS}{\mathcal{S}}

\newcommand{\cU}{\mathcal{U}}

\newcommand{\tn}[1]{\textnormal{#1}}

\newcommand{\tsf}[1]{\textsf{\textup{#1}}}
\newcommand{\tbf}[1]{\textbf{\textup{#1}}}
\newcommand{\mcal}[1]{\mathcal{#1}}
\newcommand{\mfr}[1]{\mathfrak{#1}}

\newcommand{\what}[1]{\widehat{#1}}
\newcommand{\underl}[1]{\underline{#1}}
\newcommand{\overl}[1]{\overline{#1}}

\newcommand{\colimit}[1]{\underset{#1}{\tn{colim}} \;}

\newcommand*{\defeq}{\mathrel{\vcenter{\baselineskip0.4ex \lineskiplimit0pt
\hbox{\scriptsize.}\hbox{\scriptsize.}}}%
=}
\newcommand{\op}{\tn{op}}
\newcommand{\fun}{\tn{Fun}}
\newcommand{\Hom}{\tn{Hom}}

\newcommand{\map}{\tn{map}}
\newcommand{\Map}{\tn{Map}}
\newcommand{\imap}{\underl{\tn{map}}}
\newcommand{\psigma}{\cP_{\Sigma}}
\newcommand{\sheaves}{\mcal{S}\tn{h}}
\newcommand{\alg}{\tn{Alg}}
\newcommand{\calg}{\tn{CAlg}}
\newcommand{\bbone}{\mathbbm{1}}

\newcommand{\modgr}{\tsf{Mod}(G;R)}
\newcommand{\Mod}{\tsf{Mod}}

\newcommand{\Spec}{\tn{Spec}}

\newcommand{\spans}{\tsf{span}}
\newcommand{\Spans}{\tsf{Span}}
\newcommand{\spec}{\tsf{Sp}}
\newcommand{\specg}{\tsf{Sp}^G}

\newcommand{\spc}{\tsf{Spc}}

\newcommand{\spcg}{\tsf{Spc}^G}
\newcommand{\spcgp}{\tsf{Spc}^G_{\ast}}
\newcommand{\catinf}{\tsf{Cat}_{\infty}}

\newcommand{\prl}{\tsf{Pr}^{\tn{L}}}
\newcommand{\prlst}{\tsf{Pr}^{\tn{L}}_{\tn{st}}}

\newcommand{\prr}{\tsf{Pr}^{\tn{R}}}
\newcommand{\prrst}{\tsf{Pr}^{\tn{R}}_{\tn{st}}}

\newcommand{\cmackgr}{\tsf{Mack}_R^{\tsf{coh}}(G)}

\newcommand{\perm}{\tsf{perm}}
\newcommand{\permgr}{\tsf{perm}(G;R)}
\newcommand{\Perm}{\tsf{Perm}}

\newcommand{\gset}{G\tsf{-set}}

\newcommand{\sms}{\tsf{Sm}_S}
\newcommand{\fets}{\tsf{Fét}_S}
\newcommand{\fetslice}[1]{\tsf{Fét}_{{#1}/S}}
\newcommand{\ets}{\tsf{Ét}_S}
\newcommand{\corsms}[1]{\tsf{SmCor}(S;#1)}
\newcommand{\corfets}[1]{\tsf{FétCor}(S;#1)}
\newcommand{\corfetsslice}[2]{\tsf{FétCor}({#2}/S;#1)}

\newcommand{\ind}{\tn{Ind}}

\newcommand{\glo}{\tsf{Glo}}

\newcommand{\lat}{\tsf{Lat}}

\newsavebox{\pullback}
\sbox\pullback{%
\begin{tikzpicture}%
\draw (0,0) -- (1ex,0ex);%
\draw (1ex,0ex) -- (1ex,1ex);%
\end{tikzpicture}}

\newsavebox{\pushout}
\sbox\pushout{%
\begin{tikzpicture}%
\draw (0,0) -- (0ex,1ex);%
\draw (0ex,1ex) -- (1ex,1ex);%
\end{tikzpicture}}

\DeclareFontFamily{U}{mathx}{}
\DeclareFontShape{U}{mathx}{m}{n}{<-> mathx10}{}
\DeclareSymbolFont{mathx}{U}{mathx}{m}{n}
\DeclareMathAccent{\widecheck}{0}{mathx}{"71}

\tikzcdset{scale cd/.style={every label/.append style={scale=#1}, cells={nodes={scale=#1}}}}

\date{}
\author{Yorick Fuhrmann}
\address{Yorick Fuhrmann, Warwick Mathematics Institute, Coventry CV4 7AL, UK}
\email{yorick.fuhrmann@warwick.ac.uk}
\urladdr{https://warwick.ac.uk/fac/sci/maths/people/staff/fuhrmann}

\hypersetup{pdfauthor={Yorick Fuhrmann},pdftitle={Profinite Borel completeness and smooth Artin motives}}

\usepackage[backend=biber, citestyle=authoryear, style=alphabetic, maxbibnames=99]{biblatex}
\AtBeginBibliography{\footnotesize}

\begin{document}

\title[Profinite Borel completeness and smooth Artin motives]{Profinite Borel completeness \\ and smooth Artin motives}

\begin{abstract}
The purpose of this paper is twofold. In the first part, we revisit the description of the $\infty$-category of Borel complete equivariant spectra for a finite group given by Mathew--Naumann--Noel, introduce a version with coefficients, and then consider Borel equivariance for profinite groups. Here we identify two generally differing notions: `levelwise' Borel completeness and the hypercompletion thereof. \\
In the second part, we study variants of smooth Artin motives, which are subcategories of the $\infty$-categories of effective Nisnevich and étale Voevodsky motives over a base scheme $S$ that are controlled by the étale fundamental group $\pi_1^{\tn{ét}}(S)$. In the Nisnevich case, we extend a theorem of Voevodsky and identify smooth Artin motives with modules over the Bredon cohomology spectrum for the profinite group $\pi_1^{\tn{ét}}(S)$. In the étale case, we show that the difference between our two notions of profinite Borel completeness is precisely the difference between étale sheaves and hypersheaves on finite étale schemes.
\end{abstract}

%\subjclass[]{}
%\keywords{}

\maketitle
\tableofcontents

\section{Introduction}\label{sec:intro}

It is a classical result of Grothendieck \cite[§5, Théorème 4.1]{sga1} that finite étale schemes over a connected base scheme $S$ are nothing but finite discrete sets with a continuous action of the étale fundamental group $\pi_1^{\tn{ét}}(S)$, in symbols
\begin{equation}\label{eq:fetspione}
\fets \simeq \pi_1^{\tn{ét}}(S)\tsf{-set}.
\end{equation}
When $S$ is the spectrum of a field $\bF$ we can follow Voevodsky's approach to motives and construct an additive category of \emph{$\bF$-correspondences} $\tsf{SmCor}(\bF;\bZ)$ from the category of smooth $\bF$-schemes $\tsf{Sm}_\bF$. For a commutative ring $R$, additive $R$-module valued presheaves on $\tsf{SmCor}(\bF;\bZ)$ which are sheaves for the Nisnevich topology are called \emph{sheaves with transfers}. From the category they comprise one constructs effective Voevodsky motives $\mcal{DM}_{\tn{Nis}}^{\tn{eff}}(\bF;R)$ through passage to the derived category followed by $\bA^1$-localisation, and there is an `associated motive' functor
$$\cM_\tn{Nis}: \; \tsf{Sm}_\bF \to \mcal{DM}_{\tn{Nis}}^{\tn{eff}}(\bF;R).$$
Under this linearisation procedure \emph{Artin motives} $\mcal{DAM}_{\tn{Nis}}(\bF;R)$, defined as the localising subcategory inside $\mcal{DM}_{\tn{Nis}}^{\tn{eff}}(\bF;R)$ generated by the motives associated to $0$-dimensional smooth $\bF$-schemes, are closely related to permutation representations of $\tn{Gal}(\bF^{\tn{sep}}/\bF)$, the absolute Galois group of the field $\bF$. This was originally observed by Voevodsky \cite[§3.4]{voe00} and has recently been revisited by Balmer--Gallauer \cite[§7]{bg23b}. Namely, there is a symmetric monoidal equivalence
\begin{equation}\label{eq:damaspermfield}
\mcal{DAM}_{\tn{Nis}}(\bF;R) \simeq \cD\tsf{Perm}(\tn{Gal}(\bF^{\tn{sep}}/\bF);R),
\end{equation}
where the latter is the category of derived permutation modules defined in \cite{bg22a}. 
\noindent Since the absolute Galois group $\tn{Gal}(\bF^{\tn{sep}}/\bF)$ is precisely the étale fundamental group of the scheme $\Spec(\bF)$, the equivalence \ref{eq:fetspione} suggests a natural question:
\vspace{0.2cm}
\begin{center}
\emph{Is \ref{eq:damaspermfield} still true when we replace $\Spec(\bF)$ by a general base scheme $S$ \\ and $\tn{Gal}(\bF^{\tn{sep}}/\bF)$ by its étale fundamental group?}
\end{center}
\vspace{0.2cm}
We answer this question in the affirmative, under mild assumptions on the base scheme. To this end, we define the $\infty$-category of effective Voevodsky motives 
$$\mcal{DM}_{\tn{Nis}}^{\tn{eff}}(S;R) \defeq L_{\bA^1}\sheaves_{\tn{Nis}}(\corsms{R};\spec)$$
over a base scheme $S$ with coefficients in $R$ as the $\infty$-category of $\bA^1$-invariant Nisnevich sheaves with transfers with values in spectra.\footnote{Under the assumptions on $S$ we work with we could also use hypersheaves instead of sheaves.} For this, we employ the theory of finite correspondences with coefficients \cite[§9]{cd19}. Voevodsky motives are usually defined by first deriving the $1$-category of Nisnevich sheaves with transfers. In Appendix \ref{sec:sheaveswithtransfers} we show that both of these approaches are equivalent. \\
We are then in a position to consider specific subcategories of motives, and we define the $\infty$-category of \emph{smooth Artin motives} 
$$\mcal{DAM}_\tn{Nis}(S;R) \defeq \tn{Loc}(\cM_{\tn{Nis}}(X) \mid X \tn{ finite étale over } S)$$
as the localising subcategory of $\mcal{DM}_\tn{Nis}^{\tn{eff}}(S;R)$ generated by the motives associated to finite étale $S$-schemes. When $S$ is the spectrum of a field, this recovers the notion of Artin motives that was considered by Voevodsky. Crucially relying on the equivalence \ref{eq:fetspione}, we then show:

\begin{thm*}[\Cref{thm:damnisasmodunderlr}]
Let $S$ be a connected, noetherian scheme of finite Krull dimension. Then there is a symmetric monoidal equivalence
$$\mcal{DAM}_\tn{Nis}(S;R) \simeq \cD\tsf{Perm}(\pi_1^{\tn{ét}}(S,s);R),$$
where $s: \Spec(k) \to S$ is a geometric point of $S$.
\end{thm*}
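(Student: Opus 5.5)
The plan is to identify both sides as module categories over a compactly generated stable $\infty$-category determined by a small additive $\infty$-category of generators. I will then compare these generators using \ref{eq:fetspione}.

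\textbf{Target side.} Write $G=\pi_1^{\tn{ét}}(S,s)$. By definition (following \cite{bg22a}), $\cD\tsf{Perm}(G;R)$ is generated under colimits by the permutation modules $R[G/U]$, with $U\le G$ open. It is equivalent to the $\infty$-category of additive spectral presheaves on the full subcategory $\perm(G;R)$ spanned by these objects, with its derived mapping spectra. Concretely, $\operatorname{map}(R[G/U],R[G/V])$ is the $R$-module $R[U\backslash G/V]$ concentrated in degree $0$, with composition given by the Mackey/Hecke double coset formula.

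\textbf{Motive side.} By Schwede--Shipley/Lurie recognition, $\mcal{DAM}_\tn{Nis}(S;R)$ is equivalent to additive presheaves of spectra on the full subcategory spanned by $\cM_\tn{Nis}(X)$, $X\in\fets$. This uses that these motives are compact in $\mcal{DM}^{\tn{eff}}_\tn{Nis}(S;R)$, which holds since $S$ is noetherian of finite Krull dimension and so Nisnevich cohomology commutes with filtered colimits. It therefore suffices to compute
$$\operatorname{map}_{\mcal{DM}}(\cM_\tn{Nis}(X),\cM_\tn{Nis}(Y)) \simeq \Gamma\bigl(X, L_{\bA^1}\cM_\tn{Nis}(Y)\bigr).$$

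The key claim is that the representable sheaf with transfers $R_{\tn{tr}}(Y)$, restricted to smooth schemes, is already $\bA^1$-invariant and Nisnevich acyclic on finite étale $X$. In other words, the mapping spectrum equals $\operatorname{Cor}_S(X,Y;R)$ in degree $0$. For $X,Y$ finite étale, finite correspondences are $R$-linear combinations of connected components of $X\times_S Y$, which is again finite étale. Under \ref{eq:fetspione}, with $X=G/U$ and $Y=G/V$, these components correspond to $G$-orbits of $G/U\times G/V$, i.e.\ to double cosets. Composition of correspondences (pushforward of pullbacks along finite étale maps) matches the permutation-module composition. This gives an equivalence of additive $R$-linear categories $\corfets{R}\simeq \perm(G;R)$, and it is symmetric monoidal because $\times_S$ corresponds to the product of $G$-sets, i.e.\ to $\otimes_R$ of permutation modules.

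\textbf{Main obstacle.} The hard step is showing there are no higher or $\bA^1$-corrections: that $H^i_\tn{Nis}(X, L_{\bA^1}R_{\tn{tr}}(Y))=0$ for $i\neq 0$, with $H^0$ equal to the naive correspondences. I would first try to prove that $R_{\tn{tr}}(Y)$ for $Y$ finite étale is a strictly $\bA^1$-invariant Nisnevich sheaf on $\sms$. Its sections on smooth $T$ are $R$-valued locally constant functions on $T\times_S Y$ — or rather $R[\pi_0(T\times_S Y)]$ — and $\pi_0$ is $\bA^1$-invariant for normal schemes. Nisnevich cohomology of such a sheaf on the finite étale $X$ vanishes in positive degrees, since Nisnevich-locally a finite étale scheme is a disjoint union of the henselian base and constant sheaves are flasque on irreducible (normal) schemes. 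Connectedness is used only to apply \ref{eq:fetspione}.

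Finally, the symmetric monoidal structure extends from generators to the presheaf categories by Day convolution. This yields the asserted symmetric monoidal equivalence.
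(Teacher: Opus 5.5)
\textbf{How your route differs from the paper's.} The paper never computes mapping spectra between all pairs of generators. It builds an additive symmetric monoidal functor $\permgr\to\corsms{R}$ by pushing spans of finite étale schemes to correspondences and checking that the cohomological ideal is killed. This gives an adjunction $\psigma(\permgr;\spec)\rightleftarrows\mcal{DAM}_{\tn{Nis}}(S;R)$ whose left adjoint is symmetric monoidal and hits compact generators. From \cite[Proposition 5.29]{mnn17} it then gets $\mcal{DAM}_{\tn{Nis}}(S;R)\simeq\Mod_{\beta(\cM_{\tn{Nis}}(S))}(\specg)$, and identifies $\beta(\cM_{\tn{Nis}}(S))\simeq\underl{R}$ from its homotopy Mackey functor plus initiality of $\underl{R}$ among cohomological Green functors.

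Your tilting argument identifies both sides as additive spectral presheaves on one additive category, via $\corfets{R}\simeq\permgr$ and full faithfulness of $X\mapsto\cM_{\tn{Nis}}(X)$ on $\fets$. This is essentially the alternative sketched in \Cref{rem:alternativeproof}. It avoids Barr--Beck and the algebra identification, and yields more (the motive of a finite étale scheme is its representable sheaf). The price is that you need full faithfulness of $\permgr\to\corfets{R}$ including composition, i.e.\ the length/degree bookkeeping the paper does for $\Psi$. The cohomological input is the same in both approaches. By self-duality and $\cM_{\tn{Nis}}(X)\otimes\cM_{\tn{Nis}}(Y)\simeq\cM_{\tn{Nis}}(X\times_S Y)$, everything reduces to $\map(\cM_{\tn{Nis}}(X),\cM_{\tn{Nis}}(S))\simeq R^{\pi_0(X)}$ in degree $0$ for $X$ finite étale.

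\textbf{The gap.} That input is where your proposal breaks; the paper does not prove it by hand but cites \cite[Theorem 11.2.14, Proposition 10.2.5]{cd19}. There are three problems.

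First, you must show that $T\mapsto R[\pi_0(T\times_S Y)]$, i.e.\ $p_*$ of the constant sheaf along the finite étale $p\colon T\times_S Y\to T$, has no higher Nisnevich cohomology on \emph{every} smooth $T$, not just on finite étale $X$. Only then is the representable presheaf with transfers already a Nisnevich sheaf of spectra, and (with $\pi_0$-invariance, which holds for all schemes) $\bA^1$-local.

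Second, your justification is wrong. Over a henselian local base, a finite étale scheme is a disjoint union of henselian local schemes whose residue fields may be nontrivial separable extensions (e.g.\ $\Spec\bC$ over $\Spec\bR$), not copies of the base. Also, flasqueness of constant sheaves on irreducible schemes is a Zariski fact that fails Nisnevich-locally. The correct mechanism is this: if henselisations of local rings are irreducible, the constant sheaf is $j_*$ from the generic points with $R^qj_*=0$, hence acyclic; combine this with exactness of finite pushforward.

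Third, and decisively, this needs a unibranch-type hypothesis on $S$, which ``connected, noetherian, finite Krull dimension'' does not give. Take $S$ the nodal cubic over $\bC$. The normalisation sequence gives $H^1_{\tn{Nis}}(S,\bZ)\cong\bZ$. The same sequence over any smooth $S$-scheme shows the constant sheaf is still $\bA^1$-local. Hence $\Hom(\cM_{\tn{Nis}}(S),\cM_{\tn{Nis}}(S)[1])\neq 0$, whereas the unit of $\cD\Perm(G;\bZ)$ has no self-extensions. So your argument only goes through after adding, e.g., geometric unibranchness or normality of $S$ and carrying out the first point properly. Whatever result is used for this weight-zero computation, including the one the paper cites, must likewise have its hypotheses checked against $S$.
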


\noindent The $\infty$-categories above are presentably symmetric monoidal stable $\infty$-categories and hence their homotopy categories are `large' tt-categories. By recent work of Balmer--Gallauer \cite{bg25a,bg25b}, the Balmer spectrum of the compact part of $\cD\tsf{Perm}(G;R)$ (and hence of \emph{geometric} smooth Artin motives) is known for any profinite group $G$ when $R$ is a field, and by work of Dubey--Gómez \cite{dg25} its set is known when $G$ is finite and $R$ is noetherian. \\
In fact, in order to prove the above theorem, we work with an equivariant description of the $\infty$-category $\cD\tsf{Perm}(\pi_1^{\tn{ét}}(S,s);R)$, which by Corollary 4.8 or Theorem 5.12 of \cite{fuh25} is given by $\Mod_{\underl{R}}(\specg)$, modules over the $R$-linear Bredon cohomology spectrum in equivariant spectra for the profinite group $G=\pi_1^{\tn{ét}}(S,s)$. In the proof we rely on its spectral Mackey functor description, see \cite[Example 2.15]{bcn25} or \cite[Corollary 3.18]{fuh25}.\\
So far we have used the Nisnevich topology for the motives we considered, but replacing it by the étale topology comes with no difficulty, yielding étale versions
$$\mcal{DM}_{\tn{ét}}^{(\wedge),\tn{eff}}(S;R) \defeq L_{\bA^1}\sheaves_{\tn{ét}}^{(\wedge)}(\corsms{R};\spec)$$
inside which we can similarly define smooth Artin motives. As indicated by the superscript $(\wedge)$, in the étale case it is crucial to distinguish between sheaves and hypersheaves, since the étale topos of a scheme is generally not hypercomplete. For this we again use sheaves with transfers - versions of Artin motives for the étale topology but without transfers have among others been considered by Ayoub--Zucker \cite{az12}, Cavicchi--Déglise--Nagel \cite{cdn23} and Ruimy \cite{rui25a,rui25b}. It is now an easy observation that étale (hyper)sheafification on sheaves with transfers descends to motives, and there is a second natural question:
\vspace{0.2cm}
\begin{center}
\emph{Do étale smooth Artin motives also admit descriptions in terms of \\ representation theory and equivariant homotopy theory?}
\end{center}
\vspace{0.2cm}
To tackle this question, we will take a digression into equivariant homotopy theory, which will take up the first two sections. Let us outline our considerations. \\

\noindent When $G$ is a finite group, a $G$-spectrum $X$ is \emph{Borel complete} if maps $Y \to X$ are null whenever $Y$ is non-equivariantly contractible, i.e. $\tn{res}^G_1(Y) \simeq 0$, or equivalently $Y \otimes \bD(G) \simeq 0$, where $\bD(G)$ is the equivariant function dual of the orbit $\Sigma^{\infty}_+G/1$. This already suggests that in this case the `genuine' $G$-action on $X$ should come from a `naive' one, which is made precise by the identification of the Borel complete subcategory of $\specg$ as a functor category
$$\specg_{\tn{Borel}} \simeq \fun(BG,\spec)$$
established by Mathew--Naumann--Noel in \cite[§6.3]{mnn17}. In terms of spectral Mackey functors \cite{bar17}, we can think of Borel completion as an evaluation at the orbit $G/1$. If one tries to generalise these statements to profinite groups, one faces the difficulty that the equivariant homotopy theory of such groups is already controlled by the finite (discrete) $G$-sets, and the orbit $G/1$ is not among such. \\
Instead, it is an easy observation that Borel complete equivariant spectra are preserved by categorical fixed points for arbitrary normal subgroups. Since $G$-spectra for profinite $G=\tn{lim}_i \, G_i$ are themselves a limit along categorical fixed points, we can define a full subcategory
$$\specg_{\tn{lwBorel}} \defeq \tn{lim}_i \; \Spec^{G_i}_{\tn{Borel}} \subseteq \specg,$$
the objects of which we call \emph{levelwise Borel complete}. We can view objects of $\specg$ as families of $G_i$-spectra $(X_i)_{i \in I}$ which are compatible with respect to categorical fixed points, i.e. $X_i^{N_{ij}} \simeq X_j$, where $N_{ij}$ is the kernel of the canonical map $G_i \twoheadrightarrow G_j$. Such an object is levelwise Borel complete if each $X_i$ is a Borel complete $G_i$-spectrum.\\
A second possible approach is motivated by the observation that in a profinite group the trivial subgroup is the intersection of all open normal subgroups. Instead of working with the dual $\bD(G)$ of the orbit $\Sigma^{\infty}_+G/1$ as one does in the finite group case, one should thus consider the duals $\bD(G/N_i)$ of all open normal subgroups at the same time, where $N_i$ is the kernel of the natural map $G \twoheadrightarrow G_i$. They assemble into a filtered diagram in $\calg(\specg)$, and we denote its colimit by $A \defeq \tn{colim}_i \, \bD(G/N_i)$. It naturally identifies with $\tn{coind}^G_1(\bS)$, the coinduction of the non-equivariant sphere spectrum to $G$. The $\infty$-category of Borel complete $G$-spectra is then defined as
$$\specg_{\tn{Borel}} \defeq \specg_{A\tn{-cpl}},$$
that is, as complete objects (in the sense of \cite[§2.2]{mnn17}) with respect to the algebra object $A$. This means that a $G$-spectrum is Borel complete if all maps it receives from non-equivariantly contractible $G$-spectra are null, as for finite groups. \\
In \Cref{prop:tstructureonlevelwiseborelcomplete} we observe that there is a right complete t-structure on $\specg_{\tn{lwBorel}}$, and for any t-structure we can make sense of the hypercomplete objects as the ones in the right orthogonal of the acyclics. By an acyclic object we mean one for which all t-structure homotopy objects vanish. In the main theorem of \Cref{sec:borelprofinite} we then show that $\specg_{\tn{Borel}}$ identifies with the full subcategory of hypercomplete objects with respect to this t-structure.

\begin{thm*}[\Cref{thm:borelandhyperlewelwise}]
There is a symmetric monoidal equivalence
$$\specg_{\tn{Borel}} \simeq (\specg_{\tn{lwBorel}})^h.$$
\end{thm*}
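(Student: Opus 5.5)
We will compare both sides inside $\specg$, using the t-structure of \Cref{prop:tstructureonlevelwiseborelcomplete} on $\specg_{\tn{lwBorel}}$. First we show that $\specg_{\tn{Borel}}\subseteq \specg_{\tn{lwBorel}}$. If $X$ is $A$-complete with $A=\tn{colim}_i\,\bD(G/N_i)$, then $X$ is a limit of a cosimplicial (Amitsur) diagram of $A^{\otimes n}\otimes X$. Its categorical $N_i$-fixed points are $G_i$-spectra of the form $\tn{coind}$ from the trivial group, so they are Borel complete. Since Borel complete objects are closed under limits, each $X^{N_i}$ is Borel complete, and hence $X$ is levelwise Borel complete.

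Next we identify the acyclic objects. The homotopy objects of the t-structure should be computed levelwise through the underlying spectra with $G_i$-action: under $\Spec^{G_i}_{\tn{Borel}}\simeq\fun(BG_i,\spec)$ they are the $\pi_n$ of the underlying spectra $\tn{res}^{G_i}_1 X_i$. Taking the colimit over $i$ should then produce a discrete $G$-module, namely the continuous $G$-module $\tn{colim}_i\,\pi_n(X^{N_i})^{e}$, equivalently $\pi_n$ of $A\otimes X$ read in the heart. The key claim is therefore that $X\in\specg_{\tn{lwBorel}}$ is acyclic if and only if $A\otimes X\simeq 0$, i.e. iff $X$ is non-equivariantly contractible in the profinite sense. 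For this we use that $A\otimes X$ has underlying spectrum $\tn{colim}_i\,\tn{res}^{G_i}_1(X^{N_i})$, a filtered colimit, and that homotopy groups commute with filtered colimits. From this we get the orthogonality statement: $X$ is hypercomplete iff $[Y,X]=0$ for every acyclic $Y$ iff $X$ is $A$-complete. For the last step we note that $A$-completeness means exactly being right orthogonal to all $Y$ with $A\otimes Y\simeq0$. Any such $Y\in\specg$ can be replaced by its levelwise Borel completion, and that completion is again $A$-acyclic, so it suffices to test orthogonality against acyclics inside $\specg_{\tn{lwBorel}}$.

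Finally we treat the monoidal structure. Both sides are Bousfield localisations of $\specg$: the hypercompletion of $\specg_{\tn{lwBorel}}$ composed with levelwise Borel completion, and $A$-completion. The localisation functor kills the $\otimes$-ideal of $A$-acyclics, which is a tensor ideal. Hence the localisation is compatible with the symmetric monoidal structure, and the equivalence of full subcategories of $\specg$ upgrades to a symmetric monoidal one.

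The main obstacle is the identification of the t-structure homotopy objects with a colimit over levels. The transition maps between levels are categorical fixed points, which do not preserve underlying spectra in an obvious way. We would first try to describe the heart of $\specg_{\tn{lwBorel}}$ as discrete (smooth) $G$-modules, using $\tn{lim}_i$ of $\fun(BG_i,\spec)$ along homotopy fixed points. We would then check that $\pi_n$ commutes with the relevant colimit by reducing to compact generators $\bD(G/N_i)$.
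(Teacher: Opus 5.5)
Your overall architecture is the paper's: the inclusion $\specg_{\tn{Borel}}\subseteq\specg_{\tn{lwBorel}}$, identifying acyclics with levelwise completions of $A$-acyclics, adjunction, and uniqueness of symmetric monoidal localisations. However, the first step does not work as you argue it.

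\textbf{The first step.} You claim that an $A$-complete $X$ is the totalisation of $A^{\otimes\bullet+1}\otimes X$. That identifies $A$-completion with $A$-nilpotent completion. In the finite case (\Cref{lem:completionexplicit}) this rests on dualisability of $\bD(G)$. Here $A=\tn{colim}_i\,\bD(G/N_i)$ is not dualisable, and $A\otimes-\simeq\tn{coind}^G_1\tn{res}^G_1$ does not commute with totalisations, because $\tn{res}^G_1X\simeq\tn{colim}_i\,\tn{res}^{G_i}_1(X^{N_i})$ is a filtered colimit.

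This is more than a missing citation. Under $\specg_{\tn{Borel}}\simeq\sheaves^{\wedge}(\gset;\spec)$ your Amitsur complex is the Godement resolution $(p_*p^*)^{\bullet+1}$ for the point $p$ of $BG$, with $p^*=\tn{res}^G_1$ and $p_*=\tn{coind}^G_1$. Since $p^*$ and $p_*$ are t-exact and spectra are Postnikov complete, one can check that its totalisation is the Postnikov completion $\lim_n\tau_{\le n}X$. Your claim would therefore make every Borel complete object Postnikov complete. That is only known under hypotheses such as finite virtual cohomological dimension (see the remarks following \Cref{prop:smashinghypercompletion}).

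Your observation that each $(\tn{coind}^G_1W)^{N_i}\simeq\tn{coind}^{G_i}_1W$ is Borel complete is fine, but it does not rescue the step. Use the direct adjunction argument instead:
\begin{itemize}
\item If $\tn{res}^{G_i}_1Y\simeq0$, then $\tn{res}^G_1\tn{infl}^G_{G_i}Y\simeq0$.
\item Hence $A\otimes\tn{infl}^G_{G_i}Y\simeq0$ by \Cref{cor:endofunctors}.
\item So $\Map(Y,X^{N_i})\simeq\Map(\tn{infl}^G_{G_i}Y,X)\simeq\ast$.
\end{itemize}

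\textbf{The remaining steps.} What you call the main obstacle is exactly \Cref{prop:noneqcontractibleprofinite}. So is your unproved sentence that the levelwise completion of an $A$-acyclic $Y$ is again $A$-acyclic. You need it for arbitrary $Y\in\specg$, not only for levelwise Borel objects.

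The paper obtains it without any reduction to the generators $\bD(G/N_i)$. It writes $Y\simeq\tn{colim}_i\,\tn{infl}^G_{G_i}(Y^{N_i})$. It then uses the square \eqref{diag:levelwisecompletion}, t-exactness of $h\tn{infl}^G_{G_i}$, and compatibility of the t-structure of \Cref{prop:tstructureonlevelwiseborelcomplete} with filtered colimits. This gives $\pi_n(L_{\tn{lw}}Y)\cong\tn{colim}_i\,\pi_n^{N_i}(Y)$ as discrete $G$-modules, with underlying group $\pi_n\tn{res}^G_1Y$. Hence $L_{\tn{lw}}Y$ is acyclic iff $\tn{res}^G_1Y\simeq0$, iff $A\otimes Y\simeq0$ by conservativity of $\tn{coind}^G_1$.

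A small correction: $\tn{colim}_i\,\tn{res}^{G_i}_1(X^{N_i})$ is $\tn{res}^G_1X$, equivalently the $G$-fixed points of $A\otimes X$. It is not the underlying spectrum of $A\otimes X$.

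With these two fixes in place, your orthogonality argument and your monoidal argument coincide with the paper's.
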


\noindent In fact, we work with coefficients in any connective commutative non-equivariant ring spectrum, and non-equivariant coefficients are not less general than equivariant ones when dealing with Borel complete categories, see \Cref{cor:justrestriction}.\\
We then connect the above two versions of Borel complete $G$-spectra for profinite groups $G=\lim_i \, G_i$ to $\infty$-categories of representations and étale sheaves. The latter will further down the line be the bridge to étale motives. 

\begin{prop*}[\Cref{prop:sheavesborelcomplete} and \Cref{prop:sheavesgrmodules}]
There is a commutative diagram with horizontal equivalences and fully faithful vertical functors
\[\begin{tikzcd}
	{\Mod_{R_G}(\specg)_{\tn{Borel}}} & {\sheaves^{\wedge}(\gset;R)} & {\cD(\modgr)} \\
	{\Mod_{R_G}(\specg)_{\tn{lwBorel}}} & {\sheaves(\gset;R)} & {\tn{colim}_i \; \cD(\Mod(G_i;R)).}
	\arrow["\sim", from=1-1, to=1-2]
	\arrow[hook, from=1-1, to=2-1]
	\arrow["\sim", from=1-2, to=1-3]
	\arrow[hook, from=1-2, to=2-2]
	\arrow[hook, from=1-3, to=2-3]
	\arrow["\sim", from=2-1, to=2-2]
	\arrow["\sim", from=2-2, to=2-3]
\end{tikzcd}\]
\end{prop*}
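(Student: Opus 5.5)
The plan is to build the bottom row first, then get the top row from it by hypercompletion, using \Cref{thm:borelandhyperlewelwise} for the left column. Throughout write $G=\lim_i G_i$ with $G_i$ finite, and $N_i$ for the open normal kernels.

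\emph{Bottom row.} The finite group case supplies the basic equivalence. By Mathew--Naumann--Noel together with \Cref{cor:justrestriction}, there is for each $i$ an equivalence
$$\Mod_{R_{G_i}}(\Spec^{G_i})_{\tn{Borel}} \simeq \fun(BG_i,\Mod_R) \simeq \cD(\Mod(G_i;R)).$$
Under this equivalence, categorical fixed points for $N_{ij}$ become homotopy fixed points $(-)^{hN_{ij}}$. The definition of $\specg_{\tn{lwBorel}}$ as a limit along categorical fixed points then gives
$$\Mod_{R_G}(\specg)_{\tn{lwBorel}} \simeq \lim_i \cD(\Mod(G_i;R)),$$
with transition maps $(-)^{hN_{ij}}$. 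Passing to left adjoints (inflations) in $\prl$, this limit is $\tn{colim}_i\, \cD(\Mod(G_i;R))$.

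For the middle term, $\gset$ is the filtered union of the sites $G_i\tsf{-set}$. I would check the following.
\begin{enumerate}[(a)]
\item Sheaves on a filtered union of such sites form the limit in $\prr$ of the sheaf categories along restriction. Equivalently, they form the colimit in $\prl$.
\item For finite $G_i$, the topos $\sheaves(G_i\tsf{-set};R)$ is equivalent to $\fun(BG_i,\Mod_R)$. The sheaf condition along the cover $G_i/1\to\ast$ identifies a sheaf with its value on $G_i/1$ together with the $G_i$-action. This topos is already hypercomplete, since it is equivalent to presheaves on $BG_i$.
\end{enumerate}
Under these identifications the restriction functors correspond to $(-)^{hN_{ij}}$. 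This yields the bottom row and shows it commutes.

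\emph{Top row.} I would identify every term with the hypercompletion of the corresponding bottom term.
\begin{enumerate}[(a)]
\item For sheaves this is the definition.
\item For $\Mod_{R_G}(\specg)_{\tn{Borel}}$ it is \Cref{thm:borelandhyperlewelwise}, with coefficients. One must check that the t-structure from \Cref{prop:tstructureonlevelwiseborelcomplete} corresponds to the standard t-structure on sheaves, with homotopy objects the sheafified homotopy groups. Levelwise, both are the standard t-structure on $\fun(BG_i,\Mod_R)$ read through the limit, so this should be routine.
\item The remaining claim is that $\cD(\modgr)$ is the hypercompletion of $\sheaves(\gset;R)$. The abelian category of sheaves of $R$-modules on $\gset$ is $\modgr$, the discrete (smooth) $G$-modules. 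It is a standard fact that the derived $\infty$-category of the abelian sheaf category is the hypercomplete sheaf category: Lurie, SAG, Ch.~2.1, for Grothendieck abelian categories of sheaves. Compatibility of the t-structures then matches the comparison functor $\cD(\modgr)\to\tn{colim}_i\cD(\Mod(G_i;R))$ with the hypercompletion inclusion.
\end{enumerate}
The vertical functors are inclusions of hypercomplete objects, so they are fully faithful.

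\emph{Main obstacle.} I expect the hardest step to be showing that the various equivalences intertwine the t-structures and the transition functors. Concretely, one must check that the colimit in $\prl$ really computes sheaves on the union site; here the sheaf condition only involves finitely many levels at a time, since covers in $\gset$ are finite. One must also check that the vertical inclusions all agree, in particular that the functor $\cD(\modgr)\to\tn{colim}_i\,\cD(\Mod(G_i;R))$ obtained by taking $N_i$-fixed points levelwise is exactly the hypercomplete inclusion. I would first verify this on hearts, where it reduces to $M\mapsto(M^{N_i})_i$ and $\modgr\simeq\tn{colim}_i\Mod(G_i;R)$. I would then extend it using right completeness.
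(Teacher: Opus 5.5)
Your bottom row and left-hand square follow the paper's proof of \Cref{prop:sheavesborelcomplete} essentially step for step. That proof decomposes the site $\gset$ as a filtered colimit and identifies each level via $\sheaves(G_i\tsf{-set};R)\simeq\fun(BG_i,\Mod_R(\spec))$, under which restriction becomes $(-)^{hN_{ij}}$. It then matches the t-structures and applies \Cref{thm:borelandhyperlewelwise}.

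One correction there: $\sheaves^{\wedge}(\gset;R)$ being the hypercompletion of $\sheaves(\gset;R)$ is not ``the definition''. Hypersheaves are defined by descent for hypercovers, whereas \Cref{thm:borelandhyperlewelwise} concerns hypercompleteness for the t-structure. The two notions are matched by \Cref{prop:hypersheafhypercomplete}, which you need to invoke.

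For the right-hand square you take a different route from the paper's main proof of \Cref{prop:sheavesgrmodules}. The paper builds the t-exact functor $\alpha\colon \tn{colim}_i\,\cD(\Mod(G_i;R))\to\cD(\modgr)$ from the inflations. It shows the kernel of $\alpha$ is exactly the acyclics, since $\cD(\modgr)$ has none. It then proves the right adjoint of $\alpha$ is fully faithful by factoring the localisation $\widecheck{\cD}(\modgr)\to\cD(\modgr)$ as $\alpha\circ\check{\alpha}$, where the right adjoint of $\check{\alpha}$ is a limit of fully faithful functors.

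You instead combine $\modgr\simeq\mathrm{Sh}(\gset;R)$ with Lurie's identification of the derived $\infty$-category of sheaves of modules with hypercomplete sheaves (SAG, Corollary 2.1.2.3). This is exactly the alternative the paper records in the remark after \Cref{prop:sheavesgrmodules}. Your route gives the top-right equivalence at once, and a fully faithful right vertical functor by transport, so it proves the statement as formulated. What the paper's route additionally builds in is that this vertical functor is the natural one: the right adjoint of $\alpha$, i.e.\ levelwise derived fixed points. You rightly flag this as the remaining point.

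Your plan for that point needs adjusting. The right adjoint sends $M\in\modgr$ to $(R\Gamma(N_i,M))_i$, not to $(M^{N_i})_i$. It is only left t-exact, so agreement on hearts says little, and ``compatibility of t-structures'' alone identifies nothing. Compare the \emph{left} adjoints instead: $\alpha$, and $L^h$ followed by Lurie's equivalence.
\begin{itemize}
\item Both are t-exact colimit-preserving functors $\sheaves(\gset;R)\to\cD(\modgr)$.
\item Both restrict, on the heart of each level, to inflation $\Mod(G_i;R)\to\modgr$.
\item Left exact left adjoints $\cD(\Mod(G_i;R))_{\geq 0}\to\cD(\modgr)_{\geq 0}$ are determined by their effect on hearts. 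This is the universal property used in the proof of \Cref{thm:derivedsheaveswithtransfers}.
\end{itemize}
Right completeness then passes from the connective parts to the stable categories.
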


\noindent Here $R_G$ is the inflation of a discrete commutative ring $R$ to $G$-spectra, the sheaf categories consist of (hyper)sheaves of $R$-modules in spectra defined on the site of finite discrete $G$-sets with jointly surjective covering maps, $\cD(\modgr)$ denotes the derived $\infty$-category of discrete $(G;R)$-modules, and $\tn{colim}_i \; \cD(\Mod(G_i;R))$ is the filtered colimit of the derived categories $\cD(\Mod(G_i;R))$ with transition maps induced by restriction along the quotient maps $G_i \twoheadrightarrow G_j$, taken in $\prl$. In fact, for the left-hand square of this diagram the proposition holds for any connective commutative ring spectrum $R$. \\
By work of Clausen--Mathew \cite[§4.1]{cm21}, both rows agree if the virtual cohomological dimension of $G$ is finite, in the case that $R$ is discrete. Analogous statements can be made for general connective $R \in \calg(\spec)$, see \Cref{prop:sheavesborelcomplete}. \\
Furthermore, note that for the sheaf categories in the above diagram the two notions \emph{hypersheaf} and \emph{hypercomplete sheaf} (the latter defined with respect to the canonical t-structure on sheaves) agree. We will show this in \Cref{prop:hypersheafhypercomplete} by reducing the case with coefficients in a ring spectrum to the sheaves of spectra case. \\

\noindent We finally come full circle and note that by Grothendieck's equivalence \ref{eq:fetspione}, étale sheaves on finite étale $S$-schemes are the same as sheaves on finite discrete $G$-sets when $G$ is the étale fundamental group $\pi_1^{\tn{ét}}(S)$. It is then a crucial (and well known, see e.g. \cite[Proposition 3.1.4]{cd16}) observation that étale sheaves on (finite) étale $S$-schemes already have transfers (\Cref{thm:etaletransfers}):
$$\sheaves_{\tn{ét}}^{(\wedge)}(\corfets{R};\spec) \simeq \sheaves_{\tn{ét}}^{(\wedge)}(\fets;R).$$
Here $\corfets{R}$ is the full subcategory of the category of smooth schemes with finite correspondences $\corsms{R}$ spanned by the finite étale $S$-schemes. We give an independent proof of this result that does not rely on the identifications of hom-sets made in \cite{cd16}. It is equivariant in spirit - morally speaking we can think of étale sheaves as Borel complete $G$-spectra, since in the finite group case they are right Kan extended from the free orbit. They hence have an overlying $G$-spectrum which canonically admits transfers. For $R$ a discrete commutative ring and $G=\pi_1^{\tn{ét}}(S)$ the full picture then looks as follows:

\begin{thm*}[\Cref{thm:bigdiagram}]
There is a commutative diagram of left adjoints in which all horizontal functors are equivalences.
\[\begin{tikzcd}
	{\cD\Perm(G;R)} & {\Mod_{\underl{R}}(\specg)} & {\sheaves_{\tn{Nis}}(\corfets{R};\spec)}  \\
	{\tn{colim}_i \, \cD(\Mod(G_i;R))} & {\Mod_{R_G}(\specg)_{\tn{lwBorel}}} & {\sheaves_{\tn{ét}}(\corfets{R};\spec)}  \\
	{\cD(\Mod(G;R))} & {\Mod_{R_G}(\specg)_{\tn{Borel}}} & {\sheaves_{\tn{ét}}^{\wedge}(\corfets{R};\spec)} 
	\arrow["\sim", from=1-1, to=1-2]
	\arrow[from=1-1, to=2-1]
	\arrow["\sim", from=1-2, to=1-3]
	\arrow[from=1-2, to=2-2]
	\arrow["\tsf{a}_{\tn{ét}}", from=1-3, to=2-3]
	\arrow["\sim", from=2-1, to=2-2]
	\arrow["L^h", from=2-1, to=3-1]
	\arrow["\sim", from=2-2, to=2-3]
	\arrow["L^h", from=2-2, to=3-2]
	\arrow["(-)^{\wedge}_\tn{ét}", from=2-3, to=3-3]
	\arrow["\sim", from=3-1, to=3-2]
	\arrow["\sim", from=3-2, to=3-3]
\end{tikzcd}\]
\end{thm*}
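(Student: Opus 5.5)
The strategy is to build the diagram one row and one column at a time, gluing together equivalences that are either stated earlier in the paper or cited. \textbf{Top row.} The first arrow is the equivalence $\cD\Perm(G;R)\simeq\Mod_{\underl{R}}(\specg)$ from \cite{fuh25}. For the second arrow, $\Mod_{\underl{R}}(\specg)\simeq\sheaves_{\tn{Nis}}(\corfets{R};\spec)$, I will use the spectral Mackey functor description of $\Mod_{\underl{R}}(\specg)$ \cite[Corollary 3.18]{fuh25}: it is the category of additive spectral presheaves on the $R$-linearised span category of finite $G$-sets. Grothendieck's equivalence \ref{eq:fetspione} identifies this span category with $\corfets{R}$. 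The key input is the comparison of hom-groups: finite correspondences between finite étale $S$-schemes $X,Y$ are $R$-linear combinations of components of $X\times_S Y$, and these are exactly the $R$-linearised spans modulo the relation identifying disjoint unions with sums. On finite étale schemes, the Nisnevich sheaf condition for additive presheaves reduces to additivity (finite coproduct decompositions). Hence the second arrow is the restriction of additive presheaves along this identification.

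\textbf{Middle and bottom rows.} Here I combine \Cref{prop:sheavesborelcomplete} and \Cref{prop:sheavesgrmodules}, which give
\[
\colimit{i}\cD(\Mod(G_i;R))\simeq\Mod_{R_G}(\specg)_{\tn{lwBorel}}\simeq\sheaves(\gset;R),
\]
together with the analogous hypercomplete chain. I then compose with \Cref{thm:etaletransfers}, $\sheaves_{\tn{ét}}^{(\wedge)}(\corfets{R};\spec)\simeq\sheaves^{(\wedge)}_{\tn{ét}}(\fets;R)$, and transport along \ref{eq:fetspione}. Under this transport the étale topology on $\fets$ becomes the topology of jointly surjective families on $\gset$.

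\textbf{Vertical functors and commutativity.} The left and right vertical functors in the lower squares are the localisations $L^h$ and $(-)^\wedge_{\tn{ét}}$. These are left adjoint to the inclusions of hypercomplete objects. By \Cref{prop:hypersheafhypercomplete} (hypersheaves coincide with hypercomplete sheaves), both localisations correspond under the equivalences above, because those equivalences are t-exact for the canonical t-structures. The lower squares then commute because the right adjoints commute, which is exactly the commutativity of the fully faithful inclusions in \Cref{prop:sheavesborelcomplete} and \Cref{prop:sheavesgrmodules}.

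In the top squares I define the middle vertical functor as Borel completion (levelwise, i.e.\ $X\mapsto X\otimes_{\underl R}R_G$ followed by levelwise $A$-completion). The right vertical functor $\tsf{a}_{\tn{ét}}$ is étale sheafification, and the left is the evident map from permutation modules. Since all functors are colimit-preserving and the top-left category is generated by the permutation modules $R[G/H]$, commutativity reduces to checking it on these generators, naturally in spans. Each such generator goes to the sheaf represented by $G/H$, equivalently the motive of the finite étale scheme corresponding to $G/H$, on every path. For this I will use the description of étale sheafification of representables on finite $G$-sets as the right Kan extension from free orbits at each finite level, which is the Borel completion.

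\textbf{Main obstacle.} I expect the hardest step to be upgrading these objectwise agreements to \emph{coherent} natural equivalences of functors, particularly on the transfer part of the top right square. The plan is to show that both composites are left adjoints out of a category of additive presheaves on the span category. A left adjoint out of such a category is determined by its restriction to the (spectrally enriched) span category. On that subcategory the étale-transfer identification of \Cref{thm:etaletransfers} is constructed precisely by matching transfers with the canonical transfers of the overlying $G$-spectrum, which supplies the required naturality.
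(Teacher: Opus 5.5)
\textbf{The gap: the top-right equivalence.} Your hom-group comparison there is false as stated, and it is the key step. If you take spans of finite $G$-sets modulo ``disjoint union $=$ sum'' and linearise over $R$, you get the Burnside category $\Omega_R(G)$. Its hom-module from $X$ to $Y$ is free on isomorphism classes of \emph{transitive} spans $X \leftarrow U \to Y$, not on the $G$-orbits of $X \times Y$. Already for $X=Y=G/G$ you get $R$ tensored with the Burnside ring of $G$. By contrast, the endomorphisms of $S$ in $\corfets{R}$ are just $R$, because $S$ is connected. So as soon as $G$ is non-trivial, $\Omega_R(G) \to \corfets{R}$ is full but not faithful.

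Correspondingly, $\Mod_{\underl{R}}(\specg)$ is not additive presheaves on $\Omega_R(G)$. Instead, \cite[Corollary 3.18]{fuh25} identifies it with $\psigma(\permgr;\spec)$, and $\permgr \simeq \Omega_R(G)/\cI_R(G)$ by \cite[Proposition 4.17]{bg23b]}. Here $\cI_R(G)$ is the cohomological ideal generated by $(G/H \leftarrow G/K \to G/H) - [H:K]\cdot\tn{id}_{G/H}$.

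The missing geometric input is that this ideal dies in correspondences. The pushforward of $\langle X/\overl{K}\rangle$ along the finite étale projection $X/\overl{K} \to X/\overl{H}$ equals $[H:K]\cdot\langle X/\overl{H}\rangle$: lengths times residue degrees add up to the degree of the map. This is the cycle computation in the proof of \Cref{thm:damnisasmodunderlr}. Only after this can you compare bases. The module $\Hom_{\permgr}(R[X],R[Y]) \cong R[X\times Y]^G$ is free on $G$-orbits of $X\times Y$. These match the connected components of $X\times_S Y$, and an orbit $O \subseteq X\times Y$, viewed as a span, goes to $\langle O\rangle$. This gives $\permgr \simeq \corfets{R}$.

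\textbf{Comparison with the paper once repaired.} Your route is then the one sketched in \Cref{rem:alternativeproof}. Nisnevich covers of finite étale schemes split, so Nisnevich sheaves with transfers on $\fets$ are just additive presheaves, and the equivalence is left Kan extension along $\permgr \simeq \corfets{R}$.

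The paper argues differently. It reads the top-right equivalence off \Cref{thm:damnisasmodunderlr}, whose left adjoint $\Mod_{\underl{R}}(\specg) \to \mcal{DAM}_{\tn{Nis}}(S;R)$ factors through $\sheaves_{\tn{Nis}}(\corfets{R};\spec)$. There, full faithfulness comes from Barr--Beck together with the Cisinski--Déglise computation of motivic hom-groups, which identifies the relevant algebra with $\underl{R}$. So the paper never needs the explicit basis comparison, while your route avoids $\bA^1$-localisation and motives entirely.

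\textbf{A smaller slip.} The expression $X\otimes_{\underl{R}} R_G$ makes no sense, since the ring map goes $R_G \to \underl{R}$, not the other way. The paper's middle vertical functor restricts scalars along $R_G \to \underl{R}$ and applies $L_{\tn{lw}}$, landing in modules over $L_{\tn{lw}}(\underl{R})$. This is the unit, because at each finite level $R_{G_i} \to \underl{R}$ becomes an equivalence after $\tn{res}^{G_i}_1$.

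\textbf{What matches the paper.} The rest of your proposal agrees with the paper: the lower rows and lower squares via \Cref{prop:sheavesborelcomplete}, \Cref{prop:sheavesgrmodules} and \Cref{thm:etaletransfers}, and the reduction of the upper squares to the generators $R[G_i/H]$ via the universal property of additive presheaves on permutation modules.
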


\noindent Each of the categories of sheaves with transfers on the right then maps to the respective version of smooth Artin motives constructed as a localising subcategory of effective motives. As stated above, this functor is always an equivalence in the Nisnevich case (\Cref{thm:damnisasmodunderlr}). \\
In the hypercomplete étale case we need to impose assumptions on base scheme and coefficients to obtain an equivalence: If $R$ has positive characteristic which is invertible on $S$, the Suslin--Voevodsky rigidity theorem in the form of \cite[Theorem 4.5.2]{cd16} applies. This reduces to checking when étale hypersheaves on finite étale $S$-schemes map fully faithfully to those on étale $S$-schemes. This holds when $S$ is the spectrum of a henselian local ring, and when it is a $K(\pi,1)$-scheme it holds on a certain rigid subcategory, see \Cref{cor:dametassheaves}. In the non-hypercomplete étale case analogous statements would require computations of mapping spaces in the corresponding category of effective motives, which we won't make. \\
By work of Cisinski--Déglise \cite[§2]{cd16} which generalises a theorem of Voevodsky \cite[§5, Theorem 4.1.12]{vsf00}, all rows as well as the respective categories of smooth Artin motives and effective motives collapse to a single row when we use coefficients in a $\bQ$-algebra. We summarise these statements in \Cref{thm:bigdiagramright}. \\

\noindent \tbf{Organisation of the paper.} The first two sections are entirely devoted to equivariant homotopy theory. In \Cref{sec:borelfinite} we review the identification of the Borel complete subcategory for finite groups as a functor category, as established by Mathew--Naumann--Noel. We do this with coefficients and then prove basic change of group results. \Cref{sec:borelprofinite} then generalises this theory to profinite groups, where we define the two differing notions alluded to above. The main result of this section is that one is the hypercompletion of the other. The latter is then used in \Cref{sec:sheavesandreps} to connect the Borel complete categories of equivariant spectra for profinite groups to categories of étale sheaves and representations. \\
In \Cref{sec:motives} we dive into the theory of correspondences and motives, establish an identification of smooth Artin motives for the Nisnevich topology, and study the effect of étale sheafification in the equivariant and representation theoretic setting. We end with Appendix \ref{sec:sheaveswithtransfers} by comparing our definitions of motives with the more classical ones in the literature. \\

\noindent \tbf{Notations and conventions.} We freely use the language of $\infty$-categories as developed by Lurie \cite{lur09,lur17,lur18}. We sometimes speak of `categories' instead of `$\infty$-categories' when it is clear that the context is $\infty$-categorical. \\
We write $\spc$ for the $\infty$-category of spaces/homotopy types and $\spec$ for the $\infty$-category of spectra. The $G$-equivariant analogues are denoted $\spcg$ and $\specg$, the latter is what is sometimes referred to as `genuine' $G$-spectra. We tacitly identify discrete rings and Mackey functors with their Eilenberg-MacLane spectra when viewing them as (equivariant) spectra. Mapping spaces in an $\infty$-category $\cC$ are denoted $\Map_{\cC}$, and by $\Hom_{\cC}$ we mean $\pi_0 \Map_{\cC}$, the hom-set in the homotopy category of $\cC$. \\

\noindent \tbf{Acknowledgements.} I would like to thank my advisor Martin Gallauer for his guidance during the work on this project and for his excellent comments on drafts of this paper. I want to thank Timo Richarz for suggesting to extend Voevodsky's description of Artin motives to general base schemes. Furthermore, I am indebted to Drew Heard, Achim Krause and Marius Nielsen for excellent discussions, many helpful remarks, and for their hospitality during a research visit to Trondheim and Oslo. I thank Scott Balchin, David Barnes, Tobias Barthel and Luca Pol for comments on an earlier version of this article, and I want to thank Raphaël Ruimy for pointing out that in a previous version of this article assumptions for \Cref{cor:dametassheaves} were missing, which led to significant improvements of \Cref{sec:motives}. \\
The author is supported by the Warwick Mathematics Institute Centre for Doctoral Training and gratefully acknowledges funding from the University of Warwick.

\section{Borel completeness for finite groups}\label{sec:borelfinite}

This section mostly consists of recollections and immediate generalisations of the results on Borel completeness in \cite{mnn17}. We first recall some facts about module $\infty$-categories and equivariant spectra for profinite groups that we will use throughout the paper and then consider Borel completeness for modules over equivariant ring spectra in the finite group case. At the end of the section we establish change of group results for Borel complete subcategories.

\begin{rec}\label{rec:modules}
If $\cC$ is a presentably symmetric monoidal\footnote{That is, $\cD$ is presentable and admits a symmetric monoidal structure for which the tensor product preserves colimits in both variables.} stable $\infty$-category and $A \in \calg(\cC)$, then the $\infty$-category $\Mod_A(\cC)$ of $A$-modules in $\cC$ is presentably symmetric monoidal and stable \cite[Theorem 3.4.4.2]{lur17}. It comes with a free-forgetful adjunction
$$(F_A,U_A): \cC \rightleftarrows \Mod_A(\cC),$$
the free functor $F_A$ is symmetric monoidal, and the composite $U_AF_A$ is the functor $(-) \otimes A: \cC \to \cC$ \cite[Corollary 4.2.3.7, 4.2.4.8, Theorem 4.5.2.1, §4.5.3]{lur17}. If $\cC$ is compactly generated by a set of objects $\cG$, then $\Mod_A(\cC)$ is compactly generated by the set $F_A(\cG)$. If the objects of $\cG$ are also dualisable, then the objects of $F_A(\cG)$ are. If the unit of $\cC$ is compact then $A$ is compact in $\Mod_A(\cC)$, so in this case an object of $\Mod_A(\cC)$ is compact if and only if it is dualisable, and $\Mod_A(\cC)$ is rigidly-compactly generated by the set $F_A(\cG)$. \\
A symmetric monoidal functor $\alpha: \cC \to \cD$ restricts to commutative algebra objects $\calg(\cC) \to \calg(\cD)$, and for any $A \in \calg(\cC)$ there is an induced symmetric monoidal functor
$$\bar{\alpha}: \Mod_A(\cC) \to \Mod_{\alpha(A)}(\cD)$$
on module categories. If $\alpha$ preserves colimits then so does the induced $\bar{\alpha}$ \cite[Corollary 4.2.3.5]{lur17}, and as explained in \cite[Remark 1.1.11]{erg22} both squares in the following diagram commute.
\[\begin{tikzcd}
	\cC & \cD \\
	{\Mod_A(\cC)} & {\Mod_{\alpha(A)}(\cD)}
	\arrow["\alpha", from=1-1, to=1-2]
	\arrow[""{name=0, anchor=center, inner sep=0}, "{F_A}"', shift right=2, from=1-1, to=2-1]
	\arrow[""{name=1, anchor=center, inner sep=0}, "{F_{\alpha(A)}}"', shift right=2, from=1-2, to=2-2]
	\arrow[""{name=2, anchor=center, inner sep=0}, "{U_A}"', shift right=2, from=2-1, to=1-1]
	\arrow["\bar{\alpha}", from=2-1, to=2-2]
	\arrow[""{name=3, anchor=center, inner sep=0}, "{U_{\alpha(A)}}"', shift right=2, from=2-2, to=1-2]
	\arrow["\dashv"{anchor=center}, draw=none, from=0, to=2]
	\arrow["\dashv"{anchor=center}, draw=none, from=1, to=3]
\end{tikzcd}\]
If $\alpha$ preserves limits, then so does $\bar{\alpha}$ \cite[Corollary 4.2.3.3]{lur17}, and a right adjoint $\beta: \cD \to \cC$ to $\alpha$ induces a right adjoint $\bar{\beta}: \Mod_{\alpha(A)}(\cD) \to \Mod_A(\cC)$ to the functor $\bar{\alpha}: \Mod_A(\cC) \to \Mod_{\alpha(A)}(\cD)$. As a right adjoint to a symmetric monoidal functor the functor $\beta: \cD \to \cC$ is lax symmetric monoidal, and when $B \in \calg(\cD^{\otimes})$, then we also have an induced lax symmetric monoidal functor
$$\tilde{\beta}: \Mod_B(\cD) \to \Mod_{\beta(B)}(\cC).$$
It factors as $\Mod_B(\cD) \stackrel{\varepsilon_B^*}{\longrightarrow} \Mod_{\alpha \beta(B)}(\cD) \stackrel{\bar{\beta}}{\longrightarrow} \Mod_{\beta(B)}(\cC)$, where $\varepsilon_B^*$ is the restriction of scalars along the counit $\varepsilon_B: \alpha \beta(B) \to B$, and hence $\tilde{\beta}$ has a symmetric monoidal left adjoint $\tilde{\alpha}: \Mod_{\beta(B)}(\cC) \to \Mod_B(\cD)$ given by the composition $\Mod_{\beta(B)}(\cC) \stackrel{\bar{\alpha}}{\longrightarrow} \Mod_{\alpha \beta(B)}(\cD) \stackrel{\varepsilon_{B,!}}{\longrightarrow} \Mod_B(\cD)$, where $\varepsilon_{B,!}$ is the extension of scalars along $\varepsilon_B: \alpha \beta(B) \to B$.
\end{rec}

There are multiple models for the $\infty$-category of $G$-spectra $\specg$ for a profinite group $G$ which are all known to be equivalent \cite[§A]{bbb24}. Our preferred model in this paper will be the one of spectral Mackey functors \cite{bar17,bgs20}, since in this model the relation to the Borel complete subcategory is particularly clear. We could hence take the functor category
$$\fun^{\times}(\Spans(G)^{\op},\spec)$$
as a definition for $\specg$. Here $\Spans(G)$ denotes the effective Burnside $\infty$-category of finite continuous $G$-sets \cite[§3, Example B]{bar17} and the superscript $\times$ indicates that we are taking the full subcategory of finite product preserving functors.\footnote{Since $\Spans(G)$ is self-dual, we could omit the $(-)^\op$ in the functor category, which is the more commonly used convention for spectral Mackey functors. Later on we will consider many (pre)sheaf categories for sites that are not self-dual, and the above choice works better with the flow of these.}

\begin{rec}
The \emph{$\infty$-category of $G$-spectra} $\specg$ for a profinite group $G$ is stable and presentably symmetric monoidal. We denote the symmetric monoidal product by $\otimes$ and the internal mapping $G$-spectrum by $\imap_{\specg}(-,-)$. It receives a symmetric monoidal left adjoint suspension functor $\Sigma^{\infty}: \spcgp \to \specg$ from pointed $G$-spaces. We write $\Sigma^{\infty}_+: \spcg \to \specg$ for the composition $\Sigma^{\infty} \circ (-)_+$, where the functor $(-)_+: \spcg \to \spcgp$ adds a disjoint base point with trivial action. \\
The sphere spectrum $\Sigma^{\infty}_+ G/G$ is denoted $\bS_G = \bS^0_G$. It is the unit of the symmetric monoidal structure. The $\infty$-category $\specg$ is rigidly compactly generated by the suspension spectra of orbits $\Sigma^{\infty}_+G/H$, for open subgroups $H \leq G$. It has a natural t-structure for which the t-structure homotopy objects agree with the homotopy Mackey functors $\pi_n^{H}(-) = \Hom_{\specg}(\Sigma^{\infty + n}_+ G/H,-)$. This t-structure is both left and right complete, compatible with filtered colimits, and it is compatible with the symmetric monoidal structure on $\specg$, in the sense that $\specg_{\geq 0}$ is a symmetric monoidal subcategory.
\end{rec}

\begin{rec}
For any homomorphism of profinite groups (i.e., continuous group homomorphism) $\varphi: G \to G'$  there is a symmetric monoidal left adjoint functor $\varphi^*: \spec^{G'} \to \specg$ with lax symmetric monoidal right adjoint $\varphi_*: \spec^{G} \to \spec^{G'}$. \\
If $\varphi$ is the inclusion of a closed subgroup $H \hookrightarrow G$, the left adjoint $\varphi^*: \spec^{G} \to \spec^{H}$ is the \emph{restriction functor} $\tn{res}^G_H$, and its right adjoint\footnote{When $H$ is furthermore of finite index, then $\tn{res}^G_H$ also has a left adjoint, the \emph{induction} $\tn{ind}^G_H$. This goes in line with the next remark, where $H$ being of finite index ensures that $(G \times X)/H$ is a finite $G$-set for a finite $H$-set $X$.} is the \emph{coinduction functor} $\tn{coind}^G_H$. If $\varphi: G \twoheadrightarrow G/N$ is a quotient homomorphism for $N \trianglelefteq G$ a closed normal subgroup then $\varphi^*: \spec^{G/N} \to \spec^{G}$ is the \emph{inflation functor} $\tn{infl}^G_{G/N}$, and its right adjoint $\varphi_*$ is the \emph{categorical fixed point functor} $(-)^N: \spec^{G} \to \spec^{G/N}$. 
\end{rec}

We explain what the above functors mean in terms of spectral Mackey functors.

\begin{rem}\label{rem:changeofgroupspectralmackey}
For a closed subgroup $H\leq G$ the functor $\overl{\tn{res}}^G_H: \Spans(G) \to \Spans(H)$ that restricts the $G$-action to an $H$-action induces the adjunction $(\tn{res}^G_H,\tn{coind}^G_H)$ through restriction and left Kan extension
$$((\overl{\tn{res}}^G_H)_!,(\overl{\tn{res}}^G_H)^*): \fun^{\times}(\Spans(G)^{\op},\spec) \rightleftarrows \fun^{\times}(\Spans(H)^{\op},\spec).$$
When $H$ additionally is of finite index (i.e., open) the functor $\overl{\tn{res}}^G_H$ has a left adjoint $\overl{\tn{ind}}^G_H: \Spans(H) \to \Spans(G), \; X \mapsto (G \times X)/H,$ where $H$ acts by $h(g,x)=(gh^{-1},hx)$. For the induced adjunction
$$((\overl{\tn{ind}}^G_H)_!,(\overl{\tn{ind}}^G_H)^*): \fun^{\times}(\Spans(H)^{\op},\spec) \rightleftarrows \fun^{\times}(\Spans(G)^{\op},\spec)$$
we thus have $(\overl{\tn{ind}}^G_H)^* \simeq (\overl{\tn{res}}^G_H)_! \simeq \tn{res}^G_H$ and its left adjoint $(\overl{\tn{ind}}^G_H)_!$ is usually referred to as the \emph{induction functor} $\tn{ind}^G_H$. It is equivalent to $\tn{coind}^G_H$ via the Wirthmüller isomorphism, see \cite[§A.3]{nar17}, \cite[§A]{cmnn24}. \\
Analogously, when $N \trianglelefteq G$ is a closed normal subgroup there is a functor on span categories $\overl{\tn{infl}}^G_{G/N}: \Spans(G/N) \to \Spans(G), \; (G/N)/(H/N) \mapsto G/H$. It induces
$$((\overl{\tn{infl}}^G_{G/N})_!,(\overl{\tn{infl}}^G_{G/N})^*): \fun^{\times}(\Spans(G/N)^{\op},\spec) \rightleftarrows \fun^{\times}(\Spans(G)^{\op},\spec),$$
which recovers the adjunction $(\tn{infl}^G_{G/N},(-)^N)$, see \cite[Example B]{bar17}.
\end{rem}

Let us now fix a finite group $G$ and a commutative equivariant ring spectrum $R \in \calg(\specg)$. We will return to the profinite case in the next section.

\begin{rec}\label{rec:coinddual}
Since any finite $G$-set $T$ is a commutative coalgebra in $\spcg$, the internal mapping spectrum $\bD(T) \defeq \imap_{\specg}(\Sigma^{\infty}_+T, \bS_G)$ has the structure of a commutative algebra object in $\specg$. By \cite[Lemma 3.8]{bchnl25} there is an equivalence 
$$\bD(G/H) \otimes R \stackrel{\sim}{\longrightarrow} \tn{coind}^G_H\tn{res}^G_H(R)$$
in $\calg(\specg)$ which is adjoint to the map $\tn{res}^G_H(\bD(G/H) \otimes R )\to \tn{res}^G_H(R)$ that is obtained from the projection $\tn{res}^G_H(\bD(G/H)) \to \bS_H$ by base change. As $\bD(G/H) \otimes R$ is the image of $\bD(G/H)$ under the free functor $F_R: \specg \to \Mod_R(\specg)$, we can view $\bD(G/H) \otimes R$ as a commutative algebra object of $\Mod_R(\specg)$.
\end{rec}

\begin{rem}
Since $\bD(G/H) \otimes R$ is a dualisable $R$-module with dual $\Sigma^{\infty}_+G/H \otimes R$ there is an equivalence of $R$-modules $\bD(G/H) \otimes R \simeq \imap_{\Mod_R(\specg)}(\Sigma^{\infty}_+G/H \otimes R, R)$, and the underlying $G$-spectrum of this is $\imap_{\specg}(\Sigma^{\infty}_+G/H, R)$.
\end{rem}

The definition for completeness with respect to an algebra object $A$ we will use in this paper is the one of \cite[§2.2]{mnn17}. When $A$ is dualisable, \cite[Proposition 3.11]{mnn17} shows that it agrees with the perhaps more common one that defines complete objects as the twofold right orthogonal of torsion objects (which are the objects in the localising tensor ideal generated by $A$). The dualisability assumption holds in the finite group case.

\begin{defi}\label{defi:borelcomplete}
An $R$-module $X \in \Mod_R(\specg)$ is \emph{Borel complete} if it is complete with respect to the algebra object $\bD(G) \otimes R$. That is, for all $Y \in \Mod_R(\specg)$ with $Y \otimes_R (\bD(G) \otimes R) \simeq 0$ the mapping space $\Map_{\Mod_R(\specg)}(Y,X)$ is contractible. We write $\Mod_R(\specg)_{\tn{Borel}} \subseteq \Mod_R(\specg)$ for the full subcategory on the Borel complete $R$-modules.
\end{defi}

\begin{rem}\label{rem:borelcompleteforget}
The case $R=\bS_G$ is the now classical notion of Borel completeness for $G$-spectra \cite[§6.3]{mnn17}. 
By \cite[§2.2]{mnn17} the subcategory $\Mod_R(\specg)_{\tn{Borel}}$ is presentable and the inclusion $\Mod_R(\specg)_{\tn{Borel}} \hookrightarrow \Mod_R(\specg)$ has a left adjoint $L_{G;R}$ called \emph{Borel completion}. There is a unique symmetric monoidal structure on $\Mod_R(\specg)_{\tn{Borel}}$ that makes $L_{G;R}$ into a symmetric monoidal functor.
\end{rem}

\begin{rem}\label{rem:tensorcondition}
Since the forgetful functor $U_R$ is conservative and the free-forgetful adjunction satisfies a projection formula the condition $Y \otimes_R (\bD(G) \otimes R) \simeq 0$ is equivalent to $U_R(Y) \otimes \bD(G) \simeq 0$.
\end{rem}

\begin{rem}\label{rem:completeunderlyingeasydirection}
If a module $X \in \Mod_R(\specg)$ is Borel complete, then the underlying $G$-spectrum $U_R(X) \in \specg$ is Borel complete. Indeed, assume that $Y \in \specg$ satisfies $Y \otimes \bD(G) \simeq 0$, then $F_R(Y) \otimes_R (\bD(G) \otimes R) \simeq F_R(Y \otimes \bD(G)) \simeq 0$ and hence we have that $\Map_{\specg}(Y,U_R(X)) \simeq \Map_{\Mod_R(\specg)}(F_R(Y),X) \simeq 0$. \\
Similarly, when $X \in \Mod_R(\specg)$ is Borel complete, $U_R(X) \in \specg$ is complete with respect to the algebra object $\bD(G) \otimes R \in \calg(\specg)$. We will show the converse to the first statement in \Cref{cor:completionunderlying}.
\end{rem}

As in the case without coefficients, we can give an explicit description of the Borel completion as a functor $L_{G;R}: \Mod_R(\specg) \to \Mod_R(\specg)_{\tn{Borel}} \hookrightarrow \Mod_R(\specg)$.

\begin{lem}\label{lem:completionexplicit}
Let $X \in \Mod_R(\specg)$. The map $X \to \imap_{\Mod_R(\specg)}(\Sigma^{\infty}_+EG \otimes R, X)$ exhibits the target as the Borel completion of $X$.
\end{lem}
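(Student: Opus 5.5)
To show that $X \to \imap_{\Mod_R(\specg)}(\Sigma^{\infty}_+EG \otimes R, X)$ exhibits the target as the Borel completion, it suffices to check two things. First, the target is Borel complete. Second, the fibre of the map (equivalently, the map itself) becomes an equivalence after mapping into any Borel complete module; concretely, it is enough to show that the fibre $F$ of $X \to \imap(\Sigma^{\infty}_+EG\otimes R, X)$ satisfies $F \otimes_R (\bD(G)\otimes R)\simeq 0$. Such an $F$ is then an acyclic object for the localisation, so maps out of it into Borel complete modules vanish, and the standard argument identifies the target with $L_{G;R}X$. Throughout I will write $\imap$ for the internal hom of $\Mod_R(\specg)$, and use that $\imap(M\otimes R, X)$ is computed by the internal hom $\imap_{\specg}(M, U_R X)$ with its induced $R$-module structure.

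\textbf{Step 1: the target is Borel complete.} Let $Y$ be an $R$-module with $Y\otimes_R(\bD(G)\otimes R)\simeq 0$. By \Cref{rem:tensorcondition} this means $U_R(Y)\otimes \bD(G)\simeq 0$, and since $\bD(G)\simeq \Sigma^\infty_+G$ (the Wirthmüller isomorphism for the finite group $G$), also $U_R(Y)\otimes \Sigma^\infty_+ G\simeq 0$. By adjunction,
\[\Map(Y,\imap(\Sigma^\infty_+EG\otimes R,X))\simeq \Map(Y\otimes \Sigma^\infty_+EG, X),\]
so it suffices to show $U_R(Y)\otimes\Sigma^\infty_+EG\simeq0$. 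Now $EG$ admits a skeletal filtration whose cells are free, i.e.\ of the form $G_+\wedge S^n$. Hence $\Sigma^\infty_+EG$ lies in the localising subcategory generated by $\Sigma^\infty_+G$, and tensoring $U_R(Y)$ with each of these generators gives zero. Therefore $U_R(Y)\otimes\Sigma^\infty_+EG\simeq 0$, and the target is Borel complete.

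\textbf{Step 2: the fibre is killed by $\bD(G)$.} The map in question is induced by $EG_+\to S^0$. Its cofibre is $\widetilde{EG}$, so the fibre $F$ is, up to a shift, $\imap(\Sigma^\infty\widetilde{EG}\otimes R, X)$. Since $\bD(G)\otimes R$ is dualisable with dual $\Sigma^\infty_+G\otimes R$, tensoring with it commutes with internal homs:
\[F\otimes_R(\bD(G)\otimes R)\simeq \imap(\Sigma^\infty\widetilde{EG}\otimes \Sigma^\infty_+G\otimes R, X).\]
Finally, $\widetilde{EG}\wedge G_+\simeq G_+\wedge\operatorname{res}^G_1\widetilde{EG}\simeq *$, because $\widetilde{EG}$ is non-equivariantly contractible. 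Hence the right-hand side vanishes, which completes the argument.

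The main obstacle I expect is the bookkeeping of module structures. One must verify that $\imap(\Sigma^\infty_+EG\otimes R,X)$ is correctly formed in $\Mod_R$, and that the projection formula and duality steps are valid $R$-linearly rather than merely on underlying $G$-spectra. The equivariant input itself — the cell structure of $EG$ and the contractibility of $\widetilde{EG}\wedge G_+$ — is classical.
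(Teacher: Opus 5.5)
Your proof is correct, but it takes a different route from the paper. The paper does not verify the universal property directly. It quotes the general cobar formula \cite[Proposition 2.21]{mnn17}, which expresses the completion as the totalisation of $X\otimes_R(\bD(G)\otimes R)^{\otimes_R\bullet}$. It then identifies these terms with $\imap_{\Mod_R(\specg)}(\Sigma^{\infty}_+G^{\bullet}\otimes R,X)$ and pulls the totalisation inside the internal hom using the standard simplicial model of $EG$.

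You instead check by hand the two conditions that characterise the localisation. The target is complete because $\Sigma^{\infty}_+EG$ lies in the localising subcategory generated by the free orbit $\Sigma^{\infty}_+G$, which every module $Y$ with $Y\otimes_R(\bD(G)\otimes R)\simeq 0$ annihilates. The fibre $\imap_{\Mod_R(\specg)}(\Sigma^{\infty}\widetilde{EG}\otimes R,X)$ (exactly this, no shift is needed) is killed by $\bD(G)\otimes R$, by duality and $\widetilde{EG}\wedge G_+\simeq\ast$. This is the classical isotropy-separation argument.

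What your route buys is a more self-contained proof. You never have to match the cobar construction of $\bD(G)\otimes R$ coherently with the cosimplicial object $\imap(\Sigma^{\infty}_+G^{\bullet}\otimes R,X)$, a step the paper leaves implicit. It also makes visible that dualisability of $\bD(G)\otimes R$ enters only in your Step 2. In the paper's argument it is hidden inside the cobar formula, where tensoring with the algebra has to commute with the totalisation. The paper's route, in exchange, produces exactly the cosimplicial cobar diagram that is reused in the proof of \Cref{prop:borelcompleteasfun}.

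The module bookkeeping you flag is routine: $U_R$ is conservative and sends $\imap_{\Mod_R(\specg)}(M\otimes R,X)$ to $\imap_{\specg}(M,U_R(X))$.
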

\begin{proof}
By \cite[Proposition 2.21]{mnn17} the completion $L_{G;R}(X)$ is computed as $\tn{Tot}(X \otimes_R \tn{CB}^{\bullet}(\bD(G) \otimes R))$, the totalisation of the cobar construction of $\bD(G) \otimes R$ tensored with $X$. We now note that there are equivalences
\begin{align*}
X \otimes_R (\bD(G) \otimes R)^{\otimes_R \bullet} &\simeq X \otimes_R (\imap_{\Mod_R(\specg)}(\Sigma^{\infty}_+G^{\bullet}, \bS_G) \otimes R) \\
&\simeq X \otimes_R \imap_{\Mod_R(\specg)}(\Sigma^{\infty}_+G^{\bullet} \otimes R, R) \\
&\simeq \imap_{\Mod_R(\specg)}(\Sigma^{\infty}_+G^{\bullet} \otimes R, X).
\end{align*}
Totalising gives $\imap_{\Mod_R(\specg)}(\Sigma^{\infty}_+EG \otimes R, X)$, using the standard simplicial decomposition of the free contractible $G$-space $EG$ as in \cite[Proposition 6.6]{mnn17}.
\end{proof}

\begin{cor}\label{cor:completionunderlying}
A module $X \in \Mod_R(\specg)$ is Borel complete if and only if the underlying $G$-spectrum $U_R(X)$ is.
\end{cor}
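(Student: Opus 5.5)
One direction is already \Cref{rem:completeunderlyingeasydirection}: if $X$ is Borel complete then so is $U_R(X)$. For the converse, assume $U_R(X)$ is Borel complete in $\specg$. By \Cref{lem:completionexplicit}, it suffices to show that the unit map
$$X \to \imap_{\Mod_R(\specg)}(\Sigma^{\infty}_+EG \otimes R, X)$$
is an equivalence in $\Mod_R(\specg)$. Since $U_R$ is conservative, we may instead check that $U_R$ of this map is an equivalence in $\specg$.

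The main step is to identify the underlying $G$-spectrum of the internal hom. Since the free functor $F_R$ is symmetric monoidal, the free–forgetful adjunction is linear over $\specg$. This gives a natural equivalence
$$U_R\,\imap_{\Mod_R(\specg)}(F_R(Y), X) \simeq \imap_{\specg}(Y, U_R(X))$$
for all $Y \in \specg$. Concretely, both sides corepresent the same functor: for any $Z$,
$$\Map(Z, U_R\,\imap(F_R Y, X)) \simeq \Map(F_R Z \otimes_R F_R Y, X) \simeq \Map(F_R(Z\otimes Y), X) \simeq \Map(Z \otimes Y, U_R X).$$
Applying this with $Y = \Sigma^{\infty}_+EG$ turns $U_R$ of the map above into the canonical map
$$U_R(X) \to \imap_{\specg}(\Sigma^{\infty}_+EG, U_R(X)).$$
We must also check that the identification is compatible with the unit maps, i.e. that both are induced by the collapse $EG \to \ast$; this follows from naturality of the equivalence in $Y$.

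By \Cref{lem:completionexplicit} applied with $R = \bS_G$, the map $U_R(X) \to \imap_{\specg}(\Sigma^{\infty}_+EG, U_R(X))$ is the Borel completion of $U_R(X)$ in $\specg$. It is therefore an equivalence precisely when $U_R(X)$ is Borel complete, which holds by assumption. Hence $X$ is Borel complete.

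The only point needing care is the compatibility of the identification with the unit maps. This is routine naturality, so I expect no real obstacle.
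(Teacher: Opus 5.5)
Your proposal is correct and follows essentially the same argument as the paper: reduce via \Cref{lem:completionexplicit} to the completion map, use conservativity of $U_R$, and identify $U_R\,\imap_{\Mod_R(\specg)}(\Sigma^{\infty}_+EG \otimes R, X)$ with $\imap_{\specg}(\Sigma^{\infty}_+EG, U_R(X))$, the completion map of $U_R(X)$. The paper runs this as a single chain of equivalences giving both directions at once, whereas you cite \Cref{rem:completeunderlyingeasydirection} for one direction; you also spell out the corepresentability check for the internal hom identification, which the paper leaves implicit.
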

\begin{proof}
By \Cref{lem:completionexplicit} $X$ is Borel complete if and only if the completion map 
$$X \to \imap_{\Mod_R(\specg)}(\Sigma^{\infty}_+EG \otimes R, X)$$ 
is an equivalence. Since $U_R$ is conservative, this happens if and only if the map 
$$U_R(X) \to U_R(\imap_{\Mod_R(\specg)}(\Sigma^{\infty}_+EG \otimes R, X)) \simeq \imap_{\specg}(\Sigma^{\infty}_+EG, U_R(X))$$ is an equivalence. But this is precisely the completion map for $U_R(X)$.
\end{proof}

Recall from \cite[Proposition 6.17]{mnn17} that the $\infty$-category of Borel complete $G$-spectra has a particularly simple description as spectra with a $G$-action. We can generalise this to the situation with coefficients. 

\begin{lem}\label{lem:resasbasechange}
There is a symmetric monoidal equivalence 
$$\Mod_{\bD(G)\otimes R}(\specg) \stackrel{\sim}{\longrightarrow} \Mod_{\tn{res}^G_1(R)}(\spec),$$
and the equivalence functor fits into the following commutative diagram:
\[\begin{tikzcd}
	{\Mod_{R}(\specg)} & \\
	{\Mod_{\tn{res}^G_1(R)}(\spec)} & {\Mod_{\bD(G)\otimes R}(\specg)}
	\arrow["{\tn{res}^G_1}"', from=1-1, to=2-1]
	\arrow["{F_{\bD(G) \otimes R}}", from=1-1, to=2-2]
	\arrow["\sim"', from=2-2, to=2-1]
\end{tikzcd}\]
\end{lem}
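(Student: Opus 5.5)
We identify $\bD(G)\otimes R$ with $\tn{coind}^G_1\tn{res}^G_1(R)$ using \Cref{rec:coinddual} with $H=1$. This equivalence holds in $\calg(\specg)$. We then apply the module machinery of \Cref{rec:modules} to the adjunction $(\alpha,\beta)=(\tn{res}^G_1,\tn{coind}^G_1)$ between $\specg$ and $\spec$, with $B=\tn{res}^G_1(R)\in\calg(\spec)$. Since $\tn{res}^G_1$ is symmetric monoidal and preserves colimits, \Cref{rec:modules} yields an adjunction
$$\tilde\alpha:\Mod_{\tn{coind}^G_1\tn{res}^G_1(R)}(\specg)\rightleftarrows\Mod_{\tn{res}^G_1(R)}(\spec):\tilde\beta,$$
where $\tilde\alpha$ is symmetric monoidal: it is $\bar\alpha$ followed by extension of scalars along the counit $\varepsilon:\tn{res}^G_1\tn{coind}^G_1\tn{res}^G_1 R\to\tn{res}^G_1R$. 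The functor $\tilde\alpha$ will be the equivalence of the lemma.

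To show that $\tilde\alpha$ is an equivalence, we check two things. First, $\tilde\beta$ is conservative. On underlying objects $\tilde\beta$ is $\tn{coind}^G_1$, and $\tn{coind}^G_1\simeq\tn{ind}^G_1$ by the Wirthmüller isomorphism (\Cref{rem:changeofgroupspectralmackey}). Since $\tn{res}^G_1\tn{ind}^G_1M\simeq\bigoplus_{g\in G}M$ contains $M$ as a summand, $\tn{coind}^G_1$ is conservative. Second, the unit $\mathrm{id}\to\tilde\beta\tilde\alpha$ is an equivalence on the free generators $F(\Sigma^\infty_+G/H)$, $H\le G$. Both functors preserve colimits: $\tilde\beta$ does because coinduction is also a left adjoint. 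Hence this check suffices, because the free modules on orbits generate. Alternatively, we can use the Lurie--Barr--Beck monadicity theorem for $\tilde\beta$.

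For the unit computation we use $\tilde\alpha F_{A}(Y)\simeq F_B(\tn{res}^G_1Y)$, where $A=\tn{coind}^G_1\tn{res}^G_1R$. This follows from the commutative square in \Cref{rec:modules} together with $\varepsilon_!F=F$. On underlying objects $\tilde\beta$ is coinduction, so we need
$$\tn{coind}^G_1(\tn{res}^G_1 Y\otimes\tn{res}^G_1R)\simeq Y\otimes\tn{coind}^G_1\tn{res}^G_1R,$$
which is the projection formula for coinduction. That formula holds because $\tn{coind}^G_1\simeq\tn{ind}^G_1$ and induction satisfies the projection formula. Equivalently, $\bD(G)\otimes Y\otimes R\simeq\tn{coind}\,\tn{res}(Y\otimes R)$ by \Cref{rec:coinddual} applied to $Y\otimes R$. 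The main obstacle is verifying that the unit map is indeed this projection-formula equivalence, and not merely an abstract equivalence. I would handle this by checking on the generator $R$ itself, where the unit is the identity of $A$, and then using $A$-linearity and naturality in $Y$ through the commutative diagram of \Cref{rec:modules}.

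Finally, commutativity of the triangle follows the same way. The composite $\tilde\alpha\circ F_{\bD(G)\otimes R}$, viewed on $\Mod_R(\specg)$, is the symmetric monoidal left adjoint obtained from $\tn{res}^G_1$ on $R$-modules followed by extension of scalars along the composite $\tn{res}R\to\tn{res}\,A\to\tn{res}R$. This composite is the identity, since $R\to A$ is adjoint to the identity of $\tn{res}R$. So the composite agrees with $\tn{res}^G_1:\Mod_R(\specg)\to\Mod_{\tn{res}^G_1R}(\spec)$ from \Cref{rec:modules}.
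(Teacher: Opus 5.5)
Your proposal is correct and is essentially the argument the paper relies on: the paper just cites \cite[Theorem 5.32]{mnn17} and \cite[Proposition 3.9]{bchnl25}, which apply the monadicity criterion \cite[Proposition 5.29]{mnn17} to $(\tn{res}^G_1,\tn{coind}^G_1)$ using exactly the conservativity, colimit preservation and projection formula for coinduction that you verify by hand (the paper uses the same template in \Cref{lem:resasbasechangeprofinite} and \Cref{thm:damnisasmodunderlr}). Your identification of the unit on free modules with the projection-formula map is correct, as is your identification of the composite $\tn{res}^G_1 R\to\tn{res}^G_1\tn{coind}^G_1\tn{res}^G_1 R\to\tn{res}^G_1 R$ with the identity via the triangle identity, so the triangle commutes as claimed.
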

\begin{proof}
The case for $R = \bS$ is \cite[Theorem 5.32]{mnn17} (see also \cite[Theorem 1.1]{bds15} for the original version), and a more general statement than the one above is shown in \cite[Proposition 3.9]{bchnl25}.
\end{proof}

\begin{rem}\label{rem:nonequivariantlycontractible}
When $R = \bS_G$, \Cref{lem:resasbasechange} shows that for a $G$-spectrum $Y$ the condition $Y \otimes \bD(G) \simeq 0$  is equivalent to $Y$ being \emph{non-equivariantly contractible} (that is, $\tn{res}^G_1(Y) \simeq 0$) \cite[Proposition 6.15]{mnn17}. Since $\bD(G)$ is dualisable these conditions also agree with the one that $Y$ is \emph{$\bD(G)^{-1}$-local} \cite[§3.2]{mnn17}. \\
More generally, if $Y$ is an $R$-module we have $Y \otimes_R (\bD(G) \otimes R) \simeq 0$ if and only if $\tn{res}^G_1(Y) \simeq 0$ in $\Mod_{\tn{res}^G_1(R)}(\spec)$, which holds if and only if $\tn{res}^G_1(U_R(Y)) \simeq U_R(\tn{res}^G_1(Y)) \simeq 0$ in $\spec$.
\end{rem}

\begin{prop}\label{prop:borelcompleteasfun}
There is an equivalence of symmetric monoidal $\infty$-categories $$\Mod_R(\specg)_{\tn{Borel}} \simeq \fun(BG,\Mod_{\tn{res}^G_1(R)}(\spec)),$$
where the right-hand side carries the pointwise monoidal structure.
\end{prop}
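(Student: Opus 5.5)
We prove this by descent along the commutative algebra $A \defeq \bD(G) \otimes R \in \calg(\Mod_R(\specg))$, as in the proof of \cite[Proposition 6.17]{mnn17}. By \cite[§2.2]{mnn17}, more precisely the comonadicity/descent statement for complete objects \cite[Proposition 2.21 and §3.3]{mnn17}, the Borel complete subcategory $\Mod_R(\specg)_{\tn{Borel}} = \Mod_R(\specg)_{A\tn{-cpl}}$ is identified symmetric monoidally with the limit of the cosimplicial diagram
$$\Mod_{A}(\Mod_R(\specg)) \rightrightarrows \Mod_{A \otimes_R A}(\Mod_R(\specg)) \mathrel{\substack{\textstyle\rightarrow\\[-0.6ex]\textstyle\rightarrow \\[-0.6ex]\textstyle\rightarrow}} \cdots$$
given by base change along the cobar construction $\tn{CB}^\bullet(A)$. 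The comparison functor is $F_A$, and its restriction to complete objects is fully faithful because of the explicit totalisation formula for the completion in \Cref{lem:completionexplicit}; it is essentially surjective because $A$ is dualisable, so descendable-type arguments apply to complete objects.

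Next, identify each term of the cosimplicial diagram. We have $A^{\otimes_R (n+1)} \simeq \bD(G^{n+1}) \otimes R$, and $G^{n+1} \cong \coprod_{G^n} G$ as $G$-sets. Hence
$$\Mod_{A^{\otimes_R(n+1)}}(\Mod_R(\specg)) \simeq \prod_{G^n} \Mod_{\bD(G)\otimes R}(\specg) \simeq \prod_{G^n} \Mod_{\tn{res}^G_1(R)}(\spec),$$
where the last step is \Cref{lem:resasbasechange}. Here one uses that $\bD$ of a disjoint union is a product, together with $\Mod_{B \times C} \simeq \Mod_B \times \Mod_C$. The task is then to check that the coface and codegeneracy maps under these identifications are exactly those of the cosimplicial diagram $n \mapsto \fun(G^n, \Mod_{\tn{res}^G_1(R)}(\spec))$ coming from the bar construction of $BG$. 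Its limit is $\fun(BG, \Mod_{\tn{res}^G_1(R)}(\spec))$ with the pointwise monoidal structure, since $BG \simeq |B_\bullet G|$ and $\fun(-,\cD)$ turns colimits into limits.

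The main obstacle is this coherent identification of the cosimplicial diagrams, not just of their individual terms. We would not check it by hand. Instead we realise the whole diagram functorially: the assignment $T \mapsto \Mod_{\bD(T)\otimes R}(\Mod_R(\specg))$ on finite free $G$-sets is a functor to symmetric monoidal $\infty$-categories. It sends disjoint unions to products and takes the value $\Mod_{\tn{res}^G_1(R)}(\spec)$ on $G$ with its $\tn{Aut}(G)=G$-action. By \Cref{lem:resasbasechange} this value is naturally given by $\tn{res}^G_1$, and $G$ acts through the conjugation/translation equivalences, which are trivial after restriction. Free finite $G$-sets form the category generated under coproducts by $BG$, so such a product-preserving functor is right Kan extended from $BG$. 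Evaluating on the Čech nerve $G^{\bullet+1}$ of $G \to \ast$ therefore yields the cosimplicial object computing $\lim_{BG}$.

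Alternatively, to reduce coefficient bookkeeping, one can first invoke the case $R=\bS_G$ from \cite[Proposition 6.17]{mnn17}, $\specg_{\tn{Borel}} \simeq \fun(BG,\spec)$. Then \Cref{cor:completionunderlying} shows that Borel complete $R$-modules are exactly $R$-modules whose underlying spectrum is Borel complete. Equivalently, they are modules over $L_{G}(R)$, viewed as an algebra in $\fun(BG,\spec)$ corresponding to $\tn{res}^G_1(R)$ with its $G$-action. This gives $\Mod_{\tn{res}^G_1 R}(\fun(BG,\spec)) \simeq \fun(BG,\Mod_{\tn{res}^G_1R}(\spec))$, since the $G$-action is a homotopy-coherent action on the algebra. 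Symmetric monoidality follows because all functors used are symmetric monoidal localisations or base changes.
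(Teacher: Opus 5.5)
Your main route is the paper's. It uses descent along the dualisable algebra $\bD(G)\otimes R$; the paper cites \cite[Theorem 2.30]{mnn17} for this step, which is the statement you need, not Proposition 2.21. It then identifies $\Mod_{\bD(G^{n+1})\otimes R}(\specg)\simeq\prod_{G^n}\Mod_{\tn{res}^G_1R}(\spec)$ via \Cref{lem:resasbasechange} and totalises against the bar construction of $BG$.

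Making the last step coherent via product-preserving functors on finite free $G$-sets is a good idea; the paper simply asserts it. However, your version rests on a claim that is false in the stated generality: the $\tn{Aut}_G(G)\cong G$-action on $\Mod_{\bD(G)\otimes R}(\specg)\simeq\Mod_{\tn{res}^G_1R}(\spec)$ is not trivial in general. That conjugation acts trivially on $\spec^{\{1\}}$ only handles coefficients whose underlying ring carries the trivial action, such as $\bS_G$. For general $R\in\calg(\specg)$ the action is induced by the $G$-action on the underlying ring $\tn{res}^G_1R$. The totalisation is therefore the limit over $BG$ of a possibly twisted action, i.e.\ $\Mod_{\tn{res}^G_1R}(\fun(BG,\spec))$ with $\tn{res}^G_1R$ viewed as a commutative algebra in $\fun(BG,\spec)$. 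This can differ from the pointwise category.

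For instance, take $R=\bD(G)$ with $G\neq1$. Since $\bD(G)\otimes\bD(G)\simeq\prod_G\bD(G)$ as $\bD(G)$-algebras, the condition $Y\otimes_R(\bD(G)\otimes R)\simeq0$ forces $Y\simeq0$. So every $R$-module is Borel complete, and the left-hand side is $\Mod_{\bD(G)}(\specg)\simeq\spec$. On the other hand $\tn{res}^G_1\bD(G)\simeq\prod_G\bS$ with $G$ permuting the factors. The twisted limit $\lim_{BG}\prod_G\spec\simeq\spec$ is the right answer, whereas $\fun(BG,\prod_G\spec)\simeq\prod_G\fun(BG,\spec)$ is not.

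So, read literally with the pointwise structure, the statement and your first argument need the $G$-action on $\tn{res}^G_1R$ to be trivial. This holds for inflated coefficients $R_G$, which is the case the paper restricts to right after \Cref{cor:justrestriction}. There $\Mod_R(\fun(BG,\spec))\simeq\fun(BG,\Mod_R(\spec))$ with the pointwise structure, and your argument goes through. The paper's proof has the same blind spot: it identifies the coface maps with the untwisted ones without comment.

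Your alternative route is genuinely different from the paper's, and it does keep track of the action. Reducing to \cite[Proposition 6.17]{mnn17} via \Cref{cor:completionunderlying} gives $\Mod_{\tn{res}^G_1R}(\fun(BG,\spec))$ directly and avoids all cosimplicial bookkeeping. It relies on a standard fact that you should state explicitly: for a symmetric monoidal localisation $L$, the $R$-modules whose underlying object is local are exactly the $L(R)$-modules in local objects. One way to see this is via $\Mod_R(\specg)\otimes_{\specg}\specg_{\tn{Borel}}\simeq\Mod_{L(R)}(\specg_{\tn{Borel}})$. Only your final rewriting of the result as $\fun(BG,\Mod_{\tn{res}^G_1R}(\spec))$ with the pointwise structure again requires the action to be trivial.
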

\begin{proof}
This is the same proof as in \cite[Proposition 6.17]{mnn17}, where the statement is shown for $R = \bS_G$. We note that $\bD(G) \otimes R$ is a dualisable object of $\Mod_R(\specg)$ since it is the image of the dualisable object $\bD(G)$ under the free functor. Hence by \cite[Theorem 2.30]{mnn17} the $\infty$-category of $(\bD(G) \otimes R)$-complete objects is computed as the totalisation of the cosimplicial extension of scalars diagram
\begin{equation}\label{diag:totalisation}
\begin{tikzcd}
	{\Mod_{\bD(G)\otimes R}(\specg)} & {\Mod_{\bD(G \times G) \otimes R}(\specg)} & {\cdots}
	\arrow[shift right=2, from=1-1, to=1-2]
	\arrow[shift left=2, from=1-1, to=1-2]
	\arrow[from=1-2, to=1-1]
	\arrow[from=1-2, to=1-3]
	\arrow[shift left=4, from=1-2, to=1-3]
	\arrow[shift right=4, from=1-2, to=1-3]
	\arrow[shift right=2, from=1-3, to=1-2]
	\arrow[shift left=2, from=1-3, to=1-2]
\end{tikzcd}
\end{equation}
which is obtained by applying the module functor to the cobar construction for the algebra object $\bD(G) \otimes R \in \calg(\Mod_R(\specg))$. By \Cref{lem:resasbasechange} we can identify $\Mod_{\bD(G)\otimes R}(\specg)$ with $\Mod_{\tn{res}^G_1(R)}(\spec)$, and similarly $\Mod_{\bD(G^n)\otimes R}(\specg)$ identifies with 
$$\Mod_{\imap_{\spec}(\Sigma^{\infty}_+G^{n-1}, \,\bS) \otimes \tn{res}^G_1(R)}(\spec) \simeq \prod_{G^{n-1}} \Mod_{\tn{res}^G_1(R)}(\spec).$$
The cosimplicial diagram \ref{diag:totalisation} then becomes
\[\begin{tikzcd}
	{\Mod_{\tn{res}^G_1(R)}(\spec)} & {\prod_{G} \Mod_{\tn{res}^G_1(R)}(\spec)} & \cdots
	\arrow[shift right=2, from=1-1, to=1-2]
	\arrow[shift left=2, from=1-1, to=1-2]
	\arrow[from=1-2, to=1-1]
	\arrow[from=1-2, to=1-3]
	\arrow[shift left=4, from=1-2, to=1-3]
	\arrow[shift right=4, from=1-2, to=1-3]
	\arrow[shift right=2, from=1-3, to=1-2]
	\arrow[shift left=2, from=1-3, to=1-2]
\end{tikzcd}\]
and its totalisation is precisely the functor category $\fun(BG,\Mod_{\tn{res}^G_1(R)}(\spec))$, using the standard bar construction simplicial diagram that realises to $BG$. 
\end{proof}

\begin{rem}\label{rem:forgetfulonborelcompletes}
The observation made in \Cref{rem:borelcompleteforget} says that the forgetful functor $U_R: \Mod_R(\specg) \to \specg$ induces a functor $\Mod_R(\specg)_{\tn{Borel}} \to \specg_{\tn{Borel}}$, which we can equivalently describe as the postcomposition
$$\fun(BG,\Mod_{\tn{res}^G_1(R)}(\spec)) \to \fun(BG,\spec)$$
with the forgetful functor $U_{\tn{res}^G_1(R)}: \Mod_{\tn{res}^G_1(R)}(\spec) \to \spec$.
\end{rem}

In particular, we see that the Borel complete subcategory of $\Mod_R(\specg)$ only depends on the underlying non-equivariant spectrum $\tn{res}^G_1(R)$ of $R$. We can make this precise as follows.

\begin{cor}\label{cor:justrestriction}
There is an equivalence of symmetric monoidal $\infty$-categories
$$\Mod_R(\specg)_{\tn{Borel}} \simeq \Mod_{\tn{infl}^G_{G/G}\tn{res}^G_1(R)}(\specg)_{\tn{Borel}}.$$
\end{cor}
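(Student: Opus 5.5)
The plan is to apply \Cref{prop:borelcompleteasfun} to both sides. It gives
$$\Mod_R(\specg)_{\tn{Borel}} \simeq \fun(BG,\Mod_{\tn{res}^G_1(R)}(\spec))$$
and, with $R' \defeq \tn{infl}^G_{G/G}\tn{res}^G_1(R) \in \calg(\specg)$,
$$\Mod_{R'}(\specg)_{\tn{Borel}} \simeq \fun(BG,\Mod_{\tn{res}^G_1(R')}(\spec)),$$
both symmetric monoidal with the pointwise structure. Since $\tn{infl}^G_{G/G}$ and $\tn{res}^G_1$ are symmetric monoidal, and the composite $\tn{res}^G_1\tn{infl}^G_{G/G}$ is restriction along $1 \to G \to G/G$, which is the identity of the trivial group, we get $\tn{res}^G_1(R') \simeq \tn{res}^G_1(R)$ in $\calg(\spec)$. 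Composing the two equivalences then gives the claim.

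The step needing care is that the identification $\fun(BG,\Mod_{\tn{res}^G_1(R)}(\spec))$ depends only on $\tn{res}^G_1(R)$ as a commutative algebra in $\spec$, not on extra $G$-data. To check this, look at the proof of \Cref{prop:borelcompleteasfun}. The cosimplicial diagram \ref{diag:totalisation} is identified, via \Cref{lem:resasbasechange}, with the cosimplicial diagram of products $\prod_{G^{n-1}}\Mod_{\tn{res}^G_1(R)}(\spec)$. Its coface and codegeneracy maps are induced by the combinatorics of the finite $G$-sets $G^n$, which are diagonals, projections and the translation action; they do not depend on $R$. The totalisation is therefore $\fun(BG,-)$ applied to $\Mod_{\tn{res}^G_1(R)}(\spec)$, where $G$ acts trivially on the category. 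So the result depends only on $\tn{res}^G_1(R) \in \calg(\spec)$.

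If one wants a functor realising the equivalence rather than an abstract comparison, a cleaner route is available. Since $R'$ is the inflation of $\tn{res}^G_1(R)$, one would like a map of commutative algebras $R' \to R$ restricting to the identity on underlying spectra, so that extension of scalars $\Mod_{R'}(\specg) \to \Mod_R(\specg)$ followed by Borel completion gives the equivalence. However, such a map exists only if the $G$-action on $\tn{res}^G_1(R)$ is trivialised, which is not the case in general. I would therefore not pursue this route and would instead state the equivalence as the composite through the functor categories, which is symmetric monoidal because both identifications in \Cref{prop:borelcompleteasfun} are.
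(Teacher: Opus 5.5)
Your route is the paper's own. Its proof is exactly the observation that $\tn{res}^G_1\tn{infl}^G_{G/G}\simeq\tn{id}$ gives $\tn{res}^G_1(R)\simeq\tn{res}^G_1(R')$ in $\calg(\spec)$, followed by \Cref{prop:borelcompleteasfun} applied to both sides.

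The problem is in your paragraph arguing that the answer depends only on $\tn{res}^G_1(R)\in\calg(\spec)$. The claim that the coface maps of \ref{diag:totalisation} do not depend on $R$ is false, and this is a genuine gap. The paper inherits the same gap by stating \Cref{prop:borelcompleteasfun} for arbitrary $R\in\calg(\specg)$. To see where it fails, decompose $G\times G\cong\coprod_{h\in G}G$ via $x\mapsto(x,xh)$. One coface comes from the identity on each summand. The other comes from the $G$-automorphisms $x\mapsto xh$ of $G/1$. On $\bD(G)\otimes R\simeq\tn{coind}^G_1\tn{res}^G_1(R)$ these automorphisms act through the residual $G$-action on $\tn{res}^G_1(R)$. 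So after \Cref{lem:resasbasechange} the second coface is $M\mapsto(h^*M)_{h\in G}$, where $h^*$ is restriction of scalars along the action of $h$ (or $h^{-1}$) on $\tn{res}^G_1(R)$. The totalisation is therefore the category of $\tn{res}^G_1(R)$-modules with \emph{semilinear} $G$-action, i.e.\ modules in $\fun(BG,\spec)$ over $\tn{res}^G_1(R)$ with its $G$-action. This agrees with $\fun(BG,\Mod_{\tn{res}^G_1(R)}(\spec))$ only when that action is trivial. Your last paragraph names exactly this obstruction, but it obstructs the statement itself, not just the construction of a comparison functor.

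For a counterexample, take $G=C_2$ and $R=\bD(G)\simeq\tn{coind}^G_1(\bS)$.
\begin{itemize}
\item By \Cref{lem:resasbasechange} with coefficients $\bS_G$, every $R$-module has underlying $G$-spectrum $\tn{coind}^G_1(M)$ for some $M\in\spec$. Such a $G$-spectrum receives no non-zero maps from non-equivariantly contractible $G$-spectra, by adjunction.
\item By \Cref{cor:completionunderlying}, every $R$-module is therefore Borel complete, and $\Mod_R(\specg)_{\tn{Borel}}\simeq\spec$.
\item On the other hand, $\tn{res}^G_1(R)\simeq\bS\times\bS$ with the swap action. Hence $R'\simeq\bS_G\times\bS_G$ and $\Mod_{R'}(\specg)_{\tn{Borel}}\simeq\fun(BC_2,\spec)\times\fun(BC_2,\spec)$.
\item The unit of the latter has non-trivial idempotent endomorphisms, whereas $\pi_0\tn{End}(\bS)=\bZ$ has none, so the two sides are not equivalent.
\end{itemize}

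So the statement needs the extra hypothesis that $\tn{res}^G_1(R)$, with its $G$-action, is trivial as an object of $\fun(BG,\calg(\spec))$. Under that hypothesis your argument, and the paper's, go through. This covers every case the paper actually uses, namely $R_G$, $\underl{\Lambda}$ and $\bA_{\Lambda}$. For these, $\bE_\infty$-actions on a discrete ring are just actions by ring automorphisms, and those actions are trivial here.
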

\begin{proof}
Since $\tn{res}^G_1\tn{infl}^G_{G/G} \simeq \tn{id}$ as symmetric monoidal functors $\spec \to \spec$ there is an equivalence of algebra objects $\tn{res}^G_1(R) \simeq \tn{res}^G_1\tn{infl}^G_1\tn{res}^G_1(R)$. The statement then follows from \Cref{prop:borelcompleteasfun}.
\end{proof}

\begin{ex}\label{ex:manylambdas}
For $\Lambda$ a discrete commutative ring, we can consider the Eilenberg-MacLane spectrum associated to its constant Mackey functor $\underl{\Lambda}$, the Eilenberg-MacLane spectrum associated to the $\Lambda$-linear Burnside Mackey functor $\bA_{\Lambda}$, and the inflation $\Lambda_G$ of its non-equivariant Eilenberg-MacLane spectrum to $\specg$. Then all of the three Borel complete categories associated to these equivariant ring spectra are equivalent to $\fun(BG,\Mod_{\Lambda}(\spec))$.
\end{ex}

We will hence restrict our study of the Borel complete category with coefficients to coefficients of the from $R_G \defeq \tn{infl}^G_{G/G}(R)$, where $R \in \calg(\spec)$ is a non-equivariant ring spectrum. Note that this of course includes the case $R_G = \bS_G$.

\begin{rem}
Consider the adjunction $(L_{G;\bS_G},\iota): \specg \rightleftarrows \specg_{\tn{Borel}}$. In view of \Cref{prop:borelcompleteasfun} we can express it in terms of spectral Mackey functors as
$$(\theta^*,\theta_*): \fun^{\times}(\Spans(G)^{\op},\spec) \rightleftarrows \fun(BG,\spec),$$
where $\theta^*$ restricts along the map $\theta: BG \to \Spans(G)^{\op}$ that picks out the orbit $G/1$, hence $\theta^*(X)$ is given by the mapping spectrum $\map_{\specg}(\Sigma^\infty_+G/1,X)$, and the functor $\theta_*$ is given by right Kan extension \cite[§8]{bgs20}. One may also view the functor $\theta^*$ as the one that the restriction $\tn{res}^G_1: \specg \to \spec$ factors through by remembering the $G$-action on the section of the orbit $G/1$. \\
The functor $\theta_!$ given by left Kan extension is fully faithful as well and has essential image the $G$-spectra with geometric fixed points concentrated at the trivial group \cite[§2]{hau24}. By \cite[§4]{psw22} for $R=\bZ$ and \cite[§14]{bhs23} for the general case there is a symmetric monoidal equivalence 
$$\Mod_{R_G}(\specg) \simeq \fun^{\times}(\Spans(G)^{\op},\Mod_R(\spec)),$$
and we can express the adjunction $(L_{G;R},\iota): \Mod_{R_G}(\specg) \rightleftarrows \Mod_{R_G}(\specg)_{\tn{Borel}}$ as one of functor categories
$$(\theta^*,\theta_*): \fun^{\times}(\Spans(G)^{\op},\Mod_R(\spec)) \rightleftarrows \fun(BG,\Mod_R(\spec)),$$
where the functor $\theta: BG \to \Spans(G)^{\op}$ is as before. Now $\theta^*$ can be expressed as the mapping $R$-module spectrum $\map_{\Mod_R(\specg)}(\Sigma^\infty_+G,-)$. In view of \Cref{rem:forgetfulonborelcompletes} there is a diagram
\[\begin{tikzcd}
	{\fun^{\times}(\Spans(G)^{\op},\Mod_R(\spec))} & {\fun^{\times}(\Spans(G)^{\op},\spec)} \\
	{\fun(BG,\Mod_R(\spec))} & {\fun(BG,\spec)}
	\arrow["{U_R}", from=1-1, to=1-2]
	\arrow[""{name=0, anchor=center, inner sep=0}, "{\theta^*}"', shift right=2, from=1-1, to=2-1]
	\arrow[""{name=1, anchor=center, inner sep=0}, "{\theta^*}"', shift right=2, from=1-2, to=2-2]
	\arrow[""{name=2, anchor=center, inner sep=0}, "{\theta_*}"', shift right=2, hook, from=2-1, to=1-1]
	\arrow["{U_R}", from=2-1, to=2-2]
	\arrow[""{name=3, anchor=center, inner sep=0}, "{\theta_*}"', shift right=2, hook, from=2-2, to=1-2]
	\arrow["\dashv"{anchor=center}, draw=none, from=0, to=2]
	\arrow["\dashv"{anchor=center}, draw=none, from=1, to=3]
\end{tikzcd}\]
in which both squares commute. The horizontal functors postcompose with the forgetful functor $\Mod_{R}(\spec) \to \spec$. \Cref{cor:completionunderlying} says that $X$ in the top left is in the essential image of $\theta_*$ if and only if $U_R(X)$ is in the essential image of $\theta_*$.
\end{rem}

\begin{cor}\label{cor:tstructureonborelcomplete}
When the coefficients $R$ are connective, there is a t-structure on $\Mod_{R_G}(\specg)_{\tn{Borel}}$ which makes the localisation functor $L_{G;R}$ t-exact. The heart is given by the category $\Mod(G;\pi_0(R))$ of $\pi_0(R)$-modules with a linear $G$-action. 
\end{cor}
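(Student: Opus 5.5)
Via \Cref{prop:borelcompleteasfun} together with \Cref{cor:justrestriction} (and $\tn{res}^G_1(R_G)\simeq R$), we have $\Mod_{R_G}(\specg)_{\tn{Borel}} \simeq \fun(BG,\Mod_R(\spec))$. The plan is to transport the t-structure from the functor category and then check that $L_{G;R}$ is t-exact for the standard t-structure on $\Mod_{R_G}(\specg)$, i.e.\ the one whose homotopy objects are the homotopy Mackey functors of the underlying $G$-spectrum.

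For the t-structure itself: since $R$ is connective, $\Mod_R(\spec)$ carries its standard t-structure. Here connectivity and coconnectivity are detected on underlying spectra, and the heart is $\Mod_{\pi_0(R)}$. On $\fun(BG,\Mod_R(\spec))$ we declare $F$ to be (co)connective if its value at the unique object is. This is a t-structure: $BG$ has a single object, the evaluation functor $\tn{ev}\colon \fun(BG,\cC)\to\cC$ is conservative, and it preserves all limits and colimits. Truncation functors therefore exist by applying the functorial truncations of $\Mod_R(\spec)$ pointwise, which preserves the $G$-action. The orthogonality $\Map(F_{\geq 0},F'_{\leq -1})\simeq 0$ follows by writing the mapping space as the homotopy fixed points $\Map_{\Mod_R}(F(\ast),F'(\ast))^{hG}$ of a contractible space. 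The heart is $\fun(BG,\Mod_R(\spec)^\heartsuit)=\fun(BG,\Mod(\pi_0 R))$. Because the heart is a $1$-category, this functor category is simply $\pi_0(R)$-modules with a $G$-action, which gives $\Mod(G;\pi_0(R))$.

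For t-exactness: under the Mackey functor description of the preceding remark, $L_{G;R}$ corresponds to $\theta^*$, which evaluates at $G/1$. Hence the underlying module of $L_{G;R}(X)$ is $\map(\Sigma^\infty_+G/1\otimes R,X)\simeq U(\tn{res}^G_1 X)$, whose homotopy groups are $\pi_n^{1}(X)$, the value of the homotopy Mackey functor at $G/1$. If $X$ is connective (respectively coconnective), then all $\pi_n^H(X)$ vanish in the corresponding range, in particular for $H=1$. So $\theta^*$ preserves both connective and coconnective objects, which is exactly t-exactness.

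The main obstacle is bookkeeping rather than mathematics: one must make sure the t-structure on $\Mod_{R_G}(\specg)$ used here is the one induced from $\specg$ via $U_{R_G}$, which requires connectivity of $R_G$. One must also check that the identification of $L_{G;R}$ with $\theta^*$ is compatible with the forgetful functors, as in the commuting diagram of the preceding remark.
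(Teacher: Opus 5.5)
Your proposal is correct and follows essentially the same route as the paper: you transport the pointwise t-structure from $\fun(BG,\Mod_R(\spec))$ via \Cref{prop:borelcompleteasfun}, use that the t-structure on $\fun^{\times}(\Spans(G)^{\op},\Mod_R(\spec))$ is detected pointwise so that $\theta^*$ (evaluation at $G/1$) is t-exact, and identify the heart with $\fun(BG,\Mod_R(\spec)^{\heartsuit})$. You give more detail than the paper on why the pointwise structure is a t-structure, and your appeal to \Cref{cor:justrestriction} is harmless but unnecessary, since $\tn{res}^G_1(R_G)\simeq R$ already gives the identification directly.
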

\begin{proof}
First note that $\Mod_R(\spec)$ has a t-structure which is detected on the level of underlying spectra \cite[Proposition 7.1.1.13]{lur17}. Then the functor category $\fun(BG,\Mod_R(\spec))$ inherits a t-structure which is detected pointwise. Similarly, the t-structure on $\fun^{\times}(\Spans(G)^{\op},\Mod_R(\spec))$ is detected pointwise after forgetting the $R$-module structure \cite[§6]{bgs20}. It follows that the restriction functor $\theta^*$ is t-exact. For the statement about the heart we use \cite[Proposition 7.1.1.13]{lur17} and the equivalence $\fun(BG,\Mod_R(\spec))^{\heartsuit} \simeq \fun(BG,\Mod_R(\spec)^{\heartsuit})$.
\end{proof}

\begin{rem}
One can also check by hand that the essential image of the two aisles $\Mod_{R_G}(\spec^{G})_{\geq 0}$ and $\Mod_{R_G}(\spec^{G})_{\leq 0}$ under the Borel completion functor $L_{G;R}$ define a t-structure on $\Mod_{R_G}(\spec^{G})_{\tn{Borel}}$, the key ingredient being that the composition $\iota\circ L_{G;R}$ is left t-exact: whenever $X \in \Mod_{R_G}(\spec^{G})$ is coconnective, the completion 
$$\imap_{\Mod_{R_G}(\specg)}(\Sigma^{\infty}_+EG \otimes R_G, X)$$
is coconnective as well, since the equivariant infinite loop space of its underlying $G$-spectrum is the mapping space $\Map_{\specg}(\Sigma^{\infty}_+EG, U_R(X))$, which is discrete since $\Sigma^{\infty}_+EG$ is connective.
In more categorical terms, $\Mod_R(\specg)_{\tn{Borel}}$ takes part in a recollement \cite[Definition A.8.1, Remark A.8.19]{lur17}
\[\begin{tikzcd}
	{\Mod_{R_G}(\specg)[(\bD(G)\otimes R_G)^{-1}]} & {\Mod_{R_G}(\spec^{G})} & {\Mod_{R_G}(\specg)_{\tn{Borel}}.}
	\arrow[""{name=0, anchor=center, inner sep=0}, shift right=2, hook, from=1-1, to=1-2]
	\arrow[""{name=1, anchor=center, inner sep=0}, shift right=2, from=1-2, to=1-1]
	\arrow[""{name=2, anchor=center, inner sep=0}, "{L_{G;R_G}}", shift left=2, from=1-2, to=1-3]
	\arrow[""{name=3, anchor=center, inner sep=0}, "{\iota}", shift left=2, hook', from=1-3, to=1-2]
	\arrow["\dashv"{anchor=center, rotate=-90}, draw=none, from=2, to=3]
	\arrow["\dashv"{anchor=center, rotate=-90}, draw=none, from=1, to=0]
\end{tikzcd}\]
On the left is the full subcategory of $\Mod_{R_G}(\spec^{G})$ on $(\bD(G)\otimes R_G)^{-1}$-local objects which identifies with the kernel of $L_{G;R_G}$, see \cite[Proposition 3.11]{mnn17} and \cite[§1]{npr24}. Then the t-structure on $\Mod_{R_G}(\specg)_{\tn{Borel}}$ from above is restricted from the one on $\Mod_{R_G}(\spec^{G})$, in the sense of \cite[Proposition 1.4.12]{bbd81}.
\end{rem}

\begin{rem}
By \cite[Proposition 4.14]{npr24} the recollement of the previous remark is base changed from the one for $G$-spectra along $F_{R_G}: \specg \to \Mod_{R_G}(\specg)$. In particular, the completion functor 
$$L_{G;R_G}: \Mod_{R_G}(\spec^{G}) \to \Mod_{R_G}(\specg)_{\tn{Borel}}$$ 
identifies with the functor
$$\Mod_{R_G}(\spec^{G}) \otimes_{\specg} L_{G;\bS_G}: \Mod_{R_G}(\spec^{G}) \to \Mod_{R_G}(\spec^{G}) \otimes_{\specg} \specg_{\tn{Borel}}.$$
\end{rem}

Before turning our attention to the case of profinite groups, we establish some change of group results we will use later.

\begin{lem}\label{lem:categoricalfixedpointsborelcomplete}
Let $N \unlhd G$ be a normal subgroup. Denote by 
$$(-)^N:\Mod_{R_G}(\specg) \to \Mod_{R_{G/N}}(\spec^{G/N})$$ 
the categorical fixed point functor that is right adjoint to the functor induced by inflation $\tn{infl}^G_{G/N}: \spec^{G/N} \to \specg$ on module categories. Then $(-)^N$ restricts to a functor 
$$\Mod_{R_G}(\specg)_{\tn{Borel}} \to \Mod_{R_{G/N}}(\spec^{G/N})_{\tn{Borel}}.$$
\end{lem}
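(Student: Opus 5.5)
The plan is an adjunction argument: a right adjoint preserves local (complete) objects as soon as its left adjoint preserves the acyclic objects. By \Cref{cor:completionunderlying}, and because categorical fixed points commute with the forgetful functors (\Cref{rec:modules}), it suffices to treat $R=\bS$. To spell it out, let $X \in \Mod_{R_G}(\specg)_{\tn{Borel}}$ and let $Y \in \Mod_{R_{G/N}}(\spec^{G/N})$ satisfy $Y \otimes_{R_{G/N}} (\bD(G/N) \otimes R_{G/N}) \simeq 0$. By \Cref{rem:nonequivariantlycontractible} this condition is equivalent to $\tn{res}^{G/N}_1 U(Y) \simeq 0$. The adjunction gives
$$\Map(Y, X^N) \simeq \Map(\tn{infl}^G_{G/N}(Y), X),$$
where the inflation is taken on modules, with $R_G = \tn{infl}^G_{G/N}(R_{G/N})$ as algebras, since inflation is symmetric monoidal and $\tn{infl}^G_{G/N}\tn{infl}^{G/N}_{1} \simeq \tn{infl}^G_1$.

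The key step is to show that $\tn{infl}^G_{G/N}(Y)$ is non-equivariantly contractible. The relevant identity is
$$\tn{res}^G_1 \circ \tn{infl}^G_{G/N} \simeq \tn{res}^{G/N}_1,$$
which holds because both sides are the pullback functor along the same composite homomorphism $1 \to G \to G/N$, namely the trivial map $1 \to G/N$. Functoriality of $\varphi^*$ in $\varphi$ is visible in the spectral Mackey model of \Cref{rem:changeofgroupspectralmackey}: the composite span functor $\Spans(G/N) \to \Spans(G) \to \Spans(1)$ sends $(G/N)/(H/N)$ to the underlying set of $G/H$, and this agrees with restriction of $(G/N)/(H/N)$ to the trivial group. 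The functors commute with the forgetful functors on modules (\Cref{rec:modules}), so $U_{R}\,\tn{res}^G_1 \tn{infl}^G_{G/N}(Y) \simeq \tn{res}^{G/N}_1 U(Y) \simeq 0$. By \Cref{rem:nonequivariantlycontractible}, applied now for $G$, this gives $\tn{infl}^G_{G/N}(Y) \otimes_{R_G}(\bD(G)\otimes R_G) \simeq 0$.

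Since $X$ is Borel complete, the mapping space $\Map(\tn{infl}^G_{G/N}(Y), X)$ is contractible, and therefore so is $\Map(Y, X^N)$. This shows that $X^N$ is Borel complete. The only step that needs care is the compatibility $\tn{res}\circ\tn{infl} \simeq \tn{res}$ at the level of module categories. I expect it to follow formally from pseudo-functoriality of $\varphi \mapsto \varphi^*$, together with the induced functors on modules from \Cref{rec:modules}.
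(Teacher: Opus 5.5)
Your proposal is correct and is the paper's argument: for $Y$ with $\tn{res}^{G/N}_1(Y)\simeq 0$, the identity $\tn{res}^G_1\tn{infl}^G_{G/N}\simeq\tn{res}^{G/N}_1$ together with the inflation/fixed-point adjunction gives $\Map(Y,X^N)\simeq\Map(\tn{infl}^G_{G/N}(Y),X)\simeq 0$. Your preliminary reduction to $R=\bS$ via \Cref{cor:completionunderlying} is harmless but unnecessary, since your direct argument already works with coefficients.
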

\begin{proof}
Let $X \in \Mod_{R_G}(\specg)$ be Borel complete and let $Y \in \Mod_{R_{G/N}}(\specg)$ such that $\tn{res}^{G/N}_1(Y) \simeq 0$. Then $\tn{res}^G_1\tn{infl}^G_{G/N}(Y) \simeq \tn{res}^{G/N}_1(Y) \simeq 0$, and hence
$$\Map_{\Mod_{R_{G/N}}(\spec^{G/N})}(Y,X^N) \simeq \Map_{\Mod_{R_{G}}(\specg)}(\tn{infl}^G_{G/N}(Y),X) \simeq 0,$$
so $X^N \in \Mod_{R_{G/N}}(\specg)$ is Borel complete.
\end{proof}

We denote the functor induced by categorical fixed points by
$$(-)^{hN}:\Mod_{R_G}(\specg)_{\tn{Borel}} \to \Mod_{R_{G/N}}(\spec^{G/N})_{\tn{Borel}}.$$ 
We will explain the notation that alludes to homotopy fixed points below. It follows formally that $(-)^{hN}$ has a left adjoint given by 
$$h\tn{infl}^G_{G/N} \defeq L_{G;R}\circ \tn{infl}^G_{G/N}\circ \iota: \Mod_{R_{G/N}}(\spec^{G/N})_{\tn{Borel}} \to \Mod_{R_G}(\specg)_{\tn{Borel}}.$$
We hence obtain a diagram in which the respective squares of left and right adjoints commute:
\begin{equation}\label{diag:fixedpointsinflationborel}
\begin{tikzcd}
	{\Mod_{R_G}(\specg)} & {\Mod_{R_G}(\specg)_{\tn{Borel}}} \\
	{\Mod_{R_{G/N}}(\spec^{G/N})} & {\Mod_{R_{G/N}}(\spec^{G/N})_{\tn{Borel}}.}
	\arrow[""{name=0, anchor=center, inner sep=0}, "{L_{G;R}}", shift left=2, from=1-1, to=1-2]
	\arrow[""{name=1, anchor=center, inner sep=0}, "{(-)^N}", shift left=2, from=1-1, to=2-1]
	\arrow[""{name=2, anchor=center, inner sep=0}, shift left=2, hook', from=1-2, to=1-1]
	\arrow[""{name=3, anchor=center, inner sep=0}, "{(-)^{hN}}", shift left=2, from=1-2, to=2-2]
	\arrow[""{name=4, anchor=center, inner sep=0}, "{\tn{infl}^G_{G/N}}", shift left=2, from=2-1, to=1-1]
	\arrow[""{name=5, anchor=center, inner sep=0}, "{L_{G/N;R}}", shift left=2, from=2-1, to=2-2]
	\arrow[""{name=6, anchor=center, inner sep=0}, "{h\tn{infl}^G_{G/N}}", shift left=2, from=2-2, to=1-2]
	\arrow[""{name=7, anchor=center, inner sep=0}, shift left=2, hook', from=2-2, to=2-1]
	\arrow["\dashv"{anchor=center, rotate=-90}, draw=none, from=0, to=2]
	\arrow["\dashv"{anchor=center, rotate=-90}, draw=none, from=5, to=7]
	\arrow["\dashv"{anchor=center}, draw=none, from=4, to=1]
	\arrow["\dashv"{anchor=center}, draw=none, from=6, to=3]
\end{tikzcd}
\end{equation}

It is a well known fact that on Borel complete $G$-spectra categorical fixed points and homotopy fixed points of the underlying spectrum with a $G$-action agree, and in fact this characterises Borel complete $G$-spectra \cite[Proposition 6.19]{mnn17}. Taking residual actions into account, we make this precise in our setting as follows.

\begin{prop}\label{prop:categoricalfixedpointsasrke}
Under the equivalence of \Cref{prop:borelcompleteasfun}, the adjunction 
$$(h\tn{infl}^G_{G/N}, (-)^{hN}):\Mod_{R_{G/N}}(\spec^{G/N})_{\tn{Borel}} \rightleftarrows \Mod_{R_G}(\specg)_{\tn{Borel}}$$
of \Cref{lem:categoricalfixedpointsborelcomplete} identifies with the adjunction
$$(\beta^*,\beta_*): \fun(B(G/N),\Mod_R(\spec)) \rightleftarrows \fun(BG,\Mod_R(\spec)),$$
where $\beta^*$ is given by restriction along the canonical map $\beta: BG \to B(G/N)$ induced by the quotient homomorphism $G \twoheadrightarrow G/N$ and $\beta_*$ is given by right Kan extension.
\end{prop}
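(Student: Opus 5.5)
Since both sides are adjunctions, I will identify only the left adjoints: once $h\tn{infl}^G_{G/N}$ is matched with $\beta^*$ under the equivalences of \Cref{prop:borelcompleteasfun}, uniqueness of adjoints matches $(-)^{hN}$ with the right Kan extension $\beta_*$. To make the equivalence concrete I will use the description from the remark after \Cref{ex:manylambdas}: the Borel completion $L_{G;R}$ is $\theta_G^*$, restriction along $\theta_G\colon BG \to \Spans(G)^{\op}$ picking out the orbit $G/1$, and the inclusion is $\theta_{G,*}$. The same holds for $G/N$ with $\theta_{G/N}$. In these terms,
$$h\tn{infl}^G_{G/N} \simeq \theta_G^* \circ (\overl{\tn{infl}}^G_{G/N})_! \circ \theta_{G/N,*},$$
using \Cref{rem:changeofgroupspectralmackey} for inflation. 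The coefficient version of that remark follows from the $\Mod_R$-valued Mackey functor model of \cite{bhs23} and the same Kan extension formalism.

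First I compare $\theta_G^* \circ \tn{infl}^G_{G/N}$ with $\beta^* \circ \theta_{G/N}^*$ as functors $\Mod_{R_{G/N}}(\spec^{G/N}) \to \fun(BG,\Mod_R(\spec))$. On underlying objects, $\theta_G^*\tn{infl}(Y) = \tn{res}^G_1\tn{infl}^G_{G/N}(Y) \simeq \tn{res}^{G/N}_1(Y) = \theta_{G/N}^*(Y)$. To keep track of the $G$-action I will argue through right adjoints. The right adjoint of $\theta_G^*\circ\tn{infl}$ restricted to Borel objects is $(-)^N\circ\theta_{G,*}$. The right adjoint of $\beta^*\theta_{G/N}^*$ is $\theta_{G/N,*}\beta_*$. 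It therefore suffices to show that $(\theta_{G,*}X)^N \simeq \theta_{G/N,*}(\beta_* X)$ naturally in $X\in\fun(BG,\Mod_R(\spec))$.

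Both sides are right Kan extensions, since categorical fixed points are restriction along $\overl{\tn{infl}}^G_{G/N}\colon \Spans(G/N)\to\Spans(G)$. Evaluating at an orbit $(G/N)/(H/N)$ gives $(\theta_{G,*}X)(G/H)$ on the left and $(\beta_*X)^{h(H/N)}$ on the right. The first is $X^{hH}$ by the standard formula for right Kan extension along $BG\to\Spans(G)^{\op}$ (\cite[§8]{bgs20}, or \Cref{lem:completionexplicit} with $\Map(EG\times G/H,-)$). The second is $(X^{hN})^{h(H/N)} \simeq X^{hH}$ by transitivity of homotopy fixed points. What remains is to check that these identifications are compatible with the spectral Mackey functor structure (restrictions and transfers). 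I will do this by producing the comparison map from the unit/counit, namely the mate of the evident $2$-cell $\theta_{G/N}\circ\beta \Rightarrow \overl{\tn{infl}}\circ\theta_G$, which is in fact an identity because both pick out $G/1 = \overl{\tn{infl}}(G/N\,/\,1)$ with the induced action. Then the objectwise computation shows that this mate is an equivalence.

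Finally, since $\theta_{G/N}^*\theta_{G/N,*}\simeq\tn{id}$, the left-adjoint comparison gives $h\tn{infl}\simeq \theta_G^*\tn{infl}\,\theta_{G/N,*}\simeq \beta^*\theta_{G/N}^*\theta_{G/N,*}\simeq\beta^*$, and $(-)^{hN}\simeq\beta_*$ follows. The main obstacle is the coherence step: making the pointwise equivalence $X^{hH}\simeq (X^{hN})^{h(H/N)}$ into a natural equivalence of Mackey functors rather than just of values. The plan handles this with the Beck–Chevalley mate, so that only pointwise checking is needed, and the pointwise identification reduces to the formula for Kan extension along $BG\to BG/N$ as homotopy fixed points of the fibre.
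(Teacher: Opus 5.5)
Your reduction to comparing the left adjoints $\theta_G^*\circ\tn{infl}^G_{G/N}$ and $\beta^*\circ\theta_{G/N}^*$ is right, and so is your final step $h\tn{infl}^G_{G/N}\simeq\beta^*$ once they agree. The gap is the comparison map itself: the one you propose does not exist.

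Inflation on span categories sends $(G/N)/(H/N)\mapsto G/H$, so the free $G/N$-orbit goes to $G/N$, not to $G/1$. Your identification $G/1=\overl{\tn{infl}}^G_{G/N}((G/N)/1)$ is therefore false, and as written the two sides of your $2$-cell do not even land in the same category. Concretely, $\overl{\tn{infl}}^G_{G/N}\circ\theta_{G/N}\circ\beta\colon BG\to\Spans(G)^{\op}$ picks out $G/N$ with $G$ acting through the quotient, while $\theta_G$ picks out $G/1$. The only relation between them is the non-invertible transformation induced by the projection $G/1\twoheadrightarrow G/N$. Likewise, the unit $\tn{id}\to(-)^N\tn{infl}^G_{G/N}$ is in general not an equivalence for genuine spectra (tom Dieck splitting), so it cannot be suppressed either. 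There is no identity $2$-cell to take a mate of. The step you call evident is where the content of the proposition lies, and your orbitwise check has no map to apply to.

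The missing idea is the paper's construction:
\begin{enumerate}
\item Whisker the unit to get $\theta_{G/N}^*\to\theta_{G/N}^*(-)^N\tn{infl}^G_{G/N}\simeq\tilde\theta_{G/N}^*\tn{infl}^G_{G/N}$, where $\tilde\theta_{G/N}\colon B(G/N)\to\Spans(G)^{\op}$ picks out $G/N$.
\item Apply $\beta^*$.
\item Evaluate Mackey functors on $G/1\twoheadrightarrow G/N$ to obtain $\beta^*\theta_{G/N}^*\to\theta_G^*\tn{infl}^G_{G/N}$.
\end{enumerate}
This is a transformation of functors into $\fun(BG,\Mod_R(\spec))$, so the $G$-action is already accounted for. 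Since evaluation at the object of $BG$ is conservative, it suffices that the map becomes $\tn{res}^{G/N}_1\simeq\tn{res}^G_1\tn{infl}^G_{G/N}$ on underlying modules. That is exactly the observation you made and then set aside ``to keep track of the $G$-action''.

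With this transformation in hand, the detour through right adjoints and the identification $X^{hH}\simeq(X^{hN})^{h(H/N)}$ at every orbit become unnecessary. That route would also still require showing that the mate realises that equivalence at each orbit. If you prefer the right-adjoint formulation, take the mate of this corrected transformation. Since both $(\theta_{G,*}X)^N$ and $\theta_{G/N,*}\beta_*X$ are Borel complete, it then suffices to check it at the free orbit.
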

\begin{proof}
We show the commutativity of the following diagram.
\begin{equation}\label{diag:inflationcompletion}
\begin{tikzcd}[scale cd=0.96]
	{\Mod_{R_G}(\specg)} & {\Mod_{R_G}(\specg)_{\tn{Borel}} \simeq \fun(BG,\Mod_R(\spec))} \\
	{\Mod_{R_{G/N}}(\spec^{G/N})} & {\Mod_{R_{G/N}}(\spec^{G/N})_{\tn{Borel}} \simeq \fun(B(G/N),\Mod_R(\spec))}
	\arrow["{L_{G;R}}", from=1-1, to=1-2]
	\arrow["{\tn{infl}^G_{G/N}}"', from=2-1, to=1-1]
	\arrow["{L_{G/N;R}}", from=2-1, to=2-2]
	\arrow["{\beta^*}"', shift right=5, from=2-2, to=1-2]
\end{tikzcd}
\end{equation}
Recall that in terms of spectral Mackey functors the top and bottom horizontal compositions are given by restriction along the maps $\theta_G: BG \to \Spans(G)^\op$ and $\theta_{G/N}: B(G/N) \to \Spans(G/N)^\op$ that pick out the orbits $G/1$ and $(G/N)/(N/N)$ respectively. Then the restriction functor $\tn{res}^G_1: \Mod_{R_G}(\specg) \to \Mod_R(\spec)$ factors as $\tn{ev}_G\,\theta_G^*$, where $\tn{ev}_G: \fun(BG,\Mod_R(\spec)) \to \Mod_R(\spec)$ evaluates at the unique object of $BG$, and we similarly have $\tn{res}^{G/N}_1 \simeq \tn{ev}_{G/N}\circ \theta_{G/N}^*$. \\
Consider the unit map $\tn{id} \to (-)^N\tn{infl}^G_{G/N}$ of endofunctors of $\Mod_{R_{G/N}}(\spec^{G/N})$. Composing with $\theta_{G/N}^*$ gives a map
\begin{align}\label{eq:thetastransformation}
\theta_{G/N}^* \to \theta_{G/N}^* (-)^N\tn{infl}^G_{G/N} \simeq  \tilde{\theta}_{G/N}^*\tn{infl}^G_{G/N},
\end{align}
where $\tilde{\theta}_{G/N}: B(G/N) \to \Spans(G)^\op$ picks out $G/N$. Here we use the description of categorical fixed points as a restriction along a functor of span categories, see \Cref{rem:changeofgroupspectralmackey}. Now, evaluating spectral $G$-Mackey functors on the map $G/1 \twoheadrightarrow G/N$ provides a natural transformation $\beta^*\tilde{\theta}_{G/N}^* \to \theta_{G}^*$. Through composition with \ref{eq:thetastransformation} we obtain a natural transformation 
\begin{align}\label{eq:finalformation}
\beta^*\theta_{G/N}^* \to \theta_{G}^*\tn{infl}^G_{G/N}.
\end{align}
By the above considerations and using $\tn{ev}_G \, \beta^* \simeq \tn{ev}_{G/N}$, composing this natural transformation with $\tn{ev}_G: \fun(BG,\Mod_R(\spec)) \to \Mod_R(\spec)$ gives a natural transformation $\tn{res}^G_1\tn{infl}^G_{G/N} \to \tn{res}^{G/N}_1$ which is an equivalence. Since $\tn{ev}_G$ is conservative, \ref{eq:finalformation} is an equivalence as well.
\end{proof}

\begin{rem}
Denote by $\alpha_!$ the left Kan extension along the inflation on span categories $\alpha: \Spans(G/N) \to \Spans(G), \; (G/N)/(H/N) \mapsto G/H$. We can illustrate the commutativity of the square of left adjoints
\[\begin{tikzcd}
	{\fun^{\times}(\Spans(G)^{\op},\Mod_R(\spec))} & {\fun(BG,\Mod_R(\spec))} \\
	{\fun^{\times}(\Spans(G/N)^{\op},\Mod_R(\spec))} & {\fun(B(G/N),\Mod_R(\spec)),}
	\arrow["{\theta^*_G}", from=1-1, to=1-2]
	\arrow["{\alpha_!}"', from=2-1, to=1-1]
	\arrow["{\theta_{G/N}^*}", from=2-1, to=2-2]
	\arrow["{\beta^*}"', from=2-2, to=1-2]
\end{tikzcd}\]
on objects as follows: For a spectral $G/N$-Mackey functor $\cF$ we obtain the restriction $\beta^*\theta_{G/N}^*(\cF) \simeq \cF(G/N)$ which has a $G$-action through $G \twoheadrightarrow G/N$. On the other hand, $\alpha_!$ (using \cite[Proposition 3.3.2]{chll24}) sends $\cF$ to the functor that assigns
$$G/H \mapsto \tn{colim}\left(\Spans(G/N)^\op \times_{\Spans(G)^\op} (\Spans(G)^\op)_{\tn{\big /}(G/H)} \stackrel{\cF}{\longrightarrow} \spec \right),$$
and $\theta^*_G$ evaluates this at the orbit $G/1$, for which we get the indexing category 
$$\Spans(G/N)^\op \times_{\Spans(G)^\op} (\Spans(G)^\op)_{\tn{\big /}(G/1)}.$$
By the dual of \cite[{}p.106]{bh21} it has a reflective subcategory
$$G/N\tsf{-set}^\op \times_{\gset^\op} (\gset^\op)_{\tn{\big /}(G/1)},$$
and the latter has a terminal object $G/1 \to G/N$, the canonical projection. As right adjoints are cofinal \cite[Corollary 7.2.3.7]{kerodon} we have $\theta^*_G\alpha_!(\cF) \simeq \cF(G/N)$.
\end{rem}

This now justifies our notation of the induced functors on Borel complete subcategories as \emph{homotopy} fixed points $(-)^{hN}$ and \emph{homotopy} inflation $h\tn{infl}^G_{G/N}$, since in terms of spectra with an action of $G$ (resp. $G/N$) these functors take the limit over $BN$ (resp. inflate the $G/N$-action to a $G$ action via $G \twoheadrightarrow G/N$), which is what is known as homotopy fixed points and inflation. \\
Concretely, if $X$ is a spectrum with a $G$-action its homotopy $N$-fixed points are modelled by the totalisation of the cosimplicial object 
\[\begin{tikzcd}
	X & {\prod_N X} & {\prod_{N^2} X \;\; \cdots}
	\arrow[shift left, from=1-1, to=1-2]
	\arrow[shift right, from=1-1, to=1-2]
	\arrow[from=1-2, to=1-3]
	\arrow[shift left=2, from=1-2, to=1-3]
	\arrow[shift right=2, from=1-2, to=1-3]
\end{tikzcd}\]
which has a residual $G/N$-action. The resulting spectrum $X^{hN}$ can again be viewed as a cosimplicial object over the opposite of the diagram that computes $B(G/N)$. 

\begin{cor}
The functor $h\tn{infl}^G_{G/N}: \Mod_{R_{G/N}}(\spec^{G/N})_{\tn{Borel}} \to \Mod_{R_G}(\specg)_{\tn{Borel}}$ is symmetric monoidal and t-exact with respect to the t-structure of \Cref{cor:tstructureonborelcomplete}.
\end{cor}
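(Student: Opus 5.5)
By \Cref{prop:categoricalfixedpointsasrke}, the plan is to transport the question along the equivalences of \Cref{prop:borelcompleteasfun}. Under these equivalences $h\tn{infl}^G_{G/N}$ becomes the restriction functor
$$\beta^*: \fun(B(G/N),\Mod_R(\spec)) \to \fun(BG,\Mod_R(\spec))$$
along $\beta: BG \to B(G/N)$. Both properties should then follow from elementary facts about precomposition functors, together with the fact that the structures on both sides are pointwise.

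\textbf{Symmetric monoidality.} The symmetric monoidal structures in \Cref{prop:borelcompleteasfun} are the pointwise ones on functor categories. Precomposition with any map of spaces is symmetric monoidal for pointwise structures, since $\beta^*$ is the functor $\fun(B(G/N),-)\to\fun(BG,-)$ applied to the symmetric monoidal $\infty$-category $\Mod_R(\spec)^{\otimes}$. It is therefore a symmetric monoidal functor at the level of $\infty$-operads.

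Two subtleties need care. First, the equivalence of \Cref{prop:borelcompleteasfun} must be checked to be symmetric monoidal, which it is by construction. Second, the identification of $h\tn{infl}^G_{G/N}$ with $\beta^*$ in \Cref{prop:categoricalfixedpointsasrke} is a priori only an identification of underlying functors; one must make sure it is compatible with the symmetric monoidal structures. The cleanest way to handle this is to argue directly. Consider the composite $L_{G;R}\circ \tn{infl}^G_{G/N}$. Here $\tn{infl}^G_{G/N}$ is symmetric monoidal on module categories by \Cref{rec:modules}, and $L_{G;R}$ is symmetric monoidal by \Cref{rem:borelcompleteforget}. Since $h\tn{infl}^G_{G/N}\circ L_{G/N;R} \simeq L_{G;R}\circ\tn{infl}^G_{G/N}$ by the commuting square of left adjoints in \eqref{diag:fixedpointsinflationborel}, and $L_{G/N;R}$ is a symmetric monoidal localisation, the universal property of symmetric monoidal localisations gives the result. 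The remaining point, which I expect to be the only real obstacle, is to verify that $L_{G;R}\circ\tn{infl}^G_{G/N}$ sends $L_{G/N;R}$-equivalences to equivalences. This holds because $\tn{infl}^G_{G/N}$ preserves non-equivariantly contractible objects, using $\tn{res}^G_1\tn{infl}^G_{G/N}\simeq\tn{res}^{G/N}_1$ as in the proof of \Cref{lem:categoricalfixedpointsborelcomplete}. It then follows that $h\tn{infl}^G_{G/N}$ is the induced symmetric monoidal functor.

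\textbf{t-exactness.} By \Cref{cor:tstructureonborelcomplete}, the t-structure on $\fun(BG,\Mod_R(\spec))$ is detected pointwise, i.e.\ after evaluating at the unique object via $\tn{ev}_G$, and likewise for $G/N$. Since $\tn{ev}_G\circ\beta^*\simeq \tn{ev}_{G/N}$, an object $F$ is connective (resp.\ coconnective) if and only if $\beta^*F$ is. Hence $\beta^*$, and therefore $h\tn{infl}^G_{G/N}$, is t-exact.
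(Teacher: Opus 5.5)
Your proof is correct, and for t-exactness it is the paper's argument. The paper's proof is one line: under \Cref{prop:categoricalfixedpointsasrke}, $h\tn{infl}^G_{G/N}$ becomes $\beta^*$, which is symmetric monoidal and t-exact for the pointwise structures.

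Your route for symmetric monoidality is different. The paper transports this property along the identification with $\beta^*$, implicitly treating that identification as symmetric monoidal, although the proposition only identifies underlying functors. You instead apply the universal property of the symmetric monoidal localisation $L_{G/N;R}$ to the symmetric monoidal composite $L_{G;R}\circ\tn{infl}^G_{G/N}$, and then use $h\tn{infl}^G_{G/N}\simeq L_{G;R}\circ\tn{infl}^G_{G/N}\circ\iota$.

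What each route buys: yours is more rigorous, and it does not need \Cref{prop:categoricalfixedpointsasrke} for this half. It also produces the canonical monoidal structure, the one compatible with the two completion functors. The paper's route is shorter and would additionally identify $h\tn{infl}^G_{G/N}$ with $\beta^*$ as symmetric monoidal functors. That, however, requires upgrading the natural transformation constructed in the proposition to a monoidal one, which the paper does not do.

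The step you single out as the only real obstacle is in fact automatic. For an $L_{G/N;R}$-equivalence $f$, the commuting square of left adjoints in \eqref{diag:fixedpointsinflationborel} gives $L_{G;R}\tn{infl}^G_{G/N}(f)\simeq h\tn{infl}^G_{G/N}(L_{G/N;R}f)$, and the right-hand side is already an equivalence. Your verification through non-equivariantly contractible cofibres is also valid, but it uses the extra input that $L_{G/N;R}$-equivalences are exactly the maps with non-equivariantly contractible cofibre, and it is redundant. Likewise, your opening paragraph on $\beta^*$ being symmetric monoidal is not used in the end.
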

\begin{proof}
Since $\beta^*: \fun(B(G/N),\Mod_R(\spec)) \to \fun(BG,\Mod_R(\spec))$ has these properties this follows immediately.
\end{proof}

\begin{cor}
For $M \in \Mod_R(\spec)$, the Borel complete $R_G$-module 
$$\imap_{\Mod_R(\specg)}(\Sigma^{\infty}_+EG \otimes R, \tn{infl}^G_{G/N}(M))$$
obtained by completing the inflation of $M$ to $G$-spectra\footnote{In \cite[Example 6.20]{mnn17} this is denoted $\underl{M}$, we do not use this notation to avoid confusion with constant Mackey functors. When $R_G = \bS_G$ this $G$-spectrum represents Borel equivariant $M$-cohomology on $G$-spaces.} is canonically equivalent to the right Kan extension $\theta_*(M)$, where now $M$ is viewed as an object of $\fun(BG,\Mod_R(\spec))$ with trivial $G$-action.
\end{cor}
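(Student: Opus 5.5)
The plan is to reduce the statement to the diagram \ref{diag:inflationcompletion} proved in \Cref{prop:categoricalfixedpointsasrke}. I would take the special case $N = G$, so that $G/N$ is trivial and $B(G/N) \simeq \ast$. Then $\Mod_{R_{G/G}}(\spec^{G/G}) \simeq \Mod_R(\spec)$, and every object there is Borel complete, since $\tn{res}^{1}_1$ is the identity. Hence $L_{1;R} \simeq \tn{id}$. The commutativity of \ref{diag:inflationcompletion} then gives a natural equivalence
$$L_{G;R}\,\tn{infl}^G_{G/G}(M) \simeq \beta^*(M)$$
in $\fun(BG,\Mod_R(\spec))$, where $\beta: BG \to \ast$. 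Here $\beta^*(M)$ is exactly $M$ with the trivial $G$-action.

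Next I would identify both sides of the statement. By \Cref{lem:completionexplicit}, the left-hand side $\imap_{\Mod_R(\specg)}(\Sigma^{\infty}_+EG \otimes R, \tn{infl}^G_{G/G}(M))$ is, as an object of $\Mod_{R_G}(\specg)$, precisely $\iota L_{G;R}\,\tn{infl}^G_{G/G}(M)$. On the other side, the equivalence $\Mod_{R_G}(\specg)_{\tn{Borel}} \simeq \fun(BG,\Mod_R(\spec))$ of \Cref{prop:borelcompleteasfun} has the inclusion $\iota$ corresponding to $\theta_*$, as in the remark on $(\theta^*,\theta_*)$. So the Borel complete module corresponding to $\beta^*(M)$ is $\theta_*\beta^*(M)$, which is $\theta_*(M)$ with $M$ carrying the trivial action. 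Combining the two identifications gives the equivalence, and it is canonical because it is produced by the natural transformation \ref{eq:finalformation} together with the unit of the completion.

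Two points need care. The first is that the statement concerns $\tn{infl}^G_{G/N}(M)$; since $M$ is a non-equivariant module I read this as inflation from the trivial group (equivalently $N = G$). If one insists on a general $N$, I would first inflate $M$ to $G/N$, note that $\tn{infl}^G_{G/N}\tn{infl}^{G/N}_{1} \simeq \tn{infl}^G_1$, and apply the same argument.

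The second, and the main (though mild) obstacle, is checking that the identification of $L_{G;R}$ with $\theta^*$ is compatible with the explicit completion formula of \Cref{lem:completionexplicit}. This compatibility holds because both are left adjoints to the same fully faithful inclusion, so they agree by uniqueness of adjoints.
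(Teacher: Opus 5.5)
Your proposal is correct and matches the argument the paper intends; the paper gives no separate proof and places the corollary right after \Cref{prop:categoricalfixedpointsasrke}. You specialise the commutative square \ref{diag:inflationcompletion} to $N = G$, where $L_{G/G;R} \simeq \tn{id}$ and $\beta^*(M)$ is $M$ with trivial action, and then combine \Cref{lem:completionexplicit} with the identification of $(L_{G;R},\iota)$ with $(\theta^*,\theta_*)$. Your reading of $\tn{infl}^G_{G/N}(M)$ as inflation from the trivial quotient is the right one, and the compatibility you flag as the main obstacle is supplied by the paper's remark identifying that adjunction pair.
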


\section{...and for profinite groups}\label{sec:borelprofinite}

The aim of this section is to develop a theory of Borel completeness for profinite groups. We first consider a `naive' notion which we call \emph{levelwise} Borel completeness - categorically this is the limit of the Borel complete subcategories along fixed points, running over all open normal subgroups of the group. We then introduce a second notion that is given by completeness with respect to the colimit of the (duals of the) orbits of all open normal subgroups. We finally identify the latter notion with the hypercompletion of the former, in a t-structure sense. \\
Throughout this section $G$ is a profinite group. We write $(N_i)_{i \in I}$ for the cofiltered system of open normal subgroups of $G$ under inclusion and let $G_i \defeq G/N_i$, so that $G = \lim_i G_i$. If $H \leq G$ is a closed subgroup, then we have $H \cong \lim_i H_i$ and $H$ is itself profinite, since $H_i \defeq HN_i/N_i \cong H/(H \cap N_i)$ is a subgroup of the finite group $G_i$. As before, $R \in \calg(\spec)$ is a commutative non-equivariant ring spectrum which we now assume to be connective. Again we write $R_{K} \defeq \tn{infl}^K_{K/K}(R)$ for its inflation to a group $K$, whenever this makes sense.

\begin{rec}\label{rec:specgascolimit}
There is a functor 
$$\spec^{G_{\bullet}}: I^\op \to \calg(\prlst), \; i \mapsto \spec^{G_i}, \; (N_i \subseteq N_j) \mapsto (\tn{infl}_{G_j}^{G_i}: \spec^{G_j} \to \spec^{G_i}),$$
and as explained in \cite[§6]{bbb24}, \cite[Corollary 5.7]{bt26} its colimit is equivalent to $\specg$. Equivalently, the latter is the limit of the adjoint diagram of categorical fixed point functors in $\prr$ (using that the forgetful functor $\calg(\prlst) \to \prl$ preserves filtered colimits \cite[Remark 4.8.1.24, Corollary 3.2.3.2]{lur17}). As before, the canonical left adjoints into the colimit are denoted $\tn{infl}^G_{G_i}: \spec^{G_i} \to \specg$, and their right adjoints $(-)^{N_i}: \specg \to \spec^{G_i}$. In the presence of coefficients there is an analogous functor
$$\Mod_{R_{G_{\bullet}}}(\spec^{G_{\bullet}}): I^\op \to \calg(\prlst)$$
which sends $(N_i \subseteq N_j)$ to the induced inflation on module categories. Its colimit is equivalent to $\Mod_{R_G}(\specg)$. This follows from \cite[Lemma 3.5.6]{agv22} or by combining \cite[Theorem 5.6]{bt26} with \cite[§14]{bhs23}. We employ the same notation as above for the canonical left adjoints into this colimit and their right adjoints. \\
In both cases the colimit can also be computed in $\calg(\prl_{\omega,\tn{st}})$, the $\infty$-category $\Mod_{R_{G}}(\spec^{G})$ is rigidly compactly generated by the free modules associated to the orbits of open normal subgroups. Because of $\calg(\prl_{\omega,\tn{st}}) \simeq \calg(\catinf^{\tn{perf}})$ and by \cite[Lemma 7.3.5.1]{lur17} the filtered colimit of the compact parts is computed in $\catinf$. In particular, a compact $R_{G}$-module is inflated from $G_i$ for some $i \in I$.
\end{rec}

\begin{rem}
Whenever we write down diagrams of $\infty$-categories of equivariant spectra as above we use the fact that there is a functor
$$\spec^{(-)}: \glo^\op \to \calg(\prlst), \; L \mapsto \spec^L, \; (\alpha: L \to L') \mapsto (\alpha^*: \spec^{L'} \to \spec^L),$$
where $\glo$ the global category of finite groups, group homomorphisms and conjugations,\footnote{The notation $\glo$ is also used in the literature for the global category of compact Lie groups.} see e.g. \cite[§10]{lnp25}. It is a (2,1)-category which we view as an $\infty$-category via the Duskin nerve. We can then restrict this functor to the 1-category $I^\op$ to obtain the desired filtered diagram.
\end{rem}

\begin{defi}\label{defi:levelwiseborelcomplete}
We define the $\infty$-category of \emph{levelwise Borel complete $R_G$-modules} as the $\calg(\prlst)$-colimit 
$$\Mod_{R_G}(\specg)_{\tn{lwBorel}} \defeq \tn{colim}_i \; \Mod_{R_{G_i}}(\spec^{G_i})_{\tn{Borel}}$$
along the homotopy inflations $h\tn{infl}^{G_i}_{G_j}:\Mod_{R_{G_j}}(\spec^{G_j})_{\tn{Borel}} \to \Mod_{R_{G_i}}(\spec^{G_i})_{\tn{Borel}}$ for each inclusion $N_i \subseteq N_j$.
\end{defi}

We denote the canonical functors $\Mod_{R_{G_i}}(\spec^{G_i})_{\tn{Borel}} \to \Mod_{R_G}(\specg)_{\tn{lwBorel}}$ by $h\tn{infl}^G_{G_i}$ and write $(-)^{hN_i}$ for their right adjoints.

\begin{rem}
After forgetting the monoidal structure, $\Mod_{R_G}(\specg)_{\tn{lwBorel}}$ is given by the $\prrst$-limit 
$$\tn{lim}_i \; \Mod_{R_{G_i}}(\spec^{G_i})_{\tn{Borel}}$$ 
along homotopy fixed points $(-)^{hN_{ij}}:\Mod_{R_{G_i}}(\spec^{G_i})_{\tn{Borel}} \to \Mod_{R_{G_j}}(\spec^{G_j})_{\tn{Borel}}$, where $N_{ij} = N_j/N_i = \tn{ker}(G/N_i \twoheadrightarrow (G/N_i)/(N_j/N_i) = G/N_j)$. As a limit of the fully faithful functors $\Mod_{R_{G_i}}(\spec^{G_i})_{\tn{Borel}} \hookrightarrow \Mod_{R_{G_i}}(\spec^{G_i})$ the induced functor
$$\iota_{\tn{lw}}: \Mod_{R_G}(\specg)_{\tn{lwBorel}} \to \Mod_{R_G}(\specg)$$
is fully faithful as well. We can hence view an object of $\Mod_{R_G}(\specg)_{\tn{lwBorel}}$ as a compatible collection of $R_{G_i}$-modules $(X_i)_{i \in I}$ (i.e., $(X_i)^{N_{ij}} \simeq X_j$) such that each $X_i$ is Borel complete. We denote the symmetric monoidal left adjoint of $\iota_{\tn{lw}}$ by $L_{\tn{lw}}$ and call it \emph{levelwise Borel completion}. It makes the following square commute:
\begin{equation}\label{diag:levelwisecompletion}
\begin{tikzcd}
	{\Mod_{R_{G_i}}(\spec^{G_i})} & {\Mod_{R_{G_i}}(\spec^{G_i})_{\tn{Borel}}} \\
	{\Mod_{R_{G}}(\spec^{G})} & {\Mod_{R_G}(\specg)_{\tn{lwBorel}}.}
	\arrow["{L_{G_i;R_{G_i}}}", from=1-1, to=1-2]
	\arrow["{\tn{infl}^G_{G_i}}", from=1-1, to=2-1]
	\arrow["{h\tn{infl}^G_{G_i}}", from=1-2, to=2-2]
	\arrow["{L_{\tn{lw}}}", from=2-1, to=2-2]
\end{tikzcd}
\end{equation}
\end{rem}

\begin{warn}
It follows from \cite[Proposition 2.27]{mnn17} that for each $i\in I$ the stable $\infty$-category $\Mod_{R_{G_i}}(\spec^{G_i})_{\tn{Borel}}$ has a compact generator $\Sigma^{\infty}_+G_i \otimes R_{G_i}$. In terms of the description $\fun(BG_i,\Mod_R(\spec))$ of \Cref{prop:borelcompleteasfun} this generator is given by the group algebra $R[G_i] = \Sigma^{\infty}_+ G_i \otimes R \in \alg(\spec)$, see also \cite[Remark 6.18]{mnn17}. The homotopy inflation functors in general do not preserve these generators, and the colimit of \Cref{defi:levelwiseborelcomplete} is generally not compactly generated.
\end{warn}

We will now show that there is a canonical t-structure on $\Mod_{R_G}(\specg)_{\tn{lwBorel}}$, and we will record certain properties of this t-structure we will use later on.

\begin{prop}\label{prop:tstructureonlevelwiseborelcomplete}
There is a t-structure on $\Mod_{R_G}(\specg)_{\tn{lwBorel}}$ which is induced by the t-structures of \Cref{cor:tstructureonborelcomplete}. It is right complete, compatible with filtered colimits and compatible with the symmetric monoidal structure.\footnote{In the sense of \cite[Example 2.2.1.3]{lur17}.}\\
The canonical functors $\Mod_{R_{G_i}}(\spec^{G_i})_{\tn{Borel}} \to\Mod_{R_G}(\specg)_{\tn{lwBorel}}$ are t-exact, and the heart is given by the category $\Mod(G;\pi_0(R))$ of discrete $\pi_0(R)$-modules on which $G$ acts continuously.
\end{prop}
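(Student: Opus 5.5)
The plan is to work with the description of $\Mod_{R_G}(\specg)_{\tn{lwBorel}}$ as the $\prrst$-limit $\lim_i \fun(BG_i,\Mod_R(\spec))$ along the homotopy fixed point functors $(-)^{hN_{ij}} = \beta_*$ (\Cref{prop:categoricalfixedpointsasrke}). Equivalently, it is the $\prl$-colimit along the t-exact, symmetric monoidal functors $\beta^* = h\tn{infl}^{G_i}_{G_j}$. I will define the connective part as the full subcategory generated under colimits and extensions by the images $h\tn{infl}^G_{G_i}(\Mod_{R_{G_i}}(\spec^{G_i})_{\tn{Borel},\geq 0})$. By \cite[Proposition 1.4.4.11]{lur17} this determines a t-structure, since these images are essentially small families of objects in a presentable stable category. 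Concretely, an object $X=(X_i)_i$ lies in the coconnective part $\leq 0$ if and only if each $X_i = X^{hN_i}$ is coconnective in $\fun(BG_i,\Mod_R(\spec))$. This holds because $\Map(h\tn{infl}^G_{G_i}Y, X) \simeq \Map(Y,X^{hN_i})$.

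Next I check t-exactness of $h\tn{infl}^G_{G_i}$. Right t-exactness holds by construction. For left t-exactness, I need that for coconnective $Y$ at level $i$, all components $(h\tn{infl}^G_{G_i}Y)^{hN_k}$ are coconnective. Since the system is filtered, it suffices to consider $k \geq i$, because a smaller level is obtained by a further homotopy fixed point functor, which is left t-exact as a right adjoint of the t-exact $\beta^*$. For the $\prl$-colimit along left adjoints whose right adjoints $\beta_*$ preserve filtered colimits, I will use the standard formula $(h\tn{infl}^G_{G_i}Y)^{hN_k} \simeq \tn{colim}_{l \geq i,k} (\beta_{lk})_*\beta_{li}^*Y$. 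Each term is coconnective because $\beta^*$ is t-exact and $\beta_*$ is left t-exact, and coconnectivity in $\fun(BG_k,\Mod_R(\spec))$ is pointwise, hence closed under filtered colimits. The key justification for the colimit formula is that homotopy fixed points for the finite group $N_{lk}$ commute with filtered colimits of uniformly bounded-above objects; more simply, $\beta_*$ for finite groups preserves filtered colimits of coconnective objects. I expect this step to be the main technical point, since the colimit is not compactly generated and the right adjoints $(-)^{hN_i}$ need not preserve all filtered colimits. I would first try to argue that $\beta_*$ between Borel categories of finite groups is a finite totalisation on $\leq 0$ parts up to a fixed degree, so it commutes with filtered colimits degreewise on homotopy groups.

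The remaining properties then follow formally. Compatibility with filtered colimits means that the coconnective part is closed under filtered colimits. This follows from the same colimit formula: $(\tn{colim} X)^{hN_k}$ is computed through colimits of levelwise coconnective objects. Right completeness follows because the connective part is generated by images of right complete t-structures, which gives $\bigcap_n \cC_{\leq -n} = 0$: if $X^{hN_i} \in \bigcap_n$ for all $i$, then $X^{hN_i}=0$, hence $X=0$. Together with compatibility with filtered colimits, this yields right completeness via \cite[Proposition 1.2.1.19]{lur17}. Monoidal compatibility holds because the unit is $h\tn{infl}$ of a connective unit, the tensor product preserves colimits, and the generators $h\tn{infl}^G_{G_i}(Y)\otimes h\tn{infl}^G_{G_j}(Y')$ are $h\tn{infl}^G_{G_k}$ of a tensor of connectives by symmetric monoidality and t-exactness of $\beta^*$.

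Finally, for the heart I will use t-exactness: $\cC^\heartsuit$ receives the hearts $\Mod(G_i;\pi_0R)$ along exact fully faithful inflations. I will show that $\cC^\heartsuit \simeq \tn{colim}_i \Mod(G_i;\pi_0 R)$, where this colimit is computed in presentable abelian categories and equals continuous discrete $G$-modules. To do this, I will send $X\in\cC^\heartsuit$ to $\tn{colim}_i \pi_0(X^{hN_i})$, a $\pi_0R$-module with compatible $G_i$-actions on the $N_i$-invariants, and check that this is an inverse equivalence using the colimit formula and the fact that $\pi_0$ of homotopy fixed points of a discrete module is its invariants.
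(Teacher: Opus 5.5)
Your proposal is correct in outline, but it takes a different route from the paper. The paper never builds the t-structure by hand. It observes that each $\fun(BG_i,\Mod_R(\spec))$ is the stabilisation of the Grothendieck prestable $\infty$-category $\fun(BG_i,\Mod_R(\spec)_{\geq 0})$, and that t-exactness turns the $h\tn{infl}$ into left exact left adjoints between these. It then invokes \cite[Proposition C.3.3.5]{lur18}: a filtered $\prl$-colimit of Grothendieck prestable $\infty$-categories along left exact left adjoints is again Grothendieck prestable, with left exact structure maps. Stabilising, which commutes with colimits, gives right completeness, compatibility with filtered colimits and t-exactness of the structure maps in one go. Monoidality comes from computing the colimit in $\calg(\prl)$, and the heart is read off as the filtered colimit of the discrete parts. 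You instead generate the t-structure via \cite[Proposition 1.4.4.11]{lur17}. You then prove the one non-formal point, left t-exactness of $h\tn{infl}^G_{G_i}$, through an explicit formula for $(h\tn{infl}^G_{G_i}Y)^{hN_k}$. Its real input is that homotopy fixed points for a finite group are, degreewise, finite totalisations on uniformly bounded-above objects. In effect you reprove the relevant case of Lurie's result. In exchange you get concrete descriptions: coconnectivity is tested on the $X^{hN_k}$, filtered colimits of coconnective objects are computed levelwise, and the heart comes with the explicit functor $X\mapsto \tn{colim}_i\,\pi_0(X^{hN_i})$.

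Two points need to be made precise in your version.

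First, the ``standard formula'' you quote assumes the right adjoints preserve all filtered colimits, and $\beta_*$ does not. What you actually need is the following.
\begin{itemize}
\item For coconnective $Y$, the family $Z_k=\tn{colim}_{l\geq i,k}(\beta_{lk})_*\beta_{li}^*Y$ is compatible under the $\beta_*$. This is exactly where commutation with filtered colimits of uniformly bounded-above objects enters. So $Z$ is an object of the limit.
\item Computing mapping spaces in the limit gives $\Map(Z,X)\simeq\lim_{i\leq k\leq l}\Map((\beta_{lk})_*\beta_{li}^*Y,X^{hN_k})\simeq\Map(Y,X^{hN_i})$. The limit collapses because the diagonal $k=l$ is cofinal in the poset of such pairs.
\item By Yoneda this identifies $Z$ with $h\tn{infl}^G_{G_i}Y$.
\end{itemize}
The same compatibility argument is what justifies your claim that filtered colimits of levelwise coconnective objects are computed levelwise, which your filtered-colimit step relies on.

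Second, \cite[Proposition 1.4.4.11]{lur17} needs a small set of generators. The objects $h\tn{infl}^G_{G_i}(R[G_i])$ for $i\in I$ do the job and generate the same connective part.
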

\begin{proof}
In our situation \cite[Lemma 3.2.18]{rs20} shows the existence of the t-structure and the t-exactness of the structural maps into the colimit. The proof we give ensures the desired properties of this t-structure.\\
For each $i \in I$, the t-structure on $\fun(BG_i,\Mod_R(\spec))$ is right complete and compatible with filtered colimits since $R$ is assumed to be connective, which implies that the t-structure on $\Mod_R(\spec)$ is \cite[Proposition 7.1.1.13]{lur17}. Hence by \cite[§C.1.2]{lur18} there is a canonical equivalence to the stabilisation 
$$\fun(BG_i,\Mod_R(\spec)) \simeq \spec(\fun(BG_i,\Mod_R(\spec)_{\geq 0}).$$
Since each of the functors $h\tn{infl}^{G_i}_{G_j}$ is t-exact, \cite[Lemma C.2.4.4]{lur18} implies that they restrict to left exact left adjoints on connective parts
$$(h\tn{infl}^{G_i}_{G_j})_{\geq 0}: \fun(BG_j,\Mod_R(\spec)_{\geq 0}) \to \fun(BG_i,\Mod_R(\spec)_{\geq 0}),$$
and by \cite[Lemma C.3.1.1]{lur18} the functor induced by each $(h\tn{infl}^{G_i}_{G_j})_{\geq 0}$ on stabilisations is again given by $h\tn{infl}^{G_i}_{G_j}$. Now, the $\prl$-colimit
$$\cC \defeq \tn{colim}_i \; \fun(BG_i,\Mod_R(\spec)_{\geq 0})$$
along the functors $(h\tn{infl}^{G_i}_{G_j})_{\geq 0}$ is one of Grothendieck prestable $\infty$-categories and left exact left adjoints, so by \cite[Proposition C.3.3.5]{lur18} the colimit is again Grothendieck prestable and the canonical maps into the colimit 
$$(h\tn{infl}^{G}_{G_i})_{\geq 0}: \fun(BG_i,\Mod_R(\spec)_{\geq 0}) \to \cC$$
are left exact left adjoints. Since the functor $\prl \to \prlst$ preserves colimits \cite[Proposition 4.8.2.18]{lur17}, the stabilisation $\spec(\cC)$ is equivalent to $\Mod_{R_G}(\specg)_{\tn{lwBorel}}$. So $\Mod_{R_G}(\specg)_{\tn{lwBorel}}$ is the stabilisation of a Grothendieck prestable $\infty$-category and thus carries a canonical right complete t-structure that is compatible with filtered colimits, see e.g. \cite[Remark C.3.1.5]{lur18}. It follows from \cite[Lemma C.3.1.1, Proposition C.3.2.1]{lur18} that the stabilisations of the structure maps into the colimit
$$h\tn{infl}^{G}_{G_i} \defeq \spec((h\tn{infl}^{G}_{G_i})_{\geq 0}): \fun(BG_i,\Mod_R(\spec) \to \Mod_{R_G}(\specg)_{\tn{lwBorel}}$$
are t-exact. For each $i$, the connective part $\fun(BG_i,\Mod_R(\spec)_{\geq 0})$ is a symmetric monoidal subcategory of its stabilisation, and the transition functors $(h\tn{infl}^{G_i}_{G_j})_{\geq 0}$ are symmetric monoidal. We can hence compute $\cC$ as a filtered colimit in $\calg(\prl)$, and the unit map 
$$\cC \simeq \cC \otimes_{\prl} \spc \to \cC \otimes_{\prl} \spec \simeq \spec(\cC)$$ 
induced by $\Sigma^\infty_+: \spc \to \spec$ is symmetric monoidal, which means that the t-structure on $\spec(\cC)$ is compatible with the symmetric monoidal structure. This monoidal structure agrees with the one of \Cref{defi:levelwiseborelcomplete}. \\
By \cite[§C.1.2]{lur18} the heart of the resulting t-structure is given by the discrete objects of its prestable part, and by \cite[Proposition C.3.2.1]{lur18} all transition functors in the diagram of prestable parts preserve discrete objects. Since the $0$-truncation functor commutes with filtered colimits and using \Cref{cor:tstructureonborelcomplete} the heart is the filtered colimit of the $\Mod(G_i;\pi_0(R))$ along inflation, which is $\Mod(G;\pi_0(R))$. This concludes the proof.
\end{proof}

\begin{cons}\label{cons:colimitoforbits}
For $H$ a closed subgroup of $G$ we consider the adjunction $(\tn{res}^G_H, \tn{coind}^G_H): \Mod_{R_G}(\specg) \rightleftarrows \Mod_{R_H}(\spec^H)$. When $N_i \leq G$ is open and normal, we can view the coinduction $\tn{coind}^G_{N_i}(R_{N_i})$ as a commutative algebra object of $\Mod_{R_G}(\specg)$. Whenever $N_i \subseteq N_j$, there is an equivalence $\tn{coind}^G_{N_i} \simeq \tn{coind}^G_{N_j}\tn{coind}^{N_j}_{N_i}$, and the unit map $R_{N_j} \to \tn{coind}^{N_j}_{N_i}\tn{res}^{N_j}_{N_i}(R_{N_j}) \simeq \tn{coind}^{N_j}_{N_i}(R_{N_i})$ induces an algebra map
$$\tn{coind}^G_{N_j}(R_{N_j}) \to \tn{coind}^G_{N_j}\tn{coind}^{N_j}_{N_i}(R_{N_i}) \simeq \tn{coind}^G_{N_i}(R_{N_i})$$
by applying $\tn{coind}^G_{N_j}$. This determines a filtered diagram $I^\op \to \calg(\Mod_{R_G}(\specg))$, and we let $A_R \in \calg(\Mod_{R_G}(\specg))$ be its colimit. As in the finite group case (\Cref{rec:coinddual}), there is an equivalence
\begin{equation}\label{eq:dualcoind}
\bD(G/N_i) \otimes R_G \stackrel{\sim}{\longrightarrow} \tn{coind}^G_{N_i}\tn{res}^G_{N_i}(R_G) \simeq \tn{coind}^G_{N_i}(R_{N_i})
\end{equation}
in $\calg(\specg)$. Like in the proof of \cite[Lemma 3.8]{bchnl25}, the equivalence \ref{eq:dualcoind} boils down to the fact that the projection formula for the adjunction $(\tn{res}^G_{N_i}, \tn{coind}^G_{N_i})$ holds, see e.g. \cite[Theorem 1.3]{bds16}. In terms of the left-hand side of \ref{eq:dualcoind} the map $\tn{coind}^G_{N_j}(R_{N_j}) \to \tn{coind}^G_{N_i}(R_{N_i})$ is induced by the map of coalgebras in finite $G$-sets $G/N_i \twoheadrightarrow G/N_j$ on function duals, tensored with $R_G$. Since the tensor product on $\specg$ commutes with colimits in both variables it is clear that $A_R \simeq A_{\bS} \otimes R_G$.
\end{cons}

Recall that in the finite group case \Cref{lem:resasbasechange} expressed the restriction functor to an arbitrary subgroup as a base change. This is still true in the profinite case when we work with closed subgroups. 

\begin{lem}\label{lem:resasbasechangeprofinite}
Let $H \leq G$ be a closed subgroup. Then there is a symmetric monoidal equivalence 
$$\Mod_{\tn{coind}^{G}_H(R_{H})}(\specg) \stackrel{\sim}{\longrightarrow} \Mod_{R_{H}}(\spec^{H}),$$
and the equivalence functor fits into the following commutative diagram:
\[\begin{tikzcd}
	{\Mod_{R_G}(\specg)} & \\
	{\Mod_{R_{H}}(\spec^{H})} & {\Mod_{\tn{coind}^{G}_H(R_{H})}(\specg).}
	\arrow["{\tn{res}^G_H}"', from=1-1, to=2-1]
	\arrow["{F_{\tn{coind}^{G}_H(R_{H})}}", from=1-1, to=2-2]
	\arrow["\sim"', from=2-2, to=2-1]
\end{tikzcd}\]
When $H$ additionally is of finite index (i.e., it is an open subgroup), then the algebra object $\tn{coind}^{G}_H(R_{H})$ identifies with $\bD(G/H)\otimes R_G$.
\end{lem}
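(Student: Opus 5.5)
The plan is to run the standard monadicity / Barr–Beck argument for the adjunction $(\tn{res}^G_H,\tn{coind}^G_H)$, exactly as in the finite case \cite[Proposition 3.9]{bchnl25}, and then to check that the needed inputs also hold for a closed subgroup of a profinite group.

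\textbf{Step 1: a projection formula.} For $X\in\Mod_{R_G}(\specg)$ and $Y\in\Mod_{R_H}(\spec^H)$, the canonical map
$$X\otimes_{R_G}\tn{coind}^G_H(Y)\to \tn{coind}^G_H(\tn{res}^G_H(X)\otimes_{R_H}Y)$$
should be an equivalence.
\begin{enumerate}[(a)]
\item Both sides preserve colimits in $X$. This needs $\tn{coind}^G_H$ to preserve colimits, which I get by realising it as restriction along $\overl{\tn{res}}^G_H$ in the spectral Mackey model of \Cref{rem:changeofgroupspectralmackey}. That model directly describes $\tn{coind}^G_H$ as $(\overl{\tn{res}}^G_H)^*$, which is a precomposition and hence colimit preserving.
\item By (a) it suffices to check $X=\Sigma^\infty_+G/U\otimes R_G$ for $U$ open. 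These are dualisable, and the projection formula holds for dualisable $X$ because $\tn{res}^G_H$ is symmetric monoidal.
\end{enumerate}
Applying the same formula with $Y=R_H$ gives $\tn{coind}^G_H\tn{res}^G_H(X)\simeq X\otimes_{R_G}\tn{coind}^G_H(R_H)$.

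\textbf{Step 2: the comparison functor.} The right adjoint $\tn{coind}^G_H$ is lax monoidal, so it lifts to a functor
$$\Mod_{R_H}(\spec^H)\to \Mod_{\tn{coind}^G_H(R_H)}(\specg).$$
Its left adjoint is $M\mapsto \tn{res}^G_H(M)\otimes_{\tn{res}\,\tn{coind}(R_H)}R_H$, as in \Cref{rec:modules}. This left adjoint is symmetric monoidal and composes with the free functor to give $\tn{res}^G_H$, which yields the commutative triangle.

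\textbf{Step 3: equivalence via Barr–Beck–Lurie.} The lifted functor is an equivalence if $\tn{coind}^G_H$ is conservative and preserves colimits; Step 1 then identifies the monad $\tn{coind}\,\tn{res}$ with $(-)\otimes \tn{coind}^G_H(R_H)$.
\begin{enumerate}[(a)]
\item Colimit preservation was established in Step 1(a).
\item Conservativity is the main obstacle. Its left adjoint $\tn{res}^G_H$ has image containing a set of generators of $\Mod_{R_H}(\spec^H)$: every orbit $H/K$ with $K$ open in $H$ is a retract, indeed a summand, of $\tn{res}^G_H(G/U)$ for suitable open $U\le G$. To see this, choose $U$ open in $G$ with $U\cap H\subseteq K$, possible since $H$ carries the subspace topology and the $N_i$ form a basis, and decompose $G/U$ into $H$-orbits. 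Because the restricted objects generate, $\Map(\tn{res}^G_H(\text{gen}),Y)=0$ forces $Y=0$, so $\tn{coind}^G_H$ is conservative.
\end{enumerate}

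\textbf{Step 4: open subgroups.} Finally, for $H$ open, \ref{eq:dualcoind}-type reasoning identifies $\tn{coind}^G_H(R_H)\simeq \bD(G/H)\otimes R_G$, using the same projection formula with $X=R_G$ and the Wirthmüller equivalence $\tn{ind}\simeq\tn{coind}$. Alternatively one can reduce to the finite quotient $G_i$ for $N_i\subseteq H$ and invoke \Cref{rec:coinddual} together with the fact that inflation is symmetric monoidal and commutes with duals of orbits.
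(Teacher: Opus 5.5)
Your proposal is correct and follows the same overall route as the paper. The paper simply invokes \cite[Proposition 3.9]{bchnl25}: the projection formula (your Step~1, which is the standard argument behind \cite[Lemma 3.9]{bs20} and \cite[Theorem 1.3]{bds16}) combined with the monadicity criterion \cite[Proposition 5.29]{mnn17} (your Steps~2--3). It then notes that in the profinite case $\tn{coind}^G_H$ still preserves colimits and is conservative.

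The only genuinely different ingredient is conservativity. The paper reduces to finite quotients: each $\tn{coind}^{G_i}_{HN_i/N_i}$ is conservative, and the categorical fixed point functors $(-)^{H\cap N_i}$ are jointly conservative on $\spec^H$. This tacitly uses that coinduction is compatible with these fixed points. You instead argue directly that every compact generator $\Sigma^\infty_+H/K\otimes R_H$ is a summand of a restricted orbit, so that $\tn{coind}^G_H(Y)\simeq 0$ forces $Y\simeq 0$. This avoids any base-change compatibility between coinduction and fixed points and is more self-contained.

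There is one slip in that step: choosing $U$ open with $U\cap H\subseteq K$ does not suffice. The $H$-orbits of $G/U$ are the $H/(H\cap gUg^{-1})$, and none of them need be $H/K$. For instance, for $U=N_i$ with $H\cap N_i\subsetneq K$, all of them are $H/(H\cap N_i)$. In that case $\Sigma^\infty_+H/K$ is not a retract of a sum of copies of $\Sigma^\infty_+H/(H\cap N_i)$, since the $K$-geometric fixed points of the latter vanish while those of the former do not.

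What you need is $U\cap H=K$ exactly. Take $U=KN_i$ for any $i$ with $H\cap N_i\subseteq K$. If $h=kn\in H$ with $k\in K$ and $n\in N_i$, then $n=k^{-1}h\in H\cap N_i\subseteq K$, so $h\in K$. The $H$-orbit of $eU$ is then exactly $H/K$, and the rest of your argument goes through unchanged.
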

\begin{proof}
This is shown as \cite[Proposition 3.9]{bchnl25} by combining \cite[Lemma 3.9]{bs20} and \cite[Proposition 5.29]{mnn17}. \\
Note that the coinduction functor $\tn{coind}^G_H:\spec^{H} \to \specg$ still preserves colimits since its left adjoint $\tn{res}^G_H$ preserves compact objects (see also the description given in \Cref{rem:changeofgroupspectralmackey}). It is conservative since all $\tn{coind}^{G_i}_{HN_i/N_i}:\spec^{HN_i/N_i} \to \spec^{G_i}$ are, and the family of categorical fixed point functors $(-)^{H\cap N_i}: \spec^{H} \to \spec^{HN_i/N_i}$ indexed by $i \in I$ is jointly conservative. The description of the algebra object in the finite index case is the same as in \Cref{cons:colimitoforbits}.
\end{proof}

\begin{rem}
The previous lemma is a striking difference to the case where $G$ is a compact Lie group and $H$ is an arbitrary closed subgroup, here $\tn{coind}^G_H$ in general fails to be conservative \cite[Warning p.1034]{mnn17}. In the compact Lie case the restriction $\tn{res}^G_H$ always has a left adjoint $\tn{ind}^G_H$ that is related to $\tn{coind}^G_H$ via a Wirthmüller isomorphism. On the contrary, in the profinite case $\tn{res}^G_H$ for $H$ again an arbitrary closed subgroup might not have a left adjoint, although its right adjoint $\tn{coind}^G_H$ always preserves colimits, cf. \Cref{rem:changeofgroupspectralmackey}. This goes in line with the fact that tensoring with $\tn{coind}^{G}_H(R_{H})$, or equivalently applying $\tn{coind}^G_H\tn{res}^G_H$, might not preserve limits - we will see below that there is generally no reason to expect that $\tn{coind}^{G}_H(R_{H})$ is compact (and hence neither dualisable).
\end{rem}

When we work with an arbitrary closed subgroup $H \leq G$, the only caveat of the previous lemma is that $\tn{coind}^G_H(R_H)$ is not a very explicit description of the algebra object we are working with. For $H=1$ we will give a description of the functor $\tn{coind}^G_1\tn{res}^G_1$ as tensoring with the algebra object $A_R$ from \Cref{cons:colimitoforbits}. We first prove a general statement about filtered colimits of monadic adjunctions.\\
Recall from \cite[Construction 5.23]{mnn17} (or from \Cref{rec:modules}) that if we have an adjunction $(L,R): \cM \rightleftarrows \cN$ of presentably symmetric monoidal $\infty$-categories where $L$ is symmetric monoidal, then the right adjoint applied to the unit $R(\bbone_{\cN})$ has a commutative algebra structure and there is an induced adjunction
$$(\bar{L},\bar{R}): \Mod_{R(\bbone_{\cN})}(\cM) \rightleftarrows \cN$$
for which the left adjoint $\bar{L}$ factors as $\Mod_{R(\bbone_{\cN})}(\cM) \stackrel{L}{\longrightarrow} \Mod_{LR(\bbone_{\cN})}(\cN) \stackrel{\varepsilon_!}{\longrightarrow} \cN$,
where $\varepsilon: LR(\bbone_{\cN}) \to \bbone_{\cN}$ is the counit map. We will say that the adjunction $(L,R)$ is \emph{monadic} if the induced adjunction $(\bar{L},\bar{R})$ is an equivalence.

\begin{lem}\label{lem:filteredcolimitofmonadicadjunctions}
Let $\cC$ be a filtered $\infty$-category that has an initial object $X_0$ and let $\cE: \cC \to \calg(\prl)$ be a functor. Write $f_{X} = \cE(X_0 \to X)$ and $g_X$ for its right adjoint. Assume that for every $X \in \cC$ the adjunction 
$$(f_{X},g_{X}): \cE(X_0) \rightleftarrows \cE(X)$$
is monadic. Set $\cE_{\infty} = \tn{colim}_{\cC} \; \cE$, where the colimit is taken in $\calg(\prl)$. Then the induced adjunction
$$(f_{{\infty}},g_{\infty}): \cE(X_0) \rightleftarrows \cE_{\infty}$$
is monadic. In particular, there is an equivalence $\cE_{\infty} \simeq \Mod_{g_{\infty}(\bbone_{\cE_{\infty}})}(\cE(X_0))$, and the commutative algebra object $g_{\infty}(\bbone_{\cE_{\infty}})$ is equivalent to the filtered colimit of the $g_{X}(\bbone_{\cE(X)})$ in $\calg(\cE(X_0))$.
\end{lem}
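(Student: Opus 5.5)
The plan is to pass everything through module categories over the fixed base $\cE(X_0)$. By monadicity, for each $X \in \cC$ we have $\cE(X) \simeq \Mod_{A_X}(\cE(X_0))$ with $A_X \defeq g_X(\bbone_{\cE(X)}) \in \calg(\cE(X_0))$. Under this identification $f_X$ corresponds to the free functor $F_{A_X}$. First I will upgrade this pointwise identification to a natural one. Consider the functor
\[\cC \to \calg(\cE(X_0)), \qquad X \mapsto A_X .\]
For $X \to Y$ in $\cC$, write $h = \cE(X \to Y)$. Then $f_Y \simeq h f_X$, so $g_Y \simeq g_X h^R$. Applying the unit of $(h,h^R)$ to $\bbone_{\cE(X)}$ and then $g_X$ gives an algebra map $A_X \to g_X h^R(\bbone_{\cE(Y)}) \simeq A_Y$. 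To make this coherent, I would use that $\cE$ lands in $\calg(\prl)_{\cE(X_0)/}$ because $X_0$ is initial. The functor $\calg(\cE(X_0)) \to \calg(\prl)_{\cE(X_0)/}$, $A \mapsto \Mod_A(\cE(X_0))$, is fully faithful on the image of the monadic objects; its partial inverse is $(\cE(X_0) \to \cD) \mapsto$ (right adjoint applied to the unit). This gives $\cE \simeq \Mod_{A_\bullet}(\cE(X_0))$ as functors $\cC \to \calg(\prl)_{\cE(X_0)/}$, via \cite[Construction 5.23]{mnn17}. The colimit of $\cE$ in $\calg(\prl)$ agrees with the one in the undercategory, since $\cC$ is filtered, hence weakly contractible.

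Second, set $A_\infty \defeq \tn{colim}_{\cC} A_X$ in $\calg(\cE(X_0))$. I will show that the induced functor
\[\Phi: \tn{colim}_X \Mod_{A_X}(\cE(X_0)) \to \Mod_{A_\infty}(\cE(X_0)),\]
assembled from the base changes along $A_X \to A_\infty$, is an equivalence. Filtered colimits in $\calg(\prl)$ are computed in $\prl$, so the source is the $\prr$-limit along the forgetful (restriction of scalars) functors. Under this description $\Phi$ corresponds to the functor
\[\Mod_{A_\infty} \to \lim_X \Mod_{A_X}, \qquad M \mapsto (\text{res}_{A_X}M)_X .\]
To check this is an equivalence I would use the explicit description of limits in $\prr$ as compatible families and the Lurie--Barr--Beck argument over $\cE(X_0)$. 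An object of the limit is an object $M$ of $\cE(X_0)$ with compatible $A_X$-module structures, that is, a module over the diagram $(A_X)$. Since the tensor product commutes with filtered colimits, $A_\infty^{\otimes n} \simeq \tn{colim}_X A_X^{\otimes n}$ (diagonally, because $\cC$ is filtered and so $\cC \to \cC^n$ is cofinal). Hence the free-module monads $T_X = A_X \otimes -$ satisfy $\tn{colim}_X T_X \simeq T_\infty$. The key computation is then the following. The composite left adjoint $\cE(X_0) \to \lim_X \Mod_{A_X}$ has a right adjoint $g_\infty$ that is conservative, because the family of forgetful functors jointly detects equivalences and each is conservative. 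It also preserves colimits, because each $g_X$ preserves colimits: $g_X \simeq U_{A_X}$ commutes with colimits. Its monad is $g_\infty f_\infty \simeq \tn{colim}_X g_X f_X \simeq A_\infty \otimes -$. The colimit formula $g_\infty f_\infty \simeq \tn{colim}_X g_X f_X$ for filtered colimits in $\prl$ is the standard description of a filtered colimit of presentable categories, with mapping spaces given as colimits \cite[Lemma 7.3.5.1 style]{lur17}. By Barr--Beck--Lurie, $\cE_\infty \simeq \Mod_{A_\infty}(\cE(X_0))$, and $g_\infty(\bbone) \simeq A_\infty$.

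The main obstacle is justifying $g_\infty f_\infty \simeq \tn{colim}_X g_X f_X$. Right adjoints out of a $\prl$-colimit need not commute with colimits in general. I would handle this via the formula
\[g_\infty f_\infty \simeq \tn{colim}_{X} g_X f_X,\]
valid for filtered diagrams in $\prl$ whose transition right adjoints preserve filtered colimits. This hypothesis holds here, since the restrictions of scalars do. I would deduce the formula from \cite[Corollary 5.5.3.4, 5.5.7]{lur09}, or by first reducing to compactly generated pieces. Once that is in place, the remaining identification of the symmetric monoidal structures follows from the fact that $\bar{f}_\infty$ is symmetric monoidal.
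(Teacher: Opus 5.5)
Your proof is correct in outline, and its first half is the paper's. The paper also invokes \cite[Theorem 4.8.5.11]{lur17} to rewrite $\cE$ naturally as $X\mapsto\Mod_{A_X}(\cE(X_0))$: this is full faithfulness of $A\mapsto\Mod_A(\cE(X_0))$ into $\calg(\prl)_{\cE(X_0)/}$, with right adjoint $\cD\mapsto g(\bbone_{\cD})$, and monadicity is exactly what places each $\cE(X)$ in the essential image.

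The routes diverge when computing the colimit. The paper simply cites \cite[Lemma 3.5.6]{agv22}, which identifies a filtered colimit of module categories along extensions of scalars with modules over the colimit algebra. You reprove this by Barr--Beck--Lurie. Your two checks on $g_\infty$ are fine:
\begin{itemize}
\item it is conservative, via the $X_0$-component and conservativity of each $g_X$;
\item it preserves colimits, because the transition restrictions of scalars preserve colimits, so the $\prr$-limit is also a $\prl$-limit and colimits are computed componentwise.
\end{itemize}
Everything then rests on $g_\infty f_\infty\simeq\tn{colim}_X g_Xf_X$. The paper records this only afterwards, in \Cref{rem:monads}, as a \emph{consequence} of the lemma. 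So you run the logic in the opposite direction, and your key input is essentially as strong as the cited lemma. Your route gives a self-contained argument in which conservativity and the monad formula are explicit; the paper's is brief because it outsources exactly this coherence to \cite{agv22}.

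Two steps need tightening.

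First, your justification of the colimit formula does not cover the lemma's generality. Here $\cE(X_0)$ is an arbitrary presentably symmetric monoidal $\infty$-category, so there are no compactly generated pieces to reduce to. Likewise, ``mapping spaces are filtered colimits'' only describes maps out of compact objects in the compactly generated, compact-preserving case. What does work is the following. Write $h_{YZ}\defeq\cE(Y\to Z)$ with right adjoint $h^R_{YZ}$. Using that $\cC$ is filtered and that the $h^R_{YZ}$ preserve filtered colimits, show that $f_\infty(M)$ is the compatible family $Y\mapsto\tn{colim}_{Z\in\cC_{Y/}}h^R_{YZ}h_{X_0Z}(M)$ in the $\prr$-limit. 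Its $X_0$-component is $\tn{colim}_Z A_Z\otimes M$.

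Second, an equivalence of underlying endofunctors $g_\infty f_\infty\simeq A_\infty\otimes-$ does not by itself identify the monads. Instead, proceed as follows.
\begin{itemize}
\item The maps $A_X\to g_\infty(\bbone_{\cE_\infty})$ induce an algebra map $A_\infty\to g_\infty(\bbone_{\cE_\infty})$.
\item Take the comparison functor $\Mod_{g_\infty(\bbone_{\cE_\infty})}(\cE(X_0))\to\cE_\infty$ of \cite[Construction 5.23]{mnn17}. Its right adjoint lies over $\cE(X_0)$, and both sides are monadic over $\cE(X_0)$. So it suffices that $g_\infty(\bbone_{\cE_\infty})\otimes M\to g_\infty f_\infty(M)$ is an equivalence.
\item Precomposed with $A_\infty\otimes M\to g_\infty(\bbone_{\cE_\infty})\otimes M$, this map is the colimit of the maps $A_X\otimes M\to g_Xf_X(M)$, followed by $\tn{colim}_X g_Xf_X(M)\to g_\infty f_\infty(M)$. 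The former are equivalences by monadicity, and the latter by the first point.
\item Taking $M=\bbone$ shows $A_\infty\to g_\infty(\bbone_{\cE_\infty})$ is an equivalence of commutative algebras. Two-out-of-three then gives the general case.
\end{itemize}
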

\begin{proof}
This is an application of \cite[Lemma 3.5.6]{agv22} which computes filtered colimits of module categories with extensions of scalars between them. \\
We write $A_X$ for $g_{X}(\bbone_{\cE(X)}) \in \calg(\cE(X_0))$. The assignment $X \mapsto A_X$ determines a filtered diagram $\epsilon: \cC \to \calg(\cE(X_0))$, and we write $A_{\infty}$ for its colimit. Then by \cite[Theorem 4.8.5.11]{lur17} the functor $\cE$ identifies with the composition
$$\cC \stackrel{\epsilon}{\longrightarrow} \calg(\cE(X_0)) \stackrel{\what{\Theta}_*}{\longrightarrow} \calg(\prl)_{\cE(X_0)/} \stackrel{\tn{fgt}}{\longrightarrow} \calg(\prl), \; X \mapsto \Mod_{A_X}(\cE(X_0)),$$
where $\what{\Theta}_*$ is as in \cite[Notation 4.8.5.10]{lur17}. Applying \cite[Lemma 3.5.6]{agv22} to the constant functor $X \mapsto \cE(X_0)$ and the diagram of algebra objects $\epsilon$ gives an equivalence
$$\cE_{\infty} \simeq \Mod_{A_{\infty}}(\cE(X_0))$$
under which the left adjoint $f_{\infty}: \cE(X_0) \to \cE_{\infty}$ identifies with the free module functor $A_{\infty} \otimes (-)$. In particular, by applying its right adjoint $g_{\infty}$ to the unit of $\cE_{\infty}$ we see that $g_{\infty}(\bbone_{\cE_{\infty}}) \simeq A_{\infty}$ in $\calg(\cE_{\infty})$. This concludes the proof.
\end{proof}

\begin{rem}\label{rem:monads}
Since in the previous lemma the monads $g_Xf_X$ and $g_{\infty}f_{\infty}$ in $\tn{End}(\cE(X_0))$ are determined by the algebra objects $g_{X}(\bbone_{\cE(X)})$ and $g_{\infty}(\bbone_{\cE_{\infty}})$ they tensor with, the lemma implies that the monad $g_{\infty}f_{\infty}$ is the colimit of the monads $g_Xf_X$ taken in endofunctors of $\cE(X_0)$.
\end{rem}

\begin{lem}\label{lem:spectraascolimit}
There is a symmetric monoidal equivalence 
$$\tn{colim}_i \; \Mod_{R_{N_i}}(\spec^{N_i}) \simeq \Mod_{R}(\spec),$$
where the colimit is taken in $\calg(\prlst)$ along the restrictions $\tn{res}^{N_j}_{N_i}$ for $N_i \subseteq N_j$.
\end{lem}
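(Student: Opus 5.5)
We plan to apply \Cref{lem:filteredcolimitofmonadicadjunctions} with initial object coming from $N_{i_0} = G$ itself. Concretely, take $\cC = I$ (the cofiltered system of open normal subgroups, viewed so that $N_i \subseteq N_j$ gives a morphism $j \to i$); $G$ is an open normal subgroup, so $I$ contains an initial object $X_0$ corresponding to $N_{X_0}=G$. The functor $\cE: \cC \to \calg(\prl)$ sends $i \mapsto \Mod_{R_{N_i}}(\spec^{N_i})$, with transition functors the symmetric monoidal restrictions $\tn{res}^{N_j}_{N_i}$. This functoriality comes from restricting the functor $\spec^{(-)}$ on $\glo^{\op}$, and passing to modules over the inflated coefficients, which are compatible under restriction since $\tn{res}\,\tn{infl}\simeq\tn{infl}$. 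The adjunction $(f_i,g_i)$ is then $(\tn{res}^G_{N_i},\tn{coind}^G_{N_i})$ on module categories. By \Cref{lem:resasbasechangeprofinite} (or \Cref{lem:resasbasechange} applied to the finite quotient level), each of these is monadic, with algebra $\tn{coind}^G_{N_i}(R_{N_i}) \simeq \bD(G/N_i)\otimes R_G$. Hence \Cref{lem:filteredcolimitofmonadicadjunctions} gives
$$\tn{colim}_i \, \Mod_{R_{N_i}}(\spec^{N_i}) \simeq \Mod_{A_R}(\Mod_{R_G}(\specg)),$$
with $A_R$ the algebra of \Cref{cons:colimitoforbits}. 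The transition maps there agree with those induced by the units of the restriction–coinduction adjunctions, so the colimit algebra is indeed $A_R$.

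It remains to identify $\Mod_{A_R}(\Mod_{R_G}(\specg))$ with $\Mod_R(\spec)$. For this, apply \Cref{lem:resasbasechangeprofinite} with $H=1$, which gives $\Mod_{\tn{coind}^G_1(R)}(\Mod_{R_G}(\specg)) \simeq \Mod_R(\spec)$ via $\tn{res}^G_1$. The key claim is therefore an equivalence of commutative algebras $A_R \simeq \tn{coind}^G_1(R)$. There is a canonical comparison map $A_R = \tn{colim}_i\,\tn{coind}^G_{N_i}(R_{N_i}) \to \tn{coind}^G_1(R)$, obtained from the units $R_{N_i}\to \tn{coind}^{N_i}_1(R)$ and $\tn{coind}^G_1 \simeq \tn{coind}^G_{N_i}\tn{coind}^{N_i}_1$. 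Because $\tn{coind}^G_1$ preserves colimits (\Cref{lem:resasbasechangeprofinite}), it suffices to check this map on categorical fixed points $(-)^{H}$ for open $H$, i.e.\ on homotopy groups of genuine fixed points. Mapping out of a compact generator $\Sigma^\infty_+G/H\otimes R_G$ commutes with the filtered colimit, giving $\tn{colim}_i\, \map(\Sigma^\infty_+ \tn{res}^G_{N_i}G/H, R)$ on the left. Since $G/H$ is eventually a disjoint union of $N_i$-fixed points (each orbit $N_i$-trivial once $N_i\subseteq H$), this becomes $\tn{colim}_i \prod_{G/H} R$-type terms, and everything stabilises. On the right we get $\map(\Sigma^\infty_+\tn{res}^G_1 G/H, R)=\prod_{G/H}R$. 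Both sides are thus $\prod_{G/H} R$, and the comparison map is the identity.

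The main obstacle is the step just described: making the identification $A_R\simeq \tn{coind}^G_1(R)$ precise as commutative algebras, not just as underlying objects, and verifying that the comparison map on fixed points is really the obvious one. An alternative that avoids this step is to observe that $\tn{res}^G_1$ restricted to each $N_i$ gives a compatible cocone $\Mod_{R_{N_i}}(\spec^{N_i})\to\Mod_R(\spec)$. One then checks, via the monadic description, that the induced functor from the colimit is an equivalence by comparing the monads $\tn{colim}_i\,\tn{coind}^G_{N_i}\tn{res}^G_{N_i}$ and $\tn{coind}^G_1\tn{res}^G_1$, using \Cref{rem:monads}. Conservativity and colimit preservation reduce this to the fixed-point computation above on compact generators.
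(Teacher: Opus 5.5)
\paragraph{The gap.} The key fixed-point computation is wrong. For $N_i\subseteq H$, adjunction gives
\[
\map_{\Mod_{R_G}(\specg)}\bigl(\Sigma^\infty_+G/H\otimes R_G,\tn{coind}^G_{N_i}(R_{N_i})\bigr)\simeq\map_{\spec^{N_i}}\bigl(\Sigma^\infty_+\tn{res}^G_{N_i}(G/H),R_{N_i}\bigr)\simeq\prod_{G/H}(R_{N_i})^{N_i},
\]
where $(R_{N_i})^{N_i}=\map_{\spec^{N_i}}(\bS_{N_i},R_{N_i})$ is the \emph{genuine} fixed-point spectrum of the inflated ring. This is not $R$, because categorical fixed points do not undo inflation. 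At each finite quotient $K=N_i/M$, tom Dieck splitting gives summands $\Sigma^\infty_+BW_KL\otimes R$, one for each conjugacy class of subgroups $L\le K$. For instance, $\pi_0(\bS_{N_i})^{N_i}$ is the Burnside ring of the profinite group $N_i$.

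So the terms do not stabilise. At each finite stage the comparison map is $\prod_{G/H}$ of the augmentation $(R_{N_i})^{N_i}\to R$ induced by $\tn{res}^{N_i}_1$, and that map is far from an equivalence. Your claim that ``both sides are $\prod_{G/H}R$ and the comparison map is the identity'' is therefore unfounded.

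\paragraph{What is actually needed.} You must show that these augmentations exhibit $R$ as $\tn{colim}_i(R_{N_i})^{N_i}$ along restrictions. Equivalently, the map $\tn{colim}_i\Map(R_{N_i},R_{N_i})\to\Map(R,R)$ must be an equivalence. This is exactly the reduced statement \ref{eq:varphimappingspacesreduced} in the paper, and it is where the proof lives.

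The paper proves it as follows. It writes each $\Mod_{R_{N_i}}(\spec^{N_i})$ as a colimit along inflation from the finite quotients $N_iN_j/N_j$. It then swaps the two filtered colimits. For fixed $j$, these quotients become trivial once $N_i\subseteq N_j$, so the Burnside-type contributions die in the colimit. On $\pi_0$ for $R=\bS$, this is the fact that a finite $N_i$-set, restricted to a small enough $N_{i'}$, becomes its cardinality times the point.

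\paragraph{Your ``main obstacle'' is misplaced.} Upgrading to commutative algebras is harmless. One can produce an algebra map $A_R\to\tn{coind}^G_1(R)$ from the cocone $\tn{res}^{N_i}_1$ by applying right adjoints to units, and such a map is an equivalence iff its underlying map is. The colimit computation above is the real content. Your alternative, comparing the monads $\tn{colim}_i\,\tn{coind}^G_{N_i}\tn{res}^G_{N_i}$ and $\tn{coind}^G_1\tn{res}^G_1$, reduces to the same computation.

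\paragraph{The rest of your route.} Once that computation is supplied, your approach goes through. You apply \Cref{lem:filteredcolimitofmonadicadjunctions} with initial object $G$, identify $A_R\simeq\tn{coind}^G_1(R)$, and then use \Cref{lem:resasbasechangeprofinite} with $H=1$. This reverses the paper's logical order: the paper proves this lemma directly, by checking full faithfulness of the induced functor on compact generators, and only then deduces \Cref{cor:identifycolimitoforbits} from it.
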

\begin{proof}
Consider the filtered diagram 
\begin{align*}
&\Mod_{R_{N_{\bullet}}}(\spec^{N_\bullet}): I^\op \to \calg(\prlst), \; i \mapsto \Mod_{R_{N_i}}(\spec^{N_i}), \\
&(N_i \subseteq N_j) \mapsto (\tn{res}_{N_i}^{N_j}: \Mod_{R_{N_j}}(\spec^{N_j}) \to \Mod_{R_{N_i}}(\spec^{N_i})).
\end{align*}
Restricting from each $N_i$ to the trivial subgroup induces a symmetric monoidal left adjoint $\varphi: \tn{colim}_i \; \Mod_{R_{N_i}}(\spec^{N_i}) \to \Mod_{R}(\spec)$. For $X \in \Mod_{R}(\spec)$, its inflation $\tn{infl}^{N_i}_1(X)$ to $\Mod_{R_{N_i}}(\spec^{N_i})$ (for any $i$) has restriction $\tn{res}^{N_i}_1\tn{infl}^{N_i}_1(X) \simeq X$, hence $\varphi$ is essentially surjective. Since the restrictions in the colimit diagram preserve compact objects their colimit is compactly generated, and by \cite[Proposition 5.3.5.11]{lur09} it suffices to show that $\varphi$ is fully faithful when restricted to compacts. \\
The compact part of $\tn{colim}_i \; \Mod_{R_{N_i}}(\spec^{N_i})$ is $\cC \defeq \tn{colim}_i \; \Mod_{R_{N_i}}^{\omega}(\spec^{N_i})$, where the filtered colimit is computed in $\catinf$. For two objects $\bar{X}, \bar{Y} \in \cC$ we can assume without restriction that they are of the form $\varphi_k(X)$ and $\varphi_k(Y)$ respectively, where $X$ and $Y$ are compact objects of $\Mod_{R_{N_k}}(\spec^{N_k})$ for the same index $k$ and $\varphi_k$ is the structure map $\Mod_{R_{N_k}}^{\omega}(\spec^{N_k}) \to \cC$. We can furthermore reduce to the case where the objects $X$ and $Y$ are compact generators, i.e. of the form $\Sigma^{\infty}_+ N_k/H \otimes R_{N_k}$ and $\Sigma^{\infty}_+ N_k/H' \otimes R_{N_k}$ for two open subgroups $H,H' \leq N_k$. We hence consider the map
\begin{equation}\label{eq:varphimappingspaces}
\Map_{\cC}(\bar{X},\bar{Y}) \stackrel{\varphi}{\longrightarrow} \Map_{\Mod_{R}^{\omega}(\spec)}(\varphi(\bar{X}),\varphi(\bar{Y}))
\end{equation}
induced by $\varphi$ on mapping spaces. Note that the functor $(I^\op)_{k/} \to I^\op$ is cofinal \cite[Example 5.4.5.9, Lemma 5.4.5.12]{lur09}, hence by \cite[Proposition 4.1.1.8]{lur09} the left-hand side of \ref{eq:varphimappingspaces} identifies with 
$$\tn{colim}_{i \geq k} \; \Map_{\Mod_{R_{N_i}}^{\omega}(\spec^{N_i})}(\Sigma^{\infty}_+ \tn{res}^{N_k}_{N_i}(N_j/H) \otimes R_{N_i},\Sigma^{\infty}_+ \tn{res}^{N_k}_{N_i}(N_j/H') \otimes R_{N_i}),$$
where we understand $i \geq k$ as $N_i \subseteq N_k$. Now there is a $k' \geq k$ such that for all $i \geq k'$ the normal subgroup $N_i$ is contained in both $H$ and $H'$ and hence acts trivially on $N_k/H$ and $N_k/H'$. Using the same cofinality argument as before, we can thus further reduce to showing that the map
\begin{equation}\label{eq:varphimappingspacesreduced}
\tn{colim}_{i} \; \Map_{\Mod_{R_{N_i}}^{\omega}(\spec^{N_i})}(R_{N_i},R_{N_i}) \stackrel{\varphi}{\longrightarrow} \Map_{\Mod_{R}^{\omega}(\spec)}(R,R)
\end{equation}
is an equivalence. Since each $\Mod_{R_{N_i}}^{\omega}(\spec^{N_i})$ is itself a colimit along inflation, the left-hand side of \ref{eq:varphimappingspacesreduced}, where we write $N_{ij} \defeq N_iN_j/N_j$, identifies with
\begin{align}
&\colimit{i, \, \tn{res}} \; \colimit{j, \, \tn{infl}} \; \Map_{\Mod_{R_{N_{ij}}}^{\omega}(\spec^{N_{ij}})}(R_{N_{ij}},R_{N_{ij}}) \notag \\
\simeq \; &\colimit{j, \, \tn{infl}} \; \colimit{i, \, \tn{res}} \; \Map_{\Mod_{R_{N_{ij}}}^{\omega}(\spec^{N_{ij}})}(R_{N_{ij}},R_{N_{ij}}). \label{eq:finalmappingspace}
\end{align}
But for each $j$, there is an $i'$ such that $N_i \subseteq N_j$ for all $i \geq i'$, and in this case $N_{ij}=N_iN_j/N_j$ is trivial. Using cofinality once more, this shows that \ref{eq:finalmappingspace} is given by $\Map_{\Mod_{R}^{\omega}(\spec)}(R,R)$, and the map \ref{eq:varphimappingspacesreduced} is an equivalence, as desired.
\end{proof}

\begin{cor}\label{cor:identifycolimitoforbits}
There is an equivalence $\tn{coind}^G_1(R) \simeq A_R$ in $\calg(\Mod_{R_G}(\specg))$.
\end{cor}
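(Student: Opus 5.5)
The plan is to apply \Cref{lem:filteredcolimitofmonadicadjunctions} to a filtered diagram built from the restrictions to the open normal subgroups $N_i$. Let $\cC = I^\op$, adjoining an initial object $X_0$ if needed; since $G$ itself is an open normal subgroup, we may take $N_{i_0}=G$ as the initial index. The functor $\cE: \cC \to \calg(\prl)$ is the one of \Cref{lem:spectraascolimit}:
$$i \mapsto \Mod_{R_{N_i}}(\spec^{N_i}),$$
with transition functors the restrictions $\tn{res}^{N_j}_{N_i}$. Then $\cE(X_0)=\Mod_{R_G}(\specg)$ and $f_i = \tn{res}^G_{N_i}$, with right adjoint $g_i=\tn{coind}^G_{N_i}$.

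\textbf{Monadicity.} For each $i$ the adjunction $(\tn{res}^G_{N_i},\tn{coind}^G_{N_i})$ is monadic in the sense above. This is exactly the content of \Cref{lem:resasbasechangeprofinite} for the closed (indeed open) subgroup $N_i$. The induced comparison $\Mod_{\tn{coind}^G_{N_i}(R_{N_i})}(\specg)\to \Mod_{R_{N_i}}(\spec^{N_i})$ is the equivalence there, and the algebra $g_i(\bbone)=\tn{coind}^G_{N_i}(R_{N_i})$ is the one appearing in \Cref{cons:colimitoforbits}.

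\textbf{Identifying the colimit.} \Cref{lem:filteredcolimitofmonadicadjunctions} says that the induced adjunction $(f_\infty,g_\infty)$ between $\Mod_{R_G}(\specg)$ and $\cE_\infty=\tn{colim}_i\,\Mod_{R_{N_i}}(\spec^{N_i})$ is monadic. It also gives
$$g_\infty(\bbone_{\cE_\infty}) \simeq \tn{colim}_i\,\tn{coind}^G_{N_i}(R_{N_i})$$
in $\calg(\Mod_{R_G}(\specg))$. By \Cref{lem:spectraascolimit}, $\cE_\infty\simeq \Mod_R(\spec)$ symmetric monoidally, and under this equivalence $f_\infty$ becomes $\tn{res}^G_1$. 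The reason is that the equivalence $\varphi$ was built from the restrictions $\tn{res}^{N_i}_1$, and $\tn{res}^{N_i}_1\tn{res}^G_{N_i}\simeq \tn{res}^G_1$ compatibly in $i$. Hence $g_\infty\simeq \tn{coind}^G_1$ by uniqueness of adjoints, and $g_\infty(\bbone)\simeq \tn{coind}^G_1(R)$ as commutative algebras, since it is the lax monoidal right adjoint of a symmetric monoidal functor applied to the unit.

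\textbf{Main obstacle.} It remains to check that the transition maps of the diagram $i\mapsto g_i(\bbone)$ produced by the lemma agree with those of \Cref{cons:colimitoforbits}, namely the maps induced by units $R_{N_j}\to \tn{coind}^{N_j}_{N_i}\tn{res}^{N_j}_{N_i}R_{N_j}$. This is where I expect the main difficulty. In \Cref{lem:filteredcolimitofmonadicadjunctions} the algebra map $A_j\to A_i$ is $g_j$ applied to the unit $\bbone\to g_{ji}f_{ji}\bbone$ of the transition adjunction, using $g_i\simeq g_j g_{ji}$. That is precisely the recipe in \Cref{cons:colimitoforbits}, so the two diagrams agree as functors $I^\op\to\calg$; making this coherent should follow from functoriality of $A_X=g_X(\bbone)$ in the lax monoidal right adjoints. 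Taking colimits then gives $A_R\simeq \tn{coind}^G_1(R)$.
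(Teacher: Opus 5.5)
Your proposal is correct and is exactly the paper's argument: apply \Cref{lem:filteredcolimitofmonadicadjunctions} to the restriction diagram of \Cref{lem:spectraascolimit}, with initial index $N_{i_0}=G$, and use \Cref{lem:resasbasechangeprofinite} for monadicity. The two points you spell out are left implicit in the paper, and your reasoning for both is sound: that the structure map at the initial index becomes $\tn{res}^G_1$ under the equivalence, and that the transition maps $g_j(\bbone\to g_{ji}f_{ji}\bbone)$ coincide with those of \Cref{cons:colimitoforbits}.
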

\begin{proof}
Apply \Cref{lem:filteredcolimitofmonadicadjunctions} to the diagram of \Cref{lem:spectraascolimit}, using \Cref{lem:resasbasechangeprofinite}.
\end{proof}

\begin{cor}\label{cor:endofunctors}
There are equivalences of endofunctors of $\Mod_{R_G}(\specg)$
$$(A_R \otimes_{R_G} -) \simeq \tn{coind}^G_1\tn{res}^G_1 \simeq \tn{colim}_i \, \tn{coind}^G_{N_i}\tn{res}^G_{N_i}.$$
\end{cor}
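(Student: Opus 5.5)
The plan is to derive both equivalences from \Cref{lem:resasbasechangeprofinite} together with \Cref{cor:identifycolimitoforbits} and \Cref{rem:monads}. The key point is that the composite of a free-forgetful adjunction for an algebra $B \in \calg(\Mod_{R_G}(\specg))$ is the functor $B \otimes_{R_G} -$ (\Cref{rec:modules}).

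\textbf{First equivalence.} Apply \Cref{lem:resasbasechangeprofinite} with $H=1$. Under the symmetric monoidal equivalence $\Mod_{\tn{coind}^G_1(R)}(\specg) \simeq \Mod_R(\spec)$, the restriction $\tn{res}^G_1$ corresponds to the free functor $F_{\tn{coind}^G_1(R)}$. Passing to right adjoints, $\tn{coind}^G_1$ corresponds to the forgetful functor $U_{\tn{coind}^G_1(R)}$. Hence
$$\tn{coind}^G_1\tn{res}^G_1 \simeq U_{\tn{coind}^G_1(R)}F_{\tn{coind}^G_1(R)} \simeq \tn{coind}^G_1(R) \otimes_{R_G} -.$$
\Cref{cor:identifycolimitoforbits} gives $\tn{coind}^G_1(R) \simeq A_R$ as commutative algebras, which yields $(A_R \otimes_{R_G} -) \simeq \tn{coind}^G_1\tn{res}^G_1$.

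\textbf{Second equivalence.} By the same argument for each open normal $N_i$, we get $\tn{coind}^G_{N_i}\tn{res}^G_{N_i} \simeq \tn{coind}^G_{N_i}(R_{N_i}) \otimes_{R_G} -$. These identifications should be natural in $i$. To see this, view the system of adjunctions $(\tn{res}^G_{N_i},\tn{coind}^G_{N_i})$ as arising from the functor $I^\op \to \calg(\prl)$ of \Cref{lem:spectraascolimit}, extended by the initial object $\Mod_{R_G}(\specg)$ (i.e. $G$ itself). Then \Cref{lem:filteredcolimitofmonadicadjunctions} and \Cref{rem:monads} apply: the monad $\tn{coind}^G_1\tn{res}^G_1$ is the colimit in $\tn{End}(\Mod_{R_G}(\specg))$ of the monads $\tn{coind}^G_{N_i}\tn{res}^G_{N_i}$, because all of them are given by tensoring with algebras and these algebras form the diagram whose colimit is $A_R$.

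An alternative, more hands-on route is also available. Since $\otimes_{R_G}$ commutes with colimits in each variable, $(\tn{colim}_i B_i)\otimes_{R_G} - \simeq \tn{colim}_i (B_i \otimes_{R_G} -)$ pointwise, and colimits in functor categories are computed pointwise. It therefore suffices that the transition maps of $i \mapsto \tn{coind}^G_{N_i}\tn{res}^G_{N_i}$, induced by units of $(\tn{res}^{N_j}_{N_i},\tn{coind}^{N_j}_{N_i})$, match the algebra maps of \Cref{cons:colimitoforbits} under the projection formula $\tn{coind}^G_{N_i}(R_{N_i}) \otimes_{R_G} X \simeq \tn{coind}^G_{N_i}\tn{res}^G_{N_i}(X)$.

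\textbf{Main obstacle.} The hardest step is this coherence: the compatibility of the projection-formula equivalences with the transition maps as $i$ varies. I would handle it by the monadic-adjunction formalism rather than by hand, since there the identification with tensoring by $g_X(\bbone)$ is functorial in $X$ by construction (\cite[Theorem 4.8.5.11]{lur17}). This yields the colimit statement directly.
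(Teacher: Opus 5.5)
Your proposal is correct and is essentially the paper's argument: the paper also just combines \Cref{lem:resasbasechangeprofinite}, \Cref{cor:identifycolimitoforbits} and \Cref{rem:monads}, where the latter two rest on \Cref{lem:filteredcolimitofmonadicadjunctions} applied to the diagram of \Cref{lem:spectraascolimit}, exactly as you describe. One small point: no extension of the diagram is needed, since $G$ is itself one of the open normal subgroups $N_i$ and is already the initial object of $I^\op$.
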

\begin{proof}
This follows from \Cref{lem:resasbasechangeprofinite}, \Cref{cor:identifycolimitoforbits} and \Cref{rem:monads}.
\end{proof}

\begin{ex}
Using adjunction and \Cref{cor:identifycolimitoforbits} we obtain an equivalence
$$\Map_{\spec}(\bS,\bS) \simeq \tn{colim}_i \; \Map_{\specg}(\Sigma^{\infty}_+ G/N_i,\bS_G),$$
which on $\pi_0$ recovers the fact that the filtered colimit of the group completions of the monoids of finite $N_i$-sets under disjoint union $\pi_0^{N_i}(\bS_G) \cong \bA_{\bZ}(G/N_i)$ is $\bZ$.
\end{ex}

Let us fix some terminology for t-structures we will use throughout this paper.

\begin{defi}\label{def:hypercompletetstructure}
Let $\cC$ be a stable $\infty$-category with a t-structure $(\cC_{\geq 0},\cC_{\leq 0})$ and denote by $\pi_n(-): \cC \to \cC^{\heartsuit}$ its integer indexed homotopy object functors. We say that an object $X \in \cC$ is
\begin{enumerate}
    \item \emph{acyclic} if $\pi_n(X) = 0$ for all $n \in \bZ$,
    \item \emph{hypercomplete} if $\Map_{\cC}(Y,X) \simeq \ast$ for all acyclic $Y \in \cC$.
\end{enumerate}
We call $\cC$ \emph{hypercomplete} if all of its objects are hypercomplete. 
\end{defi}

\begin{rem}
That $\cC$ is hypercomplete is equivalent to all acyclic objects being zero. If the t-structure on $\cC$ is right complete then hypercompleteness is equivalent to the t-structure being left separated, i.e. $\bigcap_{n \in \bZ} \cC_{\geq n} = 0$. Again assuming right completeness, hypercompleteness is implied by Postnikov completeness, which asserts that for every object $X$ the canonical map $X \to \lim_{n\geq 0} \tau_{\leq n} X$ is an equivalence. The latter is precisely the statement that the canonical map $\cC \to \tn{lim}_{n \geq 0} \; \cC_{\leq n}$ is fully faithful. This map being an equivalence is the condition that the t-structure on $\cC$ is left complete.
\end{rem}

\begin{rem}
It is clear from the definition that the full subcategory of hypercomplete objects $\cC^h \subseteq \cC$ is closed under limits. If $\cC$ is additionally assumed to be presentable and its t-structure is compatible with filtered colimits then the inclusion $\cC^h \hookrightarrow C$ has an accessible left adjoint we denote by $L^h: \cC \to \cC^h$ \cite[Proposition 2.14]{cm21}, \cite[Proposition C.1.4.1]{lur18}. Note that $\cC^h$ inherits a t-structure that makes the functor $L^h$ t-exact. If $\cC$ is furthermore presentably symmetric monoidal and this symmetric monoidal structure is compatible with the t-structure (i.e., the connective part is a symmetric monoidal subcategory), then \cite[Lemma 2.22]{cm21} gives a unique symmetric monoidal structure on $\cC^h$ that makes the hypercompletion functor symmetric monoidal.
\end{rem}

Recall that for a finite group Borel complete spectra were defined as the right orthogonal of non-equivariantly contractible spectra. We now give characterisations for the latter notion in the profinite group case. One of them uses the t-structure from \Cref{prop:tstructureonlevelwiseborelcomplete}, which in contrast to the t-structure on $G$-spectra is only right but generally not left complete. In particular, there can be non-zero acyclic objects.

\begin{prop}\label{prop:noneqcontractibleprofinite}
For an $R_G$-module $X$, the following are equivalent:
\begin{enumerate}
    \item $X$ is non-equivariantly contractible, i.e. $\tn{res}^G_1(X) \simeq 0$ in $\Mod_R(\spec)$.
    \item $\tn{res}^G_1U_{R_G}(X) \simeq U_R\tn{res}^G_1(X)$ is contractible in $\spec$.
    \item $X \otimes_{R_G} A_R \simeq 0$ in $\Mod_{R_G}(\specg)$.
    \item $U_R(X \otimes_{R_G} A_R) \simeq U_R(X) \otimes A_{\bS}$ is contractible in $\spec$.
    \item The levelwise completion $L_{\tn{lw}}(X)$ is acyclic, that is $\pi_n(L_{\tn{lw}}(X))=0$ for all $n \in \bZ$, where the $\pi_n(-)$ are the t-structure homotopy modules of \ref{prop:tstructureonlevelwiseborelcomplete}.
\end{enumerate}
\end{prop}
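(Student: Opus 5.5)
The equivalences (1)$\Leftrightarrow$(2) and (3)$\Leftrightarrow$(4) hold because the forgetful functors $U_R$ and $U_{R_G}$ are conservative and commute with restriction (\Cref{rec:modules}). For the compatibility $U_R(X\otimes_{R_G}A_R)\simeq U_R(X)\otimes A_{\bS}$, I would use $A_R\simeq A_{\bS}\otimes R_G$ from \Cref{cons:colimitoforbits} together with the projection formula for the free-forgetful adjunction.

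For (1)$\Leftrightarrow$(3), I would use \Cref{cor:endofunctors}, which gives $A_R\otimes_{R_G}X\simeq \tn{coind}^G_1\tn{res}^G_1(X)$. If $\tn{res}^G_1(X)\simeq 0$, this is zero. Conversely, $\tn{coind}^G_1$ is conservative by \Cref{lem:resasbasechangeprofinite}, so $\tn{coind}^G_1\tn{res}^G_1(X)\simeq 0$ forces $\tn{res}^G_1(X)\simeq 0$.

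The substantial step is (1)$\Leftrightarrow$(5). I would first compute the homotopy objects of $L_{\tn{lw}}(X)$. By \Cref{prop:tstructureonlevelwiseborelcomplete}, the heart is $\Mod(G;\pi_0(R))$, the filtered colimit of the $\Mod(G_i;\pi_0(R))$, and the structure functors $h\tn{infl}^G_{G_i}$ are t-exact. By \Cref{rec:specgascolimit}, $X\simeq\tn{colim}_i\,\tn{infl}^G_{G_i}(X^{N_i})$; more precisely, $X$ is the filtered colimit of the inflated compact pieces. Since $L_{\tn{lw}}$ is a left adjoint, square \ref{diag:levelwisecompletion} gives
$$L_{\tn{lw}}(X)\simeq \tn{colim}_i\, h\tn{infl}^G_{G_i}L_{G_i;R}(X^{N_i}).$$
The t-structure is compatible with filtered colimits, so
$$\pi_n L_{\tn{lw}}(X)\simeq \tn{colim}_i\, \tn{infl}\,\pi_n L_{G_i;R}(X^{N_i}).$$
By \Cref{cor:tstructureonborelcomplete} and \Cref{prop:borelcompleteasfun}, the term $\pi_n L_{G_i;R}(X^{N_i})$ is the $\pi_0(R)$-module $\pi_n\tn{res}^{G_i}_1(X^{N_i})$ with its $G_i$-action. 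Forgetting the action, which is conservative on the heart $\Mod(G;\pi_0(R))$ (a discrete module is zero iff its underlying module is), I obtain
$$\pi_n L_{\tn{lw}}(X)\simeq\tn{colim}_i\,\pi_n\,\tn{res}^{G_i}_1(X^{N_i}).$$

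It then remains to identify this colimit with $\pi_n\tn{res}^G_1(X)$. I would derive this from \Cref{lem:spectraascolimit} and \Cref{cor:endofunctors}: writing $\tn{res}^G_1=\tn{colim}$ over the restrictions to $N_i$, one gets $\tn{res}^G_1(X)\simeq \tn{colim}_i\, \tn{res}^{G_i}_1(X^{N_i})$, with the transition maps induced by the counits $\tn{infl}(X^{N_j})\to X^{N_i}$. One way to see this is to evaluate the spectral Mackey functor $X$ at $G/N_i$ and pass to the colimit, which is exactly the description of the underlying spectrum of a profinite $G$-spectrum as $\tn{colim}_i\,X^{N_i}$ (the mapping spectrum out of $\Sigma^\infty_+G/N_i$). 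Since $\pi_n$ commutes with filtered colimits of spectra, all $\pi_n L_{\tn{lw}}(X)$ vanish iff all $\pi_n\tn{res}^G_1(X)$ vanish, i.e. iff (2) holds.

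I expect the main obstacle to be this last identification, namely checking that the transition maps in the two colimits agree. Concretely, the map $\tn{res}^{G_j}_1(X^{N_j})\to\tn{res}^{G_i}_1(X^{N_i})$ coming from homotopy inflation in \Cref{prop:categoricalfixedpointsasrke} must match the restriction map of the Mackey functor along $G/N_i\to G/N_j$. I would verify this using the explicit natural transformation \ref{eq:finalformation} built in the proof of \Cref{prop:categoricalfixedpointsasrke}, which is precisely evaluation along $G/1\to G/N$.
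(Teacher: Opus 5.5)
Your proof is correct, but for the key step involving (5) you take a different route from the paper.

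\textbf{The paper's route.} The paper never passes through (2) when dealing with (5). It proves (3)$\Leftrightarrow$(5) by writing $X\otimes_{R_G}A_R\simeq\tn{colim}_i\,\imap(\Sigma^{\infty}_+G/N_i\otimes R,X)$ and computing every homotopy Mackey functor of it: $\pi_n^{N_j}(X\otimes_{R_G}A_R)\cong\tn{colim}_i\bigoplus_{[N_j\backslash G/N_i]}\pi_n^{N_i\cap N_j}(X)$. It then argues elementwise that these groups vanish for all $j$ if and only if $\tn{colim}_i\,\pi_n^{N_i}(X)\cong\pi_n(L_{\tn{lw}}(X))$ vanishes. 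The direction (3)$\Rightarrow$(5) only needs the level $N_j=G$.

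\textbf{Your route.} You identify $\pi_n(L_{\tn{lw}}(X))$ directly with $\pi_n\tn{res}^G_1(X)$ and obtain (5)$\Leftrightarrow$(2). The paper's level-$G$ computation is exactly your identification in disguise, since $\pi_n^G(\tn{coind}^G_1\tn{res}^G_1X)\cong\pi_n\tn{res}^G_1(X)$.

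\textbf{Comparison.} Your argument is shorter and avoids the double coset formula. However, to reach (3) it relies on (1)$\Leftrightarrow$(3), that is, on conservativity of $\tn{coind}^G_1$. The paper instead links (3) and (5) directly at the level of all homotopy Mackey functors.

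\textbf{A simplification.} You don't need \Cref{lem:spectraascolimit} to get $\tn{res}^G_1(X)\simeq\tn{colim}_i\,\tn{res}^{G_i}_1(X^{N_i})$. Apply the left adjoint $\tn{res}^G_1$ to $X\simeq\tn{colim}_i\,\tn{infl}^G_{G_i}(X^{N_i})$ and use $\tn{res}^G_1\tn{infl}^G_{G_i}\simeq\tn{res}^{G_i}_1$. This gives the identification immediately, with the counits as transition maps.

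\textbf{The transition maps.} The compatibility you flag, that the transition maps on both sides are the Mackey restriction maps, is also used implicitly by the paper in its elementwise vanishing argument. Since only vanishing of $\pi_n$ is at stake, it suffices to check commutativity in the homotopy category for each pair $N_i\subseteq N_j$. That is exactly what the transformation \ref{eq:finalformation} from \Cref{prop:categoricalfixedpointsasrke} provides.
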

\begin{proof}
That (1) and (2) are equivalent is clear from the conservativity of the forgetful functor and \Cref{rec:modules}, similarly that (3) and (4) are equivalent follows from the conservativity of the forgetful functor and the projection formula for the free-forgetful adjunction. The equivalence of assertions (1) and (3) follows from the conservativity of $\tn{coind}^G_1$ (see \Cref{lem:resasbasechangeprofinite}) and \Cref{cor:endofunctors}. \\
We now show that (3) and (5) are equivalent. Let $X$ be an $R_G$-module. By \cite[Lemma 6.3.3.6]{lur09} we have $X \simeq \tn{colim}_i \, \tn{infl}^G_{G_i}(X^{N_i})$, so 
$$L_{\tn{lw}}(X) \simeq \tn{colim}_i \, L_{\tn{lw}}\tn{infl}^G_{G_i}(X^{N_i}) \stackrel{\ref{diag:levelwisecompletion}}{\simeq} \tn{colim}_i \, h\tn{infl}^G_{G_i}\theta_i^*(X^{N_i}) \simeq \tn{colim}_i \, h\tn{infl}^G_{G_i}\sigma_i^*(X),$$
where $\theta_i: BG_i \to \Spans(G_i)^{\op}$ picks out the orbit $G_i/1$ and $\sigma_i: BG_i \to \Spans(G)^{\op}$ picks out the orbit $G/N_i$, using that each of the triangles
\[\begin{tikzcd}
	& {\Spans(G)^{\op}} \\
	{BG_i} & {\Spans(G_i)^{\op}}
	\arrow["{\sigma_i}", from=2-1, to=1-2]
	\arrow["{\theta_i}", from=2-1, to=2-2]
	\arrow["{\overl{\tn{infl}}^G_{G_i}}"', from=2-2, to=1-2]
\end{tikzcd}\]
commutes. (Recall from \Cref{rem:changeofgroupspectralmackey} that categorical fixed points precompose with the vertical functor $\overl{\tn{infl}}^G_{G_i}$ of span categories.) Since the t-structure from \Cref{prop:tstructureonlevelwiseborelcomplete} is compatible with filtered colimits it follows for its homotopy modules
\begin{align*}
\pi_n(L_{\tn{lw}}(X)) &\cong \tn{colim}_i \, \pi_n(h\tn{infl}^G_{G_i}\sigma_i^*(X)) \cong \tn{colim}_i \, (h\tn{infl}^G_{G_i})^{\heartsuit} (\pi_n(\sigma_i^*(X))) \\
&\cong \tn{colim}_i \, (h\tn{infl}^G_{G_i})^{\heartsuit} \pi_n^{N_i}(X) \cong \tn{colim}_i \, \pi_n^{N_i}(X).
\end{align*}
Here $(h\tn{infl}^G_{G_i})^{\heartsuit}$ denotes the restriction of $h\tn{infl}^G_{G_i}$ to hearts, it is given by the ordinary inflation from $(G_i;\pi_0(R))$-modules to discrete $(G;\pi_0(R))$-modules. We use that $h\tn{infl}^G_{G_i}$ is t-exact and hence commutes with homotopy objects. In the last step each $\pi_n^{N_i}(X)$ is viewed as a discrete $(G;\pi_0(R))$-module.\\
On the other hand $X \otimes_{R_G} A_R \simeq \tn{colim}_i \, \imap_{\Mod_R(\specg)}(\Sigma^{\infty}_+G/N_i \otimes R, X)$, and this $R_G$-module is contractible if and only if its underlying $G$-spectrum is, for which contractibility is equivalent to the vanishing of all homotopy Mackey functors. For the $n^{\tn{th}}$ one, using that the forgetful functor $\Mod_R(\specg) \to \specg$ commutes with colimits, the $N_j$-sections are given by 
\begin{align}
\pi^{N_j}_n(&\tn{colim}_i \, \imap_{\specg}(\Sigma^{\infty}_+G/N_i, X)) \notag \\
\cong \;&\tn{colim}_i \, \Hom_{\specg}(\Sigma^{\infty +n}_+G/N_j, \imap_{\specg}(\Sigma^{\infty}_+G/N_i, X)) \notag \\
\cong \;&\tn{colim}_i \, \Hom_{\specg}(\Sigma^{\infty +n}_+G/N_j \times G/N_i, X) \notag \\
\cong \;&\tn{colim}_i \bigoplus_{[N_j\backslash G /N_i]} \pi_n^{N_i\cap N_j}(X).
\label{eq:mackeyintersection}
\end{align}
Assume the vanishing of $\pi_n(L_{\tn{lw}}(X))$, so for all $x\in \pi_n^{N_i}(X)$ there is an $N_l \subseteq N_i$ such that $x$ vanishes in $\pi_n^{N_l}(X)$. Fix $j$. If $y$ is an element of  $\bigoplus_{k \in [N_j\backslash G /N_i]} \; \pi_n^{N_i\cap N_j}(X)$ with (finitely many) components $y_k \in \pi_n^{N_i \cap N_j}(X)$, then there are $N_{k} \subseteq N_i\cap N_j$ such that $y_k$ vanishes in $\pi_n^{N_{k}}(X)$, and hence $y$ vanishes when passing to the sections for the group $\bigcap_k N_{k} = (\bigcap_k N_{k}) \cap N_j \subseteq N_i \cap N_j$. \\
Conversely, assume that \ref{eq:mackeyintersection} vanishes for all $N_j$, then in particular it vanishes for $N_j=G$. Then \ref{eq:mackeyintersection} reads $\tn{colim}_i \, \pi_n^{N_i}(X)$, which is precisely $\pi_n(L_{\tn{lw}}(X))$. Using this argument for all $n$ shows that (3) and (5) are equivalent, concluding the proof.
\end{proof}

\begin{rem}
If $X \in \Mod_{R_G}(\specg)$ is compact, then it is inflated from some $G_i$, i.e. $X \simeq \tn{infl}^G_{G_i}(X')$. Then $X \otimes_{R_G} A_R \simeq 0$ is equivalent to $\tn{res}^{G_i}_1(X') \simeq 0$. However, if we write a general $X \in \Mod_{R_G}(\specg)$ as a filtered colimit of compacts $X_j$, then $X \otimes_{R_G} A_R \simeq 0$ need not imply $X_j \otimes A_R \simeq 0$ for any $j$.
\end{rem}

We can now extend \Cref{defi:borelcomplete} to the profinite case.

\begin{defi}
We define the $\infty$-category of \emph{Borel complete $R_G$-modules} 
$$\Mod_{R_G}(\specg)_{\tn{Borel}}$$
as the full subcategory of $\Mod_{R_G}(\specg)$ on the $A_R$-complete $R_G$-modules. That is, an $R_G$-module $X$ is Borel complete if for all $Y \in \Mod_{R_G}(\specg)$ with $Y \otimes_{R_G} A_R \simeq 0$ the mapping space $\Map_{\Mod_{R_G}(\specg)}(Y,X)$ is contractible.
\end{defi}

\begin{rem}
As in the finite group case, the general theory of \cite[§2.2]{mnn17} ensures that $\Mod_{R_G}(\specg)_{\tn{Borel}}$ is presentable and that its inclusion into $\Mod_{R_G}(\specg)$ has a left adjoint $L_{G;R}$ we call \emph{Borel completion}. There is a unique symmetric monoidal structure on $\Mod_{R_G}(\specg)_{\tn{Borel}}$ that makes $L_{G;R}$ symmetric monoidal. \\
However, a large proportion of the theory does not carry over, since there is no reason for the algebra object $A_R$ that we defined as an infinite colimit to be dualisable.\footnote{Or more generally, that tensoring with $A_R$ preserves limits.} In particular, $\Mod_{R_G}(\specg)_{\tn{Borel}}$ is not necessarily the right orthogonal to $A_R^{-1}$-local objects as defined in \cite[Definition 3.10]{mnn17}, and we do not have nice descriptions of the completion functor or the $\infty$-category $\Mod_{R_G}(\specg)_{\tn{Borel}}$ as a functor category.
\end{rem}

Nevertheless, we can show a generalisation of \Cref{cor:completionunderlying} via an argument that always holds for complete objects in module categories in the rigidly-compactly generated setting.

\begin{lem}\label{lem:completionunderlyingprofinite}
An $R_G$-module $X$ is Borel complete if and only if its underlying $G$-spectrum $U_{R_G}(X) \in \specg$ is Borel complete.
\end{lem}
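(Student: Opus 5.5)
The forward direction is formal and goes exactly as in \Cref{rem:completeunderlyingeasydirection}. Suppose $X$ is Borel complete and let $Y \in \specg$ satisfy $Y \otimes A_{\bS} \simeq 0$. Since $A_R \simeq A_{\bS} \otimes R_G$ (\Cref{cons:colimitoforbits}) and $F_{R_G}$ is symmetric monoidal, we get
$$F_{R_G}(Y) \otimes_{R_G} A_R \simeq F_{R_G}(Y \otimes A_{\bS}) \simeq 0.$$
Adjunction then gives $\Map_{\specg}(Y,U_{R_G}(X)) \simeq \Map_{\Mod_{R_G}(\specg)}(F_{R_G}(Y),X) \simeq \ast$. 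So $U_{R_G}(X)$ is $A_{\bS}$-complete, which is the definition of Borel complete $G$-spectra (the case $R = \bS$).

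For the converse we cannot use an explicit completion formula as in \Cref{cor:completionunderlying}, because $A_R$ need not be dualisable. Instead I would use the internal mapping object of the free–forgetful adjunction. Assume $U_{R_G}(X)$ is Borel complete and let $Y \in \Mod_{R_G}(\specg)$ with $Y \otimes_{R_G} A_R \simeq 0$. We need $\Map_{\Mod_{R_G}(\specg)}(Y,X) \simeq \ast$. Since the category is stable, it suffices to show that the mapping spectrum $\map(Y,X)$ vanishes. I would prove the stronger statement that the internal hom $\imap_{\Mod_{R_G}(\specg)}(Y,X)$ vanishes, by testing against the compact generators $\Sigma^{\infty}_+G/H \otimes R_G$ for $H$ open, together with their shifts. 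By adjunction,
$$\Map(\Sigma^{\infty+n}_+G/H \otimes R_G, \imap(Y,X)) \simeq \Map_{\Mod_{R_G}(\specg)}(\Sigma^{\infty+n}_+G/H \otimes Y, X).$$
The object $Z \defeq \Sigma^{\infty+n}_+G/H \otimes Y$ again satisfies $Z \otimes_{R_G} A_R \simeq 0$. So it suffices to show that $\Map_{\Mod_{R_G}(\specg)}(Z,X) \simeq \ast$ for every such $Z$, and this reduces to the original claim. The compact-generation step is therefore a detour, and the real task is to relate maps of modules to maps of underlying objects.

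The key step is to write $Z$ as a geometric realisation of free modules via the bar resolution:
$$Z \simeq |F_{R_G}U_{R_G}(F_{R_G}U_{R_G})^{\bullet}Z|.$$
This gives
$$\Map(Z,X) \simeq \tn{Tot}\,\Map_{\specg}\bigl(U_{R_G}(F_{R_G}U_{R_G})^{\bullet}Z,\, U_{R_G}X\bigr).$$
Each term in the totalisation is a mapping space into the Borel complete $G$-spectrum $U_{R_G}X$. The source of each term is $U_{R_G}Z \otimes R_G^{\otimes k}$. By the projection formula this is $U_{R_G}(Z \otimes_{R_G} F_{R_G}(R_G^{\otimes k}))$, and tensoring it with $A_{\bS}$ gives
$$U_{R_G}\bigl((Z \otimes_{R_G} A_R) \otimes_{R_G} F_{R_G}(R_G^{\otimes k})\bigr) \simeq 0.$$
Each term is therefore contractible, and so is the totalisation.

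The main obstacle I anticipate is carefully justifying that the sources in the bar construction are $A_{\bS}$-acyclic; this uses only the projection formula and the identity $A_R \simeq A_{\bS}\otimes R_G$. Once that is settled, no dualisability of $A_R$ is needed at any point, which is consistent with the remark that this is an argument valid for complete objects in module categories in general.
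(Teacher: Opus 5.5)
Your proof is correct, and the converse takes a genuinely different route from the paper's.

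The paper does not resolve the test module at all. It invokes the characterisation from \cite[§2.2]{mnn17}: a map $f\colon M\to N$ of $G$-spectra exhibits $N$ as the $A_{\bS}$-completion if and only if $f\otimes A_{\bS}$ is an equivalence and $N$ is $A_{\bS}$-complete. It applies this to $U_{R_G}(c)$, where $c\colon X\to \iota L_{G;R_G}(X)$ is the $A_R$-completion map. The target is complete by the forward direction, and $U_{R_G}(c)\otimes A_{\bS}\simeq U_{R_G}(c\otimes_{R_G}A_R)$ is an equivalence by the projection formula. The paper then concludes by conservativity of $U_{R_G}$.

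You instead use the monadic bar resolution to write every $A_R$-acyclic module as a geometric realisation of free modules $F_{R_G}(W_k)$, where $W_k=U_{R_G}Z\otimes R_G^{\otimes k}$ is again $A_{\bS}$-acyclic. Orthogonality can then be tested on free modules, where adjunction reduces it to the hypothesis on $U_{R_G}(X)$. Levelwise contractibility of the cosimplicial mapping space suffices here, even though not every coface map is induced by a map of $G$-spectra.

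Your argument is more elementary. It needs neither the existence of the completion functor nor the characterisation of completion maps, only monadicity and the projection formula. It also shows that the $A_R$-acyclic modules are built under colimits from free modules on $A_{\bS}$-acyclic $G$-spectra.

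The paper's argument gives a byproduct: $U_{R_G}$ carries the $A_R$-completion map of $X$ to the $A_{\bS}$-completion map of $U_{R_G}(X)$, so forgetting commutes with Borel completion. Its template (characterise the localisation map, then apply a conservative forgetful functor) is also reused for hypercompleteness in \Cref{prop:hypersheafhypercomplete}.

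The internal-hom and compact-generator detour at the start of your converse can simply be deleted.
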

\begin{proof}
For ease of notation, we write $U=U_{R_G}$ and $F=F_{R_G}$. One implication is as in \Cref{rem:completeunderlyingeasydirection}: Assume that the $R_G$-module $X$ is $A_R$-complete and let $Y \in \specg$ satisfy $Y \otimes A_{\bS} \simeq 0$. Then $F(Y) \otimes_R A_R \simeq F(Y) \otimes_R F(A_{\bS}) \simeq F(Y \otimes A_{\bS}) \simeq 0$ and
$$\Map_{\specg}(Y,U(X)) \simeq \Map_{\Mod_{R_G}(\specg)}(F(Y),X) \simeq 0,$$
hence $U(X)$ is $A_{\bS}$-complete. For the other direction, note that by \cite[§2.2]{mnn17} a map $f:U(X) \to M$ of $G$-spectra exhibits $M$ as the $A_{\bS}$-completion of $U(X)$ if and only if the following two conditions are satisfied:
\begin{enumerate}
    \item The map $f$ becomes an equivalence after tensoring with $A_{\bS}$.
    \item The $G$-spectrum $M$ is $A_{\bS}$-complete.
\end{enumerate}
Let $c: X \to \iota L_{G;R_G}(X)$ be the $A_R$-completion map for $X$. From the first part it follows that $U(\iota L_{G;R_G}(X))$ is $A_{\bS}$-complete. Using the projection formula for the free-forgetful adjunction the map $U(c) \otimes A_{\bS}$ identifies with
$$U(c \otimes_R A_R): U(X \otimes_R A_R) \to U(\iota L_{G;R_G}(X) \otimes_R A_R),$$
which is an equivalence since $c \otimes_R A_R$ is one. Hence $U(c): U(X) \to U\iota L_{G;R_G}(X)$ exhibits $U\iota L_{G;R_G}(X)$ as the $A_{\bS}$-completion of $U(X)$. Now if $U(X)$ is $A_{\bS}$-complete then this map is an equivalence, and since $U$ is conservative $c$ is an equivalence, which means that $X$ is $A_R$-complete, as desired.
\end{proof}

We finally connect the $\infty$-category of Borel completes we just defined to the previously defined $\infty$-category of levelwise Borel complete modules.

\begin{thm}\label{thm:borelandhyperlewelwise}
There is an inclusion $\Mod_{R_G}(\specg)_{\tn{Borel}} \hookrightarrow \Mod_{R_G}(\specg)_{\tn{lwBorel}}$ which induces a symmetric monoidal equivalence
$$\Mod_{R_G}(\specg)_{\tn{Borel}} \simeq \Mod_{R_G}(\specg)^h_{\tn{lwBorel}}.$$
\end{thm}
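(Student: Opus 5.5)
We first show that every Borel complete $R_G$-module is levelwise Borel complete, and then identify the Borel complete modules inside $\Mod_{R_G}(\specg)_{\tn{lwBorel}}$ with the hypercomplete objects.

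\textbf{Step 1: the inclusion.} Let $X \in \Mod_{R_G}(\specg)_{\tn{Borel}}$. By the description of objects of $\Mod_{R_G}(\specg)_{\tn{lwBorel}}$ as compatible families, it suffices to show that each $X^{N_i}$ is Borel complete as an $R_{G_i}$-module. Let $Y \in \Mod_{R_{G_i}}(\spec^{G_i})$ with $\tn{res}^{G_i}_1(Y)\simeq 0$. Then $\tn{res}^G_1\tn{infl}^G_{G_i}(Y) \simeq \tn{res}^{G_i}_1(Y) \simeq 0$, so by \Cref{prop:noneqcontractibleprofinite} we get $\tn{infl}^G_{G_i}(Y)\otimes_{R_G} A_R \simeq 0$. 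Hence
$$\Map(Y,X^{N_i}) \simeq \Map(\tn{infl}^G_{G_i}Y,X) \simeq \ast,$$
exactly as in \Cref{lem:categoricalfixedpointsborelcomplete}. This gives a fully faithful inclusion $\Mod_{R_G}(\specg)_{\tn{Borel}} \subseteq \iota_{\tn{lw}}\Mod_{R_G}(\specg)_{\tn{lwBorel}} \subseteq \Mod_{R_G}(\specg)$.

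\textbf{Step 2: the essential images agree.} We show that a levelwise Borel complete $X$ is Borel complete if and only if it is hypercomplete for the t-structure of \Cref{prop:tstructureonlevelwiseborelcomplete}. For $Y \in \Mod_{R_G}(\specg)$ and $X$ levelwise Borel complete we have $\Map(Y,\iota_{\tn{lw}}X)\simeq \Map(L_{\tn{lw}}Y,X)$. By (3)$\Leftrightarrow$(5) of \Cref{prop:noneqcontractibleprofinite}, $Y\otimes_{R_G}A_R\simeq 0$ if and only if $L_{\tn{lw}}Y$ is acyclic.

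If $X$ is hypercomplete, then $\Map(L_{\tn{lw}}Y,X)\simeq\ast$ whenever $L_{\tn{lw}}Y$ is acyclic, so $X$ is Borel complete.

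Conversely, let $X$ be Borel complete and let $Z$ be an acyclic levelwise Borel complete object. Take $Y=\iota_{\tn{lw}}Z$. Then $L_{\tn{lw}}Y\simeq Z$ because $\iota_{\tn{lw}}$ is fully faithful, so $Y$ is non-equivariantly contractible. Therefore $\Map(Z,X)\simeq\Map(\iota_{\tn{lw}} Z,\iota_{\tn{lw}} X)\simeq\ast$, and $X$ is hypercomplete.

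Hence, inside $\Mod_{R_G}(\specg)_{\tn{lwBorel}}$, the Borel complete objects are exactly $\Mod_{R_G}(\specg)_{\tn{lwBorel}}^h$.

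\textbf{Step 3: monoidal structures.} Both $L_{G;R}$ and $L^h\circ L_{\tn{lw}}$ are left adjoints to the inclusion of the same full subcategory of $\Mod_{R_G}(\specg)$, so they agree. Each is symmetric monoidal:
\begin{itemize}
\item $L_{G;R}$ by \cite[§2.2]{mnn17};
\item $L_{\tn{lw}}$ by construction;
\item $L^h$ by \cite[Lemma 2.22]{cm21}, which applies because \Cref{prop:tstructureonlevelwiseborelcomplete} shows the t-structure is compatible with the monoidal structure and with filtered colimits.
\end{itemize}
Uniqueness of the symmetric monoidal structure making a localisation symmetric monoidal then shows that the equivalence is symmetric monoidal.

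\textbf{Main obstacle.} I expect the delicate point to be in Step 2: checking that $\iota_{\tn{lw}}$ sends acyclic objects to non-equivariantly contractible ones. Via $L_{\tn{lw}}\iota_{\tn{lw}}\simeq\tn{id}$ this reduces to \Cref{prop:noneqcontractibleprofinite}, so that proposition carries the real content of the theorem.
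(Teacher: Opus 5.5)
Your proposal is correct and follows essentially the same route as the paper. You prove the inclusion via \Cref{lem:categoricalfixedpointsborelcomplete}-style adjunction, then identify hypercomplete with Borel complete objects via the equivalence (3)$\Leftrightarrow$(5) of \Cref{prop:noneqcontractibleprofinite} together with $L_{\tn{lw}}\iota_{\tn{lw}}\simeq\tn{id}$ (which is the paper's ``levelwise completion is essentially surjective''). The monoidal statement then follows from uniqueness of symmetric monoidal structures on localisations, matching the paper's appeal to the universal property of symmetric monoidal localisations.
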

\begin{proof}
We show that both categories agree as full subcategories of $\Mod_{R_G}(\specg)$. Let $X \in \Mod_{R_G}(\specg)_{\tn{Borel}}$, we first show that it is levelwise Borel complete. This is as in \Cref{lem:categoricalfixedpointsborelcomplete} - $X$ defines a compatible family $(X^{N_i})_{i \in I}$ for which we show that every component is Borel complete. So for $i \in I$ let $Y \in \Mod_{R_{G_i}}(\spec^{G_i})$ such that $\tn{res}^{G_i}_1(Y) \simeq 0$. Then $\tn{res}^G_1\tn{infl}^G_{G_i}(Y) \simeq \tn{res}^{G_i}_1(Y) \simeq 0$ and hence $\tn{infl}^G_{G_i}(Y) \otimes_{R_G} A_R \simeq 0$ by \Cref{prop:noneqcontractibleprofinite}. Then 
$$\Map_{\Mod_{R_{G_i}}(\spec^{G_i})}(Y,X^{N_i}) \simeq \Map_{\Mod_{R_{G}}(\specg)}(\tn{infl}^G_{G_i}(Y),X) \simeq 0,$$
so $X^{N_i} \in \Mod_{R_{G_i}}(\spec^{G_i})$ is Borel complete and $X$ is levelwise Borel. We hence have an inclusion 
$$\Mod_{R_G}(\specg)_{\tn{Borel}} \subseteq \Mod_{R_G}(\specg)_{\tn{lwBorel}}.$$
Now, by definition $X \in \Mod_{R_G}(\specg)_{\tn{lwBorel}}$ is hypercomplete if and only if the mapping space
$\Map_{\Mod_{R_G}(\specg)_{\tn{lwBorel}}}(Z,X)$
is contractible for all acyclic $Z$, by \Cref{prop:noneqcontractibleprofinite} equivalently for all $Z$ of the form $L_{\tn{lw}}(Y)$ with $Y \otimes_{R_G} A_R \simeq 0$ (using that the levelwise completion is essentially surjective). By adjunction this holds for $X$ if and only if
$\Map_{\Mod_{R_G}(\specg)}(Y,X)$
is contractible for all $R_G$-modules $Y$ for which $Y \otimes_{R_G} A_R \simeq 0$ holds, which precisely means that $X$ is Borel complete. This shows that $X$ is in $\Mod_{R_G}(\specg)_{\tn{Borel}}$ if and only if it is hypercomplete, and hence the two $\infty$-categories in the statement agree as full subcategories of $R_G$-modules. Since they are both symmetric monoidal localisations of $\Mod_{R_G}(\specg)$ it follows from the universal property of symmetric monoidal localisations \cite[Proposition 3.2.2]{hin16} that the induced equivalence functor is symmetric monoidal.
\end{proof}

We will see that under assumptions on $G$ and $R$ the $\infty$-category of levelwise Borel complete $R_G$-modules is already hypercomplete, and hence in this case the two notions of Borel completeness we considered agree. We will discuss this in the next section.

\section{Sheaves and representations}\label{sec:sheavesandreps}

In \cite[§4.1]{cm21}, Clausen and Mathew consider the classifying topos for a profinite group and prove various results about hypercompleteness for sheaves of spectra on the latter. The purpose of this section is to connect the previous results on Borel completeness for profinite groups to the results of Clausen and Mathew. We first recall some generalities.

\begin{rec}
For a small $\infty$-category $\cC$, we denote by $\cP(C) = \fun(\cC^{\op},\spc)$ its \emph{presheaf $\infty$-category} and by $y: \cC \hookrightarrow \cP(C)$ the Yoneda embedding. If $(\cC, \tau)$ is an $\infty$-site and $W$ is the class of $\tau$-covering sieves $T \to y(X)$ where $X \in \cC$, then a \emph{$\tau$-sheaf on $\cC$} is a $W$-local object in $\cP(\cC)$. The full subcategory $\sheaves_{\tau}(\cC)$ of $\cP(\cC)$ spanned by such sheaves is the \emph{$\infty$-category of $\tau$-sheaves on $\cC$}. We denote the left adjoint sheafification functor by $\tsf{a}_{\tau}: \cP(\cC) \to \sheaves_{\tau}(\cC)$. \\
If $W$ instead is the set of maps $\tn{colim}_{[n] \in \Delta} \; y(\cU_n) \to y(X)$ for $\cU_{\bullet}$ a hypercover of an object $X$, then an $W$-local object of $\cP(\cC)$ is called a \emph{$\tau$-hypersheaf on $\cC$}. The full subcategory of $\cP(\cC)$ they span is denoted $\sheaves^{\wedge}_{\tau}(\cC)$. Each hypersheaf is in particular a sheaf, and we denote the localisation functor by $(-)^{\wedge}: \sheaves_{\tau}(\cC) \to \sheaves^{\wedge}_{\tau}(\cC)$, the composition with $\tsf{a}_{\tau}$ by $\tsf{a}_{\tau}^{\wedge}: \cP(\cC) \to \sheaves_{\tau}^{\wedge}(\cC)$.
\end{rec}

Replacing $\spc$ by an $\infty$-category with small limits, we get the following.

\begin{rec}
Let $(\cC, \tau)$ be an $\infty$-site and let $\cD$ an $\infty$-category with small limits. A \emph{$\tau$-(hyper)sheaf} on $\cC$ with values in $\cD$ is a functor $\cF: \cC^{\op} \to \cD$ such that its canonical limit-preserving extension $\bar{\cF}: \cP(\cC)^{\op} \to \cD$ factors through the opposite of the localisation functor $\tsf{a}^{(\wedge)}_{\tau}: \cP(\cC) \to \sheaves^{(\wedge)}_{\tau}(\cC)$. The \emph{$\infty$-category of $\tau$-(hyper)sheaves on $\cC$ with values in $\cD$} is the full subcategory $\sheaves_{\tau}^{(\wedge)}(\cC; \cD)$ of $\cP(\cC;\cD)=\fun(\cC^{\op},\cD)$ on $\tau$-(hyper)sheaves $\cC^{\op} \to \cD$. \\
If $\cD$ is furthermore presentable, then $\sheaves_{\tau}^{\wedge}(\cC; \cD)$ is presentable and there is a left adjoint to the inclusion $\sheaves_{\tau}^{(\wedge)}(\cC; \cD) \subseteq \cP(\cC;\cD)$ denoted $\tsf{a}_{\tau}^{(\wedge)}: \cP(\cC;\cD) \to \sheaves_{\tau}^{(\wedge)}(\cC;\cD)$. If the topology is clear from the context we omit the $\tau$ from the notation.
\end{rec}

\begin{rem}\label{rem:tstructureonsheaves}
When $\cD = \Mod_R(\spec)$ for a connective $\bE_{\infty}$-ring $R$ we write $\cP(\cC;R)$ and $\sheaves_{\tau}^{(\wedge)}(\cC; R)$ and for the respective presheaf and (hyper)sheaf $\infty$-categories with values in $R$-modules. By \cite[Proposition 1.3.2.7, 2.1.1.1]{lur18} there is a t-structure on $\sheaves_{\tau}(\cC; R)$ for which a sheaf $\cF$ is connective if the sheafification of the presheaf of abelian groups $X \mapsto \pi_n\cF(X)$ vanishes for all $n<0$ and it is coconnective if all its sections are after applying the forgetful functor $\Mod_R(\spec) \to \spec$. This t-structure is right complete and compatible with filtered colimits.
\end{rem}

\begin{prop}\label{prop:hypersheafhypercomplete}
A sheaf in $\sheaves_{\tau}(\cC; R)$ is a hypersheaf if and only if it is hypercomplete (in the sense of \Cref{def:hypercompletetstructure}).
\end{prop}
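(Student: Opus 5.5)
The plan is to reduce to the case of sheaves of spectra ($R=\bS$), where the statement is known. By \cite[§6.5.2, Corollary 6.5.3.13]{lur09} and \cite[Proposition 1.3.3.3, Corollary 2.1.2.3]{lur18}, a sheaf of spectra is a hypersheaf if and only if it is hypercomplete with respect to the canonical t-structure. The steps, in order, are as follows.

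First, fix notation. Let $U\colon \sheaves_\tau(\cC;R)\to \sheaves_\tau(\cC;\spec)$ be postcomposition with the forgetful functor $U_R\colon\Mod_R(\spec)\to\spec$. Since $U_R$ preserves and reflects limits, a functor $\cF\colon\cC^{\op}\to\Mod_R(\spec)$ is a (hyper)sheaf if and only if $U\cF$ is. In particular $U$ is well defined on sheaves and detects hypersheaves. Moreover, $U$ has a left adjoint $F=\tsf{a}_\tau\circ(R\otimes-)$, given by pointwise free modules followed by sheafification. Both $U$ and $F$ are compatible with the t-structures: $U$ is t-exact, because homotopy sheaves of $\cF$ and $U\cF$ agree (the homotopy groups of an $R$-module are those of its underlying spectrum). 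As $R$ is connective, $F$ is right t-exact.

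Second, compare acyclics and hypercomplete objects. Since $U$ is t-exact and conservative, a sheaf $\cF$ of $R$-modules is acyclic if and only if $U\cF$ is.

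\emph{Hypercomplete implies hypersheaf.} Let $\cF$ be hypercomplete in $\sheaves_\tau(\cC;R)$. It suffices to show $U\cF$ is hypercomplete in sheaves of spectra, since the spectra case then makes $U\cF$, and hence $\cF$, a hypersheaf. So let $Y$ be an acyclic sheaf of spectra. I need $F(Y)$ to be acyclic, so that $\Map(Y,U\cF)\simeq\Map(F Y,\cF)\simeq\ast$. Acyclicity means $Y\in\bigcap_n\sheaves(\cC;\spec)_{\geq n}$. Since $F$ is right t-exact and commutes with shifts, $F(Y)\in\bigcap_n\sheaves(\cC;R)_{\geq n}$, so $F(Y)$ is acyclic.

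\emph{Hypersheaf implies hypercomplete.} Let $\cF$ be a hypersheaf and $Z$ an acyclic sheaf of $R$-modules. Write $\Map(Z,\cF)$ as the $\Omega^\infty$ of the mapping spectrum. Presenting $Z$ as a colimit of free modules on its underlying sheaf via the bar resolution $|F(UF)^{\bullet}U Z|$ reduces the claim to showing $\Map(F(UF)^{n}UZ,\cF)\simeq\Map((UF)^nUZ,U\cF)$ is trivial. The object $(UF)^nUZ$ is acyclic by the previous step together with t-exactness of $U$, and $U\cF$ is a hypersheaf of spectra, hence hypercomplete. So each term is contractible, and therefore so is the totalisation.

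The main obstacle is the first step: verifying that $F$ is right t-exact, and in particular that sheafification does not destroy connectivity. This holds because $\tsf{a}_\tau$ is right t-exact for the t-structure of \Cref{rem:tstructureonsheaves}: connectivity is defined via the sheafified homotopy groups, and $R\otimes-$ preserves connective presheaves because $R$ is connective. A secondary point is the identification of the acyclics with $\bigcap_n\sheaves_{\geq n}$, which uses the right completeness stated in \Cref{rem:tstructureonsheaves}.
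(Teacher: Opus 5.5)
Your proposal is correct, and its skeleton matches the paper's. Both transport the problem along the forgetful functor $U$, using that $U$ preserves and detects (hyper)sheaves. Both also prove ``$\cF$ hypercomplete $\Rightarrow$ $U\cF$ hypercomplete'' exactly as you do, via the right t-exact left adjoint $F$ preserving acyclics.

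The genuine difference is in the converse, ``$U\cF$ hypercomplete $\Rightarrow$ $\cF$ hypercomplete''. The paper takes the hypercompletion map $c\colon\cF\to\iota(\cF^h)$ and notes that $U(c)$ is a $\pi_*$-isomorphism into a hypercomplete sheaf. By the characterisation of hypercompletion maps in Clausen--Mathew, $U(c)$ is therefore the hypercompletion of $U\cF$, hence an equivalence, and conservativity of $U$ finishes; this mirrors \Cref{lem:completionunderlyingprofinite}. You instead resolve an acyclic $Z$ by the bar construction $|F(UF)^\bullet UZ|$ and reduce, by adjunction, to maps from acyclic sheaves of spectra into $U\cF$.

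Your route does not need the existence of the hypercompletion localisation or the $\pi_*$-characterisation of hypercompletion maps. It does need $U$ to be monadic, i.e.\ conservative and compatible with geometric realisations, and you should say why this holds. One reason is that $U$ is itself a left adjoint: postcomposition with $\map_{\spec}(R,-)$ preserves limits, hence sheaves. Alternatively, use the identification $\sheaves_\tau(\cC;R)\simeq\Mod_{\underl{R}}(\sheaves_\tau(\cC;\spec))$ from the paper.

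One correction: the step you call the main obstacle is not one, and your justification of it is slightly off. A connective sheaf $Y$ need not have levelwise connective sections, so ``$R\otimes-$ preserves connective presheaves'' does not apply to $Y$ directly. You would first have to replace $Y$ by $\tsf{a}_\tau$ of its presheaf connective cover. In fact right t-exactness of $F$ is automatic from the adjunction. $U$ is left t-exact, because coconnectivity is detected sectionwise on underlying spectra. Hence $\Map(FX,W)\simeq\Map(X,UW)\simeq\ast$ for $X$ connective and $W\in\sheaves_\tau(\cC;R)_{\leq -1}$.
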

\begin{proof}
For sheaves of spectra this follows from \cite[Lemma 6.5.3.11]{lur09}, \cite{dhi04,tv03}, \cite[Corollary 6.5.3.13]{lur09} and \cite[Proposition 1.3.3.3]{lur18}, as discussed in \cite[Example 2.5]{cm21}. We show that the case with coefficients reduces to the one of spectral sheaves.\\
Since the forgetful functor $U_R: \Mod_R(\spec) \to \spec$ preserves and detects limits, \cite[Remark 2.3.3]{agv22} implies that the induced functor $U_R: \cP(\cC;R) \to \cP(\cC;\spec)$ preserves and detects sheaves and hypersheaves. Hence a sheaf in $\sheaves_{\tau}(\cC; R)$ is a hypersheaf if and only if it is one after applying $U_R$. \\
It remains to show the analogous statement for hypercomplete sheaves. It follows from the description of the t-structure on $\sheaves_{\tau}(\cC; R)$ given in \cite[§2.1.1]{lur18} that the functor $U_R: \sheaves_{\tau}(\cC; R) \to \sheaves_{\tau}(\cC; \spec)$ is t-exact. Under the identification $\sheaves_{\tau}(\cC; R) \simeq \Mod_{\underl{R}}(\sheaves_{\tau}(\cC; \spec))$, where $\underl{R}$ is the sheafification of the constant presheaf with value $R$, the functor $U_R$ identifies with the forgetful functor 
$$\Mod_{\underl{R}}(\sheaves_{\tau}(\cC; \spec)) \to \sheaves_{\tau}(\cC; \spec).$$
It has a (automatically right t-exact) left adjoint we denote $F_R$, cf. \cite[Proposition 2.1.0.3]{lur18}. Now the argument is analogous to the one given in \Cref{lem:completionunderlyingprofinite} for complete objects in a  module category. \\
Let $\cF \in \sheaves_{\tau}(\cC; R)$ be hypercomplete. Then $U_R(\cF)$ is hypercomplete. Indeed, if $\cG \in \sheaves_{\tau}(\cC; \spec)$ is acyclic then $F_R(\cG)$ is acyclic since $F_R$ is right t-exact and the t-structures at play are right complete. We have
$$\Map_{\sheaves_{\tau}(\cC; \spec)}(\cG,U_R(\cF)) \simeq \Map_{\sheaves_{\tau}(\cC;R)}(F_R(\cF),\cG) \simeq 0,$$
and hence $U_R(\cF)$ is hypercomplete. For the other direction, a map $f:U_R(\cF) \to \cM$ of sheaves of spectra exhibits $\cM$ as the hypercompletion of $U_R(\cF)$ if and only if the following two conditions are satisfied (cf. \cite[Proposition 2.32]{cm21}):
\begin{enumerate}
    \item The map $f$ is a $\pi_{\ast}$-isomorphism.
    \item The sheaf $\cM$ is hypercomplete.
\end{enumerate}
Let $c: \cF \to \iota(\cF^h)$ be the hypercompletion map for $\cF$. From the first part it follows that $U_R\iota(\cF^h)$ is hypercomplete. Since $U_R$ commutes with homotopy groups as a t-exact functor $U_R(c)$ is a $\pi_{\ast}$-isomorphism. Hence $U_R(c): U_R(\cF) \to U_R(\iota(\cF^h))$ exhibits $U_R(\iota(\cF^h))$ as the hypercompletion of $U_R(\cF)$. Now if $U_R(\cF)$ is hypercomplete then this map is an equivalence, and since $U_R$ is conservative $c$ is an equivalence, which means that $\cF$ is hypercomplete, which concludes the proof.
\end{proof}

\begin{nota}
For a profinite group $G$, we denote by $\gset$ the category of finite continuous $G$-sets endowed with the discrete topology and $G$-equivariant maps. It becomes a site by declaring covering sieves to be families of jointly surjective maps.
\end{nota}

When $G$ is finite, the sheaf category with ring spectrum coefficients we obtain for this site has a familiar description. 

\begin{lem}\label{lem:sheavesfunbg}
Let $R \in \calg(\spec)$ be connective and let $G$ be a finite group. Then there is a t-exact equivalence
$$\sheaves(\gset;R) \simeq \fun(BG,\Mod_R(\spec)).$$
\end{lem}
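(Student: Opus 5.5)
We use that the site $\gset$ for finite $G$ is equivalent to $\sheaves(\gset) \simeq \cP(\mathcal{O}_G)$-type data concentrated on the free orbit. Concretely, every finite $G$-set $T$ admits the jointly surjective cover $G \times T^{\delta} \to T$ (indeed $\coprod_{t} G/1 \to T$, $g \mapsto gt$), so the free orbit $G/1$ generates the topology. The plan is to show that restriction along the inclusion of the full subcategory $BG \simeq \{G/1\} \subseteq \gset$ (with $\tn{End}(G/1) = G^{\op}$ acting by right multiplication) gives the equivalence $\sheaves(\gset;R) \xrightarrow{\sim} \fun(BG^{\op},\Mod_R(\spec)) \simeq \fun(BG,\Mod_R(\spec))$.

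First we identify the inverse. Given $M \in \fun(BG,\Mod_R(\spec))$, we define $\cF_M$ as the right Kan extension along $BG^{\op} \to \gset^{\op}$; explicitly $\cF_M(T) = \lim_{(G/1 \to T)} M$, which for an orbit $T = G/H$ gives $M^{hH}$, and which sends disjoint unions to products. Then we check that $\cF_M$ is a sheaf. Since every covering sieve in $\gset$ can be refined by, and the category is extensive with, covers of orbits by disjoint unions of orbits, it suffices to verify descent for covers $G/K \to G/H$ and for finite coproduct decompositions. Descent for $G/K \to G/H$ amounts to $M^{hH} \simeq \tn{Tot}\big((M^{hK})^{\times\bullet}\text{-type Čech nerve}\big)$, which follows from the identification $G/K \times_{G/H} G/K \cong \coprod_{K\backslash H/K} G/(K\cap {}^hK)$ and the usual fact that homotopy fixed points satisfy descent. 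Equivalently, we can argue more formally that a right Kan extension from the generating object is a sheaf because mapping out of a covering sieve of $G/H$ into $\cF_M$ reduces by adjunction to maps in $\fun(BG,\spec)$ out of the restriction of the sieve to $G/1$, which is an equivalence since any covering sieve restricted to the free orbit contains all maps $G/1 \to G/H$. Next, full faithfulness of restriction on sheaves holds because every sheaf is the right Kan extension of its restriction to $G/1$: for $\cF$ a sheaf, the Čech nerve of $G/1 \to G/H$ is $(G/1 \times (G/H)^{\times_{G/H}\bullet})$, which is $\coprod_{H^{\bullet}} G/1$, and descent gives $\cF(G/H) \simeq \lim_{\Delta} \prod_{H^{\bullet}} \cF(G/1) \simeq \cF(G/1)^{hH}$. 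Together these show restriction is an equivalence with inverse $M \mapsto \cF_M$. With $R$-coefficients we reduce to spectra via the forgetful functor, which preserves and detects limits and sheaves (as in the proof of \Cref{prop:hypersheafhypercomplete}).

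For t-exactness, the t-structure on $\fun(BG,\Mod_R(\spec))$ is pointwise. A sheaf is coconnective iff all sections are, and sections are $M^{hH}$, which are coconnective if $M$ is; conversely evaluation at $G/1$ shows coconnectivity of $M$. For connectivity, the sheafified homotopy presheaf $\pi_n$ vanishes iff its stalk-like value on $G/1$ vanishes, since every element at $G/H$ becomes zero after pulling back along the cover $G/1 \to G/H$ exactly when it lies in the kernel, and the $\pi_n$ sheaf is determined by its value on $G/1$ (sheaves of sets on this site are $G$-sets). So connectivity corresponds to connectivity of $M$. The main obstacle I expect is the careful bookkeeping of the descent computation (Čech nerve of $G/1 \to G/H$ and identification of its limit with homotopy fixed points) together with the variance $BG$ versus $BG^{\op}$; the t-exactness is then essentially formal.
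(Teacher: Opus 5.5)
\textbf{Verdict.} Your proof is correct, but it follows a more hands-on route than the paper.

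\textbf{The paper's route.} The paper does not reprove the equivalence. It cites \cite[Example 4.4]{cm21} for the fact that restriction $\alpha^*$ along $\alpha\colon BG \to \gset^{\op}$ (picking out $G/1$) and right Kan extension $\alpha_*$ are inverse equivalences. It then gets t-exactness purely formally:
\begin{enumerate}
\item Since $\alpha_*$ is both left and right adjoint to $\alpha^* \simeq \hat{\alpha}^*\iota$, passing to left adjoints gives $\alpha_* \simeq \tsf{a}\hat{\alpha}_!$.
\item This is right t-exact, because $\hat{\alpha}_!$ is left adjoint to the t-exact restriction $\hat{\alpha}^*$ on presheaves, and sheafification is t-exact.
\item On the other side, $\alpha_*$ agrees with $\hat{\alpha}_*$, which already lands in sheaves and is left t-exact as a right adjoint of $\hat{\alpha}^*$.
\end{enumerate}

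\textbf{Your route.} You first reprove the Clausen--Mathew equivalence. Your adjunction argument is clean and complete: $\hat{\alpha}_*M$ is a sheaf because every covering sieve of $T$ contains all maps $G/1 \to T$. Conservativity of $\alpha^*$ on sheaves then follows from your Čech computation along $G/1 \to G/H$. You then check t-exactness by hand: sections are $M^{hH}$, which handles coconnectivity, and evaluation at $G/1$ handles connectivity.

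\textbf{Comparison.} Your version is self-contained and makes the homotopy fixed point formula explicit. The paper's version needs no information about sections or homotopy sheaves, only t-exactness of restriction and of sheafification.

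\textbf{A step to rephrase.} In the connectivity step, the sentence about elements becoming zero after pulling back ``exactly when they lie in the kernel'' is circular. The real reason is this:
\begin{itemize}
\item Every surjection $U \to G/1$ splits, since a preimage of the base point has trivial stabiliser.
\item Hence every covering sieve of $G/1$ is maximal, and sheafification does not change the value at $G/1$: $(\tsf{a}\,\pi_n\cF)(G/1) \cong \pi_n\cF(G/1)$.
\item For a sheaf of abelian groups $F$ one has $F(G/H) \cong F(G/1)^H$.
\end{itemize}
Together these give $\tsf{a}\,\pi_n\cF = 0$ if and only if $\pi_n\cF(G/1) = 0$, which is exactly what you need.

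\textbf{Minor points.} Your first descent sketch (``homotopy fixed points satisfy descent'') can be dropped; the formal sieve argument you give right after it suffices. Your opening sentence comparing $\sheaves(\gset)$ with presheaves on the orbit category is misleading, since presheaves on orbits give genuine rather than Borel $G$-spaces. You never use it, though.
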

\begin{proof}
This is \cite[Example 4.4]{cm21}, we just need to show that the equivalence is t-exact. Let $\alpha: BG \to \gset^\op$ be the functor that picks out the free $G$-set $G/1$. Then restriction along $\alpha$ and right Kan extension is an adjoint equivalence.
\[\begin{tikzcd}[column sep=30, row sep=30]
	& {\sheaves(\gset;R)} \\
	{\fun(BG,\Mod_R(\spec))} & {\cP(\gset;R)}
	\arrow["{\alpha^*}", shift right=3, shorten <=10pt, from=1-2, to=2-1]
	\arrow[""{name=0, anchor=center, inner sep=0}, "\iota", shift left=2, hook', from=1-2, to=2-2]
	\arrow["{\alpha_*}", shift left=5, shorten >=10pt, from=2-1, to=1-2]
	\arrow["\sim"{marking, allow upside down}, shift left=4, draw=none, from=2-1, to=1-2]
	\arrow[""{name=3, anchor=center, inner sep=0}, "{\tsf{a}}", shift left=2, from=2-2, to=1-2]
	\arrow[""{name=4, anchor=center, inner sep=0}, "{\hat{\alpha}^*}"', from=2-2, to=2-1]
\end{tikzcd}\]
Here $\tsf{a}$ is the sheafification functor and $\hat{\alpha}^*$ is the restriction along $\alpha$ on presheaves. We have $\alpha^* \simeq \hat{\alpha}^*\iota$, and hence $\alpha_* \simeq \tsf{a}\hat{\alpha}_! $ by passing to left adjoints, where $\hat{\alpha}_!$ is the left adjoint of $\hat{\alpha}^*$. Here we use that $\alpha_*$ is both left and right adjoint to $\alpha^*$ since they are mutually inverse equivalences. Since $\hat{\alpha}^*$ is t-exact, $\hat{\alpha}_!$ is right t-exact, and since the sheafification $\tsf{a}$ is t-exact it follows that $\alpha_*$ is right t-exact. \\
On the other hand, the right adjoint $\hat{\alpha}_*$ of $\hat{\alpha}^*$ already lands in sheaves, and hence $\alpha_*$ factors as $\tsf{a}\hat{\alpha}_*$. Again since $\hat{\alpha}^*$ is t-exact, $\hat{\alpha}_*$ is left t-exact, and hence $\alpha_*$ is left t-exact as well, hence t-exact.
\end{proof}

\begin{rem}
It follows from \cite[Lemma 4.3]{cm21} that the right Kan extension $\alpha_*\cF$ of a functor $\cF: BG \to \Mod_R(\spec)$ to $\gset^\op$ sends the $G$-set $G/H$ to the $H$-homotopy fixed points $\cF(\ast_{BG})^{hH}$.
\end{rem}

Together with the observation that for a profinite group $G$, the site $\gset$ is a filtered colimit of the sites associated to the finite quotients of $G$, we get:

\begin{prop}\label{prop:sheavesborelcomplete}
Let $R \in \calg(\spec)$ be connective and let $G= \tn{lim}_i \, G_i$ be a profinite group. Then the inclusion of hypercomplete objects gives a commutative diagram
\[\begin{tikzcd}
	{\sheaves^{\wedge}(\gset;R)} & {\Mod_{R_G}(\specg)_{\tn{Borel}}} \\
	{\sheaves(\gset;R)} & {\Mod_{R_G}(\specg)_{\tn{lwBorel}}}
	\arrow["\sim", from=1-1, to=1-2]
	\arrow[hook, from=1-1, to=2-1]
	\arrow[hook, from=1-2, to=2-2]
	\arrow["\sim", from=2-1, to=2-2]
\end{tikzcd}\]
in which both horizontal functors are symmetric monoidal equivalences.
\end{prop}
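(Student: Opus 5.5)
First I build the bottom equivalence, as a filtered colimit of the finite-level equivalences of \Cref{lem:sheavesfunbg}. For each $i$, \Cref{lem:sheavesfunbg} and \Cref{prop:borelcompleteasfun} give t-exact symmetric monoidal equivalences
$$\sheaves(G_i\tsf{-set};R) \simeq \fun(BG_i,\Mod_R(\spec)) \simeq \Mod_{R_{G_i}}(\spec^{G_i})_{\tn{Borel}}.$$
Next I check that these identifications are compatible with the transition functors for $N_i \subseteq N_j$. On the sheaf side the transition functor is the pullback along the morphism of sites $G_j\tsf{-set} \to G_i\tsf{-set}$ (inflation of finite sets). On the Borel side it is $h\tn{infl}^{G_i}_{G_j}$, which is $\beta^*$ by \Cref{prop:categoricalfixedpointsasrke}. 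Both have right adjoints computed as homotopy fixed points $(-)^{hN_{ij}}$, namely right Kan extension along $BG_i \to BG_j$. For sheaves this holds because sections on $G_j/H$ are $\cF(G_i/1)^{hH'}$, by the remark after \Cref{lem:sheavesfunbg}. So it suffices to compare the right adjoints, and both are $\beta_*$. This gives a natural equivalence of diagrams $I^{\op}\to\calg(\prl)$. The monoidal refinement is automatic because both pointwise structures are the pointwise tensor on $\fun(BG_i,\Mod_R(\spec))$ and $\beta^*$ is symmetric monoidal.

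The bottom-left corner is then identified with the colimit. Since $\gset = \tn{colim}_i\, G_i\tsf{-set}$ is a filtered colimit of sites, with every finite continuous $G$-set and every covering inflated from some finite level, the sheaf $\infty$-category satisfies
$$\sheaves(\gset;R) \simeq \tn{colim}_i\, \sheaves(G_i\tsf{-set};R)$$
in $\prl$, or equivalently as the limit along pushforwards in $\prr$. This is the standard statement that sheaves on a filtered colimit of finitary sites form the limit of the sheaf categories, cf.\ \cite[§4.1]{cm21}. This is the step I expect to be the main obstacle; if needed I would argue directly in $\prr$. A presheaf on $\gset$ is the same as a compatible family of presheaves on the $G_i\tsf{-set}$ under restriction, and the sheaf condition can be checked on covers defined at some finite level. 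Comparing with \Cref{defi:levelwiseborelcomplete} yields the bottom symmetric monoidal equivalence $\sheaves(\gset;R)\simeq\Mod_{R_G}(\specg)_{\tn{lwBorel}}$. Monoidality is checked on the $\calg(\prl)$-colimit description.

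It remains to see that this equivalence is t-exact for the t-structure of \Cref{rem:tstructureonsheaves} and that of \Cref{prop:tstructureonlevelwiseborelcomplete}. Both t-structures are right complete, compatible with filtered colimits, and generated under colimits and extensions by the images of the connective parts at the finite levels. For sheaves I would check that pullback from $G_i\tsf{-set}$ is t-exact, since it commutes with sheafified homotopy presheaves. The finite-level equivalences are t-exact by \Cref{lem:sheavesfunbg}, and hence the connective parts agree. Since the equivalence is t-exact, acyclic objects correspond to acyclic objects, and so hypercomplete objects correspond to hypercomplete objects. On the left, \Cref{prop:hypersheafhypercomplete} identifies the hypercomplete sheaves with $\sheaves^{\wedge}(\gset;R)$. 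On the right, \Cref{thm:borelandhyperlewelwise} identifies the hypercomplete objects with $\Mod_{R_G}(\specg)_{\tn{Borel}}$. Restricting the bottom equivalence therefore gives the top one, which is symmetric monoidal as an equivalence of symmetric monoidal localisations by uniqueness of the localised monoidal structure. The square commutes by construction.
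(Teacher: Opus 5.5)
Your proposal is correct and follows essentially the same route as the paper. The paper also decomposes $\gset$ as a filtered colimit of the finite-level sites (\cite[Construction 4.5]{cm21}) and identifies the resulting $\prr$-limit along restrictions with homotopy fixed points, so that passing to left adjoints gives exactly the $\calg(\prl)$-colimit defining $\Mod_{R_G}(\specg)_{\tn{lwBorel}}$. It then deduces t-exactness from the t-exact transition functors and \Cref{lem:sheavesfunbg}, and passes to hypercomplete objects via \Cref{thm:borelandhyperlewelwise}; your explicit appeal to \Cref{prop:hypersheafhypercomplete} to identify hypercomplete sheaves with hypersheaves is a step the paper leaves implicit.
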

\begin{proof}
We first consider the lower horizontal functor. By \cite[Construction 4.5]{cm21} the site $\gset$ decomposes into a filtered colimit of the $G_i\tsf{-set}$ along the canonical maps $G_j\tsf{-set} \to G_i\tsf{-set}$ that inflate the $G_j$-action to a $G_i$-action along $G_i \twoheadrightarrow G_j$ whenever $N_i \subseteq N_j$. Then the sheaf category $\sheaves(\gset;R)$ can be written as the cofiltered $\prr$-limit
$$\tn{lim}_i \; \sheaves(G_i\tsf{-set};R) \stackrel{\ref{lem:sheavesfunbg}}{\simeq} \tn{lim}_i \; \fun(BG_i,\Mod_R(\spec))$$
with transition maps $\sheaves(G_i\tsf{-set};R) \to \sheaves(G_j\tsf{-set};R)$ given by restriction along the above functors of sites, cf. \cite[§3.1]{cm21}. In terms of the right-hand side, these are the maps $(-)^{h(N_j/N_i)}$. Passing to left adjoints, this is precisely the $\calg(\prl)$-colimit that defines $\Mod_{R_G}(\specg)_{\tn{lwBorel}}$. By \cite[Remark 1.3.2.8]{lur18} the transition functors in the colimit diagram for $\sheaves(\gset;R)$ are t-exact, hence the t-structure on the latter is the filtered colimit of the t-structures on the $\sheaves(G_i\tsf{-set};R)$, as in \Cref{prop:tstructureonlevelwiseborelcomplete}. Since each of the equivalences coming from \Cref{lem:sheavesfunbg} is t-exact it follows that the equivalence 
$$\sheaves(\gset;R) \simeq \Mod_{R_G}(\specg)_{\tn{lwBorel}}$$
is t-exact. Now the top horizontal equivalence follows from \Cref{thm:borelandhyperlewelwise} and \Cref{prop:tstructureonlevelwiseborelcomplete} by passing to hypercomplete objects.
\end{proof}

In \cite[§4.1]{cm21}, it is studied when hypercompletion on $\sheaves(\gset;\spec)$ is smashing, and criteria for sheaves of spectra to be hypercomplete in terms of weakly nilpotent group actions are given. Using this we can give assumptions under which the vertical functors in \Cref{prop:sheavesborelcomplete} are actually equivalences. We denote by $R^s$ the sheafification of the constant presheaf of spectra with value $R$. 

\begin{prop}[Clausen--Mathew]\label{prop:smashinghypercompletion}
Under each of the following assumptions the two vertical functors in \Cref{prop:sheavesborelcomplete} are equivalences:
\begin{enumerate}
    \item The virtual cohomological dimension of $G$ is finite and $R^s$ is hypercomplete. (The latter in particular holds when the ring spectrum $R$ is truncated.)
    \item The group $G$ has finite cohomological dimension $d$ and $R^s$ satisfies either of the two (equivalent) conditions:
    \begin{enumerate}
        \item For every normal containment $N \unlhd H$ of open subgroups the spectrum with $H/N$-action $R^s(G/N)$ is d-nilpotent.
        \item There is a $d' \geq 0$ such that for every open normal subgroup $N \unlhd G$ the spectrum with $G/N$-action $R^s(G/N)$ is $d'$-nilpotent.
    \end{enumerate}
\end{enumerate}
\end{prop}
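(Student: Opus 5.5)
By \Cref{prop:sheavesborelcomplete}, both vertical functors are the inclusion of hypercomplete objects $\sheaves^{\wedge}(\gset;R) \hookrightarrow \sheaves(\gset;R)$, transported along the horizontal equivalences. It therefore suffices to show that every object of $\sheaves(\gset;R)$ is a hypersheaf. By \Cref{prop:hypersheafhypercomplete} this is the same as hypercompleteness with respect to the t-structure of \Cref{rem:tstructureonsheaves}. The plan is to reduce this to the statements of Clausen--Mathew for sheaves of spectra, using the identification $\sheaves(\gset;R)\simeq \Mod_{R^s}(\sheaves(\gset;\spec))$ from the proof of \Cref{prop:hypersheafhypercomplete}.

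\textbf{Reduction to spectral sheaves.} The forgetful functor $U_R$ is conservative and t-exact, and it preserves and detects hypersheaves. By the argument in the proof of \Cref{prop:hypersheafhypercomplete}, a sheaf $\cF$ of $R$-modules is hypercomplete if and only if $U_R(\cF)$ is. Since every sheaf of $R$-modules is an $R^s$-module in sheaves of spectra, it suffices to show: every $R^s$-module in $\sheaves(\gset;\spec)$ is hypercomplete as a sheaf of spectra. The tool is \cite[§4.1]{cm21}, where Clausen--Mathew show in the relevant cases that hypercompletion $L^h$ on $\sheaves(\gset;\spec)$ is smashing, i.e.\ $L^h(\cF)\simeq \cF\otimes L^h(\bS^s)$. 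Granting smashing, for an $R^s$-module $\cF$ we get
\[
L^h\cF \simeq \cF\otimes_{R^s}(R^s\otimes L^h\bS^s)\simeq \cF\otimes_{R^s} L^h(R^s).
\]
If $R^s$ is hypercomplete, then $L^h(R^s)\simeq R^s$, and hence $L^h\cF\simeq\cF$.

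\textbf{Case (1).} Finite virtual cohomological dimension is exactly the hypothesis under which \cite{cm21} prove that hypercompletion is smashing on $\sheaves(\gset;\spec)$. Combined with the assumed hypercompleteness of $R^s$, the argument above concludes. For the parenthetical claim, suppose $R$ is truncated, say $n$-truncated. Then $R^s$ is $n$-truncated in the t-structure, since the t-structure is right complete and coconnectivity is checked sectionwise. Truncated objects are hypercomplete, because maps from an acyclic $Y$ into $\tau_{\le n}X$ factor through $\tau_{\le n}Y=0$.

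\textbf{Case (2).} Here $\mathrm{cd}(G)=d<\infty$, and I would invoke the nilpotence criterion of \cite[§4.1]{cm21}. A sheaf on $\gset$ whose values $\cF(G/N)$ are uniformly nilpotent as spectra with $G/N$-action is hypercomplete. Applied to $R^s$, condition (a) or (b) makes $R^s$ hypercomplete. The equivalence of (a) and (b) is part of their statement, and I would cite it rather than reprove it.

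It remains to pass from $R^s$ to all $R^s$-modules. This is the main obstacle: in case (2), smashing of $L^h$ is not available for all sheaves of spectra. I would instead use the fact, proved in \cite{cm21}, that in finite cohomological dimension the nilpotent objects form a thick tensor ideal on which hypercompletion is smashing. Alternatively, one can directly show that nilpotence of $R^s$ propagates to $R^s$-modules, since each is a retract of $R^s\otimes \cF$; uniform nilpotence bounds are preserved by tensoring with arbitrary $\cF$ and by retracts. This module-propagation step is where the most care is needed.
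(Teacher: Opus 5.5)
Your case (1) is the paper's argument. The paper simply cites \cite[Corollary 4.28, 4.29]{cm21}, and your computation $L^h\cF\simeq\cF\otimes_{R^s}L^h(R^s)$, derived from smashing, is the content of that corollary. (A small correction: $R^s$ is $n$-truncated because sheafification is t-exact and the constant presheaf with value $R$ is $n$-truncated; right completeness is not what gives this.)

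The gap is in case (2), and it comes from a false claim. You say that smashing of $L^h$ is not available when $G$ only has finite cohomological dimension. But finite cohomological dimension trivially implies finite virtual cohomological dimension: take the open subgroup $G$ itself, so $\tn{vcd}(G)\le\tn{cd}(G)=d<\infty$. Hence the Clausen--Mathew smashing result applies. You correctly invoke \cite[Theorem 4.26]{cm21} to get that (a) or (b) makes $R^s$ hypercomplete. With that, case (2) is literally an instance of case (1), and your smashing argument finishes it. This is exactly the paper's proof: ``the second assertion follows from the first and Theorem 4.26''.

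The workarounds you propose instead are not established. The ``thick tensor ideal on which hypercompletion is smashing'' is not a precise result you can cite. Your retract argument also conflates the tensor product of sheaves with that of sections. The sections of $R^s\otimes\cF$ are those of a sheafified tensor product, not $R^s(G/N)\otimes\cF(G/N)$, so nilpotence of $R^s$ does not pass to $R^s\otimes\cF$ by the reasoning you give.

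A sectionwise version would work. Via the lax monoidal evaluation functor, $\cF(G/N)$ is an $R^s(G/N)$-module in $\fun(B(H/N),\spec)$, hence a retract of $R^s(G/N)\otimes\cF(G/N)$ there. Since $d$-nilpotent objects are closed under tensoring with arbitrary objects and under retracts, the nilpotence criterion of \cite[Theorem 4.26]{cm21} then applies to $\cF$ directly. The reduction to case (1) makes this unnecessary.
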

\begin{proof}
Part (1) is \cite[Corollary 4.28]{cm21}, see also \cite[Corollary 4.29]{cm21} for the statement about truncatedness. The second assertion follows from the first and \cite[Theorem 4.26]{cm21}.
\end{proof}

\begin{rem}
Loosely speaking, the condition of $d$-nilpotency for a spectrum $X$ with an action of a finite group $G$ means that $X$ can be built through retracts and extensions from an induced spectrum $\oplus_G Y$ in $d+1$ many steps. For the precise definition we refer to \cite[Definition 4.18]{cm21}, see also \cite{mnn17,mnn19}. The statement of \cite[Lemma 4.20]{cm21} gives a characterisation in terms of the $d$-skeleton of the universal space for free $G$-actions $EG$.
\end{rem}

\begin{rem}
When $G$ is finite, the colimit diagram computing the levelwise Borel complete category has a terminal object and
$$\Mod_{R_G}(\specg)_{\tn{lwBorel}} \simeq \Mod_{R_G}(\specg)_{\tn{Borel}}.$$
Note also that the t-structure on $\Mod_R(\spec)$ is left complete, and hence the functor category $\fun(BG,\Mod_R(\spec))$ is already hypercomplete. This is consistent with part (1) of \Cref{prop:smashinghypercompletion}, as in this case $R^s$ is hypercomplete since every object of $\fun(BG,\Mod_R(\spec))$ is Postnikov-complete (that is, equivalent to the limit of its Postnikov tower).
\end{rem}

\begin{rem}
Under the assumption that the virtual cohomological dimension of $G$ is finite, Clausen and Mathew also show that hypercompletion on $\sheaves(\gset;R)$ (for general connective $R \in \calg(\spec)$) is smashing and agrees with Postnikov completion \cite[Corollary 4.28, Proposition 2.10]{cm21}. In view of \Cref{prop:sheavesborelcomplete} this means that in this case the hypercompletion
$$L^h: \Mod_{R_G}(\specg)_{\tn{lwBorel}} \to \Mod_{R_G}(\specg)_{\tn{Borel}}$$
is computed as the Postnikov completion $X \mapsto \lim_{n} \tau_{\leq n} X \simeq X \otimes \lim_n \tau_{\leq n} R^s$.
\end{rem}

When $R$ is a discrete commutative ring, we can give another description of the sheaf categories in \Cref{prop:sheavesborelcomplete}. As before, we write $\modgr$ for the (Grothendieck abelian) category of discrete $(G;R)$-modules, where $G$ is a profinite group, see e.g. \cite[§2]{bg23b}. Its derived $\infty$-category $\cD(\modgr)$ carries a natural t-structure \cite[Proposition 1.3.5.21]{lur17}, and although this t-structure is generally not left complete, $\cD(\modgr)$ is hypercomplete, since it has no non-zero acyclics.

\begin{prop}\label{prop:sheavesgrmodules}
Let $R$ be a discrete commutative ring and let $G= \tn{lim}_i \, G_i$ be a profinite group. Then the inclusion of hypercomplete objects gives a commutative diagram
\[\begin{tikzcd}
	{\sheaves^{\wedge}(\gset;R)} & {\cD(\modgr)} \\
	{\sheaves(\gset;R)} & {\tn{colim}_i \; \cD(\Mod(G_i;R))}
	\arrow["\sim", from=1-1, to=1-2]
	\arrow[hook, from=1-1, to=2-1]
	\arrow[hook, from=1-2, to=2-2]
	\arrow["\sim", from=2-1, to=2-2]
\end{tikzcd}\]
in which both horizontal functors are symmetric monoidal equivalences.
\end{prop}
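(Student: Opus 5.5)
Our plan is to handle the bottom row first, obtain the top row by passing to hypercomplete objects on both sides, and check monoidality at the end.

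\textbf{Bottom row.} For a finite group $G_i$ and a discrete commutative ring $R$, we have the equivalence $\fun(BG_i,\Mod_R(\spec)) \simeq \cD(\Mod(G_i;R))$. Here $\Mod(G_i;R)$ is the category of modules over the group ring $R[G_i]$, so this is the standard identification $\fun(BG_i,\Mod_R(\spec)) \simeq \Mod_{R[G_i]}(\spec) \simeq \cD(R[G_i])$ of \cite[Theorem 7.1.2.13]{lur17}. This equivalence is symmetric monoidal for the pointwise structure on the left and the derived tensor product over $R$ with diagonal action on the right. It is also t-exact, since both t-structures are detected on underlying $R$-module spectra. Combined with \Cref{lem:sheavesfunbg}, this gives $\sheaves(G_i\tsf{-set};R) \simeq \cD(\Mod(G_i;R))$ for each $i$.

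Next we check that these equivalences are compatible with the transition functors. On the sheaf side, the left adjoints of the transition maps are the homotopy inflations $\beta^*$ of \Cref{prop:categoricalfixedpointsasrke}. These are restrictions along $BG_i \to BG_j$, and under the above identification they become restriction of scalars along $R[G_i] \to R[G_j]$, i.e.\ the derived functor of the exact inflation $\Mod(G_j;R) \to \Mod(G_i;R)$. This naturality is most cleanly obtained by realising both sides functorially in the global category of finite groups. Taking the $\calg(\prl)$-colimit, and using \Cref{prop:sheavesborelcomplete} to identify $\sheaves(\gset;R)$ with the colimit of the $\sheaves(G_i\tsf{-set};R)$, yields the bottom equivalence. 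By \Cref{prop:tstructureonlevelwiseborelcomplete} it is t-exact for the colimit t-structures.

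\textbf{Right vertical functor.} We construct the right vertical functor $\cD(\modgr) \to \tn{colim}_i\, \cD(\Mod(G_i;R))$ as the right adjoint of the canonical colimit-preserving symmetric monoidal functor $\Phi: \tn{colim}_i\, \cD(\Mod(G_i;R)) \to \cD(\modgr)$, which is induced by the exact inflations $\Mod(G_i;R) \to \modgr$. By \Cref{prop:sheavesborelcomplete} and the previous paragraph, $\Phi$ is t-exact and on hearts it is $\tn{colim}_i \Mod(G_i;R) \simeq \modgr$. It remains to show that $\Phi$ exhibits $\cD(\modgr)$ as the hypercompletion of the source. We do this in three steps:
\begin{enumerate}
\item The target $\cD(\modgr)$ is hypercomplete; this was noted before the statement.
\item $\Phi$ kills acyclics, by t-exactness.
\item For an object $X$ of the source and $M \in \cD(\modgr)$, we need to compare $\Map(\Phi X, M)$ with $\Map(X, \Phi^R M)$. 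Equivalently, we need to show that the right adjoint $\Phi^R$ is fully faithful with hypercomplete image, i.e.\ that the unit or counit is an equivalence.
\end{enumerate}
For step 3 we plan to use that both categories are generated under colimits by the images of the $\cD(\Mod(G_i;R))$. We will reduce to showing that for $M$ bounded below, $\Phi^R M$ is computed by $(M^{N_i})_i$, where $M^{N_i}$ denotes derived $N_i$-invariants. Mapping out of inflated objects then agrees with mapping in $\cD(\modgr)$ because $\Hom$ in $\modgr$ from an inflated module factors through $N_i$-invariants. Passing to unbounded $M$ uses that both sides are limits of their truncations relative to hypercomplete objects.

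\textbf{Main obstacle and conclusion.} Step 3 is where we expect the difficulty. The cleanest route is probably to avoid it: identify $\sheaves^{\wedge}(\gset;R)$ directly with $\cD(\modgr)$, using that the heart of the sheaf t-structure is the category of sheaves of $R$-modules on $\gset$, which is $\modgr$. We would then invoke \cite[Proposition 2.1.2.2 / Corollary 2.1.2.3]{lur18} (or \cite{cm21}), which identifies hypercomplete sheaves of $R$-modules on a site with the derived $\infty$-category of the heart. That yields the top equivalence directly, and commutativity of the square follows because both composites are t-exact colimit-preserving functors that agree on the generators $R[G/N_i]$.

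Symmetric monoidality is then handled by the universal property of symmetric monoidal localisations \cite[Proposition 3.2.2]{hin16}, exactly as in \Cref{thm:borelandhyperlewelwise}.
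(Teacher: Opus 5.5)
Your bottom row is argued exactly as in the paper. For the top row, the route you finally choose is not the paper's main argument. That route identifies $\sheaves^{\wedge}(\gset;R)$ with $\cD(\modgr)$ directly, through \cite[Corollary 2.1.2.3]{lur18} and the classical equivalence $\tn{Sh}(\gset;R)\simeq\modgr$. It is, however, exactly the alternative the paper records in the remark right after the proposition.

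The paper's proof instead confronts your step 3 directly. Write $\alpha_i$ for the structure maps into the colimit and $\beta_i$ for their right adjoints. The paper first shows your $\Phi$ (its $\alpha$) is t-exact, using $M\simeq\tn{colim}_i\,\alpha_i\beta_i(M)$, so that $\ker\Phi$ is exactly the acyclics. It then proves $\Phi^R$ fully faithful by factoring the quotient $\widecheck{\cD}(\modgr)\to\cD(\modgr)$ as $\Phi\circ\check{\alpha}$. Here $\check{\alpha}^R$ is fully faithful for two reasons: the unseparated derived category commutes with the filtered colimit $\modgr\simeq\tn{colim}_i\,\Mod(G_i;R)$, and each $\cD(\Mod(G_i;R))\hookrightarrow\widecheck{\cD}(\Mod(G_i;R))$ is fully faithful.

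Each route buys something different:
\begin{itemize}
\item The paper's argument proves directly that the canonical comparison functor is the hypercompletion. Commutativity of the square and monoidality (via \cite[Proposition 3.2.2]{hin16}) then come for free.
\item Your route gets the top equivalence from general sheaf theory without analysing $\Phi^R$. The price is that you must separately identify the right vertical arrow with $\Phi^R$.
\end{itemize}

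That identification needs more than you give. Two colimit-preserving functors that agree on the objects $R[G/N_i]$ need not be equivalent. Instead, note that $\Phi\circ(\text{bottom})$ and $(\text{top})\circ L^h$ both kill acyclics, because the target is hypercomplete, so both factor through $L^h$. After transporting along the top equivalence, they become t-exact left adjoints $\cD(\modgr)\to\cD(\modgr)$ that agree on the heart. They are then identified by \cite[Theorem C.5.4.9]{lur18}, as in the appendix.

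This argument needs $\Phi$ to be t-exact, and that does not follow from \Cref{prop:sheavesborelcomplete} as you claim. Use the paper's computation $\pi_*\Phi(M)\cong\tn{colim}_i\,(\tn{infl}^G_{G_i})^{\heartsuit}\pi_*\beta_i(M)\cong\pi_*(M)$.

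Finally, you were right to abandon step 3 as sketched. Since $\beta_i\Phi^R(M)\simeq R\Gamma(N_i,M)$ holds for every $M$ by adjunction, the real content is the counit $\tn{colim}_i\,\tn{infl}^G_{G_i}R\Gamma(N_i,M)\to M$. You can check this for cohomologically bounded below $M$, but it does not extend to unbounded $M$:
\begin{itemize}
\item not via Postnikov limits, since $\cD(\modgr)$ need not be left complete;
\item not via colimits, since there is no a priori reason for $\Phi^R$ to preserve them.
\end{itemize}
This is exactly the difficulty the paper's detour through $\widecheck{\cD}(\modgr)$ avoids.
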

\begin{proof}
Again consider the colimit decomposition of $\sheaves(\gset;R)$ from \cite[Construction 4.5]{cm21}. For each $i \in I$, we have 
$$\sheaves(G_i\tsf{-set};R) \simeq \fun(BG_i,\cD(R)) \simeq \cD(\Mod(G_i;R)),$$
where the second equivalence follows from the identification of $\Mod(G_i;R)$ with modules over the group algebra $R[G_i]$ and the Schwede--Shipley theorem \cite[Theorem 7.1.2.1]{lur17}. This equivalence is t-exact. \\
The functor $\sheaves(G_j\tsf{-set};R) \to \sheaves(G_i\tsf{-set};R)$ that is left adjoint to the restriction along the inflation map $G_j\tsf{-set} \to G_i\tsf{-set}$ for $N_i \subseteq N_j$ corresponds to the functor on derived categories induced by $\Mod(G_j;R) \to \Mod(G_i;R)$, the exact functor that inflates a $G_j$-action to a $G_i$-action along $G_i \twoheadrightarrow G_j$. In terms of group algebras, this is a restriction of scalars along the canonical map $R[G_i] \to R[G_j]$. We hence obtain an equivalence 
$$\sheaves(G\tsf{-set};R) \simeq \tn{colim}_i \; \cD(\Mod(G_i;R)),$$
where the colimit is taken in $\calg(\prlst)$ along the above maps. Using the same argument as in \Cref{prop:tstructureonlevelwiseborelcomplete} we obtain a t-structure on $\tn{colim}_i \; \cD(\Mod(G_i;R))$ that is compatible with filtered colimits and has $\modgr$ as a heart. It makes the above equivalence t-exact. It remains to show that $\cD(\modgr)$ identifies with the full subcategory of hypercomplete objects of the above colimit. \\
The inflation functors $\Mod(G_i;R) \to \modgr$ are exact and provide t-exact functors $\tn{infl}^G_{G_i}: \cD(\Mod(G_i;R)) \to \cD(\Mod(G;R))$. Then there is an induced symmetric monoidal left adjoint 
$$\alpha: \tn{colim}_i \; \cD(\Mod(G_i;R)) \to \cD(\modgr).$$
Write $\alpha_i: \cD(\Mod(G_i;R)) \to \tn{colim}_i \; \cD(\Mod(G_i;R))$ for the canonical map and $\beta_i$ for its right adjoint. Let $M \in \tn{colim}_i \; \cD(\Mod(G_i;R))$. Then \cite[Lemma 6.3.3.6]{lur09} implies that $M \simeq \tn{colim}_i \; \alpha_i \beta_i(M)$ and $\alpha(M) \simeq \tn{colim}_i \; \tn{infl}^G_{G_i} \beta_i(M)$, so
$$\pi_*(\alpha(M)) \cong \tn{colim}_i (\tn{infl}^G_{G_i})^\heartsuit \pi_*(\beta_i(M)) \cong \pi_*(M).$$
In particular, $\alpha$ is t-exact. Since the t-structure on $\cD(\modgr)$ is hypercomplete, we have that
$$\alpha(M) \simeq 0 \Leftrightarrow \pi_*(\alpha(M)) \cong 0 \Leftrightarrow \pi_*(M) \cong 0.$$
This means that the kernel of $\alpha$ precisely consists of the acyclic objects, and we need to show that the right adjoint of $\alpha$ is fully faithful. Denote by $\widecheck{\cD}(\modgr)$ the unseparated derived $\infty$-category of \cite[§C.5.8]{lur18}. There is a canonical localisation functor that quotients by the acyclic complexes
$$\tilde{\alpha}: \widecheck{\cD}(\modgr) \twoheadrightarrow  \widecheck{\cD}(\modgr)/\widecheck{\cD}(\modgr)_{\tn{ac}} \simeq \cD(\modgr).$$
Its right adjoint is fully faithful and identifies the target with the hypercomplete objects of the source. Combining the universal property of the prestable part of the unseparated derived $\infty$-category \cite[Corollary C.5.8.9]{lur18} with \cite[Proposition C.3.1.1, C.3.2.1]{lur18} we see that $\tilde{\alpha}$ factors as $\alpha \circ \check{\alpha}$. Here 
$$\check{\alpha}: \widecheck{\cD}(\modgr) \to \tn{colim}_i \; \cD(\Mod(G_i;R))$$
is the unique t-exact left adjoint that is given by the identity on hearts. Note that $\modgr$ is itself the filtered colimit of the categories $\Mod(G_i;R)$ along the left exact left adjoint inflation functors $\Mod(G_j;R) \to \Mod(G_i;R)$ induced by the quotient maps $G_i \twoheadrightarrow G_j$. So by \cite[Proposition C.5.5.20, C.3.3.5]{lur18} we can also write $\widecheck{\cD}(\modgr)$ as a $\prl$-colimit of the $\widecheck{\cD}(\Mod(G_i;R))$, and the right adjoint of $\check{\alpha}$ is a $\prr$-limit of fully faithful functors 
$$\cD(\Mod(G_i;R)) \hookrightarrow \widecheck{\cD}(\Mod(G_i;R)).$$
So the right adjoints of $\check{\alpha}$ and $\tilde{\alpha}$ are fully faithful, and hence the right adjoint of $\alpha$ must be fully faithful as well. This finishes the proof.
\end{proof}

\begin{rem}
Recall the classical equivalence $\modgr \simeq \tn{Sh}(\gset;R)$, where the latter is the 1-category of sheaves of (discrete) $R$-modules on $G$-sets, and $G$ again is a profinite group. Then there is a fully faithful embedding 
$$\modgr \simeq \tn{Sh}(\gset;R) \hookrightarrow \sheaves(\gset;R),$$
and by \cite[Corollary 2.1.2.3]{lur18} it extends to a fully faithful functor
$$\cD(\modgr) \hookrightarrow \sheaves(\gset;R)$$
which has essential image given by hypersheaves. This provides an alternative approach to the previous proposition.
\end{rem}

In this setting, \Cref{prop:smashinghypercompletion} implies the following.

\begin{cor}\label{cor:smashinghypercompletionreps}
If the virtual cohomological dimension of the profinite group $G$ is finite, the vertical functors in \Cref{prop:sheavesgrmodules} are equivalences.
\end{cor}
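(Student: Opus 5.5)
The plan is to reduce the statement to \Cref{prop:smashinghypercompletion}(1) by combining the two commutative squares of \Cref{prop:sheavesborelcomplete} and \Cref{prop:sheavesgrmodules}. Both squares share the same left-hand column, namely the inclusion $\sheaves^{\wedge}(\gset;R) \hookrightarrow \sheaves(\gset;R)$, and their horizontal functors are equivalences. So the right vertical functor $\cD(\modgr) \hookrightarrow \tn{colim}_i \; \cD(\Mod(G_i;R))$ of \Cref{prop:sheavesgrmodules} is an equivalence exactly when this left vertical inclusion is. It therefore suffices to show that every sheaf in $\sheaves(\gset;R)$ is a hypersheaf.

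The key steps, in order, are as follows. First, regard the discrete commutative ring $R$ as a connective $\bE_\infty$-ring via its Eilenberg--MacLane spectrum, so that \Cref{prop:sheavesborelcomplete} and \Cref{prop:smashinghypercompletion} apply. Second, check the hypothesis of \Cref{prop:smashinghypercompletion}(1): the virtual cohomological dimension of $G$ is finite by assumption, and $R^s$ must be hypercomplete. Since $R$ is discrete, it is in particular truncated, and \Cref{prop:smashinghypercompletion}(1) explicitly records that truncatedness of $R$ suffices for hypercompleteness of $R^s$ (via \cite[Corollary 4.29]{cm21}). Hence both vertical functors of \Cref{prop:sheavesborelcomplete} are equivalences; in particular $\sheaves^{\wedge}(\gset;R) \hookrightarrow \sheaves(\gset;R)$ is one. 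Third, transport this along the horizontal equivalences of \Cref{prop:sheavesgrmodules}: commutativity of that square, together with two-out-of-three for equivalences, shows that $\cD(\modgr) \to \tn{colim}_i \; \cD(\Mod(G_i;R))$ is an equivalence.

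I expect essentially no obstacle. The only point requiring care is that the left vertical inclusions in the two propositions are literally the same functor, namely the hypersheaf inclusion, which also coincides with the inclusion of hypercomplete objects by \Cref{prop:hypersheafhypercomplete}. This is immediate from how both squares were constructed. A secondary check is that the truncatedness criterion is applied to the constant sheaf with value the Eilenberg--MacLane spectrum of $R$, which is bounded, so \cite[Corollary 4.29]{cm21} applies directly.
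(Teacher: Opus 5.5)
Your proposal is correct and matches the paper's argument. The paper deduces this corollary directly from \Cref{prop:smashinghypercompletion}(1), using that a discrete $R$ is truncated so that $R^s$ is hypercomplete. Since the left column of \Cref{prop:sheavesgrmodules} is the same hypersheaf inclusion as in \Cref{prop:sheavesborelcomplete}, the horizontal equivalences then transfer the conclusion to the right-hand vertical functor.
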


\begin{rem}
By definition the unseparated derived $\infty$-category $\widecheck{\cD}(\modgr)$ of \cite[Corollary C.5.8.9]{lur18} agrees with Krause's homotopy category of injectives 
$$\cK(\tn{Inj} \; \modgr)$$ 
of \cite{kra05,kra15}. It generally differs from the filtered colimit of derived categories we considered above, this is already the case for finite groups. \\
When $G$ is finite, and when the coefficient ring $R$ is assumed regular and noetherian, then a theorem of Mathew shows that $\cK(\tn{Inj} \; \modgr)$ coincides with the category of modules over the Borel completion of $R_G$ in $G$-spectra, which in this case also agrees with $\ind\,\fun(BG,\tsf{Perf}(R))$, see \cite[Theorem 3.7]{bar21}, \cite[Theorem A.4]{tre15}. We refer to \cite[Remark 3.35]{bchnl25} for further discussion.
\end{rem}

\section{Smooth Artin motives}\label{sec:motives}

We now start with the geometric part of the paper, in which we will consider $\infty$-categories of smooth Artin motives for the Nisnevich and the étale topology. They are constructed using the theory of finite correspondences \cite[§9]{cd19}. For versions of (smooth) Artin motives for the étale topology without correspondences we refer the reader to \cite{az12}, \cite{cdn23} and \cite{rui25a,rui25b}. \\ 
We will work over a base scheme $S$ that we always assume to be connected, noetherian, and of finite Krull dimension.\footnote{Connectedness is assumed for convenience, it is not of technical relevance and one can restrict to individual connected components when working with étale fundamental groups. The assumption on the Krull dimension is a technical one and will only be used for hypercompleteness of the Nisnevich topos. Assuming $S$ to be noetherian is also used for the latter (for this it would be enough that $S$ is qcqs), but it is also important for the theory of finite correspondences.} All morphisms of schemes are assumed to be separated and of finite type. In particular, all schemes considered are noetherian. Throughout, $R$ is a discrete commutative ring.

\begin{defi}
Let $\sms$ be the category of smooth $S$-schemes with morphisms given by morphisms of $S$-schemes, and let $\fets$ be the full subcategory on finite étale $S$-schemes. The fibre product over $S$ endows both $\sms$ and $\fets$ with a symmetric monoidal structure which commutes with coproducts in both variables. 
\end{defi}

\begin{rec}[{\cite[§8.1.a]{cd19}}]\label{rec:cycles}
For a scheme $X$, a \emph{cycle with domain $X$} is a $\bZ$-linear combination of points of $X$. We write 
$$\langle X \rangle \defeq \sum_{x \in X^{(0)}} \tn{lg}(\cO_{X,x}).x$$ 
for the \emph{cycle associated to $X$}.\footnote{Here $\tn{lg}(\cO_{X,x})$ is the geometric multiplicity of $x \in X$, i.e. the length of the local ring $\cO_{X,x}$ (considered as a module over itself).} The sum runs over the generic points\footnote{Throughout, we understand a generic point as one of an irreducible component.} of $X$. When $Z$ is a closed subscheme of a scheme $X$, the cycle $\langle Z \rangle$ is understood as a cycle with domain $X$. If $f: X \to Y$ is a morphism of schemes, the \emph{pushforward} of a cycle $\alpha=\sum_{i \in I} n_ix_i$ with domain $X$ by $f$ is the cycle with domain $Y$ given by 
$$f_*(\alpha) = \sum_{i \in I} n_id_i.f(x_i),$$
where $d_i$ is the degree of the extension $\kappa(x_i)/\kappa(f(x_i))$ if it is finite and $0$ otherwise. 
\end{rec}

\begin{defi}\label{defi:correspondence}
Let $X$ and $Y$ be smooth $S$-schemes. Then $\tsf{Cor}_S(X,Y)$, the group of \emph{finite $S$-correspondences from $X$ to $Y$}, is the abelian group consisting of the finite and universally integral cycles over $S$ with domain $X \times_S Y$. \\
Using each of the tensor products $\tsf{Cor}_S(X,Y) \otimes R$ as hom-sets defines an additive category $\corsms{R}$ which has objects the smooth $S$-schemes and morphisms the finite $S$-correspondences with coefficients in $R$. Composition is defined using pushforward and intersection of cycles, and the identity of a smooth $S$-scheme $X$ is given by the cycle $\langle \Delta_X \rangle$, where $\Delta_X \subseteq X\times_S X$ is the diagonal \cite[§9.1]{cd19}. \\
By $\corfets{R}$ we denote the full subcategory of $\corsms{R}$ spanned by the finite étale $S$-schemes. Finite direct sums are given by the disjoint union of schemes, and the fibre product over $S$ endows both categories with an additively symmetric monoidal structure \cite[§9.2]{cd19}.
\end{defi}

\begin{rem}\label{rem:nicecorrespondences}
The definition given above is the most general one taken from \cite[§9.1]{cd19}, which goes back to \cite[§3]{sv00}, \cite[Appendix 1A]{mvw06}, and we will not need it in its full detail. We want to highlight three special cases of which the third one will be of importance later:
\begin{enumerate}
    \item[(1)] If the base scheme $S$ is regular, then $\tsf{Cor}_S(X,Y)$ is the free abelian group on integral closed subschemes $Z \subseteq X \times_S Y$ which are finite and equidimensional over $X$ (that is, they are finite over $X$ and dominate an irreducible component of $X$), see \cite[Remark 8.3.29, 9.1.3]{cd19}.
    \item[(2)] Assume that $X$ is smooth and $Y$ étale over $S$, write $p_1: X \times_S Y \to X$ for the canonical (étale) morphism. Then by \cite[Lemma 10.2.6]{cd19} the group $\tsf{Cor}_S(X,Y)$ is free abelian on the set of connected components $U$ of $X \times_S Y$ such that $p_1(U)$ is equal to a connected component of $X$.
    \item[(3)] From (2) it follows that if $X$ is smooth and $Y$ finite étale over $S$, then $\tsf{Cor}_S(X,Y)$ is the free abelian group on the connected components $X \times_S Y$, cf. \cite[Proposition 10.2.5]{cd19}.
\end{enumerate}
The apparent discrepancy between (1) and (2) can be explained as follows: every connected scheme $X$ that is smooth over a regular and hence normal scheme $S$ is already itself normal, and since it is assumed noetherian it is integral.
\end{rem}

\begin{rem}
In \cite[§9]{cd19} finite correspondences with coefficients in a ring $R$ are defined by means of $\Lambda$-universal cycles, where $\Lambda \subseteq \bQ$ is the localisation of $\bZ$ at the integers that are invertible in $R$. By \cite[Remark 9.1.3]{cd19} tensoring the resulting correspondence groups over $\Lambda$ with $R$ results in the hom-modules of $\corsms{R}$ defined above. 
\end{rem}

\begin{rem}\label{rem:isoetalecorrespondences}
The identification made in (2) of \Cref{rem:nicecorrespondences} poses a subtlety: it identifies connected components with the sum of their generic points. For example, the identity of $S$ in $\tsf{Cor}_S(S,S)$ is given by $\langle \Delta_S \rangle$, but if $S$ is only connected but not irreducible $\Delta_S \cong S$ can have multiple irreducible components. Under the identification made in (2) the cycle $\langle \Delta_S \rangle$ corresponds to the unique connected component of $S \times_S S$. In fact, the proof of \cite[Lemma 10.2.6]{cd19} shows that for $X$ a smooth and $Y$ an étale $S$-scheme the map 
$$\bZ[\pi_0^{\tn{fin}}(X \times_S Y/X)] \to \tsf{Cor}_S(X,Y), \; U \mapsto \langle U \rangle = \sum_{u \in U^{(0)}} \tn{lg}(\cO_{U,u}).u$$
is an isomorphism, where $\pi_0^{\tn{fin}}(X \times_S Y/X)$ is the set of connected components $U$ of $X \times_S Y$ such that the projection $p_1(U)$ is equal to a connected component of $X$. When $Y$ is finite étale over $S$ the condition on the projection can be dropped.
\end{rem}

\begin{rec}\label{rec:graphfunctors}
Let $f: X \to Y$ be a morphism of smooth $S$-schemes. Then its graph $\Gamma_f$ is a closed subscheme of $X \times_S Y$, and by \cite[Example 9.1.4]{cd19} the cycle $\langle \Gamma_f \rangle$ with domain $X \times_S Y$ defines an element of $\tsf{Cor}_S(X,Y)$. This construction gives faithful functors $\gamma: \sms \to \corsms{R}$ and $\gamma: \fets \to \corfets{R}$ which are the identity on objects and take a morphism of $S$-schemes $f: X \to Y$ to the cycle $\langle \Gamma_f \rangle$. They are symmetric monoidal \cite[{}9.2.4]{cd19}.
\end{rec}

\begin{rem}
Using the description of correspondence groups given in \Cref{rem:nicecorrespondences} it follows that all objects of $\corfets{R}$ are self-dual with respect to the symmetric monoidal structure of \Cref{defi:correspondence}.
\end{rem}

\begin{nota}
In what follows, $\tau$ will denote either the étale or the Nisnevich topology on $\sms$ or a full subcategory thereof. For the former, covering families are finite families $(X_i \to X)_{i \in I}$ of étale morphisms that are jointly surjective, and for the latter we furthermore demand that for every $x \in X$ there exists $i \in I$ and $x_i \in X_i$ mapping to $x$ such that the induced map of residue fields $\kappa(x) \to \kappa(x_i)$ is an isomorphism. \\
For $\cC$ an additive $\infty$-category (which in our case will be a correspondence category) we will write $\psigma(\cC;\spec)$ for the $\infty$-category of additive presheaves on $\cC$ with values in spectra. We refer to Appendix \ref{sec:sheaveswithtransfers} for some recollections.
\end{nota}

\begin{defi}\label{defi:sheaveswithtransfers}
The objects of $\psigma(\corsms{R};\spec)$ are called \emph{presheaves with transfers}. We define the $\infty$-category  $\sheaves^{(\wedge)}_{\tau}(\corsms{R};\spec)$ of \emph{$\tau$-(hyper)sheaves with transfers} as the pullback 
\[\begin{tikzcd}
	{\sheaves^{(\wedge)}_{\tau}(\corsms{R};\spec)} & {\psigma(\corsms{R};\spec)} \\
	{\sheaves^{(\wedge)}_{\tau}(\sms;\spec)} & {\psigma(\sms;\spec),}
	\arrow[from=1-1, to=1-2, "{\tsf{o}^{(\wedge),\tn{tr}}_{\tau}}", hook]
	\arrow[from=1-1, to=2-1, "\gamma^*"]
	\arrow[from=1-2, to=2-2, "\hat{\gamma}^*"]
	\arrow[from=2-1, to=2-2, "{\tsf{o}_{\tau}^{(\wedge)}}", hook]
    \arrow["\usebox\pullback"{anchor=center, pos=0.125}, draw=none, from=1-1, to=2-2]
\end{tikzcd}\]
where the horizontal functors are the canonical inclusions and vertical functors precompose with the graph functor $\gamma: \sms \to \corsms{R}$. When $\sms$ is replaced by $\fets$ we denote the resulting sheaf category by $\sheaves^{(\wedge)}_{\tau}(\corfets{R};\spec)$.
\end{defi}

Note that in the above pullback we use that sheaves for both topologies already preserve finite products, see e.g. \cite[Proposition A.3.3.1]{lur18}.

\begin{rem}
Under our assumptions on the base scheme $S$ the Nisnevich topos $\sheaves_{\tn{Nis}}(\sms)$ is hypercomplete, which follows from \cite[Theorem 3.7.7.1]{lur18} or \cite[Theorem 3.18]{cm21} using the argument given in \cite[Lemma 5.1]{mat24}. Furthermore, by \cite[Proposition 1.3.3.3]{lur18} a sheaf of spectra is a hypersheaf if and only if the underlying sheaf of spaces is one, so we do not need to distinguish between sheaves and hypersheaves for the Nisnevich topology.
\end{rem}

\begin{rem}\label{rem:tstructuresheaveswithtransfers}
In the étale case, we can express hypersheaves with transfers as the iterated pullback
\[\begin{tikzcd}
	{\sheaves^{\wedge}_{\tn{ét}}(\corsms{R};\spec)} & {\sheaves_{\tn{ét}}(\corsms{R};\spec)} & {\psigma(\corsms{R};\spec)} \\
	{\sheaves^{\wedge}_{\tn{ét}}(\sms;\spec)} & {\sheaves_{\tn{ét}}(\sms;\spec)} & {\psigma(\sms;\spec).}
	\arrow["{\tsf{o}_{\tn{ét}}^{\wedge,\tn{tr}}}", hook, from=1-1, to=1-2]
	\arrow["{\gamma^*}", from=1-1, to=2-1]
	\arrow["{\tsf{o}_{\tn{ét}}^{\tn{tr}}}", hook, from=1-2, to=1-3]
	\arrow["{\gamma^*}", from=1-2, to=2-2]
	\arrow["{\hat{\gamma}^*}", from=1-3, to=2-3]
	\arrow["{\tsf{o}_{\tn{ét}}^{\wedge}}", hook, from=2-1, to=2-2]
	\arrow["{\tsf{o}_{\tn{ét}}}", hook, from=2-2, to=2-3]
    \arrow["\usebox\pullback"{anchor=center, pos=0.125}, draw=none, from=1-1, to=2-2]
    \arrow["\usebox\pullback"{anchor=center, pos=0.125}, draw=none, from=1-2, to=2-3]
\end{tikzcd}\]
Recall from \Cref{prop:hypersheafhypercomplete} that hypercomplete sheaves are precisely the hypercomplete objects in sheaves. There is a right complete t-structure on sheaves with transfers which makes the functor $\gamma^*$ to sheaves t-exact (see \Cref{rem:tstrucuresonsheaveswithtransfers}), and hypersheaves with transfers are precisely the hypercomplete objects with respect to this t-structure, by essentially the same argument as in \Cref{prop:hypersheafhypercomplete}.\\
In view of \Cref{rem:pushoutinsteadofpullback} all the functors in the diagram above have left adjoints, and in particular hypercompletion on étale sheaves induces a left adjoint on étale sheaves with transfers
$$(-)^\wedge_\tn{ét}: \sheaves_{\tn{ét}}(\corsms{R};\spec) \to \sheaves_{\tn{ét}}^\wedge(\corsms{R};\spec).$$
\end{rem}

\begin{rem}\label{rem:representables}
For a smooth (resp. finite étale) $S$-scheme $X$ the presheaf with transfers 
$$R^{\tn{tr}}_S(X) \defeq \bar{y}(X) \simeq \tsf{Cor}_S(-,X) \otimes R$$ is an étale hypersheaf with transfers, where $\bar{y}: \corsms{R} \hookrightarrow \psigma(\corsms{R};\spec)$ (resp. $\bar{y}: \corfets{R} \hookrightarrow \psigma(\corfets{R};\spec)$) is the stable Yoneda embedding of \Cref{rec:psigma}, which follows from \cite[Proposition 2.1.4]{cd16}, see also \cite[Remark 14.6]{bh21}. This gives an induced symmetric monoidal functor
$$R^{\tn{tr}}_S: \corsms{R} \to \sheaves^{\wedge}_{\tau}(\corsms{R};\spec),$$
and replacing $\tsf{Sm}$ by $\tsf{Fét}$ gives its analogue for finite étale schemes.
\end{rem}

\begin{defi}\label{defi:motives}
The $\infty$-category $\mcal{DM}^{\tn{eff},(\wedge)}_{\tau}(S;R)$ of \emph{effective $\tau$-motives} is the full subcategory of $\sheaves_{\tau}^{(\wedge)}(\corsms{R};\spec)$ on the $\cS$-local objects, where $\cS$ is the collection of morphisms $R^\tn{tr}_S(\bA^1_X) \to R^\tn{tr}_S(X)$ for $X \in \tsf{Sm}_{S}$.
\end{defi}

\begin{rem}
By \cite[Proposition 5.5.4.15]{lur09} the $\infty$-category $\mcal{DM}^{\tn{eff},(\wedge)}_{\tau}(S;R)$ is presentable and a localisation of $\sheaves_{\tau}^{(\wedge)}(\corsms{R};\spec)$. We could also define it as a further pullback along the inclusion of $\bA^1$-invariant sheaves with transfers into the latter. We write 
$$L_{\bA^1}: \sheaves_{\tau}^{(\wedge)}(\corsms{R};\spec) \to \mcal{DM}^{\tn{eff},(\wedge)}_{\tau}(S;R)$$
for the left adjoint localisation functor. By \Cref{rem:pushoutinsteadofpullback}, $\sheaves_{\tau}^{(\wedge)}(\corsms{R};\spec)$ has a symmetric monoidal structure, and using \cite[Proposition 2.2.1.9]{lur17} one can check that the functor $L_{\bA^1}$ above is a symmetric monoidal localisation. We obtain a symmetric monoidal composition
$$\cM_{\tau}^{(\wedge)}: \sms \stackrel{\gamma}{\longrightarrow} \corsms{R} \stackrel{R^{\tn{tr}}_S}{\longrightarrow} \sheaves_{\tau}^{(\wedge)}(\corsms{R};\spec) \stackrel{L_{\bA^1}}{\longrightarrow} \mcal{DM}^{\tn{eff},(\wedge)}_{\tau}(S;R),$$
and for $X \in \sms$ we call $\cM_{\tau}^{(\wedge)}(X)$ the \emph{effective motive associated to $X$}. For the Nisnevich topology, it follows from the finite cohomological dimension of the base scheme \cite[{}1.2.5]{ks86} that $\mcal{DM}_{\tn{Nis}}^{\tn{eff}}(S;R)$ is compactly generated by the motives $\cM_{\tn{Nis}}(X)$ for $X$ smooth over $S$.
\end{rem}

\begin{defi}\label{defi:smoothartinmotives}
The $\infty$-category $\mcal{DAM}_{\tau}^{(\wedge)}(S;R)$ of \emph{smooth Artin $\tau$-motives} is defined as the localising subcategory of $\mcal{DM}^{\tn{eff},(\wedge)}_{\tau}(S;R)$ generated by the motives associated to finite étale $S$-schemes. Instead taking the thick subcategory yields $\mcal{DAM}^{(\wedge)}_{\tau, \tn{c}}(S;R)$, its objects are called \emph{constructible smooth Artin motives}. 
\end{defi}

As in the Nisnevich case the generators of $\mcal{DAM}_{\tn{Nis}, \tn{c}}(S;R)$ are compact, we can also describe the latter as the thick subcategory of the compact part $\mcal{DM}^{\tn{eff}}_{\tn{Nis}}(S;R)^\omega$ generated by the motives associated to finite étale $S$-schemes, or as the compact part $\mcal{DAM}_{\tn{Nis}}(S;R)^\omega$, using Neeman--Thomason localisation.\footnote{The compact parts of effective motives and smooth Artin motives are also referred to as \emph{geometric} effective/smooth Artin motives.} Since the motives associated to finite étale schemes are self-dual, in particular dualisable, $\mcal{DAM}_{\tn{Nis}}(S;R)$ is rigidly compactly generated.

\begin{rem}
Recall that one passes from effective motives $\mcal{DM}^{\tn{eff},(\wedge)}_{\tau}(S;R)$ to non-effective motives $\mcal{DM}^{(\wedge)}_{\tau}(S;R)$ by tensor inverting the effective motive associated to the motivic sphere $\bS^{\bA^1} = \tn{cofib}(\cM_{\tau}^{(\wedge)}(S) \to \cM_{\tau}^{(\wedge)}(\bG_{m,S}))$. Here the map $S \to \bG_{m,S}$ is the constant section with value $1$ of the canonical map $\bG_{m,S} \to S$. In general the question when the functor from effective to non-effective motives is fully faithful is subtle, see  e.g. \cite[Theorem 4.3.1]{voe00}, \cite{voe10}, \cite[Corollary 11.2.15]{cd19}. In this paper we will only deal with effective motives.
\end{rem}

\begin{cons}\label{cons:fettosm}
The inclusion functor $\iota: \corfets{R} \hookrightarrow \corsms{R}$ induces an adjunction 
$$(\hat{\iota}_!,\hat{\iota}^*): \psigma(\corfets{R};\spec) \rightleftarrows \psigma(\corsms{R};\spec)$$ 
of categories of presheaves with transfers. Here $\hat{\iota}^*$ is restriction along $\iota$, and $\hat{\iota}_!$ is given by left Kan extension along $\iota$, which restricts to $\psigma(-;R)$ by \cite[Proposition 3.3.2]{chll24}. The pointwise formula for the left Kan extension
$$\hat{\iota}_!F(Y) \simeq \tn{colim}_{(\corfets{R}^\op)_{/Y}} \; F$$
ensures that $\hat{\iota}_!(R^{\tn{tr}}_S(X)) \simeq R^{\tn{tr}}_S(X)$ holds for a finite étale scheme $X$. Using the pullback squares defining (hyper)sheaves with transfers and the analogous adjunction on (pre)sheaves, we obtain an induced adjunction
$$(\iota_!,\iota^*): \sheaves_{\tau}^{(\wedge)}(\corfets{R};\spec) \rightleftarrows \sheaves_{\tau}^{(\wedge)}(\corsms{R};\spec).$$
Here $\iota^*$ is the restriction of $\hat{\iota}^*$ to sheaves and $\iota_!$ is given by the composition of $\hat{\iota}_!$ and (hyper)sheafification `with transfers', see \Cref{rem:pushoutinsteadofpullback} for the latter. 
\end{cons}

Fix a geometric point $s$ of the base scheme $S$, i.e. a morphism $s: \Spec(k) \to S$ where $k$ is an algebraically closed field. From now on, write $G$ for $\pi^{\tn{ét}}_1(S,s)$, the étale fundamental group of the base scheme \cite{sga1}. We write $\underl{R}$ for the Eilenberg-MacLane spectrum of the constant Mackey functor associated to the ring $R$ and $\permgr$ for the additive category of $(G;R)$-permutation modules \cite[§2]{bg23b}. Our aim now is to extend Voevodsky's theorem \cite[§3.4]{voe00} that expresses Artin motives over a field in terms of permutation modules. 

\begin{thm}\label{thm:damnisasmodunderlr}
There is a symmetric monoidal equivalence 
$$\Mod_{\underl{R}}(\specg) \simeq \mcal{DAM}_{\tn{Nis}}(S;R).$$
\end{thm}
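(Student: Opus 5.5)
We will compare both sides through the category of additive presheaves on finite étale correspondences. The plan has three steps.

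\textbf{Step 1 (equivariant side).} We identify $\Mod_{\underl{R}}(\specg)$ with $\psigma(\perm(G;R);\spec)$, the additive spectral presheaves on the additive $1$-category of $(G;R)$-permutation modules. We use the spectral Mackey functor description from \cite[Example 2.15]{bcn25} or \cite[Corollary 3.18]{fuh25}. Concretely, $\Mod_{\underl{R}}(\specg)$ is rigidly compactly generated by the modules $\Sigma^\infty_+ G/H \otimes \underl{R}$ for $H$ open. Their mapping spectra are discrete, with $\pi_0$ given by $\Hom_{\permgr}(R[G/H],R[G/K])$, the double coset formula for $\underl{R}$. A Schwede--Shipley style argument then identifies the category with additive presheaves on the full subcategory spanned by these generators.

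\textbf{Step 2 (motivic side).} Grothendieck's equivalence $\fets\simeq\gset$ together with \Cref{rem:nicecorrespondences}(3) identifies $\corfets{R}$ with $\permgr$. Indeed, $\tsf{Cor}_S(X,Y)\otimes R$ is free on $\pi_0(X\times_S Y)$, which corresponds to the $G$-orbits of $X\times Y$ as finite $G$-sets. This is precisely the hom-set $\Hom(R[X],R[Y])$ in permutation modules. We then check that composition of cycles matches composition of permutation-module maps, and that the symmetric monoidal structures agree (fibre product corresponds to cartesian product). Next we show that the Nisnevich topology on $\fets$ is trivial on additive presheaves: a Nisnevich cover of a finite étale scheme admits a section over each connected component, since its residue fields lift at generic points and finite étale maps are clopen. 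Hence $\sheaves_{\tn{Nis}}(\corfets{R};\spec)\simeq \psigma(\corfets{R};\spec)\simeq\Mod_{\underl{R}}(\specg)$.

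\textbf{Step 3 (comparison with motives).} We consider the functor $L_{\bA^1}\iota_!:\sheaves_{\tn{Nis}}(\corfets{R};\spec)\to\mcal{DM}^{\tn{eff}}_{\tn{Nis}}(S;R)$ built from \Cref{cons:fettosm}. It is symmetric monoidal, colimit preserving, and sends $R^{\tn{tr}}_S(X)$ to $\cM_{\tn{Nis}}(X)$. Its essential image therefore generates $\mcal{DAM}_{\tn{Nis}}(S;R)$. Since both sides are compactly generated by the representables, and these are compact (using finite Krull dimension), it suffices to prove full faithfulness on the generators. That is, we must show
$$\map(\cM_{\tn{Nis}}(X),\cM_{\tn{Nis}}(Y))\simeq \tsf{Cor}_S(X,Y)\otimes R$$
concentrated in degree $0$, for $X,Y$ finite étale.

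\textbf{Main obstacle.} The crux is the following claim: for $Y$ finite étale, the sheaf with transfers $R^{\tn{tr}}_S(Y)=\tsf{Cor}_S(-,Y)\otimes R$ on $\sms$ is already $\bA^1$-invariant and Nisnevich-acyclic. By \Cref{rem:nicecorrespondences}(3) its value on a smooth $Z$ is $R[\pi_0(Z\times_S Y)]$. This is $\bA^1$-invariant because $\pi_0(\bA^1_W)=\pi_0(W)$ for $W$ noetherian, and it is a Nisnevich sheaf. For its higher Nisnevich cohomology we observe that it is the pushforward along the finite étale map $Y\to S$ of the sheaf $R[\pi_0]$. Since Nisnevich cohomology is computed on henselian local schemes, and $R[\pi_0]$ restricted to such local rings is constant on connected components, the higher cohomology vanishes. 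With this claim, $R^{\tn{tr}}_S(Y)$ is $\bA^1$-local, so by Yoneda
$$\map(\cM(X),\cM(Y))\simeq R^{\tn{tr}}_S(Y)(X)=\tsf{Cor}_S(X,Y)\otimes R,$$
which proves the required full faithfulness.
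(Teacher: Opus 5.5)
\paragraph{Comparison of routes.} You follow the route the paper only sketches in a remark: identify $\corfets{R}$ with $\permgr$ directly, observe that the Nisnevich topology on $\fets$ is trivial, and prove $L_{\bA^1}\iota_!$ fully faithful. The paper's actual proof is different. It maps $\spans(G)$ to correspondences and kills the cohomological ideal. It then uses the Mathew--Naumann--Noel Barr--Beck argument to get $\mcal{DAM}_{\tn{Nis}}(S;R)\simeq\Mod_{\beta(\cM_{\tn{Nis}}(S))}(\specg)$, and finally identifies $\beta(\cM_{\tn{Nis}}(S))\simeq\underl{R}$. Both routes need the same geometric input: $\Hom(\cM_{\tn{Nis}}(X),\cM_{\tn{Nis}}(S)[n])$ is $R[\pi_0(X)]$ for $n=0$ and vanishes otherwise. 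The paper imports this from \cite[Theorem 11.2.14]{cd19}. You try to prove it, and that is where the gap is.

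\paragraph{The acyclicity argument fails.} Knowing that $R[\pi_0(-\times_S Y)]$ has constant stalks on henselian local schemes says nothing about higher cohomology, since sheafification never changes stalks. The same reasoning would make every constant Nisnevich sheaf acyclic, and in the stated generality that is false. Let $S$ be a nodal cubic over an algebraically closed field, so $G\cong\widehat{\bZ}$. Nisnevich-locally the node has two branches, so the normalisation $\nu\colon\bP^1\to S$ gives an exact sequence $0\to R\to\nu_*R\to i_*R\to 0$, where $i$ is the inclusion of the node. Hence $H^1_{\tn{Nis}}(S;R)\cong R\neq 0$, so already $R^{\tn{tr}}_S(S)$ is not Nisnevich-acyclic.

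\paragraph{The splitting claim in Step 2 also fails.} In the same example, the connected étale double cover $V\to S$ (two copies of $\bP^1$ glued in a cycle) is a Nisnevich cover. Every point of $S$, the generic point included, has a preimage with the same residue field, yet $V\to S$ has no section. Your `residue fields lift at generic points' argument tacitly assumes that connected finite étale $S$-schemes are irreducible.

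\paragraph{The statement itself needs a hypothesis.} This is not only a defect of your argument. The analogous normalisation sequence for smooth $X$ over $S$ shows that $H^*_{\tn{Nis}}(-;R)$ is $\bA^1$-invariant on smooth $S$-schemes here. Hence $\Hom(\cM_{\tn{Nis}}(S),\cM_{\tn{Nis}}(S)[1])\cong R$, whereas $\pi^G_{-1}(\underl{R})=0$. So the input the paper cites, and the theorem as stated, require an extra hypothesis on $S$.

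\paragraph{How to repair your proof.} Assume $S$ is normal, or just geometrically unibranch. Then both of your claims hold and your proof goes through.
\begin{itemize}
\item \emph{Acyclicity.} For $Z$ smooth over $S$, both $Z$ and $Z\times_S Y$ are normal. On them the constant Nisnevich sheaf is pushed forward from the generic points with vanishing higher direct images, because the relevant localisations of henselisations are fields, whose Nisnevich cohomology is trivial. So it is acyclic. Combine this with $\pi_0(\bA^1_Z)=\pi_0(Z)$ and exactness of pushforward along the finite map $Z\times_S Y\to Z$, and your Yoneda computation of $\map(\cM(X),\cM(Y))$ is justified.
\item \emph{Splitting.} Connected finite étale $S$-schemes are then integral. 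So the component of a Nisnevich cover that contains a point over the generic point with trivial residue field extension has degree one, and is therefore a section.
\end{itemize}
You should also actually carry out the composition check in Step 2. It is the degree computation $(\pi^{\overl{H}}_{\overl{K}})_*\langle X/\overl{K}\rangle=[H:K]\cdot\langle X/\overl{H}\rangle$ that the paper performs when showing $\Psi$ kills the cohomological ideal.
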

\begin{proof}
We first produce a left adjoint $\Mod_{\underl{R}}(\specg) \to \mcal{DAM}_{\tn{Nis}}(S;R)$ and then apply the Barr-Beck theorem. Using \cite[Example 2.15]{bcn25} or \cite[Corollary 3.18]{fuh25} we can work with $\psigma(\permgr;\spec)$ instead of $\Mod_{\underl{R}}(\specg)$ throughout. Recall Grothendieck's equivalence of finite discrete $G$-sets and finite étale $S$-schemes from \cite{sga1}, and consider the symmetric monoidal functor defined on 1-categories of spans thereof: 
\begin{align*}
\Psi: \spans(G) \simeq \spans(\fets) \to \corfets{R}.
\end{align*}
It is the identity on objects and sends a span of finite étale $S$-schemes, represented by a map $f: U \to X \times_S Y$, to the pushforward correspondence 
$$f_*\langle U \rangle = \sum_{u \in U^{(0)}} \tn{lg}(\cO_{U,u}) \cdot d_u. f(u).$$
By \cite[9.1.1, Lemma 10.2.6]{cd19} the cycle $f_*\langle U \rangle$ is an element of $\tsf{Cor}_S(X,Y)$. The identity span $(X \to X \times_S X)$ is sent to $\langle \Delta_X \rangle = \tn{id}_X \in \tsf{Cor}_S(X,X)$, and composition of finite correspondences as described in \cite[Definition 9.1.5]{cd19} makes $\Psi$ functorial. It preserves finite coproducts and hence extends to a functor
$$\Psi: \Omega(G) \to \corfets{R},$$
where $\Omega(G)$ is the Burnside category of finite $G$-sets obtained by group completing the hom-sets in $\spans(G)$ that are abelian monoids under disjoint union. There is a `cohomological' ideal $\cI(G)$ of homomorphisms in $\Omega(G)$ that is generated by elements
\begin{equation}\label{eq:cohidealgeneator}
(G/H \stackrel{\pi^H_K}{\longleftarrow} G/K \stackrel{\pi^H_K}{\longrightarrow} G/H) - ([H:K] \cdot \tn{id}_{G/H})
\end{equation}
for a pair of open subgroups $K \leq H \leq G$, see \cite[§4]{bg23b}. The left-hand span in \ref{eq:cohidealgeneator} corresponds to a span of finite étale $S$-schemes
\begin{equation}\label{eq:spangenerator}
(X/\overl{H} \stackrel{\pi^{\overl{H}}_{\overl{K}}}{\longleftarrow} X/\overl{K} \stackrel{\pi^{\overl{H}}_{\overl{K}}}{\longrightarrow} X/\overl{H})
\end{equation}
for some finite étale cover $X \to S$ which can be chosen so that the kernel of the canonical homomorphism $\varphi: G \to \tn{Aut}_{\fets}(X)$ is contained in $K$, hence in $H$, and $\overl{H}$, $\overl{K}$ are the respective images of $H$ and $K$ under $\varphi$. The map $\pi^{\overl{H}}_{\overl{K}}$ is the canonical projection, induced by the universal property of the quotient by a group action \cite[Proposition 20.87]{gw23}. We now show that the generators in \ref{eq:cohidealgeneator} and hence the ideal $\cI(G)$ are killed by the functor $\Psi$. \\
Let $f: X/\overl{K} \to X/\overl{H} \times_S X/\overl{H}$ correspond to the span \ref{eq:spangenerator}. It factors through the diagonal $\Delta_{X/\overl{H}} \subseteq X/\overl{H} \times_S X/\overl{H}$ which is isomorphic to $X/\overl{H}$. We thus have a commutative diagram
\[\begin{tikzcd}
	{X/\overl{K}} & {\Delta_{X/\overl{H}}} & {X/\overl{H} \times_S X/\overl{H}.} \\
	& {X/\overl{H}}
	\arrow[from=1-1, to=1-2]
	\arrow["f", bend left=20, from=1-1, to=1-3]
	\arrow[from=1-1, to=2-2, "\pi^{\overl{H}}_{\overl{K}}"']
	\arrow[hook, from=1-2, to=1-3]
	\arrow[from=1-2, to=2-2, "\hspace*{-0.01cm}\rotatebox{90}{$\sim$}"]
	\arrow[from=1-3, to=2-2, "p_1"]
\end{tikzcd}\]
It hence suffices to show that $(\pi^{\overl{H}}_{\overl{K}})_*\langle X/\overl{K} \rangle = [H:K] . \langle X/\overl{H} \rangle$ holds. The canonical projection $\pi^{\overl{H}}_{\overl{K}}: X/\overl{K} \to X/\overl{H}$ is surjective and thus maps generic points to generic points. Let $y$ be a generic point of $X/\overl{H}$. Then the coefficient of $y$ appearing in the cycle $(\pi^{\overl{H}}_{\overl{K}})_*\langle X/\overl{K} \rangle$ is 
\begin{align*}
\sum_{x} \; [\kappa(x) : \kappa(y)] \; \cdot \tn{lg}(\cO_{X/\overl{K},x}) &= \tn{lg}(\cO_{X/\overl{H},y}) \cdot \sum_{x} \; [\kappa(x) : \kappa(y)] \tag*{\cite[{}10.52.13]{stacks}} \\
&= \tn{lg}(\cO_{X/\overl{H},y}) \cdot \tn{deg}(\pi^{\overl{H}}_{\overl{K}}) \tag*{\cite[Prop. 12.21]{gw10}}\\
&= \tn{lg}(\cO_{X/\overl{H},y}) \cdot [H:K] \tag*{\cite[Prop. 20.89]{gw23}}
\end{align*}
where the sum runs over all $x \in (\pi^{\overl{H}}_{\overl{K}})^{-1}(y)$. Since $\tn{lg}(\cO_{X/\overl{H},y})$ is precisely the coefficient of $y$ in $\langle \Delta_{X/\overl{H}} \rangle$ it follows that
$$\Psi(f) = f_* \langle X/\overl{K} \rangle = [H:K] . \langle \Delta_{X/\overl{H}} \rangle = [H:K] . \tn{id}_{X/\overl{H}}.$$
Hence $\Psi$ descends to $\Omega(G)/\cI(G)$. Since the target $\corfets{R}$ is $R$-linear, we can extend the source to $\Omega_R(G)/\cI_R(G)$, as in \cite[Corollary 4.21]{bg23b}. Here $\Omega_R(G)$ is the $R$-linearisation of $\Omega(G)$, and $\cI_R(G)$ is the analogous ideal of $\Omega_R(G)$ on the same generators as for $\cI(G)$. By \cite[Proposition 4.17]{bg23b} the free $R$-module functor induces an equivalence $\permgr \simeq \Omega_R(G)/\cI_R(G)$, and we have an additive symmetric monoidal functor
$$\psi: \permgr \stackrel{\Psi}{\longrightarrow} \corfets{R} \stackrel{\iota}{\longrightarrow} \corsms{R},$$
where the second functor is the canonical inclusion. Using \cite[Proposition 3.3.2]{chll24} restriction and left Kan extension provide an adjunction
$$(\psi_!, \psi^*): \psigma(\permgr;\spec) \rightleftarrows \psigma(\corsms{R};\spec).$$
The functor $\iota_!: \sheaves_{\tn{Nis}}(\corfets{R};\spec) \to \sheaves_{\tn{Nis}}(\corsms{R};\spec)$ of \Cref{cons:fettosm} makes the following diagram commute:
\[\begin{tikzcd}
	\corfets{R} & \corsms{R} \\
	{\sheaves_{\tn{Nis}}(\corfets{R};\spec)} & {\sheaves_{\tn{Nis}}(\corsms{R};\spec).}
	\arrow["\iota", hook, from=1-1, to=1-2]
	\arrow["R^{\tn{tr}}_S", from=1-1, to=2-1]
	\arrow["R^{\tn{tr}}_S", from=1-2, to=2-2]
	\arrow["{\iota_!}", shift left, from=2-1, to=2-2]
\end{tikzcd}\]
Hence, the composition $L_{\bA^1}\tsf{a}_{\tn{Nis}}^{\tn{tr}}\psi_!$ lands in the full subcategory $\mcal{DAM}_{\tn{Nis}}(S;R)$ of $\mcal{DM}_{\tn{Nis}}^{\tn{eff}}(S;R)$. Composing $(\psi_!, \psi^*)$ with the adjunctions 
\[\begin{tikzcd}[column sep=1.4cm]
	{\psigma(\corsms{R};\spec)} & {\sheaves_{\tn{Nis}}(\corsms{R};\spec)} & {\mcal{DM}_{\tn{Nis}}^{\tn{eff}}(S;R)}
	\arrow["{(\tsf{a}_{\tn{Nis}}^{\tn{tr}},\tsf{o}_{\tn{Nis}}^{\tn{tr}})}", shift left, from=1-1, to=1-2]
	\arrow[shift left=2, from=1-2, to=1-1]
	\arrow["{(L_{\bA^1},\tn{incl})}", shift left, from=1-2, to=1-3]
	\arrow[shift left, from=1-3, to=1-2]
\end{tikzcd}\]
we thus obtain an adjunction
$$(\alpha, \beta): \psigma(\Spans(G);R) \rightleftarrows \mcal{DAM}_{\tn{Nis}}(S;R).$$
Now, $\alpha$ sends the presheaf associated to a finite $G$-set (i.e., its free $\underl{R}$-module) to the motive associated to its corresponding finite étale $S$-scheme. By construction $\alpha$ is symmetric monoidal, so it preserves dualisable and hence compact objects, and it sends a set of generators to a set of generators. It follows that $\beta$ preserves colimits and is conservative. Since the projection formula holds automatically in this case, \cite[Proposition 5.29]{mnn17} gives a symmetric monoidal equivalence
$$\mcal{DAM}_{\tn{Nis}}(S;R) \simeq \Mod_{\beta(\cM_{\tn{Nis}(S)})}\Mod_{\underl{R}}(\specg) \simeq \Mod_{\beta(\cM_{\tn{Nis}(S)})}(\specg),$$
where the second equivalence is \cite[Corollary 3.4.1.9]{lur17}. It remains to identify the algebra object $\beta(\cM_{\tn{Nis}}(S))$. For some open subgroup $H \leq G$, the $n$-th homotopy is given by
\begin{align}
\pi_n^{H}(\beta(\cM_{\tn{Nis}}(S))) &= \Hom_{\Mod_{\underl{R}}(\specg)}(\Sigma^{\infty +n}_+G/H \otimes \underl{R},\beta(\cM_{\tn{Nis}}(S))) \notag \\
&\cong \Hom_{\mcal{DAM}_{\tn{Nis}}(S;R)}(\cM_{\tn{Nis}}(X/\overl{H}),\cM_{\tn{Nis}}(S)), \label{eq:homotopy}
\end{align}
where $X/\overl{H}$ is the finite étale $S$-scheme corresponding to the $G$-set $G/H$ - here $X$ is a finite étale $S$-scheme chosen so that $H$ contains the kernel of the canonical homomorphism $G \to \tn{Aut}_{\fets}(X)$, and $\overl{H}$ is the image of $H$ under this homomorphism. By \cite[Theorem 11.2.14, Proposition 10.2.5]{cd19} the group \ref{eq:homotopy} is isomorphic to $R$ when $n=0$, and zero otherwise. Furthermore, for an inclusion of open subgroups $H \leq K$ of $G$ the restriction map 
$$\pi_n^{K}(\beta(\cM_{\tn{Nis}}(S))) \to \pi_n^{H}(\beta(\cM_{\tn{Nis}}(S)))$$
is given by precomposition with the map of motives that comes from the canonical degree $[K:H]$-map $X/\overl{H} \to X/\overl{K}$, where $X$ is chosen such that the kernel of $G \to \tn{Aut}_{\fets}(X)$ contains $K$ and hence $H$. Again by \cite[Theorem 11.2.14, Proposition 10.2.5]{cd19} in degree $0$ this identifies with the map 
$$R(\pi_0(X/\overl{H})) \to R(\pi_0(X/\overl{K})),$$
which is the identity since both schemes are connected. It follows that there is an abstract equivalence $\beta(\cM_{\tn{Nis}}(S)) \cong \underl{R}$ (note that \cite[Proposition 11.2.5]{cd19} is precisely the statement that the resulting Mackey functor is cohomological \cite[§16]{tw95}). We need to show that it is induced by a map of algebra objects. \\
Consider the unit map $\varphi: \underl{R} \to \beta(\cM_{\tn{Nis}}(S))$. Since both algebra objects live in the heart $\Mod_{\underl{R}}(\specg)^{\heartsuit} \simeq \cmackgr$, the map $\varphi$ is induced by an algebra map in the heart $\varphi^{\heartsuit}: \underl{R}^\heartsuit \to \beta(\cM_{\tn{Nis}}(S))^\heartsuit$. Since both sides are isomorphic to the Mackey functor $\underl{R}$ and the latter is the initial commutative cohomological Green functor, $\varphi^\heartsuit$ is an isomorphism, and $\varphi$ is an equivalence of commutative algebra objects. Hence, we have $\beta(\cM_{\tn{Nis}}(S)) \simeq \underl{R}$ in $\calg(\specg)$. This concludes the proof.
\end{proof}

\begin{rem}
Using the spectral Mackey functor description of $\specg$, we could also produce a left adjoint from the latter to $\mcal{DAM}_{\tn{Nis}}(S;R)$, in fact with less work on finite correspondences. Then the Barr-Beck theorem applies, but the difficulty then lies in identifying the algebra object we obtain by applying the right adjoint to the unit. This identification is simpler when we already deal with an $\underl{R}$-module.
\end{rem}

\begin{rem}
Another way to obtain a functor $\specg \to \mcal{DAM}_{\tn{Nis}}(S;R)$ is constructing a functor $c_S: \specg \to \mcal{SH}^\tn{eff}(S)$ to effective motivic spectra over $S$, and then passing to effective motives. Over a field, such a functor has been constructed by Heller--Ormsby \cite{ho16}, for a much more general construction see \cite[§10.2]{bh21}. \\
In the case of the real and the complex numbers, it is known \cite{lev14,ho16,ho18} that these functors (now with values in all of $\mcal{SH}$) restrict to equivalences
$$c_{\bC}: \spec \stackrel{\sim}{\longrightarrow} \mcal{SH}_{\nA}(\bC) \hspace{1cm} \tn{and} \hspace{1cm} c_{\bR}: \spec^{C_2} \stackrel{\sim}{\longrightarrow}\mcal{SH}_{\nA}(\bR)$$
onto the localising subcategories generated by motivic spectra associated to finite étale algebras for the respective fields. The inverses are given by complex and real Betti realisation, which are induced by the assignment that sends a complex (resp. real) variety to its complex (resp. real) points.
Combining this with the fact that $C_2$-equivariant Betti realisation sends the motivic cohomology spectrum associated to $\bZ$ to the constant Mackey functor $\underl{\bZ}$ \cite[Theorem 4.17]{ho16}, the compatibility of this with restriction \cite[Recollection 1.19]{bhs22} and the fact that motives over a field of characteristic zero are modules over motivic cohomology \cite{ro08} we obtain induced equivalences
$$c_{\bC}: \Mod_{\bZ}(\spec) \stackrel{\sim}{\longrightarrow} \mcal{DAM}_{\tn{Nis}}(\bC;\bZ) \hspace{0.5cm} \tn{and} \hspace{0.5cm} c_{\bR}: \Mod_{\underl{\bZ}}(\spec^{C_2}) \stackrel{\sim}{\longrightarrow} \mcal{DAM}_{\tn{Nis}}(\bR;\bZ).$$
Note that over a field, we do not need to distinguish between effective and non-effective Artin motives \cite[Remark 7.9]{bg23b}.
\end{rem}

\begin{rem}\label{rem:alternativeproof}
One can also show directly that $\Psi: \spans(\fets) \to \corfets{R}$ which we used above induces an equivalence $\permgr \simeq \corfets{R}$ upon factoring out the cohomological ideal, see \cite[Proposition 4.17]{bg23b} and the ideas indicated in \cite[Remark 6.5]{bg23b}. From there one can pursue a similar proof strategy as in op. cit. by showing that since Nisnevich covers of finite étale schemes split, a presheaf with transfers on finite étale schemes is automatically a Nisnevich sheaf. Then one can argue that these sheaves embed fully faithfully into Nisnevich sheaves with transfers on smooth schemes and are not affected by $\bA^1$-localisation.
\end{rem}

Having dealt with the Nisnevich case, we now consider the étale topology, where we will focus on the hypercomplete case. We denote by $\ets$ the small étale site of the base scheme $S$, its underlying category is the full subcategory of $\sms$ on the étale $S$-schemes.

\begin{cons}\label{cons:fettoet}
We write $i: \fets \hookrightarrow \ets$ for the canonical inclusion functor. It is a morphism of sites and hence restriction along $i$ on presheaves preserves hypersheaves. We obtain an adjunction
$$(i_!,i^*): \sheaves^{\wedge}_{\tn{ét}}(\fets;R) \rightleftarrows \sheaves^{\wedge}_{\tn{ét}}(\ets;R),$$
where the left adjoint $i_!$ is given by left Kan extension on presheaves, followed by hypersheafification. As $i$ is symmetric monoidal, left Kan extension along $i$ on presheaves is as well \cite[Proposition 3.6]{bs24}. Since the symmetric monoidal structure on hypersheaves is obtained by localising the Day convolution on presheaves, the hypersheafification functor $\tsf{a}_{\tn{ét}}^{\wedge}$ is symmetric monoidal, and so is $i_!$. \\
When $X$ is a finite étale $S$ scheme, we denote by $R_S(X)$ the associated étale hypersheaf of $R$-modules $\tsf{a}_{\tn{ét}}^{\wedge}(R\otimes \Sigma^{\infty}_+y(X))$ on $\fets$, where $y: \fets \hookrightarrow \cP(\fets)$ is the Yoneda embedding. We use the same notation for the étale sheaf on $\ets$ associated to an étale $S$-scheme.
\end{cons}

For certain schemes, the étale cohomology of lcc sheaves is entirely controlled by the cohomology of their étale fundamental group. This will be of use when passing from the finite étale to the small étale site below.

\begin{rec}[{cf. \cite[§9]{ag16},\;\cite[§2.1.2]{ach15}}]
Let $X$ be a connected qcqs scheme, together with a geometric point $x: \Spec(k) \to X$. Let $\mfr{p}$ be the set of primes invertible on $X$. Then $X$ is a \emph{$K(\pi,1)$-scheme} if for all $\mfr{p}$-torsion locally constant constructible sheaves of abelian groups $\cF$ on $\tn{Ét}_X$, the natural maps
$$H^i(\pi_1^{\tn{ét}}(X,x),\cF_x) \to H^i_{\tn{ét}}(X,\cF)$$
are isomorphisms for all $i \geq 0$. This notion is independent of the choice of the geometric point $x$.
\end{rec}

\begin{ex}
The following schemes are examples of $K(\pi,1)$-schemes \cite[Example 2.1.11]{ach15}: spectra of fields, schemes of cohomological dimension $\leq 1$, smooth connected curves $X$ over a field $\bF$ such that $X_{\bar{\bF}}$ is not isomorphic to $\bP^1_{\bar{\bF}}$, abelian varieties over a field. Furthermore, by \cite[Theorem 1.1.1]{ach15}, any connected affine $\bF_p$-scheme is $K(\pi,1)$ (using the even stronger definition \cite[Definition 4.1.1]{ach17} of the latter).
\end{ex}

\begin{lem}\label{lem:ffkpione}
Assume that $R$ is $n$-torsion and that $S$ is a $K(\pi,1)$-scheme whose residue characteristics are prime to $n$. Then the functor
$$i_!: \sheaves^{\wedge}_{\tn{ét}}(\fets;R) \to \sheaves^{\wedge}_{\tn{ét}}(\ets;R)$$
is fully faithful when restricted to the thick subcategory of $\sheaves^{\wedge}_{\tn{ét}}(\fets;R)$ generated by the $R(X)$ for $X$ finite étale over $S$.
\end{lem}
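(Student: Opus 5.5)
Since the functors in play are exact and the source subcategory is thick, we reduce to the generators. A standard two-out-of-three argument shows that it suffices to prove the following: for all finite étale $S$-schemes $X,Y$ and all integers $m$, the map
$$\Hom_{\sheaves^{\wedge}_{\tn{ét}}(\fets;R)}(R_S(X),\Sigma^m R_S(Y)) \to \Hom_{\sheaves^{\wedge}_{\tn{ét}}(\ets;R)}(i_!R_S(X),\Sigma^m i_!R_S(Y))$$
induced by $i_!$ is an isomorphism. We first record that $i_!R_S(X) \simeq R_S(X)$, the hypersheaf on $\ets$ represented by $X$. This holds because left Kan extension along $i$ sends representables to representables, and hypersheafification commutes with this.

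Next we rewrite both sides as cohomology. On the small étale side, adjunction gives
$$\Hom(R_S(X),\Sigma^m R_S(Y)) \cong H^m_{\tn{ét}}(X, R_S(Y)|_X),$$
where $R_S(Y)|_X$ is the étale sheaf $f_*\underl{R}$ pushed forward along the finite étale map $Y\times_S X \to X$. This is a locally constant constructible sheaf of $R$-modules. It is discrete: as a hypersheaf of $R$-modules its homotopy sheaves are concentrated in degree $0$, since finite étale pushforward is exact. On the finite étale side we use \Cref{prop:sheavesgrmodules} together with the description of hypersheaves on $\gset$ as $\cD(\modgr)$. There the corresponding group is
$$\Ext^m_{\modgr}(R[G/H],R[G/K]) \cong H^m(H,R[G/K]),$$
which is continuous group cohomology of the open subgroup $H=\pi_1^{\tn{ét}}(X)$ (working componentwise) with coefficients in the stalk of the same lcc sheaf. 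We then check that the map induced by $i_!$ identifies with the natural comparison $H^m(\pi_1^{\tn{ét}}(X,x),\cF_x)\to H^m_{\tn{ét}}(X,\cF)$. This should follow because restriction along $i$ corresponds to the inclusion of the Galois-category topos into the étale topos.

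The remaining step is to apply the $K(\pi,1)$ hypothesis, and we expect this to be the main obstacle. Since $R$ is $n$-torsion and $n$ is prime to the residue characteristics, $\cF$ is a $\mfr{p}$-torsion lcc sheaf of abelian groups. One issue is that $R$ need not be finite, so $\cF$ need not be constructible. We handle this by writing $\cF$ as a filtered colimit of lcc constructible $\mfr{p}$-torsion subsheaves, obtained as $f_*$ of constant sheaves on finitely generated, hence finite, $\bZ/n$-submodules of $R$. Both étale cohomology on the qcqs scheme $X$ and continuous profinite group cohomology commute with filtered colimits, so the reduction goes through. A second issue is that the $K(\pi,1)$ property is stated for $S$ while we need it for $X$. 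For this we use that a connected finite étale cover of a $K(\pi,1)$-scheme is again $K(\pi,1)$. Alternatively, Shapiro's lemma on both sides, $H^m_{\tn{ét}}(X,\cF)\cong H^m_{\tn{ét}}(S,g_*\cF)$ with $g:X\to S$ finite étale, and the corresponding coinduction on the group side, reduce everything to $S$ itself. Once these are checked, the comparison maps are isomorphisms and full faithfulness on the thick subcategory follows.
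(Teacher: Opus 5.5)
Your argument is correct. It proves directly what the paper obtains by citation. The paper identifies both hypersheaf categories with derived $\infty$-categories of the abelian categories of sheaves of $R$-modules \cite[Corollary 2.1.2.3]{lur18}. It then quotes the full-faithfulness statement for $K(\pi,1)$-schemes from \cite[Corollary 9.20]{ag16} (see also \cite[Proposition 2.1.5]{ach15}).

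You instead reduce to the generators $R_S(X)$. You compute both $\Hom$-groups via adjunction and Shapiro as $H^m(\pi_1^{\tn{ét}}(X,x),-)$ and $H^m_{\tn{ét}}(X,-)$, and then use the definition of $K(\pi,1)$ directly. Your route makes explicit two points the citation leaves implicit:
\begin{itemize}
\item $R$ need not be finite, so the relevant locally constant sheaves need not be constructible. Your filtered-colimit argument handles this, using that cohomology of a qcqs scheme and continuous cohomology of a profinite group commute with filtered colimits.
\item The $K(\pi,1)$ hypothesis on $S$ has to be transported to the finite étale $X$. Shapiro's lemma on both sides does this.
\end{itemize}
The paper's route is shorter, and it leaves to the literature the identification of the map induced by $i_!$ with the comparison map.

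In your write-up, that identification is the one step that needs more than ``this should follow''. Your phrasing also has the direction reversed. Restriction along $i$ is the direct image $\epsilon_*$ of the morphism of topoi $\epsilon$ induced by $i$. It is $i_!=\epsilon^*$ that embeds the locally constant sheaves. A precise argument goes as follows:
\begin{itemize}
\item Left Kan extension along the finite-limit-preserving $i$ is computed by filtered colimits, so it commutes with homotopy presheaves.
\item Hypersheafification is t-exact, so $i_!$ is t-exact and restricts on hearts to the exact functor $\epsilon^*$ on abelian sheaves.
\item Hence on $\Hom(-,\Sigma^m-)$ between objects of the heart, $i_!$ induces the Yoneda-Ext map of $\epsilon^*$.
\item After your adjunction and Shapiro identifications, this map is exactly $H^m(\pi_1^{\tn{ét}}(X,x),\cF_x)\to H^m_{\tn{ét}}(X,\cF)$.
\end{itemize}
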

\begin{proof}
Using the identification of the sheaf categories at play with the derived $\infty$-categories of the corresponding $1$-categories of sheaves of $R$-modules \cite[Corollary 2.1.2.3]{lur18}, this follows immediately from \cite[Corollary 9.20]{ag16} (see also \cite[Proposition 2.1.5]{ach15}).
\end{proof}

It follows from \cite[Corollary 5.3.2]{ach17} that henselian local rings are $K(\pi,1)$. In this case, we even have full faithfulness on the entire category of hypersheaves.

\begin{lem}\label{lem:ffhenselian}
When $S$ is the spectrum of a henselian local ring, then the functor $i_!: \sheaves^{\wedge}_{\tn{ét}}(\fets;R) \to \sheaves^{\wedge}_{\tn{ét}}(\ets;R)$ is fully faithful.
\end{lem}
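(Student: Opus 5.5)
It suffices to show that the unit $\cF \to i^*i_!\cF$ is an equivalence for every $\cF \in \sheaves^{\wedge}_{\tn{ét}}(\fets;R)$. The right adjoint $i^*$ is restriction along $i$, so I would compute $i^*i_!\cF$ sectionwise on finite étale $S$-schemes. Since $S=\Spec(A)$ with $A$ henselian local, I will use the classical fact that finite étale $S$-schemes are finite products of henselian local schemes: every finite $A$-algebra is a product of local henselian rings. Moreover, $\pi_1^{\tn{ét}}(S)\cong \tn{Gal}(k^{\tn{sep}}/k)$ with $k$ the residue field, and the restriction along the closed point induces $\fets \simeq \tsf{Fét}_{\Spec(k)}$.

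The key step is to compare both sites with the closed point. Write $z\colon \Spec(k)\hookrightarrow S$ for the closed immersion. On the étale side, Gabber's affine analogue of proper base change (equivalently, the henselian property) gives, for every étale hypersheaf $\cG$ of $R$-modules, $\Gamma(S,\cG)\simeq \Gamma(\Spec(k),z^*\cG)$. I would first establish this for discrete sheaves, where it is the classical statement, and then extend it to hypersheaves of $R$-module spectra. Hypersheaves are Postnikov-complete here, since $\Spec(k)$ has only Galois cohomology and is locally of finite cohomological dimension, and the identity holds for bounded objects by induction; passing to limits handles unbounded objects, because both global section functors commute with limits and $z^*$ commutes with the relevant Postnikov limits on hypersheaves. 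I expect this extension to unbounded hypersheaves to be the main obstacle. If it proves problematic, I would instead argue through homotopy sheaves, using that hypersheaves are detected on them, via the identification with derived categories \cite[Corollary 2.1.2.3]{lur18} and Lurie's hypercompleteness criterion. The same statement applies over each finite étale $X\to S$, since $X$ is a finite disjoint union of henselian local schemes.

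On the finite étale side I would run the same analysis. Under $\fets\simeq\tsf{Fét}_{\Spec(k)}\simeq \tn{Gal}\tsf{-set}$, pullback along $z$ identifies $\sheaves^{\wedge}_{\tn{ét}}(\fets;R)$ with $\sheaves^{\wedge}_{\tn{ét}}(\tsf{Fét}_{\Spec(k)};R)$. For the field $k$, every étale scheme is a disjoint union of finite étale ones, so $i_k^*i_{k,!}$ is the identity: the inclusion $\tsf{Fét}_k\subseteq\tsf{Ét}_k$ generates under coproducts and the topologies match. One can check this directly, since $i_{k,!}$ is hypersheafification of a left Kan extension, which is computed by coproduct decomposition, and restricting back recovers $\cF$.

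To assemble the argument, I would use the base change square $z^*i_!\simeq i_{k,!}z^*$, which follows by comparing left adjoints: restriction functors commute with $i^*$ because $z^{-1}$ on sites commutes with the inclusion. This yields the chain
\[
(i^*i_!\cF)(X)\simeq \Gamma(X,i_!\cF)\simeq \Gamma(X_k,z^*i_!\cF)\simeq \Gamma(X_k, i_{k,!}z^*\cF)\simeq (z^*\cF)(X_k)\simeq \cF(X),
\]
where the last step is the finite étale analogue of the henselian identity, namely the equivalence of sites. One must then check that this composite is the unit map, which follows by naturality.
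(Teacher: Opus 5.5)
Your formal scaffolding is fine. The base change $z^*i_!\simeq i_{k,!}z^*$ holds because the right adjoints $i^*z_*$ and $z_*i_k^*$ both send $\cH$ to $X\mapsto\cH(X_k)$. The functor $i_k$ is an equivalence because étale $k$-schemes of finite type are finite étale. And any natural equivalence $i^*i_!\simeq\tn{id}$ forces the unit to be invertible.

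The gap is the key step, that $\Gamma(X,\cG)\simeq\Gamma(X_k,z^*\cG)$ for arbitrary unbounded hypersheaves. Everything rests on it, and your justification does not work. Passing from bounded objects to all objects via $\cG\simeq\lim_n\tau_{\leq n}\cG$ needs two things: Postnikov completeness, and that $z^*$, a left adjoint, commutes with these limits. Your reason, that $\Spec(k)$ is locally of finite cohomological dimension, fails once $\tn{Gal}(k^{\tn{sep}}/k)$ has infinite virtual cohomological dimension for the coefficients at hand. Nothing in the hypotheses excludes this; finite vcd is exactly what the paper needs, via Clausen--Mathew, to get Postnikov completeness. The fallback through homotopy sheaves is not yet an argument, since it would require knowing how $i_!$ acts on homotopy sheaves.

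The missing observation is that $z^*$ needs no sheafification here, which removes the extension problem entirely. For a presheaf $\cG$ on $\ets$ and $V\in\tsf{Ét}_k$, the presheaf pullback $(z^{-1}\cG)(V)$ is a colimit of $\cG(U)$ over étale $U\to S$ equipped with a $k$-map $V\to U_k$. This indexing category has an initial object, the finite étale lift $\tilde V$ of $V$:
\begin{itemize}
\item existence: a residue-field-rational point on the closed fibre of $U\times_S\tilde V\to\tilde V$ extends to a section, because the connected components of $\tilde V$ are henselian local;
\item uniqueness: sections of a separated étale map over a connected base are determined by a single point.
\end{itemize}
Hence $z^{-1}\cG\simeq i^*\cG$ under $\fets\simeq\tsf{Ét}_k$. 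This is already a hypersheaf, so $z^*\simeq i^*$ on hypersheaves of $R$-modules with no boundedness or cohomological dimension hypotheses, and your chain of equivalences becomes tautological.

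With this repair your proof is a genuine alternative to the paper's. The paper uses the same henselian input, in the form of the covering lifting property of $i$ (via Milne's structure theorem for étale schemes over a henselian local ring). From this it deduces that $i^*$ commutes with hypersheafification and preserves colimits, and it then checks the unit only on the generators $R_S(X)$. In your version, colimit preservation of $i^*$ is automatic because $i^*\simeq z^*$ is a left adjoint. The check on generators is replaced by base change to the closed point together with the trivial field case.
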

\begin{proof}
First, note that when $S$ is the spectrum of a henselian local ring the functor $i: \fets \hookrightarrow \ets$ has the covering lifting property \cite[§A.1, Definition 5]{pst23}. This follows from \cite[§1, Theorem 4.2]{mil80} and the fact that a finite étale morphism from a non-empty to a connected scheme is surjective. We can thus invoke \cite[§A.2, Corollary 4]{pst23}, which implies that the right adjoint $i^*$ of $i_!$ commutes with hypersheafification and preserves colimits.\footnote{The cited reference requires covering families to consist of single morphisms. This assumption can be dropped, see the proof of \cite[§A.2, Proposition 6]{pst23}.} \\
To show that $i_!$ is fully faithful, we can now show that the unit map $\tn{id} \to i^*i_!$ is an equivalence. Since both functors involved commute with finite limits and arbitrary colimits, and the objects $R_S(X)$ for $X$ finite étale over $S$ generate $\sheaves^{\wedge}_{\tn{ét}}(\fets;R)$ as a localising subcategory of itself, we reduced to showing $i^*i_!(R_S(X)) \simeq R_S(X)$ for such $X$. This is clear since $i^*$ is restricted from the analogous functor on presheaves and commutes with sheafification.
\end{proof}

By passing to sheaves on smooth $S$-schemes, adding transfers and then localising with respect to $\bA^1$, we obtain a symmetric monoidal left adjoint
\begin{equation}\label{eq:rigidityfunctor}
\rho_!: \sheaves^{\wedge}_{\tn{ét}}(\ets;R) \to \mcal{DM}^{\tn{eff},\wedge}_{\tn{ét}}(S;R).
\end{equation}
For its construction using derived categories see \cite[§3.1]{cd16}, in Appendix \ref{sec:sheaveswithtransfers} we compare our constructions to the classical setup.

\begin{cor}\label{cor:dametassheaves}
Assume that the ring $R$ has positive characteristic $n$ and that the residue characteristics of $S$ are prime to $n$.
\begin{enumerate}
    \item If $S$ is a $K(\pi,1)$-scheme, the composition $\rho_!i_!$ induces a symmetric monoidal equivalence
    $$\sheaves^{\wedge}_{\tn{ét}}(\fets;R) \supseteq \tn{Thick}(R_S(X) \mid X \tn{ finite étale over } S) \simeq \mcal{DAM}^{\wedge}_{\tn{ét},\tn{c}}(S;R).$$
    \item If $S$ is the spectrum of a henselian local ring, $\rho_!i_!$ induces a symmetric monoidal equivalence
    $$\sheaves^{\wedge}_{\tn{ét}}(\fets;R) \simeq \mcal{DAM}^{\wedge}_{\tn{ét}}(S;R).$$
\end{enumerate}
\end{cor}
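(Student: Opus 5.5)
The plan is to factor $\rho_!i_!$ as a composite of two functors, each fully faithful on the relevant subcategory. The essential image is then identified by generators.

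The first functor is $i_!$. It is fully faithful on the thick subcategory generated by the $R_S(X)$ in case (1), by \Cref{lem:ffkpione}. It is fully faithful on all of $\sheaves^{\wedge}_{\tn{ét}}(\fets;R)$ in case (2), by \Cref{lem:ffhenselian}.

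The second functor is $\rho_!\colon \sheaves^{\wedge}_{\tn{ét}}(\ets;R) \to \mcal{DM}^{\tn{eff},\wedge}_{\tn{ét}}(S;R)$. Here I would invoke the Suslin--Voevodsky rigidity theorem in the form of \cite[Theorem 4.5.2]{cd16}. Since $R$ has characteristic $n$ invertible on $S$, that theorem says $\rho_!$ is an equivalence between étale hypersheaves of $R$-modules on the small étale site and étale hypermotives with $R$-coefficients. To apply it in our $\infty$-categorical framework, I would use Appendix \ref{sec:sheaveswithtransfers}. It identifies our $\mcal{DM}^{\tn{eff},\wedge}_{\tn{ét}}(S;R)$ and our $\rho_!$ with the derived-category construction of \cite[§3.1]{cd16}. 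It also identifies $\sheaves^{\wedge}_{\tn{ét}}(\ets;R)$ with $\cD(\tn{Sh}_{\tn{ét}}(S;R))$ via \cite[Corollary 2.1.2.3]{lur18}. Only full faithfulness is needed for the argument, though rigidity gives more. The main obstacle I expect is exactly this comparison step: checking that our construction (pullback of sheaves with transfers, then $\bA^1$-localisation) agrees with Cisinski--Déglise's model compatibly with $\rho_!$. Once that is settled, I would verify directly that it matches on representables, i.e. that $\rho_! R_S(X) \simeq \cM^{\wedge}_{\tn{ét}}(X)$ for $X$ étale.

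Next I would identify the images via generators. The composite $\rho_!i_!$ is a symmetric monoidal left adjoint, since both factors are (\Cref{cons:fettoet} and the construction of \ref{eq:rigidityfunctor}). Evaluated on representables, it sends $R_S(X)$ to $\rho_! R_S(X) \simeq \cM^{\wedge}_{\tn{ét}}(X)$ for $X$ finite étale. For case (1), a fully faithful exact functor restricted to $\tn{Thick}(R_S(X))$ has essential image the thick subcategory generated by the images of the generators. That image is $\mcal{DAM}^{\wedge}_{\tn{ét},\tn{c}}(S;R)$ by \Cref{defi:smoothartinmotives}, and monoidality is inherited. For case (2), $\rho_!i_!$ is fully faithful and preserves colimits. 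Hence its essential image is a localising subcategory, and it contains the $\cM^{\wedge}_{\tn{ét}}(X)$. Conversely, $\sheaves^{\wedge}_{\tn{ét}}(\fets;R)$ is generated under colimits by the $R_S(X)$, as used in the proof of \Cref{lem:ffhenselian}. So the image is exactly the localising subcategory generated by the $\cM^{\wedge}_{\tn{ét}}(X)$, namely $\mcal{DAM}^{\wedge}_{\tn{ét}}(S;R)$. Symmetric monoidality again follows from that of $\rho_!i_!$ together with the fact that $\mcal{DAM}^{\wedge}_{\tn{ét}}$ is a tensor subcategory.
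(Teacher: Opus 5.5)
Your proposal is correct and follows the paper's proof. Rigidity \cite[Theorem 4.5.2]{cd16} makes $\rho_!$ an equivalence, \Cref{lem:ffkpione} and \Cref{lem:ffhenselian} give full faithfulness of $i_!$ in cases (1) and (2), and the essential image is identified as the thick (resp.\ localising) subcategory generated by the $\cM^\wedge_{\tn{ét}}(X)$. Your added details on the model comparison via Appendix \ref{sec:sheaveswithtransfers} and on essential images of fully faithful exact (resp.\ colimit-preserving) functors spell out steps the paper leaves implicit.
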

\begin{proof}
Under our assumptions on base scheme and coefficients, the rigidity theorem in the form of \cite[Theorem 4.5.2]{cd16} implies that the functor $\rho_!$ is an equivalence. In the respective cases (1) and (2) the statement thus follows from \Cref{lem:ffkpione} and \Cref{lem:ffhenselian} respectively, noting that the category of smooth Artin motives (resp. constructible smooth Artin motives) is the localising (resp. thick) subcategory of $\mcal{DAM}^{\wedge}_{\tn{ét}}(S;R)$ generated by the motives $\cM^\wedge_{\tn{ét}}(X)$ for $X$ finite étale over $S$.
\end{proof}

\begin{rem}
Since the generators are dualisable, it is clear that
\begin{equation}\label{eq:thickdualisable}
\tn{Thick}(R_S(X) \mid X \tn{ finite étale over } S) \subseteq \sheaves^{\wedge}_{\tn{ét}}(\fets;R)^{\tn{dbl}}.
\end{equation}
The latter denotes the full subcategory of dualisable objects, which are the locally constant sheaves with perfect values.\footnote{In the context of étale topologies (and originally for $\ell$-adic sheaves), dualisable sheaves are also called \emph{lisse} sheaves.} In terms of the equivalence 
$$\sheaves^{\wedge}_{\tn{ét}}(\fets;R) \simeq \cD(\modgr),$$
the dualisable objects are given by complexes of discrete $(G;R)$-modules that are perfect when considered as a complex of $R$-modules. If $R$ is regular and noetherian, one can use that these complexes are bounded for the t-structure to show that \ref{eq:thickdualisable} is actually an equality. An argument of this form (for sheaves on the small étale site) is given by Ruimy in \cite[Lemma 3.1.10]{rui25a}, using results of \cite{bg23a} on finite resolutions by permutation modules.
\end{rem}

In the non-hypercomplete case, we can analogously construct a functor
$$\sheaves_{\tn{ét}}(\fets;R) \to \mcal{DM}^{\tn{eff}}_{\tn{ét}}(S;R)$$
which lands in $\mcal{DAM}_{\tn{ét}}(S;R)$, but we will not be able to say that the composition is fully faithful since here no statements about cohomology groups are available.\\
In the Nisnevich case, \Cref{rem:alternativeproof} asserts that a presheaf with transfers on finite étale schemes is automatically a sheaf. In the étale case the situation is dual: an étale sheaf on finite étale schemes automatically has transfers. In order to show this, we need a preliminary construction that refines the pullback diagram of \Cref{defi:sheaveswithtransfers}.

\begin{cons}\label{cons:refinedpullback}
The functor $\gamma: \fets \to \corfets{R}$ factors as
$$\fets \stackrel{\rho}{\longrightarrow} \fets \times \lat_R \stackrel{\gamma \times \sigma}{\longrightarrow} \corfets{R} \times \corfets{R} \stackrel{\times_S}{\longrightarrow} \corfets{R}.$$
Here $\lat_R$ is the category of finite rank free $R$-modules, and by \cite[Remark D.1.1.5]{lur18} we have $\Mod_R(\spec) \simeq \psigma(\lat_R;\spec)$. The functor $\rho$ is the identity on the fist factor and constant with value $R$ on the second, $\sigma$ is the canonical additive functor $\lat_R \to \corfets{R}$ that sends $R$ to $S$, and the bifunctor $(-)\times_S(-)$ comes from the monoidal structure on $\corfets{R}$. We denote the composition functor $\fets \times \lat_R \to \corfets{R}$ by $\delta$. Using $\psigma(\fets;R) \simeq \fun^{\tn{bi-}\times}(\fets^\op \times \lat_R^\op,\spec)$, the pullback square of \Cref{defi:sheaveswithtransfers} refines to the iterated pullback
\begin{equation}\label{diag:refinedpullback}
\begin{tikzcd}
	{\sheaves^{(\wedge)}_{\tn{ét}}(\corfets{R};\spec)} & {\psigma(\corfets{R};\spec)} \\
	{\sheaves^{(\wedge)}_{\tn{ét}}(\fets;R)} & {\psigma(\fets;R)} \\
	{\sheaves^{(\wedge)}_{\tn{ét}}(\fets;\spec)} & {\psigma(\fets;\spec).}
	\arrow["{\tsf{o}^{(\wedge),\tn{tr}}_{\tn{ét}}}", hook, from=1-1, to=1-2]
	\arrow["{\delta^*}", from=1-1, to=2-1]
	\arrow["{\hat{\delta}^*}", from=1-2, to=2-2]
	\arrow["{\tsf{o}_{\tn{ét}}^{(\wedge)}}", hook, from=2-1, to=2-2]
	\arrow["{\rho^*}", from=2-1, to=3-1]
	\arrow["{\hat{\rho}^*}", from=2-2, to=3-2]
	\arrow["{\tsf{o}_{\tn{ét}}^{(\wedge)}}", hook, from=3-1, to=3-2]
    \arrow["\usebox\pullback"{anchor=center, pos=0.125}, draw=none, from=1-1, to=2-2]
    \arrow["\usebox\pullback"{anchor=center, pos=0.125}, draw=none, from=2-1, to=3-2]
\end{tikzcd}
\end{equation}
As before for $\gamma$, the presheaf versions of the restriction along $\rho$ and $\delta$ are denoted $\hat{\rho}^*$ and $\hat{\delta}^*$, and the sheaf versions $\rho^*$ and $\delta^*$.
\end{cons}

\begin{thm}\label{thm:etaletransfers}
The functor $\delta^*: \sheaves_{\tn{ét}}^{(\wedge)}(\corfets{R};\spec) \to \sheaves_{\tn{ét}}^{(\wedge)}(\fets;R)$ is an equivalence.
\end{thm}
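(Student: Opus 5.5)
We would prove this equivariantly, using Grothendieck's equivalence $\fets \simeq \gset$ with $G=\pi_1^{\tn{ét}}(S,s)$ together with the results of \Cref{sec:borelprofinite} and \Cref{sec:sheavesandreps}. The idea is that an étale sheaf on $\gset$ is (levelwise) Borel complete, and such $G$-spectra carry transfers for free. The plan is to construct an explicit inverse $\sheaves_{\tn{ét}}^{(\wedge)}(\fets;R) \to \sheaves_{\tn{ét}}^{(\wedge)}(\corfets{R};\spec)$ by passing through $R_G$-modules in $\specg$, and then to check the two composites.

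The first step produces transfers from the equivariant picture. By \Cref{prop:sheavesborelcomplete}, the category $\sheaves(\gset;R)$ is equivalent to $\Mod_{R_G}(\specg)_{\tn{lwBorel}}$, and via $\iota_{\tn{lw}}$ it sits fully faithfully in $\Mod_{R_G}(\specg)\simeq \fun^{\times}(\Spans(G)^{\op},\Mod_R(\spec))$. Restricting a spectral Mackey functor along $\gset^{\op}\to\Spans(G)^{\op}$ recovers the sheaf, so every étale sheaf extends canonically to a functor on spans. Next we show that, for levelwise Borel complete objects, this functor on spans factors through the quotient $\Omega_R(G)/\cI_R(G)\simeq\permgr$ of \Cref{thm:damnisasmodunderlr}. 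In other words, the composite $\pi_*\pi^*$ along $G/K\to G/H$ acts as multiplication by $[H:K]$ on the values. It suffices to check this at each finite level $G_i$. There, by \Cref{prop:borelcompleteasfun} and \Cref{lem:sheavesfunbg}, the value at $G/H$ is the homotopy fixed points $M^{hH}$, the transfer is the norm (the sum over $H/K$ of the action maps), and the restriction is the inclusion of fixed points, so the composite is indeed multiplication by the index. The main obstacle is making this an $\infty$-categorical factorisation rather than a statement on $\pi_0$. We would handle it by writing $\Mod_R(\spec)\simeq\psigma(\lat_R;\spec)$ as in \Cref{cons:refinedpullback}. The Borel complete objects are right Kan extended from $BG_i$, and we would exhibit them as right Kan extended along the additive functor $\Spans(G_i)\to\Omega_R(G_i)/\cI_R(G_i)\to\corfets{R}$, using the equivalence $\permgr\simeq\corfets{R}$ of \Cref{rem:alternativeproof}. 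The key input for this Kan extension is that mapping spectra out of the free orbit agree in both categories: in both, the endomorphisms of $G_i/1$ form the group ring $R[G_i]$.

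The second step assembles the inverse functor. The first step gives a functor $\sheaves(\gset;R)\to\psigma(\corfets{R};\spec)$ whose composite with $\hat\delta^*$ is the inclusion into presheaves. By the pullback square \ref{diag:refinedpullback}, this functor lands in $\sheaves_{\tn{ét}}(\corfets{R};\spec)$ and is a section of $\delta^*$. Passing to hypercomplete objects, using \Cref{rem:tstructuresheaveswithtransfers} and \Cref{prop:hypersheafhypercomplete}, gives the hypersheaf version.

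The third step shows that the section is also a retraction, i.e.\ that $\delta^*$ is fully faithful, equivalently conservative on the pullback. Since $\delta^*$ is a base change of a faithful restriction and the section exists, it suffices to show that a sheaf with transfers is determined by its underlying sheaf. Here we use the identification of $\corfets{R}$ with permutation modules from \Cref{rem:alternativeproof}: every correspondence between finite étale schemes is an $R$-linear combination of composites of graphs and transposed graphs. A transfer structure is therefore determined by the values on transposed graphs of finite étale maps $f\colon Y\to X$. For such a map, we argue that the sheaf condition along the covering $Y\to X$ forces the transfer to be the norm map constructed in the first step. We do this by passing to a Galois cover $Z\to X$ that trivialises $f$: over $Z$ the map $f$ splits as a fold map, and for a fold map the transfer is the sum map by additivity.
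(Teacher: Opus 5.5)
Your first two steps contain the real content of the paper's proof, though the paper gets there more directly. It first reduces to finite $G$: it writes $\corfets{R}$ as a filtered union of the $\corfetsslice{R}{S_i}$, so that sheaves with transfers become a limit along restrictions. It then right Kan extends along the free orbit $\alpha=\delta\beta\colon BG\times\{R\}\to\corfets{R}$, where $\beta$ picks out the scheme $\tilde S$ corresponding to $G/1$. To get $\hat\delta^*\hat\alpha_*\simeq\hat\beta_*$ it uses exactly your key input, namely that maps out of the free orbit agree, e.g.\ $\Hom_{\corfets{R}}(\tilde S,\tilde S/H)\cong R(G/H)$. The detour through spectral Mackey functors, the cohomological ideal and the equivalence $\permgr\simeq\corfets{R}$ (only sketched in \Cref{rem:alternativeproof}) is not needed.

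The genuine gap is your third step, for two reasons.
\begin{itemize}
\item Conservativity of $\delta^*$ together with a section does not give an equivalence unless the section is an adjoint. For instance, $BK\to\ast$ for a nontrivial group $K$ is conservative and has a section, but is not an equivalence.
\item The argument via transposed graphs and Galois covers only takes place in the homotopy category. It pins down the map that a sheaf with transfers $F$ assigns to each transposed graph at best up to homotopy, one morphism at a time. A functor $\corfets{R}^{\op}\to\spec$, and a natural transformation between two such, consists of higher coherence data. So this argument controls neither mapping spaces between sheaves with transfers nor the essential image of your section.
\end{itemize}
The argument does not even work on $\pi_0$. For a Galois cover $Z\to X$ with group $K$ one has $F(X)\simeq F(Z)^{hK}$. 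A map into $F(Z)^{hK}$ is not detected by its composite with $F(Z)^{hK}\to F(Z)$. So agreement after pulling back to $Z$ does not force the transfer to be the norm.

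The repair is short, and it is how the paper concludes. Right Kan extensions compose, so $\hat\alpha_*\simeq\hat\delta_*\hat\beta_*$. Here $\hat\beta_*$ is an equivalence onto étale sheaves (as in \Cref{lem:sheavesfunbg}), and $\hat\alpha_*$ is fully faithful because $\alpha$ is. Hence your section is just the right adjoint $\delta_*$ of $\delta^*$, and it is fully faithful. Conservativity of $\delta^*$ is immediate: $\delta$ is essentially surjective, and equivalences of presheaves are detected objectwise. The triangle identity $\varepsilon_{\delta^*F}\circ\delta^*(\eta_F)\simeq\tn{id}$ then shows that $\delta^*(\eta_F)$ is an equivalence, hence so is $\eta_F$. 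Therefore $\delta^*$ is an equivalence, with no analysis of the individual transfer maps needed.
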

\begin{proof}
We first consider the case of sheaves. Let $S_i$ be the finite étale $S$-scheme corresponding to the orbit $G/N_i$, so that the canonical map $G \to \tn{Aut}_{\fets}(S_i) $ has kernel given by $N_i$. Write $\fetslice{S_i}$ for the slice $(\fets)_{S_i/}$. Then as for $G$-sets \cite[Construction 4.5]{cm21} the site $\fets$ decomposes into a filtered colimit of sites $\tn{colim}_i \, \fetslice{S_i}$ (each of which has covering sieves given by jointly surjective families). The transition maps are the natural maps $\fetslice{S_i} \to \fetslice{S_j}$ that precompose with the map $S_i \to S_j$ whenever there is an inclusion $N_i \subseteq N_j$. \\
Note that each of the canonical maps $\fetslice{S_i} \to \fets$ is fully faithful. We write $\corfetsslice{R}{S_i}$ for the full subcategory of $\corfets{R}$ on the schemes in $\fetslice{S_i}$. Since each of the maps of sites descends to these correspondence categories (see \Cref{rem:nicecorrespondences}), we obtain a similar colimit decomposition 
$$\corfets{R} \simeq \tn{colim}_i \, \corfetsslice{R}{S_i},$$
compatible with the functors $\delta_i: \fetslice{S_i} \times \lat_R \to \corfetsslice{R}{S_i}$ that are restricted from the functor $\delta$ of \Cref{rec:graphfunctors}. Using the upper pullback of \ref{diag:refinedpullback}, the decomposition of the (pre)sheaf categories involved induces one for sheaves of transfers as a limit along restriction:
$$\sheaves_{\tn{ét}}(\corfets{R};\spec) \simeq \tn{lim}_i \; \sheaves_{\tn{ét}}(\corfetsslice{R}{S_i};\spec).$$
So it suffices to show that each $\delta_i^*: \sheaves_{\tn{ét}}(\corfetsslice{R}{S_i};\spec) \to \sheaves_{\tn{ét}}(\fetslice{S_i};R)$ which precomposes with the map $\delta_i$ from above is an equivalence. For the rest of the proof we will assume that $G$ is finite. The argument we give then holds verbatim for each of the $\delta_i$. \\
Note that $\lat_R$ is the coproduct completion of its full subcategory $\{R \} \subseteq \lat_R$ on the object $R$, i.e. it is the smallest full subcategory of $\cP(\{R \})$ that contains the object $R$ and is closed under finite coproducts. Hence \cite[Proposition 5.3.6.2]{lur09} implies that restriction and right Kan extension provide an equivalence
\begin{equation}\label{eq:latr}
\Mod_R(\spec) \simeq \fun^{\times}(\lat_R^\op,\spec) \simeq \fun(\{R\}^{\op},\spec).
\end{equation}
For the moment, we will work with the description on the right-hand side of \ref{eq:latr}. Let $\beta: BG \times \{R \} \to \fets \times \{R \}$ be the functor that on the first component picks out the scheme $\tilde{S}$ corresponding to the orbit $G/1$ and is given by the identity on the other component. Write $\alpha: BG \times \{R \} \to \corfets{R}$ for its composition with the functor $\delta: \tsf{Fét}_{S} \times \{R\} \to \tsf{FétCor}(S;R)$. Restriction and right Kan extension along $\alpha$ provide an adjunction 
$$(\hat{\alpha}^*,\hat{\alpha}_*): \cP(\corfets{R};\spec) \rightleftarrows \cP(BG \times \{R \};\spec)$$
which factors as the composition of adjunctions
\[\begin{tikzcd}
	{\cP(\corfets{R};\spec)} & {\cP(\fets \times \{R \};\spec)} & {\cP(BG \times \{R \};\spec).}
	\arrow["{(\hat{\delta}^*,\hat{\delta}_*)}", shift left, from=1-1, to=1-2]
	\arrow[shift left, from=1-2, to=1-1]
	\arrow["{(\hat{\beta}^*,\hat{\beta}_*)}", shift left, from=1-2, to=1-3]
	\arrow[shift left, from=1-3, to=1-2]
\end{tikzcd}\]
By \cite[Example 4.4]{cm21} (which we already used in \Cref{lem:sheavesfunbg}, up to self-duality of $BG$), the functor $\hat{\beta}_*$ is fully faithful and has essential image those additive presheaves that are étale sheaves on the first component (i.e., presheaves corresponding to elements in $\sheaves_{\tn{ét}}(\fets;R) \subseteq \psigma(\fets;R)$ under the equivalence \ref{eq:latr}). We denote the resulting equivalence functor by 
$$\beta_*: \cP(BG \times \{R \};\spec) \stackrel{\sim}{\longrightarrow} \sheaves_{\tn{ét}}(\fets \times \{R \};\spec).$$
The functor $\alpha$ sends the unique object $(\ast_{BG},R)$ of its domain to $\tilde{S}$. By \Cref{rem:nicecorrespondences} the endomorphisms of these two objects are isomorphic to the free module $R(G)$, and $\alpha$ sends a morphism $(g,r)$ to $r \cdot \langle \Gamma_{{\tn{id}}^g}\rangle$, where ${\tn{id}}^g: \tilde{S} \to \tilde{S}$ corresponds to conjugation by $g$. Under the description given in \Cref{rem:nicecorrespondences} this corresponds to the element $r\cdot g$ of $R(G)$ and it follows that $\alpha$ is fully faithful. Hence the right Kan extension functor $\hat{\alpha}_*$ is fully faithful as well. \\
We now show that $\hat{\alpha}_*$ lands in sheaves with transfers. Let $\cF: BG^\op \times \{R \}^\op \to \spec$ be a functor. For a subgroup $H \leq G$, the evaluation of the right Kan extension $(\hat{\alpha}_*\cF)(\tilde{S}/H)$ is computed as the limit of the restriction of $\cF$ to
$$\cQ \defeq (BG^\op \times \{R \}^\op) \times_{\corfets{R}^\op} (\corfets{R}^\op)_{(\tilde{S}/H)\tn{\big /}}.$$
Now, the functor $\delta$ induces an isomorphism of $R$-modules
$$\Hom_{\fets \times \lat_R}((\tilde{S},R),(\tilde{S}/H,R)) \to \tsf{Cor}_S(\tilde{S},\tilde{S}/H)\otimes R,$$
here both sides are isomorphic to $R(G/H)$. It follows that $\delta$ induces an equivalence of $\cQ$ to the diagram
$$(BG^\op \times \{R \}^\op) \times_{\fets^\op \times \{R \}^\op} (\fets^\op \times \{R \}^\op)_{(\tilde{S}/H,R)\tn{\big /}},$$
which computes $\beta_*\cF(\tilde{S}/H) \simeq \cF(\ast_{BG},R)^{hH}$ (see also the proof of \cite[Proposition 7.3.1.16]{kerodon}). So we have $\hat{\delta}^*\hat{\alpha}_*(\cF) \simeq \hat{\beta}_*(\cF)$ and $\hat{\delta}^*\hat{\alpha}_*(\cF)$ is right Kan extended from $BG^\op \times \{R\}^\op$, and thus a sheaf and in particular product preserving. Hence $\hat{\alpha}_*(\cF)$ is a sheaf with transfers. We denote the resulting adjunction by
$$(\alpha^*,\alpha_*): \sheaves_{\tn{ét}}(\corfets{R};\spec) \rightleftarrows \fun(BG \times \{R \}^\op,\spec).$$
Now, for $\cG \in \sheaves_{\tn{ét}}(\fets \times \{R \};\spec)$ there exists $\cF \in \cP(BG \times \{R \};\spec)$ such that $\beta_*(\cF) \simeq \cG$. Since right Kan extensions compose we have $\hat{\delta}_*(\cG) \simeq \hat{\delta}_*\beta_*(\cF) \simeq \alpha_*(\cF)$ which is a sheaf with transfers. Hence $\hat{\delta}_*$ sends sheaves to sheaves with transfers and we denote the resulting functor by 
$$\delta_*: \sheaves_{\tn{ét}}(\fets \times \{R\};\spec) \to \sheaves_{\tn{ét}}(\corfets{R};\spec),$$
it is right adjoint to restriction along $\delta$. Now, since $\alpha_*$ and $\beta_*$ are fully faithful, $\delta_*$ is fully faithful as well. Since $\delta$ is essentially surjective $\hat{\delta}^*$ is conservative, so since $\tsf{o}_{\tn{ét}}^{\tn{tr}}$ is fully faithful $\delta^*$ is conservative as well. We now have an adjunction $\delta^* \dashv \delta_*$ for which the right adjoint is fully faithful and the left adjoint is conservative, so this adjunction is an equivalence. Since
$$\sheaves_{\tn{ét}}(\fets \times \{R\};\spec) \simeq \sheaves_{\tn{ét}}(\fets;R),$$
this finishes the argument for sheaves. The statement for hypersheaves follows from the one for sheaves since the hypersheaf version 
$$\delta^*: \sheaves_{\tn{ét}}^{\wedge}(\corfets{R};\spec) \to \sheaves_{\tn{ét}}^{\wedge}(\fets;R)$$
is the pullback of the functor on sheaves, as in \Cref{rem:tstructuresheaveswithtransfers}. Hence it is an equivalence as well, which concludes the proof.
\end{proof}

\begin{rem}
In the hypercomplete case, the statement of the previous proposition is also implied by \cite[Corollary 2.1.9]{cd16}, which relies on a concrete identification of the effect of the graph functor on `representable' sheaves and sheaves with transfers. We also refer to \cite[Corollary 4.39]{cm21} for some form of converse for sheaves on the small étale site, and to \cite[Corollary C.13]{bh21} for a similar statement using $\infty$-categories of spans. \\
Morally speaking, étale sheaves on finite $G$-sets already have transfers because they are right Kan extended from $BG$ (at least when $G$ is finite) and thus have an overlying Borel complete $G$-spectrum. \\
One can make this line of thoughts precise by explicitly identifying the correspondence category $\corfets{R}$ with $\permgr$, using that by \cite[Corollary 3.17]{fuh25} spectral presheaves on the latter are the same as modules over the constant $R$-linear Mackey functor $\underl{R}$ in $\specg$, and then using that the Borel complete subcategory only sees the restriction of $\underl{R}$ to the trivial group. 
\end{rem}

In the following theorem, we will write $\cD\Perm(G;R)$ for the derived $\infty$-category of $(G;R)$-permutation modules \cite{bg22a}, \cite[§3]{bg23b}, \cite[§4]{fuh25}. It combines the results of the previous sections.

\begin{thm}\label{thm:bigdiagram}
There is a commutative diagram in which horizontal functors are equivalences and vertical functors are localisations.
\[\begin{tikzcd}
	{\cD\Perm(G;R)} & {\Mod_{\underl{R}}(\specg)} & {\sheaves_{\tn{Nis}}(\corfets{R};\spec)}  \\
	{\tn{colim}_i \, \cD(\Mod(G_i;R))} & {\Mod_{R_G}(\specg)_{\tn{lwBorel}}} & {\sheaves_{\tn{ét}}(\corfets{R};\spec)}  \\
	{\cD(\Mod(G;R))} & {\Mod_{R_G}(\specg)_{\tn{Borel}}} & {\sheaves_{\tn{ét}}^{\wedge}(\corfets{R};\spec).} 
	\arrow["\sim", from=1-1, to=1-2]
	\arrow["\alpha", from=1-1, to=2-1]
	\arrow["\sim", from=1-2, to=1-3]
	\arrow["\beta", from=1-2, to=2-2]
	\arrow["\tsf{a}_{\tn{ét}}", from=1-3, to=2-3]
	\arrow["\sim", from=2-1, to=2-2]
	\arrow["L^h", from=2-1, to=3-1]
	\arrow["\sim", from=2-2, to=2-3]
	\arrow["L^h", from=2-2, to=3-2]
	\arrow["(-)^{\wedge}_\tn{ét}", from=2-3, to=3-3]
	\arrow["\sim", from=3-1, to=3-2]
	\arrow["\sim", from=3-2, to=3-3]
\end{tikzcd}\]
If the virtual cohomological dimension of the group $G$ is finite, all vertical functors from the second to the third row are equivalences.
\end{thm}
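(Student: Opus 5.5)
The plan is to assemble the diagram from results already established, row by row and then square by square, with the only genuinely new input being the identification of the vertical functors in the top row.

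\textbf{Horizontal equivalences.}

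In the first row, the left equivalence $\cD\Perm(G;R)\simeq \Mod_{\underl{R}}(\specg)$ is Corollary 4.8 / Theorem 5.12 of \cite{fuh25}. For the right equivalence, I take the proof of \Cref{thm:damnisasmodunderlr} before $\bA^1$-localisation. There the functor $\Psi$ induces an equivalence $\permgr\simeq\corfets{R}$ (\Cref{rem:alternativeproof}). Nisnevich covers of finite étale schemes split, so every additive presheaf on $\corfets{R}$ is already a Nisnevich sheaf. Hence
\[\sheaves_{\tn{Nis}}(\corfets{R};\spec)\simeq\psigma(\corfets{R};\spec)\simeq\psigma(\permgr;\spec)\simeq\Mod_{\underl{R}}(\specg),\]
the last step by \cite[Corollary 3.18]{fuh25}.

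The second and third rows on the left are \Cref{prop:sheavesgrmodules} combined with \Cref{prop:sheavesborelcomplete}, both going through $\sheaves^{(\wedge)}(\gset;R)$. On the right, I use Grothendieck's equivalence $\gset\simeq\fets$, which is compatible with the jointly surjective topology on $\gset$ and the étale topology on $\fets$. This gives $\sheaves^{(\wedge)}(\gset;R)\simeq\sheaves_{\tn{ét}}^{(\wedge)}(\fets;R)$, and \Cref{thm:etaletransfers} identifies the latter with sheaves with transfers.

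\textbf{Vertical functors and commutativity.}

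The lower vertical functors are hypercompletions on all three columns. The lower squares commute because every horizontal functor is t-exact (\Cref{prop:sheavesborelcomplete} and \Cref{prop:sheavesgrmodules}). On the right, hypersheaves are exactly the hypercomplete objects (\Cref{prop:hypersheafhypercomplete}, \Cref{rem:tstructuresheaveswithtransfers}), and $\delta^*$ is t-exact. An equivalence commuting with the t-structures intertwines the localisations $L^h$.

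For the top row, I define $\beta$ as the composite
\[\Mod_{\underl{R}}(\specg)\to\Mod_{R_G}(\specg)\xrightarrow{L_{\tn{lw}}}\Mod_{R_G}(\specg)_{\tn{lwBorel}}.\]
The first arrow is the tensor-forgetful passage through the levelwise Borel completion. Levelwise Borel completion only depends on $\tn{res}^{G_i}_1$, and $\tn{res}^G_1\underl{R}\simeq R$, as in \Cref{cor:justrestriction} applied levelwise. I define $\alpha$ as the colimit over $i$ of the functors $\cD\Perm(G_i;R)\to\cD(\Mod(G_i;R))$ realising permutation complexes as honest complexes.

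I then check that the top squares commute on generators. All functors involved are colimit-preserving and symmetric monoidal left adjoints, so it suffices to compare the images of the free objects $\Sigma^\infty_+G/H\otimes\underl{R}$, of the permutation modules $R(G/H)$, and of the representables $R^{\tn{tr}}_S(X/\overl{H})$, together with their maps. In every case these land on $R_S(X/\overl{H})$, resp.\ the sheaf represented by $G/H$ with transfers. Étale sheafification $\tsf{a}_{\tn{ét}}$ on sheaves with transfers (\Cref{rem:pushoutinsteadofpullback}) sends $R^{\tn{tr}}_S(X)$ to the étale sheaf with transfers corresponding to $R_S(X)$ under \Cref{thm:etaletransfers}.

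The main obstacle is making this comparison coherent rather than merely objectwise. I would handle it by restricting all three functors to the additive $1$-category $\permgr\simeq\corfets{R}$ and checking agreement there as additive symmetric monoidal functors into hearts. Since the targets are prestable completions, this agreement extends uniquely via \cite[Proposition 3.3.2]{chll24}. That the vertical functors are localisations then follows because each is a left adjoint whose right adjoint is fully faithful: the forgetful inclusion of étale sheaves into Nisnevich sheaves with transfers, and the inclusion of hypercomplete objects.

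\textbf{Finite vcd.}

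The final claim, that the lower vertical functors are equivalences when the virtual cohomological dimension of $G$ is finite, follows from \Cref{cor:smashinghypercompletionreps} together with \Cref{prop:smashinghypercompletion}(1); the latter applies since $R$ is discrete.
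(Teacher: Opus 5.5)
Your proposal is essentially the paper's own proof: the lower rows and squares are assembled from \Cref{prop:sheavesborelcomplete}, \Cref{prop:sheavesgrmodules} and \Cref{thm:etaletransfers}, $\beta$ is levelwise Borel completion (monoidal because $L_{\tn{lw}}(\underl{R})$ is the unit), $\alpha$ is the levelwise realisation functor, and the upper squares are checked on permutation modules via the universal property of spectral presheaves on $\permgr$. The only deviations are that you get the top-right equivalence from $\permgr\simeq\corfets{R}$ as in \Cref{rem:alternativeproof} rather than from \Cref{thm:damnisasmodunderlr}, and that you test the upper squares on $\permgr$ directly instead of over each finite quotient $G_i$.

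One caveat applies equally to your argument and the paper's. Both use the claim that Nisnevich covers in $\fets$ split, in order to identify $\sheaves_{\tn{Nis}}(\corfets{R};\spec)$ with additive presheaves. That claim needs $S$ normal (or at least geometrically unibranch): for the nodal cubic over $\bC$, the connected $n$-fold cyclic étale cover is completely decomposed at every point, hence a non-split Nisnevich cover, and for $R=\bZ$ the representable $R^{\tn{tr}}_S(S)$ fails descent along it, since its \v{C}ech limit is $C^*(B\bZ/n;\bZ)$.
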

\begin{proof}
The content of the statement for the lower two rows was discussed already: The commutativity of the two bottom squares was shown in \Cref{prop:sheavesborelcomplete}, \Cref{prop:sheavesgrmodules} and \Cref{thm:etaletransfers}. The assertion about the virtual cohomological dimension of $G$ is \Cref{prop:smashinghypercompletion} and \Cref{cor:smashinghypercompletionreps}. Let us consider the upper part of the diagram. \\
The equivalences in the top row hold by \cite[Corollary 4.8, Corollary 3.18]{fuh25}, \cite[Example 2.15]{bcn25} and \Cref{thm:damnisasmodunderlr} - using that Nisnevich covers in $\fets$ split the left adjoint in the latter theorem factors through $\sheaves_{\tn{Nis}}(\corfets{R};\spec)$, hence is fully faithful, and it clearly hits generators. These equivalences are compatible with the filtered colimit decompositions of the three $\infty$-categories involved we will use below. \\
By passing to Ind-objects, \cite[Proposition 2.4]{bg25b} implies that the $\infty$-category $\cD\Perm(G;R)$ decomposes as a filtered colimit in $\prl$ along inflation. Similarly, $\tn{colim}_i \, \cD(\Mod(G_i;R))$ by definition is a $\prl$-colimit along inflation. The functor $\alpha$ for a finite quotient $G_i$ is given as the Ind-extension of the functor
$$\cK_{\tn{b}}(\tsf{perm}(G_i;R)^\natural) \to \cD_{\tn{b}}(\tsf{mod}(G_i;R)) \to \cD(\tsf{Mod}(G_i;R))$$
that is induced by the inclusion $\tsf{perm}(G_i;R)^\natural \hookrightarrow \tsf{mod}(G_i;R)$, where the latter category denotes finitely generated $(G_i;R)$-modules.\\
Using (the profinite extension of) \cite[§5.2]{gs14}, there is a ring map $R_G \to \underl{R}$ which is determined by being the unit map $\bA_R \to \underl{R}$ in $R$-linear Mackey functors on $\pi_0$. The levelwise completion functor of \Cref{sec:borelprofinite} induces a functor
\[\begin{tikzcd}
	{\beta: \, \Mod_{\underl{R}}(\specg) \simeq \Mod_{\underl{R}}\Mod_{R_G}(\specg)} & {\Mod_{L_{\tn{lw}(\underl{R})}}(\Mod_{R_G}(\specg)_{\tn{lwBorel}}),}
	\arrow["{L_{\tn{lw}}}", from=1-1, to=1-2]
\end{tikzcd}\]
on module categories, where the first equivalence is \cite[Corollary 3.4.1.9]{lur17}. By \cite[Construction 3.8, p.39]{fuh25}, we can write $\Mod_{\underl{R}}(\specg)$ as a $\prl$-colimit along the functors induced by inflation on module categories, followed by an extension of scalars along a ring map $\tn{infl}_{G_j}^{G_i}(\underl{R})\to \underl{R}$ that is determined by being the identity on the sections for the trivial group on $\pi_0$. Under this description, the functor $\beta$ identifies with the one induced on colimits by the functors
$$\beta_i: \Mod_{\underl{R}}\Mod_{R_{G_i}}(\spec^{G_i}) \to \Mod_{L_{G_i;R_{G_i}}(\underl{R})}\left(\Mod_{R_{G_i}}(\spec^{G_i})_{\tn{Borel}}\right).$$
Each $\beta_i$ is the functor induced by $L_{G_i;R_{G_i}}: \Mod_{R_{G_i}}(\spec^{G_i}) \to \Mod_{R_{G_i}}(\spec^{G_i})_{\tn{Borel}}$ on $\underl{R}$-modules. But since the map $R_{G_i} \to \underl{R}$ becomes an equivalence when restricting to the trivial group, $L_{G_i;R_{G_i}}(\underl{R})$ identifies with the unit of $\Mod_{R_{G_i}}(\spec^{G_i})_{\tn{Borel}}$, and $\beta$ lands in $\Mod_{R_G}(\specg)_{\tn{lwBorel}}$.\\
The functor $\tsf{a}_{\tn{ét}}^{\tn{tr}}: \sheaves_{\tn{Nis}}(\corfets{R};\spec) \to \sheaves_{\tn{ét}}(\corfets{R};\spec)$ is the restriction of the étale sheafification functor to Nisnevich sheaves with transfers (which by \Cref{rem:alternativeproof} are just presheaves with transfers), see \Cref{rem:pushoutinsteadofpullback}. \\
We now show that the two upper squares commute. Using the compatibility of the above functors with colimit decompositions, it suffices to check commutativity for each finite quotient $G/N_i=G_i$. By the universal property of the $\infty$-category in the upper row \cite[Corollary 3.17]{fuh25} it suffices to establish commutativity of the restrictions to $\perm(G_i;R)$. The latter is the coproduct completion of its full subcategory on the permutation modules $R(G_i/H)$ for $H\leq G$, and we can furthermore restrict to this subcategory. In this case, there is a commutative diagram
\[\begin{tikzcd}
	{\{R(G_i/H) \mid H \leq G_i\}} & {\{\underl{R} \otimes \Sigma^{\infty}_+G_i/H \mid H \leq G_i\}} & {\{R^{\tn{tr}}_S(\tilde{S}_i/H) \mid H \leq G_i\}} \\
	{\{R(G_i/H) \mid H \leq G_i\}} & {\{R[G/H_i] \mid H \leq G_i\}} & {\{R^{\tn{tr}}_S(\tilde{S}_i/H) \mid H \leq G_i\}}
	\arrow["\sim", from=1-1, to=1-2]
	\arrow["\alpha", from=1-1, to=2-1]
	\arrow["\sim", from=1-2, to=1-3]
	\arrow["\beta", from=1-2, to=2-2]
	\arrow["{\tsf{a}^{\tn{tr}}_{\tn{ét}}}", from=1-3, to=2-3]
	\arrow["\sim", from=2-1, to=2-2]
	\arrow["\sim", from=2-2, to=2-3]
\end{tikzcd}\]
in which every node indicates the full subcategory of the respective node in the upper two rows of the diagram in the statement. The centre of the bottom row we view as a full subcategory of $\fun(BG_i,\Mod_R(\spec))$, and under this description the functor $\beta$ evaluates a spectral presheaf on $\perm(G_i;R)$ on $R(G_i)$. For the two categories on the right, $\tilde{S}_i$ is an $S$-scheme corresponding to the $G$-orbit $G/N_i$, i.e. the kernel of the canonical homomorphism $\varphi: G \to \tn{Aut}_{\fets}(\tilde{S}_i)$ is $N_i$. Note that by \Cref{rem:representables} the elements $R^{\tn{tr}}_S(\tilde{S}_i/H)$ are already étale sheaves with transfers. For the behaviour of the bottom right horizontal functor we refer to the proof of \Cref{thm:etaletransfers} - its inverse evaluates on $\tilde{S}_i$. This concludes the proof.
\end{proof}

Recall the functor $\iota_!: \sheaves_{\tau}^{(\wedge)}(\corfets{R};\spec) \to \sheaves_{\tau}^{(\wedge)}(\corsms{R};\spec)$ from \Cref{cons:fettoet}. Postcomposing it with the $\bA^1$-localisation $L_{\bA^1}$ yields a functor to $\mcal{DAM}^{(\wedge)}_{\tau}(S;R)$. In the hypercomplete étale case, the composition $L_{\bA^1}\iota_!$ identifies with the composition $\rho_!i_!$ of \ref{eq:rigidityfunctor} and \Cref{cor:dametassheaves}, under the equivalence of \Cref{thm:etaletransfers}. We now amend the diagram of the previous theorem to the right.

\begin{thm}\label{thm:bigdiagramright}
There is a commutative diagram
\[\begin{tikzcd}
	{\sheaves_{\tn{Nis}}(\corfets{R};\spec)} & {\mcal{DAM}_{\tn{Nis}}(S;R)} & {\mcal{DM}^{\tn{eff}}_{\tn{Nis}}(S;R)} \\
	{\sheaves_{\tn{ét}}(\corfets{R};\spec)} & {\mcal{DAM}_{\tn{ét}}(S;R)} & {\mcal{DM}^{\tn{eff}}_{\tn{ét}}(S;R)}\\
	{\sheaves_{\tn{ét}}^{\wedge}(\corfets{R};\spec)} & {\mcal{DAM}^{\wedge}_{\tn{ét}}(S;R)} & {\mcal{DM}^{\tn{eff},\wedge}_{\tn{ét}}(S;R).}
	\arrow["{L_{\bA^1}\iota_!}", from=1-1, to=1-2]
	\arrow["\sim"', draw=none, from=1-1, to=1-2]
	\arrow["{\tsf{a}^{\tn{tr}}_{\tn{ét}}}", from=1-1, to=2-1]
	\arrow["{\tsf{a}^{\tn{tr}}_{\tn{ét}}}", from=1-2, to=2-2]
    \arrow["{\tsf{a}^{\tn{tr}}_{\tn{ét}}}", from=1-3, to=2-3]
	\arrow["{L_{\bA^1}\iota_!}", from=2-1, to=2-2]
    \arrow["(\ast)"', draw=none, from=3-1, to=3-2]
	\arrow["{(-)^\wedge_\tn{ét}}", from=2-1, to=3-1]
    \arrow["(\ast\ast)"', draw=none, from=2-1, to=3-1]
	\arrow["{(-)^\wedge_\tn{ét}}", from=2-2, to=3-2]
    \arrow["{(-)^\wedge_\tn{ét}}", from=2-3, to=3-3]
	\arrow["{L_{\bA^1}\iota_!}", from=3-1, to=3-2]
    \arrow[hook, from=1-2, to=1-3]
    \arrow[hook, from=2-2, to=2-3]
    \arrow[hook, from=3-2, to=3-3]
\end{tikzcd}\]
in which the top left horizontal arrow is an equivalence. We furthermore have the following implications:
\begin{enumerate}
    \item Assume that $R$ has positive characteristic $n$ and that the residue characteristics of $S$ are prime to $n$. If $S$ is the spectrum of a henselian local ring, the functor labelled $(\ast)$ is an equivalence. If $S$ is $K(\pi,1)$, the functor $(\ast)$ restricts to an equivalence of thick subcategories generated by the sheaves and motives associated to finite étale $S$-schemes.
    \item If the virtual cohomological dimension of $G$ is finite, then the functor labelled $(\ast\ast)$ is an equivalence.
    \item If $R$ is a $\bQ$-algebra, all vertical functors and hence the horizontal functors on the left side of the diagram are equivalences.
\end{enumerate}
\end{thm}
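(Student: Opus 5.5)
The plan is to build the diagram from three ingredients: commutation of sheafification functors with left Kan extension along $\iota$, commutation with $\bA^1$-localisation, and the pullback descriptions of sheaves with transfers. I would then derive the three implications from results already established.

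\textbf{The squares.} The horizontal functors $L_{\bA^1}\iota_!$ land in the smooth Artin motives $\mcal{DAM}$, since $\iota_!$ sends $R^{\tn{tr}}_S(X)$ to $R^{\tn{tr}}_S(X)$ for $X$ finite étale (\Cref{cons:fettosm}) and all functors involved preserve colimits. The vertical functors on sheaves on $\corsms{R}$ are étale sheafification and hypercompletion with transfers (\Cref{rem:pushoutinsteadofpullback}, \Cref{rem:tstructuresheaveswithtransfers}). They are left adjoints to inclusions, and their right adjoints (the inclusions) commute with restriction $\iota^*$. Passing to left adjoints, the squares for $\iota_!$ therefore commute.

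For $\bA^1$-localisation, the same argument applies: étale (hyper)sheafification sends the generating maps $R^{\tn{tr}}_S(\bA^1_X)\to R^{\tn{tr}}_S(X)$ to maps of the same form, since these are already étale hypersheaves. Hence the sheafification functors descend to $\bA^1$-local objects and commute with $L_{\bA^1}$. In particular they restrict to the localising subcategories generated by the motives of finite étale schemes, which gives the middle and right vertical arrows and the commutativity of all squares.

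\textbf{The top-left equivalence.} This was essentially shown in the proof of \Cref{thm:bigdiagram}: the functor in \Cref{thm:damnisasmodunderlr} factors through $\sheaves_{\tn{Nis}}(\corfets{R};\spec)$ and becomes an equivalence there. That factorisation is exactly $L_{\bA^1}\iota_!$.

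\textbf{Implication (1).} Under the identification of \Cref{thm:etaletransfers}, $L_{\bA^1}\iota_!$ in the hypercomplete row agrees with $\rho_!i_!$, so this is \Cref{cor:dametassheaves}. The henselian case gives the full equivalence. The $K(\pi,1)$ case gives the equivalence on thick subcategories.

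\textbf{Implication (2).} This is the last assertion of \Cref{thm:bigdiagram}, which rests on \Cref{prop:smashinghypercompletion}, transported through \Cref{thm:etaletransfers}.

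\textbf{Implication (3).} This is the main obstacle, because it needs the external input of Cisinski--Déglise. For $\bQ$-coefficients, \cite[§2]{cd16} (generalising Voevodsky) shows that Nisnevich and étale (hyper)sheaves with transfers give the same motives. The vertical functors on $\mcal{DM}^{\tn{eff}}$ are then equivalences, and so are those on $\mcal{DAM}$, as restrictions to localising subcategories generated by the same objects.

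For the left column, I would argue directly. With $R$ a $\bQ$-algebra, $\cD\Perm(G;R)\to\cD(\Mod(G;R))$ is an equivalence, since permutation modules are projective by Maschke-type averaging at each finite level and generate. Equivalently, every presheaf with transfers on $\corfets{R}$ is an étale hypersheaf: the descent objects $\cF(X)^{h\,\mathrm{Aut}}$ for finite groups have no higher cohomology rationally. In addition, $\sheaves_{\tn{ét}}$ is hypercomplete, since the cohomological dimension is rationally $0$ (a rational form of \Cref{prop:smashinghypercompletion}).

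The horizontal functors on the left then become equivalences: the top one is, and the vertical arrows on both sides are equivalences. The step I expect to need the most care is checking that rational descent along finite Galois covers really makes $\tsf{a}_{\tn{ét}}^{\tn{tr}}$ the identity on $\sheaves_{\tn{Nis}}(\corfets{R};\spec)$. I would verify it at each finite level $G_i$ via transfers, where the transfer–restriction composite is multiplication by $[H:K]$, which is invertible.
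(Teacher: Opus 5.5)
Your proposal is correct and follows the paper's proof. The paper likewise treats commutativity as formal, gets the top-left equivalence from \Cref{thm:damnisasmodunderlr} (via the proof of \Cref{thm:bigdiagram}), reads off (1) and (2) from \Cref{cor:dametassheaves} and \Cref{thm:bigdiagram}, and deduces (3) for all columns at once from \cite[Proposition 2.2.10]{cd16}. Your extra direct argument for the left column in (3) is a valid supplement, but for the hypercompleteness step do not invoke \Cref{prop:smashinghypercompletion}, which needs finite virtual cohomological dimension; instead, for $R$ a $\bQ$-algebra one has $\pi_n(X_i^{hN_{ij}}) \cong \pi_n(X_i)^{N_{ij}}$, so the filtered colimit computing $\pi_n$ in $\Mod_{R_G}(\specg)_{\tn{lwBorel}}$ has injective transition maps and acyclic objects vanish.
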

\begin{proof}
The functors on Artin motives and effective motives are induced by étale sheafification and hypercompletion on smooth $S$-schemes, we denote them by the same symbols as on sheaves with transfers. Commutativity of the diagram is clear by construction. The fact that the top horizontal map is an equivalence follows from \Cref{thm:damnisasmodunderlr}, see also the proof of \Cref{thm:bigdiagram}. Assertion (1) was shown in  \Cref{cor:dametassheaves}, (2) was recorded in \Cref{thm:bigdiagram} using \Cref{prop:smashinghypercompletion} and \Cref{cor:smashinghypercompletionreps}, and (3) is implied by \cite[Proposition 2.2.10]{cd16}.
\end{proof}

\appendix
\section{Sheaves with transfers}\label{sec:sheaveswithtransfers}

Using the language of triangulated categories, (effective) Voevodsky motives (for the Nisnevich topology or more generally for other topologies on smooth schemes) are classically defined by deriving the $1$-category of Nisnevich sheaves with transfers and then localising with respect to $\bA^1$ \cite[§11]{cd19}. On the other hand, definitions using $\infty$-(pre)sheaves are present in the literature \cite{ek20,bh21}. In this appendix we show that our definition of sheaves with transfers (that is, before $\bA^1$-localisation) agrees with the more classical one that uses derived categories, thereby also justifying that we use classical computations of mapping spaces in effective motives. For this, we will focus on hypersheaves, which on the categorical level correspond to derived $\infty$-categories of $1$-categorical sheaves. We keep all standing assumptions from the previous section. \\
We first recall a few facts about the $\infty$-category of additive presheaves with values in an $\infty$-category which for our purposes will always be spectra or an $\infty$-category of modules in spectra.

\begin{rec}\label{rec:psigma}
Let $\cC$ be an $\infty$-category which admits finite coproducts. Let $\psigma(\cC) \subseteq \cP(\cC) = \fun(\cC^{\op},\spc)$ be the full subcategory on presheaves that preserve finite products. Then $\psigma(\cC)$ is compactly generated \cite[Proposition 5.5.8.10(6)]{lur09} and an accessible localisation of $\cP(\cC)$ \cite[Proposition 5.5.8.10(1)]{lur09}, we write 
$$L_{\Sigma}: \cP(\cC) \to \psigma(\cC)$$
for the left adjoint localisation functor. The Yoneda embedding $y: \cC \hookrightarrow \cP(\cC)$ factors through $\psigma(\cC)$ \cite[Proposition 5.5.8.10(2)]{lur09}. \\
Let $\cC$ be additively symmetric monoidal\footnote{That is, $\cC$ is additive and admits a symmetric monoidal structure for which the tensor product is additive in both variables.} and let $\cD$ be a presentably symmetric monoidal stable $\infty$-category. Then the presheaf $\infty$-categories $\cP(\cC)$ and $\cP(\cC;\cD)$ become symmetric monoidal under Day convolution \cite[§2.2.6, 4.8.1]{lur17}. These structures make the map $\cP(\cC) \to \cP(\cC;\cD)$ induced by the unit $\spc \to \cD$ in $\calg(\prl)$ symmetric monoidal, see \cite[Corollary 3.7]{nik16} or \cite[Proposition 3.3]{bs24}. \\
As in \cite[Lemma 3.7]{bgs20} the tensor product on $\cP(\cC)$ preserves $L_{\Sigma}$-equivalences in both variables since it preserves colimits in each variables, and $\psigma(\cC)$ by \cite[Proposition 2.2.1.9]{lur17} inherits a symmetric monoidal structure from $\cP(\cC)$ which informally is given by performing a Day convolution in $\cP(\cC)$ and then applying the localisation functor $L_{\Sigma}$. This makes $L_{\Sigma}: \cP(\cC) \to \psigma(\cC)$ and $y: \cC \hookrightarrow \psigma(\cC)$ symmetric monoidal.\\
If $\cC$ carries a Grothendieck topology $\tau$ such that the tensor product on $\cP(\cC;\cD)$ preserves $a^{(\wedge)}_{\tau}$-equivalences, then tensoring in $\cP(\cC;\cD)$ followed by (hyper)sheafification endows $\sheaves^{(\wedge)}_{\tau}(\cC; \cD)$ with a symmetric monoidal structure which makes it presentably symmetric monoidal. \\
Analogously, the full subcategory $\psigma(\cC;\cD) \subseteq \cP(\cC;\cD)$ on $\cD$-valued presheaves which preserve finite products is a symmetric monoidal localisation of $\cP(\cC;\cD)$, cf. \cite[Remark 2.10]{aok20}. When $\cD = \Mod_R(\spec)$ we write $\psigma(\cC;R)$. By \cite[Corollary 2.10(iii), Example 5.3(ii)]{ggn15} or \cite[Proposition C.1.5.7]{lur18} there is a symmetric monoidal equivalence $\psigma(\cC) \simeq \psigma(\cC;\spec_{\geq 0})$ which is induced by the functor $\Omega^{\infty}: \spec_{\geq 0} \to \spc$. Together with the inclusion of connective spectra into spectra this induces a symmetric monoidal and fully faithful `stable Yoneda embedding'
$$\bar{y}: \cC \hookrightarrow \psigma(\cC) \simeq \psigma(\cC;\spec_{\geq 0}) \hookrightarrow \psigma(\cC;\spec),$$
as in \cite[Corollary 2.9]{aok20}. For $\cD$ presentably symmetric monoidal and stable the essentially unique symmetric monoidal left adjoint $\spec \to \cD$ again induces a symmetric monoidal functor $\psigma(\cC;\spec) \to \psigma(\cC;\cD)$.
\end{rec}

Recall that \Cref{defi:sheaveswithtransfers} defined (hyper)sheaves with transfers as the pullback
\begin{equation}\label{diag:pullbacksheaveswithtransfers}
\begin{tikzcd}
	{\sheaves^{(\wedge)}_{\tau}(\corsms{R};\spec)} & {\psigma(\corsms{R};\spec)} \\
	{\sheaves^{(\wedge)}_{\tau}(\sms;\spec)} & {\psigma(\sms;\spec).}
	\arrow[from=1-1, to=1-2, "{\tsf{o}^{(\wedge),\tn{tr}}_{\tau}}", hook]
	\arrow[from=1-1, to=2-1, "\gamma^*"]
	\arrow[from=1-2, to=2-2, "\hat{\gamma}^*"]
	\arrow[from=2-1, to=2-2, "{\tsf{o}_{\tau}^{(\wedge)}}", hook]
    \arrow["\usebox\pullback"{anchor=center, pos=0.125}, draw=none, from=1-1, to=2-2]
\end{tikzcd}
\end{equation}

The following two remarks are formulated for smooth $S$-schemes, but they hold verbatim after replacing smooth by finite étale schemes. As in \Cref{sec:motives}, $\tau$ stands for either the Nisnevich or the étale topology.

\begin{rem}\label{rem:pushoutinsteadofpullback}
By \Cref{rec:psigma} the functor $\tsf{o}_{\tau}^{(\wedge)}: \sheaves_{\tau}^{(\wedge)}(\sms;\spec) \hookrightarrow \cP(\sms;\spec)$ has a left adjoint (hyper)sheafification functor 
$$\tsf{a}^{(\wedge)}_{\tau}: \cP(\sms;\spec) \to \sheaves_{\tau}^{(\wedge)}(\sms;\spec).$$ 
The forgetful functor $\hat{\gamma}^*: \psigma(\corsms{R};\spec) \to \psigma(\sms;\spec)$ has a left adjoint given by left Kan extension 
$$\hat{\gamma}_!: \psigma(\sms;\spec) \to \psigma(\corsms{R};\spec)$$
along $\gamma: \sms \to \corsms{R}$, which restricts to $\psigma$ by \cite[Proposition 3.3.2]{chll24}. This means that \ref{diag:pullbacksheaveswithtransfers} is a pullback in $\prrst$, and we denote the left adjoints to $\gamma^*$ and $\tsf{o}^{(\wedge),\tn{tr}}_{\tau}$ by $\gamma_!$ and $\tsf{a}_{\tau}^{(\wedge), \tn{tr}}$ respectively. It follows formally that $\gamma_! \simeq \tsf{a}_{\tau}^{(\wedge), \tn{tr}} \circ \hat{\gamma}_!$. We can hence express $\sheaves_{\tau}^{(\wedge)}(\corsms{R};\spec)$ as the $\prlst$-pushout
\[\begin{tikzcd}
	{\sheaves_{\tau}^{(\wedge)}(\corsms{R};\spec)} & {\psigma(\corsms{R};\spec)} \\
	{\sheaves_{\tau}^{(\wedge)}(\sms;\spec)} & {\psigma(\sms;\spec).}
	\arrow[from=1-2, to=1-1, "{\tsf{a}_{\tau}^{(\wedge), \tn{tr}}}"']
	\arrow[from=2-1, to=1-1, "\gamma_!"']
	\arrow[from=2-2, to=1-2, "\hat{\gamma}_!"']
	\arrow[from=2-2, to=2-1, "{\tsf{a}^{(\wedge)}_{\tau}}"']
    \arrow["\usebox\pullback"{anchor=center, pos=0.125}, draw=none, from=1-1, to=2-2]
\end{tikzcd}\]
Furthermore, both functors $\hat{\gamma}_!$ and $\tsf{a}^{(\wedge)}_{\tau}$ admit symmetric monoidal enhancements (using \Cref{rec:psigma} and \cite[Proposition 3.6]{bs24}), and since $\tsf{a}^{(\wedge)}_{\tau}$ is a localisation this pushout can also be computed in $\calg(\prlst)$. The resulting symmetric monoidal structure on $\sheaves_{\tau}^{(\wedge)}(\corsms{R};\spec)$ agrees with the one inherited from $\psigma(\corsms{R};\spec)$ as a symmetric monoidal localisation, in the sense of \cite[Proposition 2.2.1.9]{lur17}.
\end{rem}

\begin{rem}\label{rem:tstrucuresonsheaveswithtransfers}
The stable $\infty$-categories defining the pullback square in \Cref{defi:sheaveswithtransfers} carry t-structures. For $\psigma(\sms;\spec)$ and $\psigma(\corsms{R};\spec)$ both connectivity and coconnectivity are detected levelwise, see e.g. \cite[Definition 2.5]{bcn25}. This makes the restriction 
$$\hat{\gamma}^*: \psigma(\corsms{R};\spec) \to \psigma(\sms;\spec)$$
into a t-exact functor. For $\sheaves_{\tau}^{(\wedge)}(\sms;\spec)$ we already discussed the t-structure in \Cref{rem:tstructureonsheaves}, it makes the (hyper)sheafification
$$\tsf{a}^{(\wedge)}_{\tau}: \psigma(\sms;\spec) \to \sheaves_{\tau}^{(\wedge)}(\sms;\spec)$$
into a t-exact functor. There is an induced t-structure on (hyper)sheaves with transfers for which coconnectivity is detected on the level of presheaves with transfers, i.e. levelwise, and the connective objects are determined by orthogonality. The hearts of these t-structures are the abelian categories of (pre)sheaves (with transfers) with values in abelian groups.

\end{rem}

We will now compare the $\infty$-category of hypersheaves with transfers of \Cref{defi:sheaveswithtransfers} with the derived $\infty$-category of the Grothendieck abelian category of $\tau$-sheaves with transfers 
$$\tn{Sh}_{\tau}(\corsms{\bZ};R) \subseteq \nP_{\Sigma}(\corsms{\bZ};R).$$
Note that in \cite[§10]{cd19} and \cite{cd16}, Cisinski--Déglise use a coefficient ring $\Lambda$ for the finite correspondences, which is a localisation of $\bZ$ at the invertible primes of $R$, or equivalently the maximal subring $\Lambda \subseteq \bQ$ such that $R$ is a $\Lambda$-algebra. They then work with $\Lambda$-linear (pre)sheaves $\corsms{\Lambda}^\op \to \Mod(R)$. But by \cite[Remark 9.1.3]{cd19} we have $\corsms{\Lambda} \cong \corsms{\bZ} \otimes_{\bZ} \Lambda$, and there is an equivalence
$$\nP_{\Lambda\tn{-lin}}(\corsms{\Lambda};R) \simeq \nP_{\Sigma}(\corsms{\bZ};R)$$
which descends to sheaves with transfers. We will work with the latter category.

\begin{thm}\label{thm:derivedsheaveswithtransfers}
There is an equivalence of symmetric monoidal $\infty$-categories
$$\cD(\tn{Sh}_{\tau}(\corsms{\bZ};R)) \simeq \sheaves_{\tau}^{\wedge}(\corsms{R};\spec).$$
\end{thm}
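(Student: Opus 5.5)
We prove the statement in two stages. First we identify the heart and the derived category on the left with an $\infty$-category of hypercomplete objects. Then we match it with the pullback definition of hypersheaves with transfers.

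\textbf{Step 1: the heart.} Take the t-structure on $\sheaves_{\tau}^{\wedge}(\corsms{R};\spec)$ from \Cref{rem:tstrucuresonsheaveswithtransfers}: coconnectivity is detected levelwise, and connective objects are defined by orthogonality. The first task is to identify its heart with the Grothendieck abelian category $\tn{Sh}_{\tau}(\corsms{\bZ};R)$.
\begin{itemize}
\item A discrete object is an additive presheaf of $R$-modules on $\corsms{\bZ}$, since $\corsms{R}$-linear equals $R$-module valued on $\corsms{\bZ}$.
\item Its restriction along $\gamma$ is a discrete $\tau$-sheaf.
\item The heart of $\sheaves_{\tau}(\sms;\spec)$ is the category of sheaves of abelian groups (\Cref{rem:tstructureonsheaves}), and the pullback square \ref{diag:pullbacksheaveswithtransfers} restricts to hearts.
\end{itemize}
Together these give the identification. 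The t-structure is right complete and compatible with filtered colimits: it is obtained from the prestable connective part, and $\gamma^*$ is t-exact and commutes with colimits by \Cref{rem:pushoutinsteadofpullback}.

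\textbf{Step 2: comparison functor.} By Lurie's universal property of the derived $\infty$-category of a Grothendieck abelian category \cite[Corollary C.5.8.9, Proposition C.5.4.5]{lur18}, there is an essentially unique t-exact colimit-preserving functor
$$\Phi: \widecheck{\cD}(\tn{Sh}_{\tau}(\corsms{\bZ};R))_{\geq 0} \to \sheaves_{\tau}(\corsms{R};\spec)_{\geq 0}$$
that restricts to the identity on hearts. Passing to stabilisations and then to hypercomplete objects, we argue as in the proof of \Cref{prop:sheavesgrmodules}. Since $\cD(-)$ is the hypercompletion of the unseparated derived category, $\Phi$ induces a t-exact functor
$$\cD(\tn{Sh}_{\tau}(\corsms{\bZ};R)) \to \sheaves_{\tau}(\corsms{R};\spec)^h.$$
By the analogue of \Cref{prop:hypersheafhypercomplete} stated in \Cref{rem:tstructuresheaveswithtransfers}, the target is exactly $\sheaves^{\wedge}_{\tau}(\corsms{R};\spec)$.

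\textbf{Step 3: equivalence (the main obstacle).} To show this functor is an equivalence we need the prestable connective part of the target to be the correct one: it should be separated and have enough injective-like behaviour. The plan is to reduce to the known case without transfers, where \cite[Corollary 2.1.2.3]{lur18} gives $\cD(\tn{Sh}_{\tau}(\sms;R)) \simeq \sheaves^{\wedge}_{\tau}(\sms;R)$. We compare the two sides via the forgetful functors $\gamma^*$, which are conservative and t-exact. Two inputs are needed here:
\begin{itemize}
\item the classical fact that the forgetful functor $\tn{Sh}_{\tau}(\corsms{\bZ};R)\to\tn{Sh}_{\tau}(\sms;R)$ is exact and that its derived functor computes $\tau$-hypercohomology of sheaves with transfers correctly \cite[§10]{cd19};
\item the fact that the representables $R^{\tn{tr}}_S(X)$ are hypersheaves and generate both sides under colimits and shifts (\Cref{rem:representables}).
\end{itemize}
Given these, we check full faithfulness on the generators $R^{\tn{tr}}_S(X)$. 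Mapping spectra out of them into a hypercomplete object $F$ are $F(X)$ on the right-hand side. On the left-hand side they are the derived sections $R\Gamma(X,F)$, and these agree with hypercohomology by the Cisinski–Déglise comparison. Essential surjectivity then follows because the image is a localising subcategory containing the generators, and both sides are hypercomplete.

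\textbf{Step 4: monoidality.} The derived tensor product on the left is generated by $R^{\tn{tr}}_S(X)\otimes R^{\tn{tr}}_S(Y)=R^{\tn{tr}}_S(X\times_S Y)$. On the right, the symmetric monoidal structure is the localised Day convolution of \Cref{rem:pushoutinsteadofpullback}, which agrees on representables. We therefore upgrade the equivalence to a symmetric monoidal one via the universal property of symmetric monoidal localisations \cite[Proposition 3.2.2]{hin16} applied to $\psigma(\corsms{R};\spec)$.
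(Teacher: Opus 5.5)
Your route is different from the paper's. You build a t-exact comparison functor from the universal property of the unseparated derived $\infty$-category, then check it on generators using the Cisinski--D\'eglise comparison of $\tn{Ext}$-groups of sheaves with transfers with $\tau$-cohomology. As written, though, it has a genuine gap. The facts about $\sheaves^{(\wedge)}_{\tau}(\corsms{R};\spec)$ that you treat as formal in Steps 1 and 2 are not formal, and they are exactly where the Cisinski--D\'eglise input has to enter.

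The square \ref{diag:pullbacksheaveswithtransfers} does not ``restrict to hearts'', because $\tsf{o}_{\tau}$ is only left t-exact. If $\gamma^*$ is t-exact, as you claim, then an object $F$ of the heart has $\gamma^*F$ in the heart of $\sheaves_{\tau}(\sms;\spec)$. So $F(Y)\simeq\tbf{R}\Gamma_{\tau}(Y;\pi_0F)$, which is not discrete. Essential surjectivity of $\pi_0$ onto $\tn{Sh}_{\tau}(\corsms{\bZ};R)$ therefore amounts to giving $Y\mapsto\tbf{R}\Gamma_{\tau}(Y;\Gamma^*\mathcal{A})$ a coherent transfer structure for every sheaf with transfers $\mathcal{A}$. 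That is the substance of the theorem, not a formality.

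Likewise, \Cref{rem:pushoutinsteadofpullback} only provides a \emph{left} adjoint of $\gamma^*$ (the square is a pullback in $\prrst$). You need $\gamma^*$ to preserve filtered colimits: that is what makes the t-structure compatible with filtered colimits and the connective part Grothendieck prestable, so that the universal property in Step 2 applies at all. This would follow from $\gamma^*\tsf{a}^{\tn{tr}}_{\tau}\simeq\tsf{a}_{\tau}\hat{\gamma}^*$, i.e.\ from the Cisinski--D\'eglise compatibility in $\infty$-categorical form. You give no way of transporting the classical statement about derived categories of abelian sheaves to the $\infty$-categorical pullback, so Steps 1--2 are circular as they stand.

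Step 3 has the same problem in another form. $\Phi$ is only determined on hearts, and the generators need not be compact (étale cohomological dimension can be infinite). So full faithfulness requires comparing $\map(\tsf{A}^{\tn{tr}}_{\tau}R^{\tn{tr}}_S(X),Y)$ with $(\Phi Y)(X)$ for \emph{every} unbounded $Y$, and you never compute $\Phi Y$. The natural fix is to get $\gamma^*\Phi\simeq\Gamma^*$ by uniqueness in the universal property and then apply \cite[Proposition 10.3.9]{cd19}. But that again presupposes that $\gamma^*$ is t-exact and colimit-preserving.

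Separately, the levelwise discrete $\bar{y}(X)$ is in general not a $\tau$-hypersheaf of spectra. For $S=X=\Spec(\bR)$ and $R=\bZ/2$, descent along $\Spec(\bC)\to\Spec(\bR)$ would force $H^n(C_2;\bZ/2)=0$ for $n>0$. The objects corepresenting evaluation at $X$ are $\tsf{a}^{\wedge,\tn{tr}}_{\tau}\bar{y}(X)$.

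The paper never uses a t-structure on $\infty$-categorical sheaves with transfers. It derives the abelian square of \cite[\S10]{cd19} and shows that the derived square of right adjoints is again a pullback. The inputs are \cite[Proposition 5.5.7.6]{lur09}, full faithfulness of $\tbf{R}\tsf{O}_{\tau}$, the identification $\tbf{R}\tsf{O}_{\tau}\tsf{A}_{\tau}\hat{\Gamma}^*\simeq\hat{\Gamma}^*\tbf{R}\tsf{O}^{\tn{tr}}_{\tau}\tsf{A}^{\tn{tr}}_{\tau}$ from \cite[Proposition 10.3.9]{cd19}, and conservativity of $\hat{\Gamma}^*$. It then matches this cospan with \ref{diag:pullbacksheaveswithtransfers}, refined through $\delta$ as in \Cref{cons:refinedpullback}. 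The nodes are matched via \cite[Corollary 2.1.2.3]{lur18} and \cite[Lemma 2.61]{pst23}. The arrows are matched by applying the heart-level universal property only to $\tsf{a}^{\wedge}_{\tau}$ and $\hat{\delta}^*$, whose t-exactness is clear. That pullback comparison is the bridge your argument is missing; the heart identification and the t-exactness of $\gamma^*$ that you take as input are consequences of it.
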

\begin{proof}
By \cite[§10]{cd19} we have a diagram of adjunctions between Grothendieck abelian categories in which we use uppercase letters to distinguish the functors from the ones on $\infty$-categorical (pre)sheaves.
\begin{equation}\label{diag:sheaveswithtransfers}
\begin{tikzcd}[column sep=0.9cm,row sep=0.9cm]
	{\tn{Sh}_{\tau}(\corsms{\bZ};R)} & {\nP_{\Sigma}(\corsms{\bZ};R)} \\
	{\tn{Sh}_{\tau}(\sms;R)} & {\nP(\sms;R),}
	\arrow[""{name=0, anchor=center, inner sep=0}, "{\tsf{O}_{\tau}^{\tn{tr}}}"', shift right=2, hook, from=1-1, to=1-2]
	\arrow[""{name=1, anchor=center, inner sep=0}, "{\Gamma^*}", shift left=2, from=1-1, to=2-1]
	\arrow[""{name=2, anchor=center, inner sep=0}, "{\tsf{A}_{\tau}^{\tn{tr}}}"', two heads, shift right=2, from=1-2, to=1-1]
	\arrow[""{name=3, anchor=center, inner sep=0}, "{\hat{\Gamma}^*}", shift left=2, from=1-2, to=2-2]
	\arrow[""{name=4, anchor=center, inner sep=0}, "{\Gamma_!}", shift left=2, from=2-1, to=1-1]
	\arrow[""{name=5, anchor=center, inner sep=0}, "{\tsf{O}_{\tau}}"', shift right=2, hook, from=2-1, to=2-2]
	\arrow[""{name=6, anchor=center, inner sep=0}, "{\hat{\Gamma}_!}", shift left=2, from=2-2, to=1-2]
	\arrow[""{name=7, anchor=center, inner sep=0}, "{\tsf{A}_{\tau}}"', two heads, shift right=2, from=2-2, to=2-1]
	\arrow["\dashv"{anchor=center, rotate=-90}, draw=none, from=2, to=0]
	\arrow["\dashv"{anchor=center}, draw=none, from=4, to=1]
	\arrow["\dashv"{anchor=center, rotate=-90}, draw=none, from=7, to=5]
	\arrow["\dashv"{anchor=center}, draw=none, from=6, to=3]
\end{tikzcd}
\end{equation}
By definition of $\tn{Sh}_{\tau}(\corsms{\bZ};R)$ the diagram of right adjoints is a pullback. According to \cite[Proposition 10.3.3, 10.3.9]{cd19} the functors $\hat{\Gamma}^*$ and $\Gamma^*$ admit a further right adjoint, in particular they are exact, and the two sheafification functors $\tsf{A}_{\tau}$ and $\tsf{A}^{\tn{tr}}_{\tau}$ are exact as well. We hence obtain a diagram of adjunctions of derived $\infty$-categories
\begin{equation}\label{diag:derivedsheaveswithtransfers}
\begin{tikzcd}[column sep=0.9cm,row sep=0.9cm]
	{\cD(\tn{Sh}_{\tau}(\corsms{\bZ};R))} & {\cD(\nP_{\Sigma}(\corsms{\bZ};R))} \\
	{\cD(\tn{Sh}_{\tau}(\sms;R))} & {\cD(\nP(\sms;R)).}
	\arrow[""{name=0, anchor=center, inner sep=0}, "{\tbf{R}\tsf{O}_{\tau}^{\tn{tr}}}"', hook, shift right=2, from=1-1, to=1-2]
	\arrow[""{name=1, anchor=center, inner sep=0}, "{\Gamma^*}", shift left=2, from=1-1, to=2-1]
	\arrow[""{name=2, anchor=center, inner sep=0}, "{\tsf{A}_{\tau}^{\tn{tr}}}"', two heads, shift right=2, from=1-2, to=1-1]
	\arrow[""{name=3, anchor=center, inner sep=0}, "{\hat{\Gamma}^*}", shift left=2, from=1-2, to=2-2]
	\arrow[""{name=4, anchor=center, inner sep=0}, "{\tbf{L}\Gamma_!}", shift left=2, from=2-1, to=1-1]
	\arrow[""{name=5, anchor=center, inner sep=0}, "{\tbf{R}\tsf{O}_{\tau}}"', hook, shift right=2, from=2-1, to=2-2]
	\arrow[""{name=6, anchor=center, inner sep=0}, "{\tbf{L}\hat{\Gamma}_!}", shift left=2, from=2-2, to=1-2]
	\arrow[""{name=7, anchor=center, inner sep=0}, "{\tsf{A}_{\tau}}"', two heads, shift right=2, from=2-2, to=2-1]
	\arrow["\dashv"{anchor=center, rotate=-90}, draw=none, from=2, to=0]
	\arrow["\dashv"{anchor=center}, draw=none, from=4, to=1]
	\arrow["\dashv"{anchor=center, rotate=-90}, draw=none, from=7, to=5]
	\arrow["\dashv"{anchor=center}, draw=none, from=6, to=3]
\end{tikzcd}
\end{equation}
The right derived functors $\tbf{R}\tsf{O}_{\tau}^{(\tn{tr})}$ are fully faithful by the argument used in the proof of \cite[Corollary 4.1]{ser03}, which boils down to the fact that the derived right adjoints can be computed using a K-injective resolution (which is a fibrant replacement with respect to the injective model structure of \cite[Proposition 1.3.5.3]{lur17}). According to \cite[Proposition 5.5.7.6]{lur09} the $\prr$-pullback of the cospan
\begin{equation}\label{diag:derivedcospan}
\begin{tikzcd}
	{\cD(\tn{Sh}_{\tau}(\sms;R))} & {\cD(\nP(\sms;R))} & {\cD(\nP_{\Sigma}(\corsms{\bZ};R))}
	\arrow["{\hspace*{-0.16cm}\tbf{R}\tsf{O}_{\tau}}", hook, from=1-1, to=1-2]
	\arrow["{\hat{\Gamma}^*}"', from=1-3, to=1-2]
\end{tikzcd}
\end{equation}
is computed in $\what{\tsf{Cat}}_{\infty}$, and hence it is equivalent to the full subcategory 
$$\cC \subseteq \cD(\nP_{\Sigma}(\corsms{\bZ};R))$$
on complexes of presheaves with transfers $F_\bullet$ such that $\hat{\Gamma}^*(F_\bullet)$ lies in the essential image of the fully faithful functor $\tbf{R}\tsf{O}_{\tau}: \cD(\tn{Sh}_{\tau}(\sms;R)) \hookrightarrow \cD(\nP(\sms;R))$. The latter is equivalent to the unit map 
\begin{align}\label{eq:unitpullback}
\hat{\Gamma}^*(F_\bullet) \to \tbf{R}\tsf{O}_{\tau}\tsf{A}_{\tau}\hat{\Gamma}^*(F_\bullet)
\end{align}
being an equivalence. Since the sheafification functors and the functors that forget transfers are exact before derivation, \cite[Proposition 10.3.9]{cd19} implies that the target of this map identifies with $\tbf{R}\tsf{O}_{\tau}\Gamma^*\tsf{A}_{\tau}^{\tn{tr}}(F_\bullet) \simeq \hat{\Gamma}^*\tbf{R}\tsf{O}_{\tau}^{\tn{tr}}\tsf{A}_{\tau}^{\tn{tr}}(F_\bullet)$. The resulting map $\hat{\Gamma}^*(F_\bullet) \to \hat{\Gamma}^*\tbf{R}\tsf{O}_{\tau}^{\tn{tr}}\tsf{A}_{\tau}^{\tn{tr}}(F_\bullet)$ identifies with $\hat{\Gamma}^*$ applied to the unit map
\begin{align}\label{eq:unitpullbacki}
F_\bullet \to \tbf{R}\tsf{O}_{\tau}^{\tn{tr}}\tsf{A}_{\tau}^{\tn{tr}}(F_\bullet).
\end{align}
Since $\hat{\Gamma}^*$ is faithful and exact on abelian categories, it is conservative on derived categories. Hence \ref{eq:unitpullback} is an equivalence if and only if \ref{eq:unitpullbacki} is one. It follows that $\cC$ is equivalent to the essential image of 
$$\tbf{R}\tsf{O}_{\tau}^{\tn{tr}}: \cD(\tn{Sh}_{\tau}(\corsms{\bZ};R)) \hookrightarrow \cD(\nP_{\Sigma}(\corsms{\bZ};R)),$$ 
and the diagram of right adjoints in \ref{diag:derivedsheaveswithtransfers} is a pullback. So in order to show that there is an equivalence 
$$\cD(\tn{Sh}_{\tau}(\corsms{\bZ};R)) \simeq \sheaves_{\tau}^{\wedge}(\corsms{R};\spec)$$
it suffices to identify the cospans of \ref{diag:pullbacksheaveswithtransfers} and \ref{diag:derivedcospan}, up to an refinement of the former as in \Cref{cons:refinedpullback}. Since the left adjoints will identify as symmetric monoidal functors, this equivalence will be symmetric monoidal. We first consider the nodes. By \cite[Corollary 2.1.2.3]{lur18} the inclusion  of sheaves into $\infty$-hypersheaves $\tn{Sh}_{\tau}(\sms;R) \hookrightarrow \sheaves_{\tau}^{\wedge}(\sms;R)$ induces a t-exact equivalence
\begin{equation}\label{eq:derivedsheaves}
\cD(\tn{Sh}_{\tau}(\sms;R)) \simeq \sheaves_{\tau}^{\wedge}(\sms;R).
\end{equation}
The same argument implies that for presheaves there is a t-exact equivalence
\begin{equation}\label{eq:derivedpresheaves}
\cD(\nP(\sms;R)) \simeq \cP(\sms;R).
\end{equation}
Formally extending scalars to $R$ in the additive category $\corsms{\bZ}$ induces an equivalence to $R$-linear presheaves
$$\nP_{\Sigma}(\corsms{\bZ};R) \simeq \nP_{R\tn{-lin}}(\corsms{R};R).$$
We can thus view $\corsms{R}$ as a full subcategory of $\nP_{\Sigma}(\corsms{\bZ};R)$ via the $R$-linear Yoneda embedding 
$$\corsms{R} \hookrightarrow \nP_{R\tn{-lin}}(\corsms{R};R),$$ and its elements form a system of compact projective generators in the sense of \cite[Definition 2.51]{pst23}. Then \cite[Lemma 2.61]{pst23} provides a t-exact equivalence
\begin{equation}\label{eq:derivedsheavespresheavestransfers}
\cD(\nP_{\Sigma}(\corsms{\bZ};R)) \simeq \psigma(\corsms{R};\spec).
\end{equation}
Let us consider the functors between the nodes. We have equivalences
\begin{align*}
&\fun^{\tn{L,t-ex}}(\cD(\nP(\sms;R)),\cD(\tn{Sh}_{\tau}(\corsms{\bZ};R))) \\
\simeq \; &\fun^{\tn{L,lex}}(\cD(\nP(\sms;R))_{\geq 0},\cD(\tn{Sh}_{\tau}(\corsms{\bZ};R))_{\geq 0}) \\
\simeq \; &\fun^{\tn{L,lex}}(\nP(\sms;R),\tn{Sh}_{\tau}(\corsms{\bZ};R)),
\end{align*}
where the first equivalence holds by \cite[Proposition C.3.1.1, C.3.2.1]{lur18} and the second one by \cite[Theorem C.5.4.9]{lur18}.
The hypersheafification functor 
$$\tsf{a}^{\wedge}_{\tau}: \cP(\sms;R) \to \sheaves_{\tau}^{\wedge}(\sms;R)$$ is t-exact, and its restriction to hearts is given by the functor $\tsf{A}_{\tau}$ of \ref{diag:sheaveswithtransfers}. From the above equivalence of functor categories it follows that the adjunctions $(\tsf{A}_{\tau},\tbf{R}\tsf{O}_{\tau})$ and $(\tsf{a}^{\wedge}_{\tau},\tsf{o}_{\tau}^{\wedge})$ identify under the equivalences \ref{eq:derivedsheaves} and \ref{eq:derivedpresheaves}. As in \Cref{cons:refinedpullback}, we now consider the restriction functor
$$\hat{\delta}^*: \psigma(\corsms{R};\spec) \to \cP(\sms;R)$$
that precomposes with the functor
$$\delta: \sms \times \lat_R \stackrel{\gamma \times \sigma}{\longrightarrow} \corsms{R} \times \corsms{R} \stackrel{\times_S}{\longrightarrow} \corsms{R}.$$
Here $\lat_R$ is the category of finite rank free $R$-modules, $\sigma$ is the canonical additive functor $\lat_R \to \corfets{R}$ that sends $R$ to $S$, and the bifunctor $(-)\times_S(-)$ comes from the monoidal structure on $\corsms{R}$. Left Kan extension followed by localisation to $\psigma$ provides a left adjoint to $\hat{\delta}^*$, in particular the latter preserves finite colimits. It also preserves filtered colimits since they are computed pointwise. Hence $\hat{\delta}^*$ is a t-exact left adjoint, and the same argument that we just used for $\tsf{a}^{\wedge}_{\tau}$ now applies: on hearts $\hat{\delta}^*$ is given by the functor $\hat{\Gamma}^*$ of \ref{diag:sheaveswithtransfers}. We can hence identify the functors $\hat{\Gamma}^*$ and $\hat{\delta}^*$ under the equivalences \ref{eq:derivedpresheaves} and \ref{eq:derivedsheavespresheavestransfers}. The observation that the diagram \ref{diag:pullbacksheaveswithtransfers} refines to an iterated pullback involving $\hat{\delta}^*$ as in \Cref{cons:refinedpullback} now finishes the proof.
\end{proof}

\begin{rem}
The same proof goes through for $\fets$, in fact we could replace $\tsf{Sm}$ by any class $\mathscr{P}$ of separated finite type morphisms between noetherian schemes of finite dimension which contains étale separated morphisms of finite type \cite[{}10.0.1]{cd19}, and we could replace the topology $\tau$ with any that is mildly compatible with transfers, in the sense of \cite[Definition 10.3.5]{cd19}.
\end{rem}

\printbibliography
\end{document}